\theoremstyle{plain}
\newtheorem{lem}{Lemma}[section]
\newtheorem{cor}[lem]{Corollary}
\newtheorem{thm}[lem]{Theorem}
\newtheorem{conj}[lem]{Conjecture}
\theoremstyle{definition}
\newtheorem{defn}[lem]{Definition}
\theoremstyle{remark}
\newtheorem{rmk}[lem]{Remark}
\newcommand{\bvec}[1]{\mathbf{#1}}
\newcommand{\RR}[0]{\mathbb{R}}
\newcommand{\ZZ}[0]{\mathbb{Z}}
\newcommand{\CC}[0]{\mathbb{C}}
\newcommand{\PP}[0]{\mathcal{P}}
\newcommand{\OO}[0]{\mathcal{O}}
\newcommand{\lbar}[1]{\overline{#1}}
\newcommand{\kronsym}[2]{\left( \dfrac{{#1}}{{#2}}\right)}
\newcommand{\gausssum}[2]{G \! \left( \dfrac{{#1}}{{#2}}\right)}
\newcommand{\gausssumr}[3]{G_{#3} ({#1}, {#2})}
\newcommand{\PSL}[0]{\operatorname{PSL}}
\newcommand{\diag}[0]{\operatorname{diag}}
\newcommand{\supp}[0]{\operatorname{supp}}
\newcommand{\sgnt}[0]{\operatorname{sgnt}}
\newcommand{\sinc}[0]{\operatorname{sinc}}
\newcommand{\dist}[0]{\operatorname{dist}}
\newcommand{\Ind}[1]{\mathbf{1}_{\left\{{#1}\right\}}}
\newcommand{\e}[1]{\mathrm{e}\!\left({#1}\right)}
\newcommand{\Mat}[0]{M}
\newcommand{\repnum}[0]{R}
\newcommand{\numvars}[0]{s}
\newcommand{\numrepresented}[0]{n}
\newcommand{\Upspsi}[0]{\psi}
\newcommand{\Upsuppvar}[0]{\rho}
\newcommand{\lvl}[0]{L}
\newcommand{\coordsize}[0]{X}
\newcommand{\Vol}[0]{\operatorname{Vol}}
\newcommand{\modmultinv}[1]{{#1}^*}
\newcommand{\dimnum}[0]{d}
\newcommand{\numposevals}[0]{\nu}
\renewcommand{\pmod}[1]{\:\left(\operatorname{mod} {#1}\right)}
\renewcommand{\Re}[0]{\operatorname{Re}}
\newcommand{\numberthis}{\stepcounter{equation}\tag{\theequation}}
\numberwithin{equation}{section}
\title[Kloosterman circle method and weighted representation numbers]{The Kloosterman circle method and weighted representation numbers of quadratic forms}
\author{Edna Jones}
\address{Edna Jones: Department of Mathematics,
Tulane University,
6823 St.\ Charles Avenue,
New Orleans, LA 70118, USA} \email{ejones27@tulane.edu}
\keywords{Kloosterman circle method, weighted representation number, quadratic forms, nonstationary phase}
\subjclass[2020]{11D85, 11D09, 11P55, 11D72}
\begin{document}
\maketitle

\begin{abstract}
\indent
We develop a version of the Kloosterman circle method with a bump function that is used to provide asymptotics for weighted representation numbers of nonsingular integral quadratic forms. 
Unlike many applications of the Kloosterman circle method, we explicitly state some constants in the error terms that depend on the quadratic form. 
This version of the Kloosterman circle method uses Gauss sums, Kloosterman sums, Sali\'{e} sums, and a principle of nonstationary phase. 
We briefly discuss a potential application of this version of the Kloosterman circle method to a proof of a strong asymptotic local-global principle for certain Kleinian sphere packings.
\end{abstract}

\tableofcontents


\section{Introduction}

Kloosterman \cite{KloostermanRepQuaternaryQuadForms} developed what is now called the Kloosterman circle method to prove an asymptotic formula for the number of representations of an integer by a positive definite diagonal integral quaternary quadratic form. 
He used ideas from the Hardy--Littlewood circle method and adapted them to handle quaternary quadratic forms. 
For an overview of the Hardy--Littlewood circle method, see \cite{VaughanH-LMethod}.
The Kloosterman circle method is described Section~11.4 in \cite{IwaniecAutomorphicForms} and in Sections~20.3 and 20.4 in \cite{IwaniecKowalskiAnalyticNumThy}. 

Before developing the Kloosterman circle method, Kloosterman~\cite{KloostermanThesis} already had used the Hardy--Littlewood circle method to determine an asymptotic for the number of representations of an integer by a positive definite diagonal integral quadratic form in $\numvars \ge 5$ variables. 
However, as explained by Kloosterman in Section~1 of \cite{KloostermanRepQuaternaryQuadForms}, the error term obtained from the Hardy--Littlewood circle method is too large to provide asymptotic formulas for positive definite quaternary quadratic forms. 

We will use the Kloosterman circle method to obtain asymptotics for a weighted number of representations of an integer by a nonsingular integral quadratic form in $\numvars \ge 4$ variables. 
In our version of the Kloosterman circle method, we will use a bump function to ensure the convergence of our generating function for our weighted representation numbers. (In \cite{IwaniecAutomorphicForms} and \cite{IwaniecKowalskiAnalyticNumThy}, the generating function has an argument in the upper half plane. This ensures that their generating function converges.)
Our bump function provides greater flexibility for our version of the Kloosterman circle method than the version found in \cite{IwaniecAutomorphicForms} and \cite{IwaniecKowalskiAnalyticNumThy}. 

Because we are discussing asymptotics and error terms, it will be useful to define big $O$ notation and related terminology. 
If $f$ and $g$ are both functions of $x$, then the notation $f(x) = O(g(x))$ means that there exists a constant $C > 0$ such that $|f(x)| \le C g(x)$ for all $x \in D$, where $D$ is an appropriate domain that can be deduced from the context. 
The constant $C$ is called the \emph{implied constant}.
We take $f \ll g$ to have the same meaning as $f = O(g)$.
If the implied constant depends on a parameter $\alpha$, then we write $f = O_\alpha (g)$ or $f \ll_\alpha g$.

Heath-Brown~\cite{H-BCircleMethod} uses the delta method to obtain an asymptotic for a weighted number of representations of an integer by a quadratic form. However, in \cite{H-BCircleMethod}, it can be difficult to determine how the implied constants depend on the quadratic form.

In the error terms of our main result, we explicitly state some constants dependent on a quadratic form in $\numvars$ variables. This is unlike what is done in \cite{IwaniecAutomorphicForms}, \cite{IwaniecKowalskiAnalyticNumThy}, and \cite{H-BCircleMethod}. The implied constants in our main result only depend on the number~$\numvars$, some positive number $\varepsilon$, and the bump function involved. To obtain our result, we do not use the delta method. (This is unlike what was done in \cite{DietmannSmallSolnsQuadEqns} or \cite{BrowningDietmannRepByQuadForms}.)

In many applications of the circle method to the problem of representing integers by forms, one does not need to know any of the implied constants that depend on the form. 
However, there are instances in which it is useful to know some of these implied constants. 
For instance, Dietmann~\cite{DietmannSmallSolnsQuadEqns} needed to know some constants dependent on a quadratic form in order to provide a search bound for the smallest solution to a quadratic polynomial in $\numvars$ variables with integer coefficients. In order to do this, Dietmann provided an asymptotic for a certain weighted representation number in which the implied constants only depended on $\numvars$ and some $\varepsilon > 0$. (See Theorem~2 of \cite{DietmannSmallSolnsQuadEqns}.) However, the asymptotic Dietmann gives in Theorem~2 of \cite{DietmannSmallSolnsQuadEqns} does not say anything of use for positive definite quaternary quadratic forms. 
Browning and Dietmann~\cite{BrowningDietmannRepByQuadForms} improved this asymptotic formula for a particular chosen bump function. (See Proposition~1 in \cite{BrowningDietmannRepByQuadForms}.) To contrast, our result applies to any bump function.

A potential application of our version of the Kloosterman circle method is proving a strong asymptotic local-global principle for certain Kleinian sphere packings. (See Section~\ref{chapter:Local-GlobalSpherePackings} for more information on a strong asymptotic local-global principle for certain Kleinian sphere packings.)
Because multiple positive definite integral quaternary quadratic forms may be involved at the same time in this potential application, we would like to know some of the constants dependent on the quadratic forms involved. 

Before we state our main result, some notation in the result should be mentioned. 
Let $\Mat_\numvars (R)$ denote the set of $\numvars \times \numvars$ matrices over a ring $R$.
The determinant of a square matrix $A$ is denoted by $\det(A)$. 

The ring of integers modulo $m$ is denoted by $\ZZ/m\ZZ$. The multiplicative group of integers modulo $m$ is denoted by ${(\ZZ/m\ZZ)}^\times$. 
If $d \in {(\ZZ/m\ZZ)}^\times$, then we denote the multiplicative inverse of $d$ modulo $m$ by $\modmultinv{d}$. 

The following notation is standard in analytic number theory. 
Let $\e{x} = e^{2 \pi i x}$. 
In a product over $p$, the index of multiplication $p$ is taken to be prime. In general, $p$ is taken to be prime. 
The gamma function is
\begin{align*}
\Gamma(z) &= \int_{0}^{\infty} t^{z - 1} e^{-t} \ d t 
\end{align*}
for $z \in \CC$ with $\Re(z) > 0$. The gamma function can be analytically continued to a meromorphic function on $\CC$ so that $1/\Gamma(z)$ is an entire function. 
Note that $1/\Gamma(0) = 0$, because $\Gamma(z)$ has a simple pole at $z = 0$.

For a statement $Y$, let $\Ind{Y}$ be the indicator function
\begin{align*}
\Ind{Y} = 
\begin{cases}
1	&\text{if $Y$ holds,} \\
0	&\text{otherwise.}
\end{cases}
\end{align*}

For a positive integer $\numrepresented$, the divisor function $\tau(\numrepresented)$ is the number of positive divisors of $\numrepresented$. It is well-known that for all $\varepsilon > 0$, we have $\tau(\numrepresented) \ll_\varepsilon \numrepresented^\varepsilon$. (See, for example, (2.20) in \cite{MontgomeryVaughanMultNT}.)

The space of real-valued and infinitely differentiable functions on $\RR^\numvars$ is denoted by $C^\infty (\RR^\numvars)$. We call a function $f$ in $C^\infty (\RR^\numvars)$ a \emph{smooth} function. 
For a continuous function $f$, define the \emph{support} of $f$ (denoted by $\supp(f)$) to be the closure of the set $\{ \bvec{x} \in \RR^\numvars : f(\bvec{x}) \ne 0 \}$. A function $f$ is said to be \emph{compactly supported} if $\supp(f)$ is a compact set. 
The space of real-valued, infinitely differentiable, and compactly supported functions on $\RR^\numvars$ is denoted by $C_c^\infty (\RR^\numvars)$. We call a function $f$ in $C_c^\infty (\RR^\numvars)$ a \emph{bump} function (or a \emph{test} function). 

Let $\Upspsi \in C_c^\infty (\RR^\numvars)$ be a bump function. Since $\Upspsi$ is compactly supported, there exists a nonnegative real number $\Upsuppvar$ such that 
\begin{align}
\supp(\Upspsi) \subseteq \{ \bvec{x} \in \RR^\numvars : \| \bvec{x} \| \le \Upsuppvar \} . \label{subset:Upsupp def}
\end{align}
Define $\Upsuppvar_\Upspsi$ to be the smallest nonnegative $\Upsuppvar$ that satisfies \eqref{subset:Upsupp def}.

For $\Upspsi \in C_c^\infty (\RR^\numvars)$, $\coordsize > 0$, and $\bvec{m} \in \RR^\numvars$, let $\Upspsi_\coordsize$ be defined by
\begin{align}
\Upspsi_\coordsize (\bvec{m}) = \Upspsi\left( \frac{1}{\coordsize} \bvec{m} \right) . \label{eq:Upsilon\coordsize}
\end{align}
Notice that $\supp(\Upspsi_\coordsize) \subseteq \{ \bvec{m} \in \RR^\numvars : \| \bvec{m} \| \le \Upsuppvar_\Upspsi \coordsize \}$ and $\Upsuppvar_{\Upspsi_\coordsize} = \Upsuppvar_\Upspsi \coordsize$. 
Also, $\Upspsi_1 = \Upspsi$.

For a positive real number $\coordsize$, an integer $\numrepresented$, and an integral quadratic form $F$, let $\repnum_{F, \Upspsi, \coordsize} (\numrepresented)$ be the weighted representation number defined by 
\begin{align}
\repnum_{F, \Upspsi, \coordsize} (\numrepresented) = \sum_{\bvec{m} \in \ZZ^\numvars} \Ind{F(\bvec{m}) = \numrepresented} \Upspsi_\coordsize (\bvec{m}) . \label{eq:R\coordsize\numrepresented}
\end{align}
The main results of this paper concern asymptotics for this weighted representation number.

We now define some quantities that appear in our main results. 
For a positive integer $\numrepresented$ and an integral quadratic form $F$ in $\numvars$ variables, define $\mathfrak{S}_F (\numrepresented)$ to be the \emph{singular series} 
\begin{align}
\mathfrak{S}_F (\numrepresented) &= \sum_{q=1}^{\infty} \frac{1}{q^\numvars} \sum_{d \in (\ZZ/q\ZZ)^\times} \sum_{\bvec{h} \in (\ZZ/q\ZZ)^\numvars} \e{\frac{d}{q} \left( F(\bvec{h}) - \numrepresented \right)} . \label{eq:singseries}
\end{align}
The singular series will appear in the main term for our asymptotic for the weighted representation number $\repnum_{F, \Upspsi, \coordsize} (\numrepresented)$. The singular series is known to contain information modulo $q$ for every positive integer $q$. See Section~11.5 in \cite{IwaniecAutomorphicForms} for more information about the singular series. 

For a nonsingular quadratic form $F$, a real number $\numrepresented$, and a positive real number~$\coordsize$, 
we define the \emph{real factor} $\sigma_{F, \Upspsi, \infty} (\numrepresented, \coordsize)$ to be 
\begin{align}
\sigma_{F, \Upspsi, \infty} (\numrepresented, \coordsize) &= \lim_{\varepsilon \to 0^+} \frac{1}{2 \varepsilon} \int_{\left| F(\bvec{m}) - \frac{\numrepresented}{\coordsize^2} \right| < \varepsilon} \Upspsi(\bvec{m}) \ d \bvec{m} . \label{eq:archmediandensity vol}
\end{align}
This real factor can be viewed as a weighted density of real solutions to $F(\bvec{m}) = \numrepresented / \coordsize^2$.

The main result of this paper is the following theorem about the weighted representation number $\repnum_{F, \Upspsi, \coordsize} (\numrepresented)$. 
\begin{thm} \label{thm:R\coordsize\numrepresented asymp1}
Suppose that $\numrepresented$ is a positive integer.
Suppose that $F$ is a nonsingular integral quadratic form in $\numvars \ge 4$ variables. Let $A \in \Mat_\numvars (\ZZ)$ be the Hessian matrix of $F$. Let $\sigma_1$ be largest singular value of $A$, and let $\numposevals$ be the number of positive eigenvalues of $A$. Let $\lvl$ be the smallest positive integer such that $\lvl A^{-1} \in \Mat_\numvars (\ZZ)$. 
Suppose that $\Upspsi \in C_c^\infty (\RR^\numvars)$ is a bump function. 
Then for $\coordsize \ge 1 / \sigma_1$ 
and $\varepsilon > 0$, the weighted representation number $\repnum_{F, \Upspsi, \coordsize} (\numrepresented)$ is 
\begin{align*}
&\repnum_{F, \Upspsi, \coordsize} (\numrepresented) \\
		&= \mathfrak{S}_F (\numrepresented) \sigma_{F, \Upspsi, \infty} (\numrepresented, \coordsize) \coordsize^{\numvars - 2} \\
		&\qquad + O_{\Upspsi , \numvars , \varepsilon} \left( \frac{\lvl^{\numvars/2} \coordsize^{(\numvars - 1)/2 + \varepsilon} \sigma_1^{(3 - \numvars )/2 + \varepsilon}}{\Gamma(\numposevals/2) \left( \prod_{j=1}^\numposevals \lambda_j \right)^{1/2}} \left(\frac{\numrepresented}{\coordsize^2} - \frac{{\Upsuppvar_\Upspsi}^2}{2} \Ind{\numposevals > 1} \sum_{j=\numposevals+1}^\numvars \lambda_j \right)^{\numposevals/2 - 1} \right. \\
				&\qquad\qquad\qquad\qquad \left. \times \tau(\numrepresented) \prod_{p \mid 2 \det(A)} ( 1 - p^{-1/2} )^{-1} \right) \\
		&\qquad + O_{\Upspsi , \numvars , \varepsilon} \left( \coordsize^{(\numvars - 1) /2 + \varepsilon} \sigma_1^{(\numvars + 1)/2 + \varepsilon} \lvl^{\numvars/2} \tau(\numrepresented) \prod_{p \mid 2 \det(A)} ( 1 - p^{-1/2} )^{-1} \right) , \numberthis \label{eq:R\coordsize\numrepresented asymp1}
\end{align*}
where $\lambda_1, \lambda_2, \ldots, \lambda_\numposevals$ are the positive eigenvalues of $A$ and $\lambda_{\numposevals+1}, \lambda_{\numposevals+2}, \ldots, \lambda_\numvars$ are the negative eigenvalues of $A$.
\end{thm}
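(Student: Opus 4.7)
The plan is to execute the Kloosterman circle method with the bump function $\Upspsi_\coordsize$ playing the role of the convergence factor. First, I express the weighted representation number as
\begin{align*}
\repnum_{F,\Upspsi,\coordsize}(\numrepresented) = \int_0^1 S(\alpha)\e{-\alpha\numrepresented}\,d\alpha, \quad S(\alpha) = \sum_{\bvec{m}\in\ZZ^\numvars}\Upspsi_\coordsize(\bvec{m})\e{\alpha F(\bvec{m})},
\end{align*}
which converges absolutely because $\Upspsi$ has compact support. Next, I perform a Farey dissection of $[0,1]$ into arcs centered at reduced fractions $a/q$ with $q$ up to some parameter $Q$ to be optimized at the end. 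Writing $\alpha = a/q + \beta$ on the arc around $a/q$ and decomposing $\bvec{m} = \bvec{b} + q\bvec{k}$ with $\bvec{b}\in(\ZZ/q\ZZ)^\numvars$ and $\bvec{k}\in\ZZ^\numvars$, the exponential sum factors as a complete arithmetic sum in $\bvec{b}$ times a smooth incomplete sum in $\bvec{k}$.

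I then apply Poisson summation in $\bvec{k}$ with dual variable $\bvec{h}\in\ZZ^\numvars$. The arithmetic factor becomes a twisted quadratic Gauss sum modulo $q$ which, after summing $a$ over $(\ZZ/q\ZZ)^\times$, collapses to a Salié or Kloosterman sum depending on local conditions at primes dividing $2\det(A)$ and on the parity of $q$. The analytic factor is the oscillatory integral
\begin{align*}
I(\bvec{h},\beta) = \int_{\RR^\numvars}\Upspsi_\coordsize(\bvec{y})\e{\beta F(\bvec{y}) - \bvec{h}\cdot\bvec{y}/q}\,d\bvec{y}.
\end{align*}
The $\bvec{h}=\bvec{0}$ contribution, once $\beta$ is integrated and $q$ is summed, assembles to the main term: the $\beta$-integral recovers $\sigma_{F,\Upspsi,\infty}(\numrepresented,\coordsize)\coordsize^{\numvars-2}$ via a coarea/Fourier inversion identity applied to $\Upspsi_\coordsize$, and the completed arithmetic sum over $q$ and $a$ produces $\mathfrak{S}_F(\numrepresented)$.

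The main obstacle is bounding the $\bvec{h}\ne\bvec{0}$ contribution with explicit dependence on $A$. On the arithmetic side, multiplicativity in $q$ permits a prime-by-prime analysis: the Weil bound on Salié and Kloosterman sums supplies the $q^{1/2}$ savings, primes dividing $2\det(A)$ produce the local factor $\prod_{p\mid 2\det(A)}(1-p^{-1/2})^{-1}$, the level $\lvl$ enters through the denominators required to invert $A$ modulo $q$, and the divisor function $\tau(\numrepresented)$ arises from the $a$-sum. On the analytic side, I split $I(\bvec{h},\beta)$ using the principle of nonstationary phase advertised in the abstract: when the gradient of the phase does not vanish on $\supp(\Upspsi_\coordsize)$, repeated integration by parts yields arbitrary polynomial decay in $\|\bvec{h}\|$; the residual stationary range is controlled by a Gaussian/volume computation on the positive-eigenvalue subspace of $A$, from which the factors $\Gamma(\numposevals/2)^{-1}$, $(\prod_{j=1}^\numposevals\lambda_j)^{-1/2}$, and the shifted quantity $\bigl(\numrepresented/\coordsize^2 - \tfrac{\Upsuppvar_\Upspsi^2}{2}\Ind{\numposevals>1}\sum_{j>\numposevals}\lambda_j\bigr)^{\numposevals/2-1}$ naturally emerge, the shift encoding how much the indefinite part of $F$ can subtract on $\supp(\Upspsi_\coordsize)$. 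The largest singular value $\sigma_1$ enters as the operator norm of the Hessian of the phase, and the hypothesis $\coordsize\ge 1/\sigma_1$ ensures that the bump at scale $\coordsize$ genuinely captures lattice points. Finally, I optimize $Q$ to balance the two error regimes, obtaining the two error terms in the stated form.
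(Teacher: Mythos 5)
Your proposal follows the paper's own route: Farey dissection, reduction mod $q$, Poisson summation to split into Gauss sums and an oscillatory integral, Weil bounds for Kloosterman and Sali\'e sums, nonstationary phase for large $\bvec{r}$, a $|\det A|^{-1/2}|x|^{-\numvars/2}$ bound for the oscillatory integral, and a choice of $Q$ to balance errors. Three details in your narrative are off, and the first matters for the saving. You say the arithmetic factor collapses to a Kloosterman or Sali\'e sum ``after summing $a$ over $(\ZZ/q\ZZ)^\times$,'' but the Farey arcs have unequal lengths, so one cannot sum $a$ against a fixed $\beta$-range. The paper's Lemma~\ref{lem:KloostDecomp} is Kloosterman's device: each half-arc around $a/q$ is reparameterized via $d = q + q'$ with $a q' \equiv \pm 1 \pmod q$, converting the arc-dependent $\beta$-range into the uniform range $[0,1/(qQ)]$ at the cost of an $x$-dependent constraint $qdx < 1$ on the $d$-sum. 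That incomplete $d$-sum is then completed (Lemma~\ref{lem:completeTl}) via a Fourier weight $\gamma(\ell) \ll (1+|\ell|)^{-1}$, which costs a factor $\log(2q)$; the Kloosterman/Sali\'e structure appears only after this completion. Without this step there is no incomplete sum to complete and no $\sqrt{q}$ saving over Hardy--Littlewood, which is precisely what is needed to reach $\numvars = 4$.

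Two smaller misattributions: the factor $\Gamma(\numposevals/2)^{-1}(\prod_{j\le\numposevals}\lambda_j)^{-1/2}(\cdots)^{\numposevals/2-1}$ in the first error term does not come from a ``residual stationary range'' of $I(\bvec{h},\beta)$ for $\bvec{h}\ne\bvec{0}$ --- that integral is only bounded by $\min\{\coordsize^\numvars, |x|^{-\numvars/2}|\det A|^{-1/2}\}$ with no $\Gamma$ --- it is an upper bound for the singular integral $J_{F,\Upspsi}(\numrepresented,\coordsize)$ (Theorem~\ref{thm:sigma upperbound nonsing}), which then multiplies the tail of the singular series in Lemma~\ref{lem:truncsingseries to singseries}. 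And the hypothesis $\coordsize \ge 1/\sigma_1$ has nothing to do with the bump ``capturing lattice points''; after optimizing $Q = \sigma_1\coordsize$, it is exactly the requirement $Q\ge 1$ needed for the Farey dissection to be nontrivial.
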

\begin{rmk}
If $\numposevals$ mentioned in Theorem~\ref{thm:R\coordsize\numrepresented asymp1} is zero, then the first error term in \eqref{eq:R\coordsize\numrepresented asymp1} is zero since $1/\Gamma(0)$.
\end{rmk}
\begin{rmk}
The integer $\lvl$ mentioned in Theorem~\ref{thm:R\coordsize\numrepresented asymp1} exists since $\det(A) A^{-1} \in \Mat_\numvars (\ZZ)$.
\end{rmk}
\begin{rmk}
If $n$ is a negative integer (instead of a positive integer), Theorem~\ref{thm:R\coordsize\numrepresented asymp1} can be applied to $\repnum_{-F, \Upspsi, \coordsize} (-\numrepresented)$. Because $\repnum_{F, \Upspsi, \coordsize} (\numrepresented) = \repnum_{-F, \Upspsi, \coordsize} (-\numrepresented)$, this would give an asymptotic formula for $\repnum_{F, \Upspsi, \coordsize} (\numrepresented)$ as $n \to -\infty$.
\end{rmk}

When we set $\coordsize = \numrepresented^{1/2}$ in Theorem~\ref{thm:R\coordsize\numrepresented asymp1}, 
we obtain the following asymptotic for the weighted representation number $\repnum_{F, \Upspsi, \coordsize} (\numrepresented)$.
\begin{cor} \label{cor:R\coordsize\numrepresented asymp2}
Suppose that $F$ is a nonsingular integral quadratic form in $\numvars \ge 4$ variables. Let $A \in \Mat_\numvars (\ZZ)$ be the Hessian matrix of $F$. Let $\sigma_1$ be largest singular value of $A$, and let $\numposevals$ be the number of positive eigenvalues of $A$. Let $\lvl$ be the smallest positive integer such that $\lvl A^{-1} \in \Mat_\numvars (\ZZ)$.
If $\numrepresented$ is a positive integer and $\varepsilon > 0$,
then
the weighted representation number $\repnum_{F, \Upspsi, \coordsize} (\numrepresented)$ is 
\begin{align*}
\repnum_{F, \Upspsi, \coordsize} (\numrepresented) &= \mathfrak{S}_F \!\left( \numrepresented \right) \sigma_{F, \Upspsi, \infty} \!\left( \numrepresented, \numrepresented^{1/2} \right) \numrepresented^{\numvars/2 - 1} \\
		&\quad + O_{\Upspsi , \numvars , \varepsilon} \left( \left( \sigma_1^{(\numvars + 1)/2 + \varepsilon} + \frac{\sigma_1^{(3 - \numvars )/2 + \varepsilon}}{\Gamma(\numposevals/2) \left( \prod_{j=1}^\numposevals \lambda_j \right)^{1/2}} \left( 1 - \frac{{\Upsuppvar_\Upspsi}^2}{2} \Ind{\numposevals > 1} \sum_{j=\numposevals+1}^\numvars \lambda_j \right)^{\numposevals/2 - 1} \right) \right. \\
		&\qquad\qquad\qquad\qquad \left. \times \numrepresented^{(\numvars - 1) /4 + \varepsilon/2} \tau(\numrepresented) \lvl^{\numvars/2} \prod_{p \mid 2 \det(A)} ( 1 - p^{-1/2} )^{-1}  \right) ,
\end{align*}
where $\lambda_1, \lambda_2, \ldots, \lambda_\numposevals$ are the positive eigenvalues of $A$ and $\lambda_{\numposevals+1}, \lambda_{\numposevals+2}, \ldots, \lambda_\numvars$ are the negative eigenvalues of $A$.
\end{cor}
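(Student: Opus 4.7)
The proof will be an essentially direct substitution of $\coordsize = \numrepresented^{1/2}$ into the statement of Theorem~\ref{thm:R\coordsize\numrepresented asymp1}, followed by algebraic simplification. Before substituting, however, I need to check that the hypothesis $\coordsize \ge 1/\sigma_1$ of Theorem~\ref{thm:R\coordsize\numrepresented asymp1} holds for the substitution.

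The plan is to observe that, because $F$ is nonsingular, the Hessian $A \in \Mat_\numvars(\ZZ)$ has nonzero integer determinant, hence $|\det(A)| \ge 1$. Since $A$ is symmetric, its singular values are the absolute values of its eigenvalues, so
\begin{equation*}
\sigma_1^{\numvars} \;\ge\; \prod_{j=1}^{\numvars} |\lambda_j| \;=\; |\det(A)| \;\ge\; 1,
\end{equation*}
which gives $\sigma_1 \ge 1$. Combined with the fact that $\numrepresented$ is a positive integer, this yields $\numrepresented^{1/2} \ge 1 \ge 1/\sigma_1$, so Theorem~\ref{thm:R\coordsize\numrepresented asymp1} applies with $\coordsize = \numrepresented^{1/2}$.

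Next I would substitute $\coordsize = \numrepresented^{1/2}$ into \eqref{eq:R\coordsize\numrepresented asymp1}. The main term transforms as
\begin{equation*}
\mathfrak{S}_F(\numrepresented) \sigma_{F,\Upspsi,\infty}(\numrepresented,\coordsize) \coordsize^{\numvars-2}
\;=\; \mathfrak{S}_F(\numrepresented) \sigma_{F,\Upspsi,\infty}(\numrepresented,\numrepresented^{1/2}) \numrepresented^{\numvars/2-1},
\end{equation*}
which is exactly the main term in the corollary. For the two error terms I would use that $\coordsize^{(\numvars-1)/2+\varepsilon} = \numrepresented^{(\numvars-1)/4+\varepsilon/2}$ and that $\numrepresented/\coordsize^2 = 1$, so the expression $(\numrepresented/\coordsize^2 - \tfrac{{\Upsuppvar_\Upspsi}^2}{2} \Ind{\numposevals>1} \sum_{j=\numposevals+1}^{\numvars} \lambda_j)^{\numposevals/2 - 1}$ collapses to $(1 - \tfrac{{\Upsuppvar_\Upspsi}^2}{2} \Ind{\numposevals>1} \sum_{j=\numposevals+1}^{\numvars} \lambda_j)^{\numposevals/2 - 1}$.

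Finally I would factor the common quantity $\numrepresented^{(\numvars-1)/4 + \varepsilon/2} \tau(\numrepresented) \lvl^{\numvars/2} \prod_{p \mid 2\det(A)} (1 - p^{-1/2})^{-1}$ out of both error terms and absorb both $O$'s into a single $O_{\Upspsi,\numvars,\varepsilon}$, giving the sum of the two prefactors $\sigma_1^{(\numvars+1)/2+\varepsilon}$ and $\sigma_1^{(3-\numvars)/2+\varepsilon}/\bigl[\Gamma(\numposevals/2)(\prod_{j=1}^\numposevals \lambda_j)^{1/2}\bigr](1 - \tfrac{{\Upsuppvar_\Upspsi}^2}{2} \Ind{\numposevals>1} \sum_{j=\numposevals+1}^{\numvars} \lambda_j)^{\numposevals/2-1}$, recovering the corollary as stated. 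There is no substantive obstacle: the only non-cosmetic point is the hypothesis check $\sigma_1 \ge 1$, which is immediate from nonsingularity and integrality.
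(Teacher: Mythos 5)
Your proposal is correct and takes essentially the same route as the paper: substitute $\coordsize = \numrepresented^{1/2}$ into Theorem~\ref{thm:R\coordsize\numrepresented asymp1}, simplify $\numrepresented/\coordsize^2 = 1$ and $\coordsize^{(\numvars-1)/2+\varepsilon} = \numrepresented^{(\numvars-1)/4+\varepsilon/2}$, and combine the two error terms. The explicit check that $\sigma_1 \ge 1$ (so that $\numrepresented^{1/2} \ge 1 \ge 1/\sigma_1$) is a sensible addition; the paper establishes $\sigma_1 \ge 1$ at a nearby point in Section~\ref{sec:parameters} but does not spell it out when stating the corollary.
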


By fixing a particular bump function $\Upspsi$ in Theorem~\ref{thm:R\coordsize\numrepresented asymp1} or in Corollary~\ref{cor:R\coordsize\numrepresented asymp2}, we obtain the following corollary about the (unweighted) representation number of the positive definite integral quadratic form $F$. (The representation number of a positive definite integral form $F$ is the number of integral solutions to $F(\bvec{m}) = \numrepresented$.) This corollary is Theorem~11.2 in \cite{IwaniecAutomorphicForms} and Theorem~20.9 in \cite{IwaniecKowalskiAnalyticNumThy}.
\begin{cor} \label{cor:repnum F asymp}
Suppose that $\numrepresented$ is a positive integer. 
Suppose that $F$ is a positive definite integral quadratic form in $\numvars \ge 4$ variables. Let $A \in \Mat_\numvars (\ZZ)$ be the Hessian matrix of $F$. 
Then the number of integral solutions to $F(\bvec{m}) = \numrepresented$ is 
\begin{align*}
| \{ \bvec{m} \in \ZZ^\numvars : F(\bvec{m}) = \numrepresented \} | &= \mathfrak{S}_F (\numrepresented) \frac{(2 \pi)^{\numvars/2}}{\Gamma(\numvars/2) \sqrt{\det(A)}} \numrepresented^{\numvars/2 - 1} + O_{F , \varepsilon} \left( \numrepresented^{(\numvars - 1)/4 + \varepsilon} \right) 
\end{align*}
for any $\varepsilon > 0$.
\end{cor}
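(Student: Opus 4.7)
The plan is to derive Corollary~\ref{cor:repnum F asymp} from Corollary~\ref{cor:R\coordsize\numrepresented asymp2} by choosing an $F$-dependent bump function $\Upspsi$ so that $\repnum_{F, \Upspsi, \numrepresented^{1/2}}(\numrepresented)$ coincides with the unweighted representation count. Since $F$ is positive definite, the Hessian $A$ has smallest eigenvalue $\lambda_{\min} > 0$, and any integer solution to $F(\bvec{m}) = \numrepresented$ satisfies $\bvec{m}^T A \bvec{m} = 2\numrepresented \ge \lambda_{\min}\|\bvec{m}\|^2$, whence $\|\bvec{m}/\numrepresented^{1/2}\| \le \sqrt{2/\lambda_{\min}}$.

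By standard mollification I produce $\Upspsi \in C_c^\infty(\RR^\numvars)$ (depending on $F$) with $\Upspsi \equiv 1$ on the closed ball of radius $\sqrt{2/\lambda_{\min}} + 1$ about the origin and $\Upspsi$ compactly supported in a slightly larger ball. By the previous paragraph, $\Upspsi_{\numrepresented^{1/2}}(\bvec{m}) = 1$ for every $\bvec{m} \in \ZZ^\numvars$ with $F(\bvec{m}) = \numrepresented$, so
\begin{equation*}
\repnum_{F, \Upspsi, \numrepresented^{1/2}}(\numrepresented) = \bigl|\{\bvec{m} \in \ZZ^\numvars : F(\bvec{m}) = \numrepresented\}\bigr|.
\end{equation*}

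For the main term, since $\Upspsi \equiv 1$ on an open neighborhood of the compact ellipsoid $\{F = 1\}$, the definition \eqref{eq:archmediandensity vol} reduces for all sufficiently small $\varepsilon$ to $(2\varepsilon)^{-1}\bigl(\Vol\{F < 1+\varepsilon\} - \Vol\{F < 1-\varepsilon\}\bigr)$. A diagonalizing linear change of variables gives $\Vol\{F < t\} = (2\pi t)^{\numvars/2}/(\Gamma(\numvars/2 + 1)\sqrt{\det(A)})$, and differentiating at $t = 1$ yields $\sigma_{F, \Upspsi, \infty}(\numrepresented, \numrepresented^{1/2}) = (2\pi)^{\numvars/2}/(\Gamma(\numvars/2)\sqrt{\det(A)})$, which matches the claimed main term after multiplication by $\numrepresented^{\numvars/2 - 1}$.

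Finally, positive definiteness forces $\numposevals = \numvars$, so the indicator $\Ind{\numposevals > 1}$ is paired with the empty sum $\sum_{j=\numvars+1}^\numvars \lambda_j = 0$, collapsing the factor $(1 - \cdots)^{\numposevals/2 - 1}$ to $1$. Both error terms in Corollary~\ref{cor:R\coordsize\numrepresented asymp2} then reduce to an $F$-dependent constant (built from $\sigma_1$, $\lvl$, $\det(A)$, and the $\lambda_j$) times $\numrepresented^{(\numvars-1)/4 + \varepsilon/2}\tau(\numrepresented)$, which is $O_{F,\varepsilon}(\numrepresented^{(\numvars-1)/4 + \varepsilon})$ after absorbing $\tau(\numrepresented) \ll_\varepsilon \numrepresented^\varepsilon$. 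The hypothesis $\coordsize \ge 1/\sigma_1$ becomes $\numrepresented \ge 1/\sigma_1^2$, and the finitely many smaller $\numrepresented$ are absorbed into the implied constant. The only subtle point is ensuring $\Upspsi$ is identically $1$ on a genuine open neighborhood of $\{F = 1\}$ rather than merely on this level set, which is why I inflate the plateau radius by $1$ past the bound from the first paragraph.
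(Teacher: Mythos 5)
Your proof is correct and follows essentially the same strategy as the paper: set $\coordsize = \numrepresented^{1/2}$, choose an $F$-dependent bump function equal to $1$ on the relevant level set, identify $\repnum_{F,\Upspsi,\coordsize}(\numrepresented)$ with the unweighted representation count, and absorb the now $F$-dependent error factors. The one genuine (minor) variation is in how you evaluate $\sigma_{F,\Upspsi,\infty}(\numrepresented, \numrepresented^{1/2})$: you insist that $\Upspsi \equiv 1$ on a full neighborhood of the ellipsoid $\{F = 1\}$, so that for small $\varepsilon$ the integral $\int_{|F - 1| < \varepsilon}\Upspsi$ is literally the shell volume and the limit is just the derivative of $t \mapsto \Vol\{F < t\}$ at $t=1$ (i.e.\ the content of Theorem~\ref{thm:limitshellvol}). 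The paper instead proves the more general Theorem~\ref{thm:sigma asymp Upsilon=1 eval}, which only needs $\Upspsi = 1$ on the level set itself, via Lemma~\ref{lem:preimagecloseness} plus a uniform-continuity argument, and then specializes. Your stronger hypothesis on $\Upspsi$ costs nothing to arrange and lets you skip that lemma, at the price of a slightly less general intermediate statement. One small remark: the condition $\coordsize \ge 1/\sigma_1$ is automatic here rather than needing an absorbed finite range, since the paper observes (Section~\ref{sec:parameters}) that $\sigma_1 \ge 1$ for an integral nonsingular form, so $\numrepresented^{1/2} \ge 1 \ge 1/\sigma_1$ for every positive integer $\numrepresented$.
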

\begin{rmk}
The implied constant in Corollary~\ref{cor:repnum F asymp} depends on the quadratic form $F$, because the choice of $\Upspsi$ depends on $F$. 
\end{rmk}
If a bump function $\Upspsi$ is chosen first and then a positive definite quadratic form $F$ is chosen based on $\Upspsi$ (and possibly on $\numrepresented , \coordsize> 0$), we can have a result in which all of the implied constants only depend on $\Upspsi$, $\numvars$, and $\varepsilon$. We hope that such a result can be used towards a proof of a strong asymptotic local-global principle for certain Kleinian sphere packings. 

The remainder of this paper is organized as follows.
In Section~\ref{chapter:notation}, we define some notation that will be used throughout the paper. 
In Section~\ref{chapter:Kloostermancirclemethodsetup}, we set up the Kloosterman circle method and apply it to our particular problem. 
Once we apply the Kloosterman circle method to our problem, we obtain an arithmetic part and an archimedean part.
We analyze the arithmetic part in Section~\ref{chapter:arithmeticpart} and the archimedean part in Section~\ref{chapter:archimedeanpart}. 
In Section~\ref{chapter:puttogether}, we put together estimates from previous sections and complete our proofs of Theorem~\ref{thm:R\coordsize\numrepresented asymp1} and Corollaries~\ref{cor:R\coordsize\numrepresented asymp2} and \ref{cor:repnum F asymp}.
In Section~\ref{chapter:Local-GlobalSpherePackings}, we briefly discuss a potential application of our version of the Kloosterman circle method: a proof of a strong asymptotic local-global principle for certain Kleinian sphere packings.

\section{Some additional notation} \label{chapter:notation}

In this section, we state some notation used throughout this paper. This is not a comprehensive list of notation used in this paper, but most of the notation used in this paper is listed here, in the previous section, or in the next section.

The greatest common divisor of integers $a_1 , \ldots , a_m$ is denoted by $\gcd(a_1 , \ldots , a_m)$.

A vector $\bvec{m} \in \ZZ^\numvars$ is viewed as an $\numvars \times 1$ column vector. The $j$th entry of a vector $\bvec{m}$ is denoted by $m_j$. The entry in the $j$th row and the $k$th column of a matrix $A$ is denoted by $a_{j k}$. For a vector $\bvec{m}$ (or matrix $A$), let $\bvec{m}^\top$ (or $A^\top$) denote the transpose of $\bvec{m}$ (or $A$). A diagonal $\numvars \times \numvars$ matrix with diagonal entries $d_1, d_2, \ldots, d_\numvars$ is denoted by $\diag(d_1, d_2, \ldots, d_\numvars)$.

The \emph{dot product} of $\bvec{x}, \bvec{y} \in \RR^\numvars$ is $\bvec{x}^\top \bvec{y} = \sum_{j=1}^\numvars x_j y_j$ and is denoted by $\bvec{x} \cdot \bvec{y}$. The \emph{Euclidean norm} of a vector $\bvec{x}$ is $\| \bvec{x} \| = \sqrt{\bvec{x} \cdot \bvec{x}}$. 
For $\bvec{x} \in \RR^\numvars$ and a nonempty subset $U$ of $\RR^\numvars$, define the \emph{distance} between $\bvec{x}$ and $U$ to be
\begin{align*}
\dist(\bvec{x}, U) &= \inf_{\bvec{m} \in U} \| \bvec{x} - \bvec{m} \| .
\end{align*}

We use $\bvec{0}$ to denote the zero vector in $\RR^\numvars$.

For a nonsingular symmetric matrix $A \in \Mat_\numvars (\RR)$, we define the \emph{signature} of $A$ to be the number of positive eigenvalues of $A$ minus the number of negative eigenvalues of $A$. The signature of $A$ is denoted by $\sgnt(A)$.

As is standard in analytic number theory, the natural logarithm of $z$ is denoted by $\log(z)$, not by $\ln(z)$. 

For $z \in \CC$, the notation $\lbar{z}$ means the complex conjugate of $z$. The real part of a complex number $z$ is denoted by $\Re(z)$.

For a nonnegative integer $k$ and a function $f \colon \RR \to \RR$ with a $k$th derivative, we let $f^{(k)}$ denote the $k$th derivative of $f$. Note that $f^{(0)} = f$, $f^{(1)} = f'$, and $f^{(2)} = f''$.

We use the notation $\lfloor x \rfloor$ for the greatest integer less than or equal to $x$. Similarly, $\lceil x \rceil$ denotes the least integer greater than or equal to $x$.

For a sufficiently nice function $f \colon \RR^d \to \RR$ 
(where $d$ is a positive integer), define the \emph{Fourier transform} $\widehat{f}$ of $f$ to be 
\begin{align}
\widehat{f}(\bvec{y}) &= \int_{\RR^d} f(\bvec{x}) \e{- \bvec{x} \cdot \bvec{y}} \ d \bvec{x} 
\end{align}
for $\bvec{y} \in \RR^d$. 
Assuming that $f$ is sufficiently nice, the inverse Fourier transform of $\widehat{f}$ is $f$. That is,
\begin{align}
f(\bvec{y}) &= \int_{\RR^d} \widehat{f}(\bvec{x}) \e{\bvec{x} \cdot \bvec{y}} \ d \bvec{x} .
\end{align}
Whenever we use the Fourier transform or the inverse Fourier transform of a function, we assume that the function is sufficiently nice. 
There are various ways that ``sufficiently nice'' can be made precise. For example, a function is sufficiently nice if it is a Schwartz function. (See Corollary~8.23 of \cite{FollandRealAnalysis}.)

\section{Setting up the Kloosterman circle method} \label{chapter:Kloostermancirclemethodsetup}

In this section, we set up the Kloosterman circle method for our weighted representation number $\repnum_{F, \Upspsi, \coordsize} (\numrepresented)$. Unless otherwise specified, any notation mentioned here will be used for the remainder of this paper.

As in the statement of Theorem~\ref{thm:R\coordsize\numrepresented asymp1}, we let $F$ be a nonsingular integral quadratic form in $\numvars \ge 4$ variables. Let $A \in \Mat_\numvars (\ZZ)$ be the Hessian matrix of $F$, and let $\numposevals$ be the the number of positive eigenvalues of $A$. Observe that $A$ is symmetric with each of its diagonal entries in $2\ZZ$. Also, $F(\bvec{x}) = \frac{1}{2} \bvec{x}^\top A \bvec{x}$ for any $\bvec{x} \in \RR^\numvars$. 
Let $\{ \sigma_j \}_{j=1}^\numvars$ be the set of singular values of $A$, where $\sigma_1 \ge \cdots \ge \sigma_\numvars > 0$. 

Let $F^*$ be the adjoint quadratic form 
\begin{align*}
F^*(\bvec{x}) = \frac{1}{2} \bvec{x}^\top A^{-1} \bvec{x} 
\end{align*}
for any $\bvec{x} \in \RR^\numvars$. 
We note that $A^{-1}$ might not be an integral matrix. Let $\lvl$ be the smallest positive integer such that $\lvl A^{-1} \in \Mat_\numvars (\ZZ)$. 

Let $\Theta_{F, \Upspsi, \coordsize} (x)$ be the real analytic function with $\repnum_{F, \Upspsi, \coordsize} (\numrepresented)$ as the Fourier coefficients; i.e., let 
\begin{align}
\Theta_{F, \Upspsi, \coordsize} (x) = \sum_{n = -\infty}^\infty \repnum_{F, \Upspsi, \coordsize} (\numrepresented) \e{\numrepresented x} \label{eq:theta\coordsize\numrepresented}
\end{align}
for $x \in \RR$.
Notice that $\Theta_{F, \Upspsi, \coordsize} (x+1) = \Theta_{F, \Upspsi, \coordsize} (x)$. 
We call $\Theta_{F, \Upspsi, \coordsize}$ a \emph{weighted theta series} of $F$.

Using the Fourier transform, we see that 
\begin{align}
\repnum_{F, \Upspsi, \coordsize} (\numrepresented) = \int_0^1 \Theta_{F, \Upspsi, \coordsize} (x) \e{-nx} \ d x . \label{eq:R\coordsize (\numrepresented) inv Fourier}
\end{align}

The fact that the $\numrepresented$th Fourier coefficient of $\Theta_{F, \Upspsi, \coordsize} (x)$ is $\repnum_{F, \Upspsi, \coordsize} (\numrepresented)$ suggests that the function $\Theta_{F, \Upspsi, \coordsize}$ is related to $F$. The next lemma states how $\Theta_{F, \Upspsi, \coordsize}$ is directly related to $F$.

\begin{lem} \label{lem:theta\coordsize sum e(xF(m)) Upsilonprod}
For $x \in \RR$ and $\coordsize > 0$, the function $\Theta_{F, \Upspsi, \coordsize} (x)$ is 
\begin{align*}
\Theta_{F, \Upspsi, \coordsize} (x) &= \sum_{\bvec{m} \in \ZZ^\numvars} \e{x F(\bvec{m})} \Upspsi_\coordsize (\bvec{m}) .
\end{align*}
\end{lem}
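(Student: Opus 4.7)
The proof is essentially bookkeeping once one observes that the support condition on $\Upspsi_\coordsize$ reduces everything to finite sums, so convergence and interchange of summation are automatic.

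My plan is to start from the definition of $\Theta_{F, \Upspsi, \coordsize}$ in \eqref{eq:theta\coordsize\numrepresented}, substitute the definition of $\repnum_{F, \Upspsi, \coordsize} (\numrepresented)$ from \eqref{eq:R\coordsize\numrepresented}, and then interchange the order of summation. Concretely, I would write
\begin{align*}
\Theta_{F, \Upspsi, \coordsize} (x) &= \sum_{n \in \ZZ} \repnum_{F, \Upspsi, \coordsize} (n) \e{n x} = \sum_{n \in \ZZ} \sum_{\bvec{m} \in \ZZ^\numvars} \Ind{F(\bvec{m}) = n} \Upspsi_\coordsize (\bvec{m}) \e{n x} .
\end{align*}
After swapping the sums, the inner sum over $n$ collapses to a single term, since for each fixed $\bvec{m} \in \ZZ^\numvars$ the integer $F(\bvec{m})$ is a single value and $\Ind{F(\bvec{m}) = n}$ vanishes unless $n = F(\bvec{m})$. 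This gives
\begin{align*}
\Theta_{F, \Upspsi, \coordsize} (x) &= \sum_{\bvec{m} \in \ZZ^\numvars} \Upspsi_\coordsize (\bvec{m}) \sum_{n \in \ZZ} \Ind{F(\bvec{m}) = n} \e{n x} = \sum_{\bvec{m} \in \ZZ^\numvars} \Upspsi_\coordsize (\bvec{m}) \e{x F(\bvec{m})} ,
\end{align*}
which is the desired formula.

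The only technical point to address is the justification of the interchange of the two sums. Since $\Upspsi \in C_c^\infty (\RR^\numvars)$, we have $\supp(\Upspsi_\coordsize) \subseteq \{ \bvec{m} : \| \bvec{m} \| \le \Upsuppvar_\Upspsi \coordsize \}$, so only finitely many $\bvec{m} \in \ZZ^\numvars$ yield a nonzero contribution. Moreover, on this finite set $F(\bvec{m})$ takes only finitely many integer values, so $\repnum_{F, \Upspsi, \coordsize} (n) = 0$ for all but finitely many $n \in \ZZ$. Thus both the sum over $n$ and the sum over $\bvec{m}$ are in fact finite sums, and the interchange is unconditionally valid. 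I do not anticipate any real obstacle; the main thing to be careful about is stating cleanly that the compact support of $\Upspsi_\coordsize$ reduces the problem to finite sums so that no convergence argument is needed.
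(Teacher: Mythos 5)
Your proof is correct, and the justification is complete: the compact support of $\Upspsi_\coordsize$ makes all sums finite, so the interchange is unconditionally valid, and the inner sum over $n$ collapses because $\Ind{F(\bvec{m})=n}$ selects the single value $n = F(\bvec{m})$. The paper takes a slightly different route: rather than manipulating the series $\Theta_{F,\Upspsi,\coordsize}(x) = \sum_n \repnum_{F,\Upspsi,\coordsize}(n)\e{nx}$ directly into the claimed form, it suggests computing the $n$th Fourier coefficient $\int_0^1 \bigl(\sum_{\bvec{m}} \e{xF(\bvec{m})}\Upspsi_\coordsize(\bvec{m})\bigr)\e{-nx}\,dx$ of the right-hand side and verifying that it equals $\repnum_{F,\Upspsi,\coordsize}(n)$, then invoking the fact that two continuous $1$-periodic functions with identical Fourier coefficients coincide. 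Your forward substitution is the more elementary of the two: it dispenses with any appeal to uniqueness of Fourier series and produces the identity by direct algebraic rearrangement. Both routes hinge on the same finiteness observation, but yours makes the chain of equalities self-contained.
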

Lemma~\ref{lem:theta\coordsize sum e(xF(m)) Upsilonprod} can be proved by showing that $\Theta_{F, \Upspsi, \coordsize} (x)$ and $\sum_{\bvec{m} \in \ZZ^\numvars} \e{x F(\bvec{m})} \Upspsi_\coordsize (\bvec{m})$ have the same Fourier coefficients.

\subsection{Using a Farey dissection} \label{sec:Fareydissect}

To use the Kloosterman circle method, we want to break up a unit interval (say $[ z , z+1 )$ for $z \in \CC$) into smaller intervals (or ``arcs'') using Farey sequences. This unit interval is where the ``circle'' in the ``circle method'' comes from. A unit interval is sometimes considered a circle since $\e{[ z , z+1 )}$ is a circle in the complex plane.

For $Q \ge 1$, the Farey sequence $\mathfrak{F}_Q$ of order $Q$ is the increasing sequence of all reduced fractions $\frac{a}{q}$ with $1 \le q \le Q$ and $\gcd(a, q) = 1$. (Traditionally, $Q$ is required to be an integer, but we will find that allowing $Q$ to be not an integer will remove some technicalities later. See Subsection~\ref{sec:parameters}.)
An element of $\mathfrak{F}_Q$ is called a \emph{Farey point}. 
We will state some well-known properties of $\mathfrak{F}_Q$. 
For more background on Farey sequences of order $Q$, please see Chapter~III of \cite{HardyWrightNT} or pp.~451--452 of \cite{IwaniecKowalskiAnalyticNumThy}.

Let $\frac{a'}{q'} < \frac{a}{q} < \frac{a''}{q''}$ be adjacent Farey points of order $Q$ (so that $q', q, q'' \le Q$ and $\frac{a}{q}$ is the only fraction in $\mathfrak{F}_Q$ that is greater than $\frac{a'}{q'}$ and less than $\frac{a''}{q''}$). 
The denominators $q', q''$ are determined by the conditions:
\begin{align}
&Q - q < q' \le Q, & a q' &\equiv 1 \pmod{q}, \label{q' cond}\\
&Q - q < q'' \le Q, & a q'' &\equiv -1 \pmod{q}. \label{q'' cond}
\end{align}

For a positive integer $q$ and an integer $a$ coprime to $q$, let $j_{a,q}$ be the interval
\begin{align}
j_{a,q} = \left[ \frac{a}{q} - \frac{1}{q(q+q')} , \frac{a}{q} + \frac{1}{q (q+q'')} \right) ,
\end{align}
where $\frac{a'}{q'} < \frac{a}{q} < \frac{a''}{q''}$ are adjacent Farey points of order $Q$.
The interval $j_{a,q}$ is called a \emph{Farey arc}. 
It is known that 
\begin{align}
\left[ -\frac{1}{1 + \lfloor Q \rfloor}, 1 - \frac{1}{1 + \lfloor Q \rfloor} \right) = \bigsqcup_{q=1}^{\lfloor Q \rfloor} \bigsqcup_{\substack{a = 0 \\ \gcd(a,q)=1}}^{q-1} j_{a,q} . \label{eq:Fareyfracdissectunion}
\end{align}
We say that the set 
\begin{align}
\{ j_{a,q} \}_{1 \le q \le Q , \; 0 \le a \le q-1 , \; \gcd(a, q) = 1}
\end{align}
of Farey arcs is a \emph{Farey dissection of order $Q$} of the circle. 

We now have all the tools to express an idea that is crucial in the the Kloosterman circle method.
\begin{lem} \label{lem:KloostDecomp}
Let $f \colon \RR \to \CC$ be a periodic function of period $1$ and with real Fourier coefficients (so that $\lbar{f(x)} = f(-x)$ for all $x \in \RR$). Then
\begin{align}
\int_0^1 f(x) \ d x 
	&= 2 \Re\left( \sum_{1 \le q \le Q} \int_{0}^{\frac{1}{q Q}} \sum_{\substack{Q < d \le q+Q \\ qdx < 1 \\ \gcd(d, q) = 1}} f \left( x - \frac{\modmultinv{d}}{q} \right) \ d x \right) \label{eq:KloostDecompSumIntSum} \\
	&= 2 \Re \left( \sum_{1 \le q \le Q} \sum_{\substack{Q < d \le q + Q \\ \gcd(d, q) = 1}} \int_{0}^{1} \frac{1}{qd} \ f\left( \frac{x}{qd} - \frac{\modmultinv{d}}{q} \right) \ d x \right) , \label{eq:KloostDecomp0-1} 
\end{align}
where 
$\modmultinv{d}$ is the multiplicative inverse of $d$ modulo $q$.
\end{lem}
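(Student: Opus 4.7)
The plan is to start from the Farey dissection identity \eqref{eq:Fareyfracdissectunion}: since $f$ has period $1$, shifting the interval of integration gives
\begin{align*}
\int_{0}^{1} f(x)\,dx = \int_{-\frac{1}{1+\lfloor Q\rfloor}}^{1-\frac{1}{1+\lfloor Q\rfloor}} f(x)\,dx = \sum_{1 \le q \le \lfloor Q\rfloor}\sum_{\substack{0 \le a \le q-1\\ \gcd(a,q)=1}} \int_{j_{a,q}} f(x)\,dx.
\end{align*}
On each Farey arc I would substitute $y = x - a/q$ and then split $\int_{j_{a,q}-a/q}$ at $y=0$ into a left half $\int_{-1/(q(q+q'))}^{0}$ and a right half $\int_{0}^{1/(q(q+q''))}$, so that
\begin{align*}
\int_{j_{a,q}} f(x)\,dx
	= \int_{0}^{1/(q(q+q'))} f\!\left(\tfrac{a}{q}-y\right) dy
	 + \int_{0}^{1/(q(q+q''))} f\!\left(\tfrac{a}{q}+y\right) dy.
\end{align*}

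The key step is the reparametrization of each half. In the left half set $d = q+q'$, and in the right half set $d = q+q''$; in both cases \eqref{q' cond}--\eqref{q'' cond} translate the range $Q - q < q', q'' \le Q$ into $Q < d \le q+Q$ with $\gcd(d,q)=1$, and the congruences $aq' \equiv 1$, $aq'' \equiv -1 \pmod{q}$ become $a \equiv \modmultinv{d}$ and $a \equiv -\modmultinv{d}\pmod{q}$, respectively (using $d \equiv q' \equiv q'' \pmod q$). Swapping the sum over $a$ for a sum over $d$ is a bijection onto the set $\{(q,d): 1\le q\le Q,\ Q < d\le q+Q,\ \gcd(d,q)=1\}$ in each half. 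By periodicity the left-half integrand becomes $f(\modmultinv{d}/q - y)$ and the right-half integrand becomes $f(y - \modmultinv{d}/q)$.

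Now I would invoke the reality hypothesis $\overline{f(x)} = f(-x)$, which shows $f(\modmultinv{d}/q - y) = \overline{f(y - \modmultinv{d}/q)}$, so adding the two halves merges them into $2\Re\, f(y - \modmultinv{d}/q)$. This yields
\begin{align*}
\int_0^1 f(x)\,dx = 2\Re\Biggl(\sum_{1 \le q \le Q}\sum_{\substack{Q<d\le q+Q\\ \gcd(d,q)=1}} \int_0^{1/(qd)} f\!\left(y - \tfrac{\modmultinv{d}}{q}\right) dy \Biggr),
\end{align*}
and the substitution $y = x/(qd)$ immediately transforms this into the form \eqref{eq:KloostDecomp0-1}. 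To obtain \eqref{eq:KloostDecompSumIntSum}, I interchange the $d$-sum and $y$-integral; since the smallest denominator $d$ with $Q < d \le q+Q$ satisfies $d > Q$, the envelope $1/(qd)$ is at most $1/(qQ)$, so extending the outer integral to $[0,1/(qQ))$ and attaching the condition $qdy < 1$ as an inner constraint is exactly \eqref{eq:KloostDecompSumIntSum}.

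The main obstacle is verifying the bijection between Farey arcs (indexed by admissible $(q,a)$) and the $(q,d)$ pairs with $Q < d \le q+Q$, $\gcd(d,q)=1$, and correctly matching $a \bmod q$ with $\pm\modmultinv{d} \bmod q$; once that combinatorial bookkeeping is in place, both displayed identities follow from the splitting at $y=0$ and the reality condition. A small technical point is that the Farey arc $j_{a,q}$ as written in the excerpt has an apparent typo in the left endpoint (it should be $-1/(q(q+q'))$ rather than $-q/(q(q+q'))$ so that the total length telescopes to one); this is the length I use when splitting.
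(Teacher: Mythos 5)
Your argument is correct and follows essentially the same route as the paper's proof: shift to the Farey-dissected interval, split each arc at the center, reindex $q',q''$ to $d$ via \eqref{q' cond}--\eqref{q'' cond}, and use $\overline{f(x)}=f(-x)$ to fold the two aggregated halves into $2\Re(\cdot)$ before performing the order-swap and the $y=x/(qd)$ substitution. You also correctly flag the typo $\tfrac{q}{q(q+q')}$ (it should be $\tfrac{1}{q(q+q')}$) in the displayed definition of $j_{a,q}$, which the paper's own proof implicitly corrects.
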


\begin{rmk}
When analytic number theorists talk about using the Kloosterman circle method, they often mean that they are using a result similar to Lemma~\ref{lem:KloostDecomp}. One of the most notable applications of Lemma~\ref{lem:KloostDecomp} is writing the function $\Ind{\numrepresented = 0}$ as a sum of exponential functions by having $f(x) = \e{\numrepresented x}$ in \eqref{eq:KloostDecomp0-1}. 
(See Proposition~11.1 in \cite{IwaniecAutomorphicForms} and Proposition~20.7 in \cite{IwaniecKowalskiAnalyticNumThy}.)
\end{rmk}
\begin{rmk}
We note that Lemma~\ref{lem:KloostDecomp} is general enough to work with a variety of representation problems by having $f(x) = \e{-\numrepresented x} \sum_{m \in \ZZ} R(m) \e{m x}$ for a given representation number $R(\numrepresented)$. Notice that Lemma~\ref{lem:KloostDecomp} is applicable to the function $f(x) = \e{-nx} \Theta_{F, \Upspsi, \coordsize} (x)$ and so can give us an expression of $\repnum_{F, \Upspsi, \coordsize} (m)$ containing an integral with a Farey dissection.
\end{rmk}
\begin{proof}[Proof of Lemma~\ref{lem:KloostDecomp}]
Because $f$ has a period of $1$, we have the following equality of integrals: 
\begin{align*}
\int_0^1 f(x) \ d x &= \int_{-\frac{1}{1 + \lfloor Q \rfloor}}^{1 - \frac{1}{1 + \lfloor Q \rfloor}} f(x) \ d x .
\end{align*}
Using our Farey dissection of $\left[ -\frac{1}{1 + \lfloor Q \rfloor}, 1 - \frac{1}{1 + \lfloor Q \rfloor} \right)$, notice that 
\begin{align*}
\int_0^1 f(x) \ d x &= \sum_{1 \le q \le Q} \sum_{\substack{a = 0 \\ \gcd(a, q) = 1}}^{q-1} \int_{j_{a,q}} f(x) \ d x \\
	&= \sum_{1 \le q \le Q} \sum_{\substack{a = 0 \\ \gcd(a, q) = 1}}^{q-1} \int_{\frac{a}{q} - \frac{1}{q(q+q')}}^{\frac{a}{q} + \frac{1}{q (q+q'')}} f(x) \ d x \\
	&= \sum_{1 \le q \le Q} \sum_{\substack{a = 0 \\ \gcd(a, q) = 1}}^{q-1} \int_{- \frac{1}{q(q+q')}}^{\frac{1}{q (q+q'')}} f \left( x + \frac{a}{q} \right) \ d x 
\end{align*}
by letting $x \mapsto x + a/q$ in the last integral.

By breaking up and rearranging the integrals, we obtain
\begin{align}
\int_0^1 f(x) \ d x &= J_1 + J_2 , \label{eq:Fareysplit}
\end{align}
where
\begin{align*}
J_1 &= \sum_{1 \le q \le Q} \sum_{\substack{a = 0 \\ \gcd(a, q) = 1}}^{q-1} \int_{- \frac{1}{q(q+q')}}^{0} f \left( x + \frac{a}{q} \right) \ d x
\end{align*}
and
\begin{align*}
J_2 &= \sum_{1 \le q \le Q} \sum_{\substack{a = 0 \\ \gcd(a, q) = 1}}^{q-1} \int_{0}^{\frac{1}{q (q+q'')}} f \left( x + \frac{a}{q} \right) \ d x .
\end{align*}

In $J_1$, we let $d = q+q'$. By \eqref{q' cond}, the expression $d = q+q'$ is equivalent to the combination of conditions $Q < d \le q + Q$ and $a \equiv \modmultinv{d} \pmod{q}$. Thus, when we use the periodicity of $f$, we have 
\begin{align*}
J_1 &= \sum_{1 \le q \le Q} \sum_{\substack{Q < d \le q + Q \\ \gcd(d, q) = 1}} \int_{- \frac{1}{q d}}^{0} f \left( x + \frac{\modmultinv{d}}{q} \right) \ d x .
\end{align*}
We now map $x \mapsto -x$, so 
\begin{align*}
J_1 &= \sum_{1 \le q \le Q} \sum_{\substack{Q < d \le q + Q \\ \gcd(d, q) = 1}} \int_{\frac{1}{q d}}^{0} - f \left( - x + \frac{\modmultinv{d}}{q} \right) \ d x \\
	&= \sum_{1 \le q \le Q} \sum_{\substack{Q < d \le q + Q \\ \gcd(d, q) = 1}} \int_{0}^{\frac{1}{q d}} f \left( - \left(x - \frac{\modmultinv{d}}{q} \right) \right) \ d x .
\end{align*}
Since $\lbar{f(x)} = f(-x)$ for all $x \in \RR$, we conclude that  
\begin{align*}
J_1 
	&= \lbar{\sum_{1 \le q \le Q} \sum_{\substack{Q < d \le q + Q \\ \gcd(d, q) = 1}} \int_{0}^{\frac{1}{q d}} f \left( x - \frac{\modmultinv{d}}{q} \right) \ d x} . \numberthis \label{eq:Fareysplit q'}
\end{align*}

In $J_2$, we let $d = q+q''$. By \eqref{q'' cond}, the expression $d = q+q''$ is equivalent to the combination of conditions $Q < d \le q + Q$ and $a \equiv -\modmultinv{d} \pmod{q}$. Thus, when we use the periodicity of $f$, we have 
\begin{align}
J_2 &= \sum_{1 \le q \le Q} \sum_{\substack{Q < d \le q + Q \\ \gcd(d, q) = 1}} \int_{0}^{\frac{1}{q d}} f \left( x - \frac{\modmultinv{d}}{q} \right) \ d x . \label{eq:Fareysplit q''}
\end{align}

Substituting \eqref{eq:Fareysplit q'} and \eqref{eq:Fareysplit q''} into \eqref{eq:Fareysplit}, we obtain 
\begin{align*}
\int_0^1 f(x) \ d x &= \lbar{\sum_{1 \le q \le Q} \sum_{\substack{Q < d \le q + Q \\ \gcd(d, q) = 1}} \int_{0}^{\frac{1}{q d}} f \left( x - \frac{\modmultinv{d}}{q} \right) \ d x} \\
		&\qquad + \sum_{1 \le q \le Q} \sum_{\substack{Q < d \le q + Q \\ \gcd(d, q) = 1}} \int_{0}^{\frac{1}{q d}} f \left( x - \frac{\modmultinv{d}}{q} \right) \ d x \\
	&= 2 \Re\left( \sum_{1 \le q \le Q} \sum_{\substack{Q < d \le q + Q \\ \gcd(d, q) = 1}} \int_{0}^{\frac{1}{q d}} f \left( x - \frac{\modmultinv{d}}{q} \right) \ d x \right) . \numberthis \label{eq:KloostDecompSumSumInt} 
\end{align*}
By switching the order of summation and integration in \eqref{eq:KloostDecompSumSumInt}, we deduce \eqref{eq:KloostDecompSumIntSum}.
By mapping $x \mapsto \frac{x}{q d}$ in \eqref{eq:KloostDecompSumSumInt}, we obtain \eqref{eq:KloostDecomp0-1}. 
\end{proof}

We now apply Lemma~\ref{lem:KloostDecomp} to \eqref{eq:R\coordsize (\numrepresented) inv Fourier} to decompose the ``circle.'' Taking $f(x) = \Theta_{F, \Upspsi, \coordsize} (x) \e{-nx}$ in \eqref{eq:KloostDecompSumIntSum} of Lemma~\ref{lem:KloostDecomp}, we see that 
\begin{align*}
\repnum_{F, \Upspsi, \coordsize} (\numrepresented) 
	&= 2 \Re\left( \sum_{1 \le q \le Q} \int_{0}^{\frac{1}{q Q}} \e{-\numrepresented x} \sum_{\substack{Q < d \le q+Q \\ qdx < 1 \\ \gcd(d, q) = 1}} \Theta_{F, \Upspsi, \coordsize} \left( x - \frac{\modmultinv{d}}{q} \right) \e{\numrepresented \frac{\modmultinv{d}}{q}} \ d x \right) . \numberthis \label{eq:R(\numrepresented) KloostDecomp}
\end{align*}
The appearance of $\Theta_{F, \Upspsi, \coordsize} \left( x - \modmultinv{d}/q \right)$ leads us to try to evaluate $\Theta_{F, \Upspsi, \coordsize} \left( x - \modmultinv{d}/q \right)$ in the next subsection.

\subsection{Examining a weighted theta series}
Due to the appearance of $\Theta_{F, \Upspsi, \coordsize} \left( x - \modmultinv{d}/q \right)$ in our expression of $\repnum_{F, \Upspsi, \coordsize} (\numrepresented)$ in \eqref{eq:R(\numrepresented) KloostDecomp}, we would like to evaluate the weighted theta series $\Theta_{F, \Upspsi, \coordsize} \left( x - \modmultinv{d}/q \right)$.
By Lemma~\ref{lem:theta\coordsize sum e(xF(m)) Upsilonprod}, we see that 
\begin{align*}
\Theta_{F, \Upspsi, \coordsize} \left( x - \frac{\modmultinv{d}}{q} \right) 
	&= \sum_{\bvec{m} \in \ZZ^\numvars} \e{- \frac{\modmultinv{d}}{q} F(\bvec{m})} \e{x F(\bvec{m})} \Upspsi_\coordsize (\bvec{m}) .
\end{align*}

Because the value $\e{- \frac{\modmultinv{d}}{q} F(\bvec{m})}$ only depends on $\bvec{m}$ modulo $q$, we would like to split the sum over $\bvec{m} \in \ZZ^\numvars$ into sums over congruence classes modulo $q$. Doing this results in the following:
\begin{align*}
\Theta_{F, \Upspsi, \coordsize} \left( x - \frac{\modmultinv{d}}{q} \right) 
	&= \sum_{\bvec{h} \in (\ZZ/q\ZZ)^\numvars} \e{- \frac{\modmultinv{d}}{q} F(\bvec{h})} \sum_{\bvec{m} \equiv \bvec{h} \pmod{q}} \e{x F(\bvec{m})} \Upspsi_\coordsize (\bvec{m}) .
\end{align*}
By replacing $\bvec{m}$ by $\bvec{h} + q \bvec{m}$ (where $\bvec{m} \in \ZZ^\numvars$), we obtain
\begin{align}
\Theta_{F, \Upspsi, \coordsize} \left( x - \frac{\modmultinv{d}}{q} \right) &= \sum_{\bvec{h} \in (\ZZ/q\ZZ)^\numvars} \e{- \frac{\modmultinv{d}}{q} F(\bvec{h})} \sum_{\bvec{m} \in \ZZ^\numvars} \e{x F(\bvec{h}+q\bvec{m})} \Upspsi_\coordsize (\bvec{h}+q\bvec{m}) . \label{eq:theta\coordsize cong classes}
\end{align}

\subsection{Using Poisson summation}

To obtain asymptotics on $\Theta_{F, \Upspsi, \coordsize} \left( x - \modmultinv{d}/q \right)$, we must estimate 
\begin{align}
\sum_{\bvec{m} \in \ZZ^\numvars} \e{x F(\bvec{h}+q\bvec{m})} \Upspsi_\coordsize (\bvec{h}+q\bvec{m}). \label{eq:eF(h+qm)bumpsum}
\end{align}
However, if we could estimate \eqref{eq:eF(h+qm)bumpsum} directly, we probably would have estimated $\Theta_{F, \Upspsi, \coordsize} (x - \modmultinv{d}/q)$ directly in the first place. Since we did not, we need to use another tool. To understand \eqref{eq:eF(h+qm)bumpsum}, 
we use Poisson summation in the next lemma. Poisson summation allows us to concentrate the bulk of the sum into one term.

\begin{lem} \label{lem:eFbump Poisson}
For $\bvec{h} \in (\ZZ/q\ZZ)^\numvars$, $x \in \RR$, and positive $q \in \ZZ$, we have 
\begin{align*}
&\sum_{\bvec{m} \in \ZZ^\numvars} \e{x F(\bvec{h}+q\bvec{m})} \Upspsi_\coordsize (\bvec{h}+q\bvec{m}) \\
	&= \sum_{\bvec{r} \in \ZZ^\numvars} \frac{1}{q^\numvars} \e{\frac{1}{q} \bvec{h} \cdot \bvec{r}} \int_{\RR^\numvars} \e{x F(\bvec{m}) - \frac{1}{q} \bvec{m} \cdot \bvec{r}} \Upspsi_\coordsize (\bvec{m}) \ d \bvec{m} .
\end{align*}
\end{lem}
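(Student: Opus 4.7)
The plan is a direct application of the Poisson summation formula in $\RR^\numvars$, after recognizing the left-hand side as a sum of the form $\sum_{\bvec{m} \in \ZZ^\numvars} G(\bvec{m})$ for an appropriately dilated and translated Schwartz function $G$. First I would set $g(\bvec{y}) = \e{x F(\bvec{y})} \Upspsi_\coordsize(\bvec{y})$. Because $\Upspsi$ is a bump function, its rescaling $\Upspsi_\coordsize$ is still smooth and compactly supported, and multiplying by the smooth oscillatory factor $\e{x F(\bvec{y})}$ preserves these properties; thus $g \in C_c^\infty(\RR^\numvars)$, which is certainly Schwartz. I would then define $G(\bvec{m}) = g(\bvec{h} + q \bvec{m})$, so that $G$ is also Schwartz and the left-hand side of the claim equals $\sum_{\bvec{m} \in \ZZ^\numvars} G(\bvec{m})$.

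Next I would compute the Fourier transform $\widehat{G}$. Making the change of variables $\bvec{y} = \bvec{h} + q \bvec{m}$ (so that $d \bvec{m} = q^{-\numvars} \, d \bvec{y}$) in the defining integral
\begin{align*}
\widehat{G}(\bvec{r}) &= \int_{\RR^\numvars} g(\bvec{h} + q \bvec{m}) \e{- \bvec{m} \cdot \bvec{r}} \, d \bvec{m} ,
\end{align*}
and factoring $\e{\frac{1}{q} \bvec{h} \cdot \bvec{r}}$ out of the resulting exponential $\e{-\frac{1}{q} (\bvec{y} - \bvec{h}) \cdot \bvec{r}}$, one obtains
\begin{align*}
\widehat{G}(\bvec{r}) &= \frac{1}{q^\numvars} \e{\frac{1}{q} \bvec{h} \cdot \bvec{r}} \int_{\RR^\numvars} \e{x F(\bvec{y}) - \frac{1}{q} \bvec{y} \cdot \bvec{r}} \Upspsi_\coordsize(\bvec{y}) \, d \bvec{y} ,
\end{align*}
which is exactly the summand appearing on the right-hand side of the claim (after renaming $\bvec{y}$ back to $\bvec{m}$).

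To conclude, I would invoke the Poisson summation formula $\sum_{\bvec{m} \in \ZZ^\numvars} G(\bvec{m}) = \sum_{\bvec{r} \in \ZZ^\numvars} \widehat{G}(\bvec{r})$, which is legitimate because $G$ is Schwartz, and this matches the two sides of the desired identity term by term. There is no substantive obstacle here; the lemma is really a clean repackaging of Poisson summation for a dilated-and-translated Schwartz function, and the only care needed is tracking the Jacobian $q^{-\numvars}$ and the phase factor $\e{\frac{1}{q} \bvec{h} \cdot \bvec{r}}$ produced by that change of variables.
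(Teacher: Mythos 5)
Your proof is correct and follows essentially the same route as the paper: apply the Poisson summation formula to $f(\bvec{m}) = \e{x F(\bvec{h}+q\bvec{m})}\Upspsi_\coordsize(\bvec{h}+q\bvec{m})$ and compute its Fourier transform via the change of variables $\bvec{m} \mapsto \frac{1}{q}(\bvec{m}-\bvec{h})$. The only difference is cosmetic; you also spell out the Schwartz-class justification, which the paper leaves implicit.
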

\begin{proof}
We use the Poisson summation formula
\begin{align}
\sum_{\bvec{m} \in \ZZ^\numvars} f(\bvec{m}) = \sum_{\bvec{r} \in \ZZ^\numvars} \widehat{f}(\bvec{r}) , \label{eq:Poisson summation formula}
\end{align}
where 
$\widehat{f}$ is the Fourier transform of $f$. 

The Fourier transform of $f(\bvec{m}) = \e{x F(\bvec{h}+q\bvec{m})} \Upspsi_\coordsize (\bvec{h}+q\bvec{m})$ is 
\begin{align*}
\widehat{f}(\bvec{r}) &= \int_{\RR^\numvars} \e{x F(\bvec{h}+q\bvec{m})} \Upspsi_\coordsize (\bvec{h}+q\bvec{m}) \e{- \bvec{m} \cdot \bvec{r}} \ d \bvec{m} .
\end{align*}
%
By mapping $\bvec{m}$ to $\frac{1}{q} (\bvec{m} - \bvec{h})$ and substituting into the Poisson summation formula~\eqref{eq:Poisson summation formula}, we obtain the result of this lemma.
\end{proof}

By applying Lemma~\ref{lem:eFbump Poisson} to \eqref{eq:theta\coordsize cong classes}, we obtain
\begin{align*}
\Theta_{F, \Upspsi, \coordsize} \left( x - \frac{\modmultinv{d}}{q} \right) &= \sum_{\bvec{h} \in (\ZZ/q\ZZ)^\numvars} \e{- \frac{\modmultinv{d}}{q} F(\bvec{h})} \sum_{\bvec{r} \in \ZZ^\numvars} \frac{1}{q^\numvars} \e{\frac{1}{q} \bvec{h} \cdot \bvec{r}} \\
		&\qquad\qquad\qquad \times \int_{\RR^\numvars} \e{x F(\bvec{m}) - \frac{1}{q} \bvec{m} \cdot \bvec{r}} \Upspsi_\coordsize (\bvec{m}) \ d \bvec{m} \\
	&= \sum_{\bvec{r} \in \ZZ^\numvars} \frac{1}{q^\numvars} \gausssumr{-\modmultinv{d}}{q}{\bvec{r}} \mathcal{I}_{F, \Upspsi} (x, \coordsize, \bvec{r}, q) , \numberthis \label{eq:theta\coordsize arithmetic-analytic sep}
\end{align*}
where $\gausssumr{d}{q}{\bvec{r}}$ is the Gauss sum
\begin{align}
\gausssumr{d}{q}{\bvec{r}} = \sum_{\bvec{h} \in (\ZZ/q\ZZ)^\numvars} \e{\frac{1}{q} (d F(\bvec{h}) + \bvec{h} \cdot \bvec{r})} \label{eq:gausssumr}
\end{align}
and
\begin{align}
\mathcal{I}_{F, \Upspsi} (x, \coordsize, \bvec{r}, q) &= \int_{\RR^\numvars} \e{x F(\bvec{m}) - \frac{1}{q} \bvec{m} \cdot \bvec{r}} \Upspsi_\coordsize (\bvec{m}) \ d \bvec{m} . \label{eq:IFx\coordsize rq}
\end{align}

Substituting \eqref{eq:theta\coordsize arithmetic-analytic sep} into \eqref{eq:R(\numrepresented) KloostDecomp}, we see that
\begin{align*}
&\repnum_{F, \Upspsi, \coordsize} (\numrepresented) \\
&= 2 \Re\left( \sum_{1 \le q \le Q} \int_{0}^{\frac{1}{q Q}} \e{-\numrepresented x} \sum_{\substack{Q < d \le q+Q \\ qdx < 1 \\ \gcd(d, q) = 1}} \sum_{\bvec{r} \in \ZZ^\numvars} \frac{1}{q^\numvars} \gausssumr{-\modmultinv{d}}{q}{\bvec{r}} \mathcal{I}_{F, \Upspsi} (x, \coordsize, \bvec{r}, q) \e{\numrepresented \frac{\modmultinv{d}}{q}} \ d x \right) . 
\end{align*}
Switching the order of the $\bvec{r}$ and $d$ sums, we obtain
\begin{align}
\repnum_{F, \Upspsi, \coordsize} (\numrepresented) &= 2 \Re\left( \sum_{1 \le q \le Q} \frac{1}{q^\numvars} \int_{0}^{\frac{1}{q Q}} \e{-\numrepresented x} \sum_{\bvec{r} \in \ZZ^\numvars} \mathcal{I}_{F, \Upspsi} (x, \coordsize, \bvec{r}, q) T_{\bvec{r}} (q, \numrepresented; x) \ d x \right) , \label{eq:R\coordsize \numrepresented with Trq\numrepresented x}
\end{align}
where
\begin{align}
T_{\bvec{r}} (q, \numrepresented; x) = \sum_{\substack{Q < d \le q+Q \\ qdx < 1 \\ \gcd(d, q) = 1}} \e{\numrepresented \frac{\modmultinv{d}}{q}} \gausssumr{-\modmultinv{d}}{q}{\bvec{r}} . \label{eq:Trq\numrepresented x with gausssumr}
\end{align}

We call the expression $T_{\bvec{r}} (q, \numrepresented; x)$ the \emph{arithmetic part}, because $T_{\bvec{r}} (q, \numrepresented; x)$ contains information modulo $q$. We call the integral $\mathcal{I}_{F, \Upspsi} (x, \coordsize, \bvec{r}, q)$ the \emph{archimedean part}. 
We will use Gauss sums, Kloosterman sums, and Sali\'{e} sums to obtain estimates on the arithmetic part $T_{\bvec{r}} (q, \numrepresented; x)$. (See Section~\ref{chapter:arithmeticpart}.)
We will use a principle of nonstationary phase to obtain estimates on the archimedean part $\mathcal{I}_{F, \Upspsi} (x, \coordsize, \bvec{r}, q)$. (See Section~\ref{chapter:archimedeanpart}.)

\section{Analyzing the arithmetic part} \label{chapter:arithmeticpart}

In this section, we analyze the arithmetic part $T_{\bvec{r}} (q, \numrepresented; x)$. We do this by completing the sum $T_{\bvec{r}} (q, \numrepresented; x)$, using Gauss sums, and obtaining estimates of our complete sums. An upper bound for the absolute value of the arithmetic part is given in the last subsection of this section.

\subsection{Completing the sum}

In general, finding the value of a sum of a periodic function over an arbitrary range of integer is more difficult than finding the value of a sum of a periodic function over its period. In our calculation of $\repnum_{F, \Upspsi, \coordsize} (\numrepresented)$, we currently have the sum $T_{\bvec{r}} (q, \numrepresented; x)$.
We notice that $T_{\bvec{r}} (q, \numrepresented; x)$ is an incomplete sum since it is a sum over $d$ over a range that might not span the entire period of the summand function. 
We use the following lemma to complete this sum so that a sum over the period of the summand function appears.

\begin{lem} \label{lem:completeTl}
For $\bvec{r} \in \ZZ^\numvars$, positive $q \in \ZZ$, $\numrepresented \in \ZZ$, and $x \in \RR$, the sum $T_{\bvec{r}} (q, \numrepresented; x)$ is 
\begin{align*}
T_{\bvec{r}} (q, \numrepresented; x) &= \sum_{\ell \pmod{q}} \gamma(\ell) K(\ell, \numrepresented, \bvec{r}; q) ,
\end{align*}
where
\begin{align*}
\gamma(\ell) &= \frac{1}{q} \sum_{Q < b \le \min\{q+Q, \lceil 1/(qx) - 1 \rceil \}} \e{- \frac{b \ell}{q}} 
\end{align*}
and
\begin{align*}
K(\ell, \numrepresented, \bvec{r}; q) &= \sum_{d \in (\ZZ/q\ZZ)^\times} \e{\frac{\ell d + n \modmultinv{d}}{q}} \gausssumr{-\modmultinv{d}}{q}{\bvec{r}} . 
\end{align*}
\end{lem}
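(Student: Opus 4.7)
The plan is to use the standard ``completing the sum'' technique. The key observation is that the summand $\e{\numrepresented\modmultinv{d}/q}\gausssumr{-\modmultinv{d}}{q}{\bvec{r}}$ appearing in \eqref{eq:Trq\numrepresented x with gausssumr} depends only on the residue class of $d$ modulo $q$ (it is a well-defined function on $(\ZZ/q\ZZ)^\times$), whereas the constraints $Q < d \le q+Q$ and $qdx < 1$ defining the range of summation are not $q$-periodic. The strategy is therefore to encode the non-periodic range constraints into a discrete Fourier expansion modulo $q$, which leaves behind a complete sum over $(\ZZ/q\ZZ)^\times$ (namely $K(\ell, \numrepresented, \bvec{r}; q)$) weighted by Fourier-type coefficients (namely $\gamma(\ell)$).

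First I would rewrite the upper bound of the range in integer form. For an integer $d$ and $x > 0$, the inequality $qdx < 1$ is equivalent to $d \le \lceil 1/(qx) \rceil - 1 = \lceil 1/(qx) - 1 \rceil$, using the general identity $\lceil y - 1 \rceil = \lceil y \rceil - 1$; the case $x = 0$ is handled by interpreting $1/(qx)$ as $+\infty$, making the constraint vacuous. In all cases, the summation range becomes $Q < d \le B$ with $B = \min\{q+Q, \lceil 1/(qx) - 1 \rceil\}$. Since any real interval of length $q$ contains exactly one representative of each residue class modulo $q$, grouping the sum by residue class gives
\[
T_{\bvec{r}}(q, \numrepresented; x) = \sum_{d \in (\ZZ/q\ZZ)^\times} N_d \, \e{\numrepresented \modmultinv{d}/q} \gausssumr{-\modmultinv{d}}{q}{\bvec{r}},
\]
where $N_d = |\{ b \in \ZZ : Q < b \le B, \ b \equiv d \pmod{q} \}| \in \{0,1\}$.

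The final step is to apply the orthogonality identity $\Ind{b \equiv d \pmod{q}} = \frac{1}{q} \sum_{\ell \pmod{q}} \e{\ell(d - b)/q}$ to rewrite
\[
N_d = \sum_{Q < b \le B} \Ind{b \equiv d \pmod{q}} = \sum_{\ell \pmod{q}} \gamma(\ell) \, \e{\ell d/q},
\]
with $\gamma(\ell)$ as in the statement. Substituting this back and interchanging the two finite sums over $d$ and $\ell$ yields exactly $T_{\bvec{r}}(q, \numrepresented; x) = \sum_{\ell \pmod{q}} \gamma(\ell) K(\ell, \numrepresented, \bvec{r}; q)$. The argument is essentially bookkeeping; the only mild subtlety is the ceiling identity used to pass from $qdx < 1$ to the integer bound $\lceil 1/(qx) - 1 \rceil$ (and a separate check in the edge case $x = 0$, where the constraint is vacuous and the claim reduces to $T_{\bvec{r}} = K(0, \numrepresented, \bvec{r}; q)$). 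There is no genuine obstacle.
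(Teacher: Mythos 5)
Your proof is correct and follows essentially the same route as the paper's: both convert the constraint $qdx<1$ into the integer bound $\lceil 1/(qx)-1\rceil$ and then use orthogonality of additive characters modulo $q$ to encode the non-periodic range into the Fourier coefficients $\gamma(\ell)$. Your bookkeeping via the per-class count $N_d\in\{0,1\}$ is a slightly cleaner packaging of the paper's two-step detector argument, but it is not a different method.
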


Before we prove Lemma~\ref{lem:completeTl}, we state a formula related to an indicator function that comes up frequently in number theory and will appear in the proof of Lemma~\ref{lem:completeTl}. 
\begin{lem} \label{lem:sumelin}
Let $a, q \in \ZZ$ and $q>0$. Then 
\begin{align*}
\sum_{j \in \ZZ/q\ZZ} \e{\frac{a j}{q}} &= q \Ind{a \equiv 0 \pmod{q}} \\
	&= \begin{cases}
		q	&\text{if $a \equiv 0 \pmod{q}$,} \\
		0	&\text{otherwise.}
	\end{cases}
\end{align*}
\end{lem}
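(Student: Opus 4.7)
The plan is to prove the orthogonality relation directly by splitting into two cases based on whether $q \mid a$, handling the degenerate case by inspection and the nondegenerate case as a geometric series.

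First, I would choose a fixed set of representatives for $\ZZ/q\ZZ$, say $\{0, 1, \ldots, q-1\}$, and note that the summand $\e{aj/q}$ depends only on $j \pmod{q}$, so the sum is well-defined and equals $\sum_{j=0}^{q-1} \e{aj/q}$.

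In the case $a \equiv 0 \pmod{q}$, I would write $a = qb$ with $b \in \ZZ$, so that $\e{aj/q} = \e{bj} = e^{2\pi i b j} = 1$ for every integer $j$. Summing the constant $1$ over $q$ terms yields $q$, which matches the right-hand side.

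In the case $a \not\equiv 0 \pmod{q}$, I would set $\zeta = \e{a/q}$, observe that $\zeta \neq 1$ (since $a/q \notin \ZZ$), and sum the geometric series
\begin{align*}
\sum_{j=0}^{q-1} \zeta^{j} &= \frac{\zeta^{q} - 1}{\zeta - 1} = \frac{\e{a} - 1}{\e{a/q} - 1} = 0,
\end{align*}
since $\e{a} = e^{2\pi i a} = 1$ for $a \in \ZZ$. This again matches the right-hand side.

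There is really no main obstacle here: the lemma is the standard orthogonality of additive characters of $\ZZ/q\ZZ$, and the only thing to be careful about is justifying that $\zeta \neq 1$ in the second case, which follows from $0 < |a/q - \lfloor a/q \rfloor| < 1$ when $q \nmid a$.
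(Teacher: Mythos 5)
Your proof is correct and follows the same route as the paper, which simply notes that the sum is a geometric series over $j = 0, 1, \ldots, q-1$; you have merely written out the two cases and the ratio computation that the paper leaves implicit.
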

\begin{proof}
This lemma follows from the fact that the exponential sum appearing in the formula can be viewed as a geometric sum with $j$ ranging from $0$ to $q-1$. 
\end{proof}

Using Lemma~\ref{lem:sumelin}, we now give a proof of Lemma~\ref{lem:completeTl}.
\begin{proof}[Proof of Lemma~\ref{lem:completeTl}]
We want to split up the sum $T_{\bvec{r}} (q, \numrepresented; x)$ into sums over residue classes.
To do this, we note the following fact: For any integer $d$,
\begin{align}
\Ind{\gcd(d,q) = 1} &= \sum_{b \in (\ZZ/q\ZZ)^\times} \Ind{b \equiv d \pmod{q}} = \sum_{b \in (\ZZ/q\ZZ)^\times} \frac{1}{q} \sum_{\ell \pmod{q}} \e{\frac{\ell b - \ell d}{q}} . \label{eq:1summodq}
\end{align}
The last equality follows from Lemma~\ref{lem:sumelin} with $a = b - d$ and $j = \ell$.

We rewrite \eqref{eq:Trq\numrepresented x with gausssumr} as 
\begin{align*}
T_{\bvec{r}} (q, \numrepresented; x) &= \sum_{\substack{Q < d \le q+Q \\ d < 1/(qx) \\ \gcd(d, q) = 1}} \e{\numrepresented \frac{\modmultinv{d}}{q}} \gausssumr{-\modmultinv{d}}{q}{\bvec{r}} \\
	&= \sum_{\substack{Q < d \le \min\{q+Q, \lceil 1/(qx) - 1 \rceil \} \\ \gcd(d, q) = 1}} \e{\numrepresented \frac{\modmultinv{d}}{q}} \gausssumr{-\modmultinv{d}}{q}{\bvec{r}} \\
	&= \sum_{Q < d \le \min\{q+Q, \lceil 1/(qx) - 1 \rceil \}} \Ind{\gcd(d,q) = 1} \e{\numrepresented \frac{\modmultinv{d}}{q}} \gausssumr{-\modmultinv{d}}{q}{\bvec{r}} .
\end{align*}
By applying \eqref{eq:1summodq} to the last expression, we obtain 
\begin{align*}
T_{\bvec{r}} (q, \numrepresented; x) &= \sum_{Q < d \le \min\{q+Q, \lceil 1/(qx) - 1 \rceil \}} \sum_{b \in (\ZZ/q\ZZ)^\times} \Ind{b \equiv d \pmod{q}} \e{\frac{n \modmultinv{d}}{q}} \gausssumr{-\modmultinv{d}}{q}{\bvec{r}} .
\end{align*}
Because $\Ind{b \equiv d \pmod{q}} \e{\frac{\numrepresented \modmultinv{d}}{q}} \gausssumr{-\modmultinv{d}}{q}{\bvec{r}}$ is periodic modulo $q$ and the inner sum only contributes if $b \equiv d \pmod{q}$, we can change some of the $d$'s to $b$'s to obtain 
\begin{align*}
T_{\bvec{r}} (q, \numrepresented; x) &= \sum_{Q < d \le \min\{q+Q, \lceil 1/(qx) - 1 \rceil \}} \sum_{b \in (\ZZ/q\ZZ)^\times} \Ind{b \equiv d \pmod{q}} \e{\frac{n \modmultinv{b}}{q}} \gausssumr{-\modmultinv{b}}{q}{\bvec{r}} .
\end{align*}
Applying \eqref{eq:1summodq} again, we see that
\begin{align*}
T_{\bvec{r}} (q, \numrepresented; x) &= \sum_{Q < d \le \min\{q+Q, \lceil 1/(qx) - 1 \rceil \}} \sum_{b \in (\ZZ/q\ZZ)^\times} \frac{1}{q} \sum_{\ell \pmod{q}} \e{\frac{\ell b - \ell d}{q}} \e{\frac{n \modmultinv{b}}{q}} \gausssumr{-\modmultinv{b}}{q}{\bvec{r}} \\
	&= \sum_{\ell \pmod{q}} \frac{1}{q} \sum_{Q < d \le \min\{q+Q, \lceil 1/(qx) - 1 \rceil \}} \e{\frac{- \ell d}{q}} \\
		&\qquad\qquad\qquad\qquad\qquad\qquad \times \sum_{b \in (\ZZ/q\ZZ)^\times} \e{\frac{\ell b + n \modmultinv{b}}{q}} \gausssumr{-\modmultinv{b}}{q}{\bvec{r}} \\
	&= \sum_{\ell \pmod{q}} \gamma(\ell) K(\ell, \numrepresented, \bvec{r}; q) . \qedhere
\end{align*}
\end{proof}

We have now separated $T_{\bvec{r}} (q, \numrepresented; x)$ into a sum $\gamma(\ell)$ and a complete sum $K(\ell, \numrepresented, \bvec{r}; q)$. We will examine the sum $K(\ell, \numrepresented, \bvec{r}; q)$ more in later subsections. For now, we will focus on $\gamma(\ell)$.
\begin{lem} \label{lem:gamma bound}
If $|\ell| \le \frac{q}{2}$ and $\ell$ is an integer, then
\begin{align*}
| \gamma(\ell) | &\le (1+|\ell|)^{-1} .
\end{align*}
\end{lem}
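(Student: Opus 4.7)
The plan is to treat $\gamma(\ell)$ as a geometric sum and bound it by standard means, handling $\ell=0$ separately from $\ell\ne 0$.

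First I would fix notation: write the range of summation as $M_1 < b \le M_2$, where $M_1 = \lfloor Q\rfloor$ and $M_2 = \min\{ q+\lfloor Q\rfloor,\ \lceil 1/(qx)-1\rceil\}$. If $M_2 \le M_1$ the sum is empty and $|\gamma(\ell)| = 0$, so the bound is trivial. Otherwise the range is a block of $N$ consecutive integers with $1\le N\le q$.

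Next I split on $\ell$. For $\ell = 0$, every term equals $1$, so $|\gamma(0)| \le N/q \le 1 = (1+|\ell|)^{-1}$. For $\ell\ne 0$ I sum the geometric series:
\begin{align*}
\left|\sum_{b=M_1+1}^{M_2} \e{-\frac{b\ell}{q}}\right|
 = \left|\frac{1-\e{-\frac{N\ell}{q}}}{1-\e{-\frac{\ell}{q}}}\right|
 \le \frac{2}{|1-\e{-\ell/q}|}
 = \frac{1}{|\sin(\pi\ell/q)|}.
\end{align*}
The hypothesis $|\ell|\le q/2$ means $|\pi\ell/q|\le \pi/2$, so Jordan's inequality $|\sin t|\ge 2|t|/\pi$ on $[-\pi/2,\pi/2]$ gives $|\sin(\pi\ell/q)|\ge 2|\ell|/q$. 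Dividing the geometric-sum bound by $q$ then yields $|\gamma(\ell)|\le 1/(2|\ell|)$.

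Finally, to convert $1/(2|\ell|)$ into $(1+|\ell|)^{-1}$ I would just use that $|\ell|\ge 1$ is an integer, so $2|\ell|\ge 1+|\ell|$ and hence $1/(2|\ell|)\le (1+|\ell|)^{-1}$. Combining the two cases gives the claimed bound. There is no real obstacle here; the only mildly delicate point is verifying Jordan's inequality applies (i.e.\ that $|\pi\ell/q|\le \pi/2$), which is exactly the hypothesis $|\ell|\le q/2$, and recognising that the $\ell=0$ case must be handled separately because the geometric-sum formula degenerates there.
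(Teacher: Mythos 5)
Your proposal is correct and follows essentially the same route as the paper: geometric series for $\ell\ne 0$, converting $|1-\e{-\ell/q}|$ into $2|\sin(\pi\ell/q)|$, then Jordan's inequality $|\sin t|\ge 2|t|/\pi$ on $[-\pi/2,\pi/2]$ to get $1/(2|\ell|)$, and finally $2|\ell|\ge 1+|\ell|$ for integer $|\ell|\ge 1$, with $\ell=0$ and the empty sum treated separately. The only cosmetic difference is that the paper symmetrizes by multiplying through by $\e{\ell/(2q)}$ before taking absolute values, whereas you invoke the modulus identity directly.
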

\begin{proof}
Let $Y = \min\{q+Q, \lceil 1/(qx) - 1 \rceil \}$.

If $\ell = 0$, then
\begin{align*}
\gamma(\ell) = \frac{1}{q} \sum_{Q < b \le Y} \e{- \frac{b \ell}{q}} = \frac{1}{q} \sum_{Q < b \le Y} 1 = \frac{\lfloor Y - Q \rfloor}{q} \le \frac{\lfloor (q+Q) - Q \rfloor}{q} = \frac{q}{q} = 1
\end{align*}
since $Y \le q+Q$ by definition. Thus, $\gamma(\ell) \le (1+|\ell|)^{-1}$ when $\ell=0$.

Now suppose that $0 < |\ell| \le \frac{q}{2}$. Then
\begin{align*}
\gamma(\ell) &= \frac{1}{q} \sum_{Q < b \le Y} \e{- \frac{b \ell}{q}} = \frac{1}{q} \sum_{Q < b \le Y} \left(\e{- \frac{\ell}{q}}\right)^b \\
	&= \frac{1}{q} \e{- \frac{\ell (Q+1)}{q}} \sum_{0 \le b \le \lfloor Y - Q - 1 \rfloor} \left(\e{- \frac{\ell}{q}}\right)^b \\
	&= \frac{1}{q} \e{- \frac{\ell (Q+1)}{q}} \frac{1 - \e{- \frac{\ell}{q}}^{ \lfloor Y - Q \rfloor}}{1 - \e{- \frac{\ell}{q}}} .
\end{align*}
By multiplying the numerator and the denominator by $\e{\frac{\ell}{2q}}$, we obtain
\begin{align}
\gamma(\ell) &= \frac{1}{q} \e{- \frac{\ell (Q+1)}{q}} \e{\frac{\ell}{2q}} \frac{1 - \e{- \frac{\ell}{q}}^{ \lfloor Y - Q \rfloor}}{\e{\frac{\ell}{2q}} - \e{- \frac{\ell}{2q}}} \nonumber \\
	 &= \frac{1}{q} \e{- \frac{\ell (Q+1)}{q}} \e{\frac{\ell}{2q}} \frac{1 - \e{- \frac{\ell}{q}}^{ \lfloor Y - Q \rfloor}}{2 i \sin\left(\frac{\pi \ell}{q}\right)} \label{eq:gamma sine}
\end{align}
since $\sin(2 \pi x) = \dfrac{\e{x}-\e{-x}}{2i}$.
By taking absolute values, we obtain
\begin{align*}
| \gamma(\ell) | &=\frac{1}{q} \frac{\left| 1 - \e{- \frac{\ell}{q}}^{ \lfloor Y - Q \rfloor} \right|}{\left| 2 \sin\left(\frac{\pi \ell}{q}\right) \right|} .
\end{align*}
Since $\left| 1 - \e{- \frac{\ell}{q}}^{ \lfloor Y - Q \rfloor} \right| \le 2$, we have 
\begin{align}
| \gamma(\ell) | &\le \frac{1}{q} \frac{1}{\left| \sin\left(\frac{\pi \ell}{q}\right) \right|} . \label{eq:gamma bound1}
\end{align}

For $|x| \le \frac{\pi}{2}$, we have $|\sin(x)| \ge \frac{2 |x|}{\pi}$. Because $|\ell| \le \frac{q}{2}$, the statement $\left|\frac{\pi \ell}{q}\right| \le \frac{\pi}{2}$ is true. Therefore,
\begin{align*}
\left| \sin\left(\frac{\pi \ell}{q}\right) \right| \ge \frac{2 |\ell|}{q} .
\end{align*}
Substituting this into \eqref{eq:gamma bound1}, we see that
\begin{align*}
| \gamma(\ell) | &\le \frac{1}{2 | \ell |} .
\end{align*}

Observe that $(2 |\ell|)^{-1} \le (1+|\ell|)^{-1}$ since $|\ell| \ge 1$. Therefore,
$\gamma(\ell) \le (1+|\ell|)^{-1}$ when $0 < |\ell| \le \frac{q}{2}$.

Combining this result with the result when $\ell = 0$, we have $\gamma(\ell) \le (1+|\ell|)^{-1}$.
\end{proof}

\subsection{Using Gauss sums}

To examine $K(\ell, \numrepresented, \bvec{r}; q)$, we first analyze the Gauss sum $\gausssumr{d}{q}{\bvec{r}}$.
We first develop a bound for $\gausssumr{d}{q}{\bvec{r}}$ when $d$ and $q$ are coprime.

\begin{lem} \label{lem:gausssumr bound}
If $\gcd(d, q) = 1$ and $\bvec{r} \in \ZZ^\numvars$, then 
\begin{align*}
\left| \gausssumr{d}{q}{\bvec{r}} \right| &\le (\gcd(L, q))^{\numvars/2} q^{\numvars/2}.
\end{align*}
\end{lem}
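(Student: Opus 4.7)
The plan is to bound $\left|\gausssumr{d}{q}{\bvec{r}}\right|^2$ by the classical squaring trick for Gauss sums, and then use the defining property of $\lvl$ (namely that $\lvl A^{-1} \in \Mat_\numvars(\ZZ)$) to count the diagonal terms that survive. Because $|\gausssumr{d}{q}{\bvec{r}}|$ is itself what we want to control, we never need to extract the precise value of the Gauss sum; a bound from a solution-counting argument will suffice.

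First I would expand
\begin{align*}
\left|\gausssumr{d}{q}{\bvec{r}}\right|^2 &= \sum_{\bvec{h}, \bvec{k} \in (\ZZ/q\ZZ)^\numvars} \e{\frac{d(F(\bvec{h}) - F(\bvec{k})) + (\bvec{h} - \bvec{k})\cdot \bvec{r}}{q}}
\end{align*}
and substitute $\bvec{h} = \bvec{k} + \bvec{m}$, where $\bvec{m}$ runs over $(\ZZ/q\ZZ)^\numvars$. Since $F(\bvec{k}+\bvec{m}) - F(\bvec{k}) = \bvec{k}^\top A \bvec{m} + F(\bvec{m})$, the dependence on $\bvec{k}$ is purely linear, and the inner $\bvec{k}$-sum is
\begin{align*}
\sum_{\bvec{k} \in (\ZZ/q\ZZ)^\numvars} \e{\frac{d \bvec{k}^\top A \bvec{m}}{q}} = q^\numvars \Ind{A \bvec{m} \equiv \bvec{0} \pmod{q}}
\end{align*}
by Lemma~\ref{lem:sumelin} applied coordinatewise (using $\gcd(d,q) = 1$ to absorb $d$). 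This gives
\begin{align*}
\left|\gausssumr{d}{q}{\bvec{r}}\right|^2 &= q^\numvars \sum_{\substack{\bvec{m} \in (\ZZ/q\ZZ)^\numvars \\ A\bvec{m} \equiv \bvec{0} \pmod{q}}} \e{\frac{d F(\bvec{m}) + \bvec{m}\cdot \bvec{r}}{q}} .
\end{align*}

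Next I would bound the remaining sum trivially by the number of solutions $\bvec{m} \in (\ZZ/q\ZZ)^\numvars$ to $A\bvec{m} \equiv \bvec{0} \pmod{q}$. The key observation is that $\lvl A^{-1} \in \Mat_\numvars(\ZZ)$, so multiplying the congruence $A\bvec{m} \equiv \bvec{0} \pmod{q}$ on the left by $\lvl A^{-1}$ yields
\begin{align*}
\lvl \bvec{m} \equiv \bvec{0} \pmod{q} ,
\end{align*}
which forces $\bvec{m} \equiv \bvec{0} \pmod{q/\gcd(\lvl, q)}$ coordinatewise. The number of $\bvec{m} \in (\ZZ/q\ZZ)^\numvars$ satisfying this is at most $(\gcd(\lvl,q))^\numvars$, so
\begin{align*}
\left|\gausssumr{d}{q}{\bvec{r}}\right|^2 \le q^\numvars (\gcd(\lvl, q))^\numvars ,
\end{align*}
and taking square roots gives the claim.

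The only step that requires any care is the implication from $A\bvec{m} \equiv \bvec{0} \pmod q$ to $\lvl \bvec{m} \equiv \bvec{0} \pmod q$; this is immediate once one writes out the matrix product $\lvl A^{-1} \cdot A\bvec{m} = \lvl \bvec{m}$ over $\ZZ$, so I don't anticipate any genuine obstacle. Note that this bound is independent of $\bvec{r}$ (which is expected, since the linear twist $\bvec{h}\cdot \bvec{r}$ only shifts the argument of the quadratic Gauss sum after completing the square).
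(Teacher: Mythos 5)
Your proposal is correct and follows essentially the same route as the paper: square the Gauss sum, change variables so that the quadratic terms cancel and a free linear sum in one variable collapses to an indicator of $A\bvec{m} \equiv \bvec{0} \pmod{q}$, then count solutions via $\lvl A^{-1} \in \Mat_\numvars(\ZZ)$. The only cosmetic difference is the direction of the substitution (you write $\bvec{h} = \bvec{k} + \bvec{m}$ where the paper sets $\bvec{k} = \bvec{h} - \bvec{j}$), and you spell out the last counting step a bit more explicitly; the argument is otherwise identical.
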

\begin{proof}
We have
\begin{align*}
\left| \gausssumr{d}{q}{\bvec{r}} \right|^2 &= \left( \sum_{\bvec{h} \in (\ZZ/q\ZZ)^\numvars} \e{\frac{1}{q} (d F(\bvec{h}) + \bvec{h} \cdot \bvec{r})} \right) \lbar{\left( \sum_{\bvec{j} \in (\ZZ/q\ZZ)^\numvars} \e{\frac{1}{q} (d F(\bvec{j}) + \bvec{j} \cdot \bvec{r})} \right)} \\
	&= \left( \sum_{\bvec{h} \in (\ZZ/q\ZZ)^\numvars} \e{\frac{1}{q} (d F(\bvec{h}) + \bvec{h} \cdot \bvec{r})} \right) \left( \sum_{\bvec{j} \in (\ZZ/q\ZZ)^\numvars} \e{-\frac{1}{q} (d F(\bvec{j}) + \bvec{j} \cdot \bvec{r})} \right) \\
	&= \sum_{\bvec{h} \in (\ZZ/q\ZZ)^\numvars} \sum_{\bvec{j} \in (\ZZ/q\ZZ)^\numvars} \e{\frac{1}{q} (d (F(\bvec{h}) - F(\bvec{j})) + (\bvec{h} - \bvec{j}) \cdot \bvec{r})} . \numberthis \label{eq:|gausssumr|^2}
\end{align*}

Let $\bvec{k} = \bvec{h} - \bvec{j}$. Then
\begin{align*}
F(\bvec{h}) &= F(\bvec{j} + \bvec{k}) \\
	&= \frac{1}{2} (\bvec{j} + \bvec{k})^\top A (\bvec{j} + \bvec{k}) \\
	&= \frac{1}{2} \bvec{j}^\top A \bvec{j} + \frac{1}{2} \bvec{j}^\top A \bvec{k} + \frac{1}{2} \bvec{k}^\top A \bvec{j} + \frac{1}{2} \bvec{k}^\top A \bvec{k} \numberthis \label{eq:Fvectorsum1} .
\end{align*}
Because $\bvec{k}^\top A \bvec{j} \in \RR$ and $A$ is symmetric, we have $\bvec{k}^\top A \bvec{j} = \bvec{j}^\top A \bvec{k}$.
Substituting this into \eqref{eq:Fvectorsum1}, we obtain
\begin{align*}
F(\bvec{h}) &= F(\bvec{j}) + \bvec{j}^\top A \bvec{k} + F(\bvec{k}) = F(\bvec{j}) + \bvec{j} \cdot (A \bvec{k}) + F(\bvec{k}) .
\end{align*}
Using this in \eqref{eq:|gausssumr|^2}, we see that
\begin{align*}
\left| \gausssumr{d}{q}{\bvec{r}} \right|^2 &= \sum_{\bvec{j} \in (\ZZ/q\ZZ)^\numvars} \sum_{\bvec{k} \in (\ZZ/q\ZZ)^\numvars} \e{\frac{1}{q} (d (\bvec{j} \cdot (A \bvec{k}) + F(\bvec{k})) + \bvec{k} \cdot \bvec{r})} \\
	&= \sum_{\bvec{k} \in (\ZZ/q\ZZ)^\numvars} \e{\frac{1}{q} (d F(\bvec{k}) + \bvec{k} \cdot \bvec{r})} \sum_{\bvec{j} \in (\ZZ/q\ZZ)^\numvars} \e{\frac{d}{q} \bvec{j} \cdot (A \bvec{k})} . \numberthis \label{eq:|gausssumr|^2 with indfunc}
\end{align*}

Now let $\bvec{w} = A \bvec{k}$. Then
\begin{align*}
\sum_{\bvec{j} \in (\ZZ/q\ZZ)^\numvars} \e{\frac{d}{q} \bvec{j} \cdot (A \bvec{k})} &= \sum_{\bvec{j} \in (\ZZ/q\ZZ)^\numvars} \e{\frac{d}{q} \bvec{j} \cdot \bvec{w}} \\
	&= \sum_{\bvec{j} \in (\ZZ/q\ZZ)^\numvars} \e{\frac{d}{q} \sum_{\ell = 1}^{\numvars} j_\ell w_\ell} \\
	&= \sum_{\bvec{j} \in (\ZZ/q\ZZ)^\numvars} \prod_{\ell = 1}^{\numvars} \e{\frac{d w_\ell j_\ell}{q}} . \numberthis \label{eq:multidimindfuncprod1}
\end{align*}
Because $\bvec{j}$ runs over all vectors in $(\ZZ/q\ZZ)^\numvars$, we have 
\begin{align}
\sum_{\bvec{j} \in (\ZZ/q\ZZ)^\numvars} \prod_{\ell = 1}^{\numvars} \e{\frac{d w_\ell j_\ell}{q}} &= \prod_{\ell = 1}^{\numvars} \left( \sum_{j_\ell \in \ZZ/q\ZZ} \e{\frac{d w_\ell j_\ell}{q}} \right) . \label{eq:multidimindfuncprod2}
\end{align}
By Lemma~\ref{lem:sumelin}, this expression is equal to 
\begin{align*}
\prod_{\ell = 1}^{\numvars} \left( \sum_{j_\ell \in \ZZ/q\ZZ} \e{\frac{d w_\ell j_\ell}{q}} \right) &= \prod_{\ell = 1}^{\numvars} \left( q \Ind{d w_\ell \equiv 0 \pmod{q}} \right) \\
	&= q^\numvars \prod_{\ell = 1}^{\numvars} \Ind{d w_\ell \equiv 0 \pmod{q}} . \numberthis \label{eq:multidimindfuncprod3}
\end{align*}
Because $d$ is coprime to $q$, the last expression is equal to 
\begin{align*}
q^\numvars \prod_{\ell = 1}^{\numvars} \Ind{d w_\ell \equiv 0 \pmod{q}} &= q^\numvars \prod_{\ell = 1}^{\numvars} \Ind{w_\ell \equiv 0 \pmod{q}} \\
	&= q^\numvars \Ind{\bvec{w} \equiv \bvec{0} \pmod{q}} \\
	&= q^\numvars \Ind{A \bvec{k} \equiv \bvec{0} \pmod{q}} . \numberthis \label{eq:multidimindfuncprod4}
\end{align*}

Combining \eqref{eq:|gausssumr|^2 with indfunc}, \eqref{eq:multidimindfuncprod1}, \eqref{eq:multidimindfuncprod2}, \eqref{eq:multidimindfuncprod3}, and \eqref{eq:multidimindfuncprod4}, we obtain 
\begin{align*}
\left| \gausssumr{d}{q}{\bvec{r}} \right|^2 &= q^\numvars \sum_{\bvec{k} \in (\ZZ/q\ZZ)^\numvars} \e{\frac{1}{q} (d F(\bvec{k}) + \bvec{k} \cdot \bvec{r})} \Ind{A \bvec{k} \equiv \bvec{0} \pmod{q}} .
\end{align*}
Therefore,
\begin{align*}
\left| \gausssumr{d}{q}{\bvec{r}} \right|^2 &= \left| q^\numvars \sum_{\bvec{k} \in (\ZZ/q\ZZ)^\numvars} \e{\frac{1}{q} (d F(\bvec{k}) + \bvec{k} \cdot \bvec{r})} \Ind{A \bvec{k} \equiv \bvec{0} \pmod{q}} \right| \\
	&\le q^\numvars \sum_{\bvec{k} \in (\ZZ/q\ZZ)^\numvars} \left| \e{\frac{1}{q} (d F(\bvec{k}) + \bvec{k} \cdot \bvec{r})} \Ind{A \bvec{k} \equiv \bvec{0} \pmod{q}} \right| \\
	&= q^\numvars \sum_{\bvec{k} \in (\ZZ/q\ZZ)^\numvars} \Ind{A \bvec{k} \equiv \bvec{0} \pmod{q}} \\
	&= q^\numvars \left| \left\{ \bvec{k} \in (\ZZ/q\ZZ)^\numvars : A \bvec{k} \equiv \bvec{0} \pmod{q} \right\} \right| . \numberthis \label{ineq:|gausssumr|^2 with setcardinality}
\end{align*}

Recall that $\lvl$ is the smallest positive integer such that $\lvl A^{-1} \in \Mat_\numvars (\ZZ)$. Thus,
\begin{align*}
\left| \left\{ \bvec{k} \in (\ZZ/q\ZZ)^\numvars : A \bvec{k} \equiv \bvec{0} \pmod{q} \right\} \right| &\le \left| \left\{ \bvec{k} \in (\ZZ/q\ZZ)^\numvars : \lvl A^{-1} A \bvec{k} \equiv \lvl A^{-1} \bvec{0} \pmod{q} \right\} \right| \\
	&= \left| \left\{ \bvec{k} \in (\ZZ/q\ZZ)^\numvars : \lvl \bvec{k} \equiv \bvec{0} \pmod{q} \right\} \right| \\
	&= (\gcd(L, q))^\numvars .
\end{align*}
Substituting this into \eqref{ineq:|gausssumr|^2 with setcardinality}, we conclude that 
\begin{align*}
\left| \gausssumr{d}{q}{\bvec{r}} \right|^2 &\le q^\numvars (\gcd(L, q))^\numvars .
\end{align*}
By taking square roots of the previous inequality, we obtain the conclusion of this lemma.
\end{proof}

Lemma~\ref{lem:gausssumr bound} gives an upper bound for the absolute value of the Gauss sum $\gausssumr{d}{q}{\bvec{r}}$. Under certain conditions, we would like to have some exact calculations for the Gauss sum $\gausssumr{d}{q}{\bvec{r}}$. 
In order to do this, we need to look some other Gauss sums called quadratic Gauss sums.

Let $\gausssum{d}{q}$ be the quadratic Gauss sum
\begin{align*}
\gausssum{d}{q} = \sum_{h \pmod{q}} \e{\frac{d h^2}{q}}.
\end{align*}

For an odd integer $q$, let 
\begin{align*}
\varepsilon_q = 
\begin{cases}
1	&\text{if $q \equiv 1 \pmod{4}$,} \\
i	&\text{if $q \equiv 3 \pmod{4}$.}
\end{cases}
\end{align*}

The following lemma is a rephrasing of Theorem~1.5.2 in \cite{BEWGaussSums} and gives a value for $\gausssum{d}{q}$ when $q$ is odd and $d$ is coprime to $q$. 
\begin{lem}[Theorem~1.5.2 in \cite{BEWGaussSums}] \label{lem:gausssum rel prime}
Let $d$ be an integer and $q$ be a positive odd integer. If $\gcd(d, q) = 1$, then
\begin{align*}
\gausssum{d}{q} &= \kronsym{d}{q} \varepsilon_q \sqrt{q} ,
\end{align*}
where $\kronsym{\cdot}{q}$ is the Jacobi symbol.
\end{lem}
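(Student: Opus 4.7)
My plan is to prove this classical quadratic Gauss sum evaluation by establishing multiplicativity in the denominator, reducing to the case of an odd prime power, and then handling the residual sign obstacle on prime moduli.

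First I would verify multiplicativity: if $q = q_1 q_2$ with $\gcd(q_1, q_2) = 1$, then the CRT bijection $\ZZ/q\ZZ \to \ZZ/q_1\ZZ \times \ZZ/q_2\ZZ$ given by $h \leftrightarrow (h_1, h_2)$ with $h = q_2 h_1 + q_1 h_2$ expands $dh^2/q$ as $dq_2 h_1^2/q_1 + 2 d h_1 h_2 + d q_1 h_2^2 / q_2$. The cross term is an integer so contributes trivially via $\e{\cdot}$, and one obtains
\[
\gausssum{d}{q_1 q_2} = \gausssum{d q_2}{q_1}\gausssum{d q_1}{q_2}.
\]
The right-hand side of the claimed identity multiplies the same way thanks to multiplicativity of the Jacobi symbol together with quadratic reciprocity for odd Jacobi symbols (which controls the factor $\varepsilon_{q_1 q_2}$). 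This reduces the problem to $q = p^k$ with $p$ an odd prime.

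Second, for $q = p^k$ and $\gcd(d, p) = 1$, the substitution $h \mapsto ch$ with $\gcd(c,p) = 1$ gives $\gausssum{c^2 d}{p^k} = \gausssum{d}{p^k}$, so the Gauss sum depends on $d$ only through the class of $d$ modulo squares in $(\ZZ/p^k\ZZ)^\times$, i.e.\ only through $\kronsym{d}{p^k}$. Comparing representative square and non-residue values of $d$ then yields $\gausssum{d}{p^k} = \kronsym{d}{p^k}\gausssum{1}{p^k}$. For $k \ge 2$, writing $h = u + p^{k-1} v$ with $u \pmod{p^{k-1}}$ and $v \pmod{p}$ and expanding $h^2 \pmod{p^k}$ produces a recurrence $\gausssum{1}{p^k} = p \gausssum{1}{p^{k-2}}$, which by induction reduces everything to the prime case $k = 1$ (with a trivial check of $k = 2$ as the base of the even branch).

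The main obstacle, and the historically difficult step, is the precise evaluation $\gausssum{1}{p} = \varepsilon_p \sqrt{p}$ for odd primes $p$. Computing the modulus is easy: expanding $\bigl|\gausssum{1}{p}\bigr|^2 = \sum_{h, k \pmod{p}} \e{(h^2 - k^2)/p}$, translating $h \mapsto h + k$, and applying Lemma~\ref{lem:sumelin} to the inner $k$-sum collapses everything to $p$. Pinning down the sign $\varepsilon_p$ rather than $-\varepsilon_p$ is the classical hard part; I would follow one of the standard arguments catalogued in \cite{BEWGaussSums}, for instance Mordell's contour integration of $\e{z^2/p}/(\e{z}-1)$ along a tilted parallelogram, or Schur's eigenvalue computation for the $p \times p$ discrete Fourier transform matrix $\bigl(\e{jk/p}\bigr)_{j,k}$ via its known trace. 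Either technique delivers the exact value $\varepsilon_p \sqrt{p}$, which combined with the preceding reductions completes the proof.
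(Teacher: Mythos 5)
The paper does not prove Lemma~\ref{lem:gausssum rel prime}: it is imported as a citation of Theorem~1.5.2 of \cite{BEWGaussSums} and never argued internally, so there is no in-paper proof to compare against. Your sketch is the standard classical proof and is correct. The CRT decomposition gives $\gausssum{d}{q_1 q_2} = \gausssum{d q_2}{q_1}\gausssum{d q_1}{q_2}$; expanding the claimed identity produces the factor $\kronsym{q_1}{q_2}\kronsym{q_2}{q_1}$, which combines with $\varepsilon_{q_1}\varepsilon_{q_2}$ to give $\varepsilon_{q_1 q_2}$ via quadratic reciprocity (a quick case check modulo $4$). The substitution $h = u + p^{k-1}v$ yields the recurrence $\gausssum{1}{p^k} = p\,\gausssum{1}{p^{k-2}}$, and you correctly flag the sign determination $\gausssum{1}{p} = \varepsilon_p\sqrt{p}$ as the genuinely hard step, with Mordell's contour or Schur's DFT eigenvalue computation as standard completions. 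A minor streamlining: the recurrence you write for $\gausssum{1}{p^k}$ holds verbatim for $\gausssum{d}{p^k}$ with any $\gcd(d,p)=1$, so you can reduce $\gausssum{d}{p^k}$ directly to $\gausssum{d}{p}$ or $\gausssum{d}{1}=1$ without first isolating the square class of $d$; the residue-versus-nonresidue comparison is then needed only at $k=1$, where it is immediate from $\left|\{h \pmod{p} : h^2 \equiv a \pmod{p}\}\right| = 1 + \kronsym{a}{p}$. The trade-off is the obvious one: citing \cite{BEWGaussSums} keeps the paper focused on the circle method, while your argument would make this piece self-contained at the cost of repeating a long classical calculation tangential to the paper's aims.
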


To use Lemma~\ref{lem:gausssum rel prime}, we need to be able to diagonalize the Hessian matrix $A$ modulo $q$ when $q$ is coprime $2 \det(A)$. 
We will actually be able to diagonalize $A$ modulo $q$ whenever $q$ is odd. 
To begin with, our next lemma says that you can diagonalize $A$ when $q$ is an odd prime power. It is a rephrasing of Theorem~31 in \cite{WatsonIntQuadForms}.
\begin{lem}[Theorem~31 in \cite{WatsonIntQuadForms}] \label{lem:diagonalizeAppower}
Let $F$ be an integral quadratic form with a Hessian matrix $A$. Suppose that $p$ is an odd prime and $k$ is a positive integer. Then there exist $D, P \in \Mat_\numvars (\ZZ)$ such that $D$ is diagonal, $p \nmid \det(P)$, and
\begin{align}
D &\equiv P^\top A P \pmod{p^k} . \label{cong:DPtAPp^k}
\end{align}
\end{lem}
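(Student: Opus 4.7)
The plan is to induct on $\numvars$; the base case $\numvars = 1$ is trivial since $A$ is already diagonal. For the inductive step, the key step is to produce a vector $\bvec{v} \in \ZZ^\numvars$ with $\gcd$ of entries equal to $1$ such that the $p$-adic valuation of $\bvec{v}^\top A \bvec{v}$ equals $m := \min_{i,j} v_p(a_{ij})$; if $m \ge k$ there is nothing to prove. Writing $A = p^m B$ with $B$ primitive modulo $p$, two cases arise: if some diagonal entry $b_{ii}$ is a unit mod $p$, take $\bvec{v} = \bvec{e}_i$; otherwise all diagonal entries of $B$ lie in $p\ZZ$, and primitivity forces some off-diagonal $b_{ij}$ to be a unit mod $p$, in which case $\bvec{v} := \bvec{e}_i + \bvec{e}_j$ satisfies $\bvec{v}^\top B \bvec{v} = b_{ii} + 2 b_{ij} + b_{jj} \equiv 2 b_{ij} \pmod{p}$, a unit precisely because $p$ is odd. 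This is the only place where the hypothesis $p$ odd is used.

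Next I would extend $\bvec{v}$ to a unimodular matrix $P_1 \in \Mat_\numvars(\ZZ)$ with $\bvec{v}$ as its first column (possible by the coprimality of the entries of $\bvec{v}$). Then $A' := P_1^\top A P_1$ has $(1,1)$ entry $a = p^m u$ with $u$ an integer that is a unit mod $p$, and each entry $c_j$ of its first row satisfies $v_p(c_j) \ge m$ since every entry of $A$ does. Writing $c_j = p^m c'_j$, I would choose integers $\alpha_j$ with $\alpha_j \equiv c'_j \modmultinv{u} \pmod{p^{k-m}}$, where $\modmultinv{u}$ denotes an integer inverse of $u$ modulo $p^{k-m}$. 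Setting $P_2 := I - \sum_{j=2}^\numvars \alpha_j E_{1j}$ (unimodular and upper unitriangular) yields $(P_2^\top A' P_2)_{1,j} = c_j - \alpha_j a \equiv 0 \pmod{p^k}$ for $j > 1$, so $P_2^\top A' P_2$ is block diagonal modulo $p^k$, with $(1,1)$ entry $a$ and some symmetric $(\numvars-1) \times (\numvars-1)$ block $A''$. Applying the inductive hypothesis to $A''$, embedding the resulting $(\numvars-1) \times (\numvars-1)$ matrix into $\Mat_\numvars(\ZZ)$ with a $1$ in the top-left corner, and composing with $P_1 P_2$ produces the required $P$ and $D$.

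The main obstacle is carrying out the elimination step with genuine integer matrices while preserving $p \nmid \det(P)$. This succeeds precisely because $a$ attains the \emph{minimum} valuation $m$ on the first row and column of $A'$: the putative rational coefficients $c_j / a$ lie in $\ZZ_p$, and although $u$ need not be a unit in $\ZZ$ it is invertible modulo $p^{k-m}$, so each $\alpha_j$ exists as an integer. Each of $P_1$, $P_2$, and the embedded inductive matrix has determinant coprime to $p$ (indeed $\pm 1$ for the first two), so the product does as well, completing the induction.
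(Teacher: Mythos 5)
The paper does not reproduce a proof of this lemma; it simply cites Theorem~31 of Watson's \emph{Integral Quadratic Forms}. Your argument is a correct, self-contained proof and follows the standard route for diagonalizing a symmetric matrix over $\ZZ/p^k\ZZ$ (equivalently, over $\ZZ_p$) for $p$ odd: locate a primitive vector $\bvec{v}$ on which the form attains the minimal $p$-adic valuation, move it to the first coordinate by a unimodular change of basis, clear the rest of the first row and column using integer multiples valid modulo $p^k$ (possible because the pivot has minimal valuation), and induct on the complementary block. Two details worth making explicit in a written-up version: first, the case split guaranteeing the existence of $\bvec{v}$ relies on $B := p^{-m}A$ being primitive modulo $p$, which is exactly the definition of $m$; second, since the lemma as stated is about Hessian matrices (symmetric with even diagonal), the inductive hypothesis applies to $A''$ only after one checks that $A''$ again has even diagonal entries, which follows from $(P_2^\top A' P_2)_{jj} = A'_{jj} - 2\alpha_j c_j + \alpha_j^2 a$ with all three summands even when $A$ is a Hessian matrix. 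Alternatively, one can simply state and prove the result for arbitrary symmetric integral $A$, which is what your argument actually delivers; the Hessian hypothesis is not needed anywhere. Your observation that $p$ odd is used only to make $2b_{ij}$ a unit in the second case is exactly right, and is the precise obstruction at $p = 2$.
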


We use Lemma~\ref{lem:diagonalizeAppower} to prove the following lemma (Lemma~\ref{lem:diagonalizeAqodd}). With Lemma~\ref{lem:diagonalizeAqodd}, we have what we need to diagonalize the Hessian matrix $A$ modulo $q$ when $q$ is odd.
\begin{lem} \label{lem:diagonalizeAqodd}
Let $F$ be an integral quadratic form with a Hessian matrix $A$. Suppose that $q$ is odd. 
Then there exist $D, P \in \Mat_\numvars (\ZZ)$ such that $D$ is diagonal, $\det(P)$ and $q$ are coprime, 
and
\begin{align}
D &\equiv P^\top A P \pmod{q} . \label{cong:DPtAPq}
\end{align}
\end{lem}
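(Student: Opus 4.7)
The plan is to reduce to the prime power case handled by Lemma~\ref{lem:diagonalizeAppower} and then glue the pieces together using the Chinese Remainder Theorem. Write the odd integer $q$ in its prime factorization $q = p_1^{k_1} p_2^{k_2} \cdots p_t^{k_t}$, where each $p_i$ is an odd prime. For each $i$, apply Lemma~\ref{lem:diagonalizeAppower} to the prime power $p_i^{k_i}$ to obtain matrices $D_i, P_i \in \Mat_\numvars (\ZZ)$ such that $D_i$ is diagonal, $p_i \nmid \det(P_i)$, and $D_i \equiv P_i^\top A P_i \pmod{p_i^{k_i}}$.

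Next, I would build the global matrix $P$ by applying CRT entry-by-entry. Since the moduli $p_1^{k_1}, \ldots, p_t^{k_t}$ are pairwise coprime, for each pair of indices $(j, k)$ with $1 \le j, k \le \numvars$ I can choose an integer entry $P_{jk}$ with $P_{jk} \equiv (P_i)_{jk} \pmod{p_i^{k_i}}$ for all $i$. This produces a matrix $P \in \Mat_\numvars (\ZZ)$ with $P \equiv P_i \pmod{p_i^{k_i}}$ for each $i$. Since the determinant is a polynomial in the matrix entries, $\det(P) \equiv \det(P_i) \pmod{p_i^{k_i}}$, and because $p_i \nmid \det(P_i)$ we conclude $p_i \nmid \det(P)$ for every $i$, so $\gcd(\det(P), q) = 1$.

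Similarly, I construct the diagonal matrix $D$ by choosing its diagonal entries via CRT so that $D \equiv D_i \pmod{p_i^{k_i}}$ for every $i$, setting the off-diagonal entries to zero. Then $D$ is diagonal and integral, and for each $i$ we have
\begin{align*}
P^\top A P \equiv P_i^\top A P_i \equiv D_i \equiv D \pmod{p_i^{k_i}} .
\end{align*}
Since this congruence holds modulo every $p_i^{k_i}$ and these moduli are pairwise coprime with product $q$, the Chinese Remainder Theorem yields $D \equiv P^\top A P \pmod{q}$, which is \eqref{cong:DPtAPq}.

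There is no real obstacle here beyond careful bookkeeping; the essential content (the existence of a diagonalization modulo an odd prime power) is already packaged in Lemma~\ref{lem:diagonalizeAppower}, and the only remaining ingredient is the standard CRT gluing, which works because $q$ is odd so every prime in its factorization is odd and Lemma~\ref{lem:diagonalizeAppower} applies to each prime power factor.
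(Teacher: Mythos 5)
Your proof is correct and follows essentially the same route as the paper: apply Lemma~\ref{lem:diagonalizeAppower} to each odd prime power factor of $q$, then glue the resulting $D_i$ and $P_i$ via the Chinese Remainder Theorem (with the determinant condition following because $\det$ is a polynomial in the entries). You have simply spelled out the CRT gluing and the coprimality check in more detail than the paper does, which is fine; the trivial case $q=1$ is also handled implicitly by your argument since the empty CRT imposes no constraints.
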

\begin{proof}
If $q = 1$, then there is nothing to prove. 

If $q$ is an odd prime power (say $q = p^k$), then we can apply Lemma~\ref{lem:diagonalizeAppower} directly and obtain the result of this lemma. 

Now suppose that $q = \prod_{j=1}^\ell p_j^{e_j}$, where the $p_j$ are distinct primes. As already mentioned, for each $j$, we can find $D_j, P_j \in \Mat_\numvars (\ZZ)$ such that $D_j$ is diagonal, $\det(P_j)$ and $q$ are coprime, 
and \eqref{cong:DPtAPq} is satisfied with $D=D_j$, $P=P_j$, and $q=p_j^{e_j}$. By applying the Chinese remainder theorem to create $D$ from the $D_j$ and $P$ from the $P_j$, we obtain the result of this lemma.
\end{proof}

Since we have now developed the necessary tools, we explicitly compute $\gausssumr{d}{q}{\bvec{r}}$ when $q$ is coprime to $2 \det(A) d$.
\begin{lem} \label{lem:gausssumr coprime eval}
Suppose that $\gcd(q, 2 \det(A) d) = 1$ and $\bvec{r} \in \ZZ^\numvars$. Choose $\alpha \in \ZZ$ so that $\alpha A^{-1} \in \Mat_\numvars (\ZZ)$ and $\gcd(\alpha, q) = 1$. 
Then
\begin{align*}
\gausssumr{d}{q}{\bvec{r}} &= \kronsym{\det(A)}{q} \left( \varepsilon_q \kronsym{2 d}{q} \sqrt{q} \right)^\numvars \e{\frac{- \modmultinv{d}}{q} \modmultinv{2} \modmultinv{\alpha} 2 \alpha F^*(\bvec{r})} ,
\end{align*}
where $\modmultinv{d}, \modmultinv{\alpha}, \modmultinv{2} \in \ZZ/q\ZZ$ are such that $d \modmultinv{d} \equiv \alpha \modmultinv{\alpha} \equiv 2 (\modmultinv{2}) \equiv 1 \pmod{q}$.
\end{lem}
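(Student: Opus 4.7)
The plan is to diagonalize $A$ modulo $q$, reduce $\gausssumr{d}{q}{\bvec{r}}$ to a product of one-dimensional quadratic Gauss sums with linear shifts, complete the square in each factor, and then invoke Lemma~\ref{lem:gausssum rel prime}. Since $q$ is odd, $\modmultinv{2}$ is well defined and $F(\bvec{h}) \equiv \modmultinv{2}\,\bvec{h}^\top A \bvec{h} \pmod{q}$. By Lemma~\ref{lem:diagonalizeAqodd}, there exist $D = \diag(d_1,\ldots,d_\numvars)$ and $P \in \Mat_\numvars(\ZZ)$ with $\gcd(\det P, q) = 1$ and $P^\top A P \equiv D \pmod q$. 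The substitution $\bvec{h} = P\bvec{k}$ is a bijection of $(\ZZ/q\ZZ)^\numvars$, and setting $\bvec{s} = P^\top \bvec{r}$ decouples the sum as
\begin{align*}
\gausssumr{d}{q}{\bvec{r}} = \prod_{j=1}^{\numvars} \sum_{k_j \in \ZZ/q\ZZ} \e{\frac{\modmultinv{2} d d_j k_j^2 + s_j k_j}{q}}.
\end{align*}
Since $\det D \equiv (\det P)^2 \det A \pmod q$ and $\gcd(q, \det A) = 1$, each $d_j$ is a unit modulo $q$.

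Next, with $a_j = \modmultinv{2} d d_j$, the quantity $2 a_j = d d_j$ is a unit modulo $q$, so completing the square and shifting the summation variable before applying Lemma~\ref{lem:gausssum rel prime} gives
\begin{align*}
\sum_{k_j \in \ZZ/q\ZZ} \e{\frac{a_j k_j^2 + s_j k_j}{q}} = \kronsym{\modmultinv{2} d d_j}{q} \varepsilon_q \sqrt{q}\; \e{-\frac{\modmultinv{2}\modmultinv{d}\modmultinv{d_j}\, s_j^2}{q}}.
\end{align*}
Multiplying over $j$, using multiplicativity of the Jacobi symbol together with $\kronsym{\modmultinv{2}}{q} = \kronsym{2}{q}$ and $\prod_{j} \kronsym{d_j}{q} = \kronsym{\det D}{q} = \kronsym{\det A}{q}$, one obtains
\begin{align*}
\gausssumr{d}{q}{\bvec{r}} = \kronsym{\det A}{q} \bigl(\varepsilon_q \kronsym{2 d}{q} \sqrt{q}\bigr)^{\numvars} \e{-\frac{\modmultinv{2}\modmultinv{d}\, \bvec{s}^\top D^{-1} \bvec{s}}{q}},
\end{align*}
where $D^{-1}$ is the diagonal matrix of mod-$q$ inverses.

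To finish, I would identify the exponent with $F^*(\bvec{r})$. From $P^\top A P \equiv D \pmod q$ one gets $A^{-1} \equiv P D^{-1} P^\top \pmod q$ as elements of $\Mat_{\numvars}(\ZZ/q\ZZ)$, and therefore $\bvec{s}^\top D^{-1} \bvec{s} \equiv \bvec{r}^\top A^{-1} \bvec{r} \pmod q$. To convert the rational matrix $A^{-1}$ into an integer residue, use the auxiliary $\alpha$: because $\alpha A^{-1}$ is integral with $\gcd(\alpha, q) = 1$, we have $\bvec{r}^\top A^{-1} \bvec{r} \equiv \modmultinv{\alpha}\, \bvec{r}^\top (\alpha A^{-1}) \bvec{r} = \modmultinv{\alpha}\cdot 2 \alpha F^*(\bvec{r}) \pmod q$, and substituting yields the stated formula. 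The main obstacle is this final bookkeeping: verifying that the several factors of $\modmultinv{2}$, the Jacobi symbols $\kronsym{2}{q}$ and $\kronsym{\det D}{q} = \kronsym{\det A}{q}$, and the conversion constant $\modmultinv{\alpha} \cdot 2 \alpha$ combine to reproduce exactly the displayed right-hand side.
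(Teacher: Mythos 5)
Your proposal is correct and takes essentially the same route as the paper's proof: diagonalize $A$ modulo $q$ via Lemma~\ref{lem:diagonalizeAqodd}, substitute $\bvec{h} = P\bvec{k}$, decouple into one-dimensional sums, complete the square and invoke Lemma~\ref{lem:gausssum rel prime}, then translate $\bvec{s}^\top D^{-1}\bvec{s}$ back to $\bvec{r}^\top A^{-1}\bvec{r}$ using $\alpha$ to clear denominators. The ``bookkeeping'' you flag as a possible obstacle does in fact close up exactly as you have written it, matching the paper step for step.
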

\begin{rmk}
Lemma~\ref{lem:gausssumr coprime eval} is similar to Lemma~20.13 in \cite{IwaniecKowalskiAnalyticNumThy}. However, the lemmas concern slightly different quantities. Also, the statement of Lemma~20.13 in \cite{IwaniecKowalskiAnalyticNumThy} has a few errors that we correct in our statement of Lemma~\ref{lem:gausssumr coprime eval}.
\end{rmk}
\begin{proof}[Proof of Lemma~\ref{lem:gausssumr coprime eval}]
We apply Lemma~\ref{lem:diagonalizeAqodd} to obtain $D, P \in \Mat_\numvars (\ZZ)$ such that $D$ is diagonal, $\gcd(\det(P), q) =1$, 
and
\begin{align}
D \equiv P^\top A P \pmod{q} . \label{cong:DPtAPq in proof}
\end{align}
We have
\begin{align*}
\gausssumr{d}{q}{\bvec{r}} &= \sum_{\bvec{h} \in (\ZZ/q\ZZ)^\numvars} \e{\frac{1}{q} (d F(\bvec{h}) + \bvec{h} \cdot \bvec{r})} 
	= \sum_{\bvec{h} \in (\ZZ/q\ZZ)^\numvars} \e{\frac{1}{q} \left( \modmultinv{2} d \bvec{h}^\top A \bvec{h} + \bvec{h}^\top \bvec{r} \right)} .
\end{align*}
We do a change of variables $\bvec{h} = P \bvec{k}$ to obtain
\begin{align*}
\gausssumr{d}{q}{\bvec{r}} 
	&= \sum_{\bvec{k} \in (\ZZ/q\ZZ)^\numvars} \e{\frac{1}{q} \left( \modmultinv{2} d \bvec{k}^\top P^\top A P \bvec{k} + \bvec{k}^\top P^\top \bvec{r} \right)} \\
	&= \sum_{\bvec{k} \in (\ZZ/q\ZZ)^\numvars} \e{\frac{\modmultinv{2}}{q} \left( d \bvec{k}^\top D \bvec{k} + 2 \bvec{k} \cdot (P^\top \bvec{r}) \right)} . \numberthis \label{eq:gausssumr diag1}
\end{align*}

Let $D = \diag(d_1, d_2, \ldots, d_\numvars)$ and $\bvec{b} = P^\top \bvec{r}$. Then \eqref{eq:gausssumr diag1} becomes 
\begin{align*}
\gausssumr{d}{q}{\bvec{r}} &= \sum_{\bvec{k} \in (\ZZ/q\ZZ)^\numvars} \e{\frac{\modmultinv{2}}{q} \left( d \bvec{k}^\top D \bvec{k} + 2 \bvec{k} \cdot \bvec{b} \right)} \\
	&= \sum_{\bvec{k} \in (\ZZ/q\ZZ)^\numvars} \e{\frac{\modmultinv{2}}{q} \sum_{j=1}^\numvars \left( d d_j k_j^2 + 2 b_j k_j \right)} \\
	&= \sum_{\bvec{k} \in (\ZZ/q\ZZ)^\numvars} \prod_{j=1}^\numvars \e{\frac{\modmultinv{2}}{q} \left( d d_j k_j^2 + 2 b_j k_j \right)} \\
	&= \prod_{j=1}^\numvars \left( \sum_{k_j \in \ZZ/q\ZZ} \e{\frac{\modmultinv{2}}{q} \left( d d_j k_j^2 + 2 b_j k_j \right)} \right) .
\end{align*}

Note that each $d_j$ is coprime to $q$ since $q$ is coprime to $\det(D) = \prod_{j=1}^\numvars d_j$. 
Let $\modmultinv{d}_j$ be the multiplicative inverse of $d_j$ modulo $q$. 
Then by completing the square, we obtain
\begin{align*}
\gausssumr{d}{q}{\bvec{r}} 
	&= \prod_{j=1}^\numvars \e{\frac{- \modmultinv{2} \modmultinv{d} \modmultinv{d}_j b_j^2}{q}} \left( \sum_{k_j \in \ZZ/q\ZZ} \e{\frac{\modmultinv{2} d d_j}{q} (k_j + \modmultinv{d} \modmultinv{d}_j b_j)^2} \right) . \numberthis \label{eq:gausssumr completedsquare}
\end{align*}

Now we make a change of variables $\ell_j = k_j + \modmultinv{d} \modmultinv{d}_j b_j$ to see that 
\begin{align*}
\sum_{k_j \in \ZZ/q\ZZ} \e{\frac{\modmultinv{2} d d_j}{q} (k_j + \modmultinv{d} \modmultinv{d}_j b_j)^2} &= \sum_{\ell_j \in \ZZ/q\ZZ} \e{\frac{\modmultinv{2} d d_j}{q} \ell_j^2} .
\end{align*}
We apply Lemma~\ref{lem:gausssum rel prime} to obtain 
\begin{align*}
\sum_{k_j \in \ZZ/q\ZZ} \e{\frac{\modmultinv{2} d d_j}{q} (k_j + \modmultinv{d} \modmultinv{d}_j b_j)^2} &= \kronsym{\modmultinv{2} d d_j}{q} \varepsilon_q \sqrt{q} .
\end{align*}
Because $\kronsym{\modmultinv{2}}{q} = \kronsym{2}{q}$, we have 
\begin{align*}
\sum_{k_j \in \ZZ/q\ZZ} \e{\frac{\modmultinv{2} d d_j}{q} (k_j + \modmultinv{d} \modmultinv{d}_j b_j)^2} &= \kronsym{2 d d_j}{q} \varepsilon_q \sqrt{q} .
\end{align*}
Substituting this into \eqref{eq:gausssumr completedsquare}, we see that 
\begin{align*}
\gausssumr{d}{q}{\bvec{r}} &= \prod_{j=1}^\numvars \e{\frac{- \modmultinv{2} \modmultinv{d} \modmultinv{d}_j b_j^2}{q}} \kronsym{2 d d_j}{q} \varepsilon_q \sqrt{q} \\
	&= \kronsym{\prod_{j=1}^\numvars d_j}{q} \left( \kronsym{2 d}{q} \varepsilon_q \sqrt{q} \right)^\numvars \prod_{j=1}^\numvars \e{\frac{- \modmultinv{2} \modmultinv{d} \modmultinv{d}_j b_j^2}{q}} \\
	&= \kronsym{\det(D)}{q} \left( \kronsym{2 d}{q} \varepsilon_q \sqrt{q} \right)^\numvars \e{\frac{- \modmultinv{d}}{q} \modmultinv{2} \sum_{j=1}^\numvars \modmultinv{d}_j b_j^2} . \numberthis \label{eq:gausssumr intermed1}
\end{align*}

Because $D \equiv P^\top A P \pmod{q}$, we know that $\det(D) \equiv \det(A) \det(P)^2 \pmod{q}$. Thus, 
\begin{align}
\kronsym{\det(D)}{q} = \kronsym{\det(A) \det(P)^2}{q} = \kronsym{\det(A)}{q} , \label{eq:kronsym dets}
\end{align}
and 
\begin{align}
\gausssumr{d}{q}{\bvec{r}} &= \kronsym{\det(A)}{q} \left( \kronsym{2 d}{q} \varepsilon_q \sqrt{q} \right)^\numvars \e{\frac{- \modmultinv{d}}{q} \modmultinv{2} \sum_{j=1}^\numvars \modmultinv{d}_j b_j^2} . \label{eq:gausssumr intermed2}
\end{align}

For a matrix $B$ that is invertible over $\ZZ/q\ZZ$, let $\modmultinv{B}$ be the multiplicative inverse of $B$ modulo $q$. Note that $\modmultinv{D} = \diag( \modmultinv{d}_1 , \ldots , \modmultinv{d}_\numvars )$ and 
\begin{align*}
\modmultinv{D} = \modmultinv{(P^\top A P)} = \modmultinv{P} \modmultinv{A} \modmultinv{(P^\top)} .
\end{align*}
Therefore, 
\begin{align*}
\sum_{j=1}^\numvars \modmultinv{d}_j b_j^2 &\equiv \bvec{b}^\top \modmultinv{D} \bvec{b} 
	\equiv \bvec{b}^\top \modmultinv{P} \modmultinv{A} \modmultinv{(P^\top)} \bvec{b} \pmod{q} .
\end{align*}
Since $\bvec{b} = P^\top \bvec{r}$, we have
\begin{align*}
\sum_{j=1}^\numvars \modmultinv{d_j} b_j^2 
	&\equiv \bvec{r}^\top \modmultinv{A} \bvec{r} \pmod{q} . \numberthis \label{cong:quadAmodq}
\end{align*}

A short calculation shows that 
\begin{align}
\modmultinv{A} &\equiv \modmultinv{\alpha} \alpha A^{-1} \pmod{q} . \label{cong:Abar}
\end{align}
Substituting this into \eqref{cong:quadAmodq}, we obtain
\begin{align*}
\sum_{j=1}^\numvars \modmultinv{d}_j b_j^2 &\equiv \modmultinv{\alpha} \alpha \bvec{r}^\top A^{-1} \bvec{r} 
	\equiv 2 \modmultinv{\alpha} \alpha F^*(\bvec{r}) \pmod{q} .
\end{align*}
We substitute this into \eqref{eq:gausssumr intermed2} and conclude that 
\begin{align*}
\gausssumr{d}{q}{\bvec{r}} &= \kronsym{\det(A)}{q} \left( \kronsym{2 d}{q} \varepsilon_q \sqrt{q} \right)^\numvars \e{\frac{- \modmultinv{2} \modmultinv{d}}{q} 2 \modmultinv{\alpha} \alpha F^*(\bvec{r})} . \qedhere 
\end{align*}
\end{proof}

\subsection{Decomposing some complete sums}

In the previous subsection, we derived some estimates for the Gauss sum $\gausssumr{d}{q}{\bvec{r}}$. We would like to apply these estimates to the complete sum $K(\ell, \numrepresented, \bvec{r}; q)$. However, we have additional estimates for the Gauss sum when $q$ is coprime to $2 \det(A)$. We would like to exploit as much as we can from these estimates, so we split $q$ into $q = q_0 q_1$ such that $q_0$ is the largest factor of $q$ having all of its prime divisors dividing $2 \det(A)$. This implies that $\gcd(q_1 , 2 \det(A)) = 1$. To do this, we use the following lemma. 
\begin{lem} \label{lem:Ksumdecomp}
Let $q = q_0 q_1$ such that $\gcd(q_0, q_1) =1$. Let
\begin{align*}
K^{(q_1)}(\ell, \numrepresented, \bvec{r}; q_0) &= \sum_{d_0 \in (\ZZ/q_0\ZZ)^\times} \e{\frac{\modmultinv{q}_1 (\ell d_0 + n \modmultinv{d}_0)}{q_0}} \gausssumr{- \modmultinv{q}_1 \modmultinv{d}_0}{q_0}{\modmultinv{q}_1 \bvec{r}} 
\end{align*}
and
\begin{align*}
K^{(q_0)}(\ell, \numrepresented, \bvec{r}; q_1) &= \sum_{d_1 \in (\ZZ/q_1\ZZ)^\times} \e{\frac{\modmultinv{q}_0 (\ell d_1 + n \modmultinv{d}_1)}{q_1}} \gausssumr{- \modmultinv{q}_0 \modmultinv{d}_1}{q_1}{\modmultinv{q}_0 \bvec{r}}, 
\end{align*}
where $\modmultinv{d}_0 , \modmultinv{q}_1 \in \ZZ/q_0\ZZ$ are such that $d_0 \modmultinv{d}_0 \equiv q_1 \modmultinv{q}_1 \equiv 1 \pmod{q_0}$ and $\modmultinv{d}_1 , \modmultinv{q}_0 \in \ZZ/q_1\ZZ$ are such that $d_1 \modmultinv{d}_1 \equiv q_0 \modmultinv{q}_0 \equiv 1 \pmod{q_1}$. Then
\begin{align}
K(\ell, \numrepresented, \bvec{r}; q) &= K^{(q_1)}(\ell, \numrepresented, \bvec{r}; q_0) K^{(q_0)}(\ell, \numrepresented, \bvec{r}; q_1) . \label{eq:Ksplit}
\end{align}
\end{lem}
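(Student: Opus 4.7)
The plan is a standard Chinese Remainder Theorem (CRT) decomposition argument. Since $\gcd(q_0, q_1) = 1$, the reduction map $\ZZ/q\ZZ \to \ZZ/q_0\ZZ \times \ZZ/q_1\ZZ$ is a bijection, and restricting to units gives a bijection $(\ZZ/q\ZZ)^\times \cong (\ZZ/q_0\ZZ)^\times \times (\ZZ/q_1\ZZ)^\times$. Explicitly, if we set $d = q_1 \modmultinv{q}_1 d_0 + q_0 \modmultinv{q}_0 d_1 \pmod{q}$, then $d \equiv d_0 \pmod{q_0}$ and $d \equiv d_1 \pmod{q_1}$, and consequently $\modmultinv{d} \equiv \modmultinv{d}_0 \pmod{q_0}$ and $\modmultinv{d} \equiv \modmultinv{d}_1 \pmod{q_1}$. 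The key additive identity I will repeatedly use is
\begin{align*}
\frac{a}{q_0 q_1} \equiv \frac{\modmultinv{q}_1 a}{q_0} + \frac{\modmultinv{q}_0 a}{q_1} \pmod{1},
\end{align*}
valid for every integer $a$, which lets every $\e{\cdot/q}$ factor through as a product of a $\e{\cdot/q_0}$ term and a $\e{\cdot/q_1}$ term.

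First I would apply this identity to the phase $\e{(\ell d + n \modmultinv{d})/q}$, obtaining a product of $\e{\modmultinv{q}_1(\ell d_0 + n \modmultinv{d}_0)/q_0}$ (after reducing mod $q_0$ inside) and the symmetric factor with indices swapped. Next I would perform the analogous CRT decomposition inside the Gauss sum $\gausssumr{-\modmultinv{d}}{q}{\bvec{r}}$: parameterize $\bvec{h} = q_1 \modmultinv{q}_1 \bvec{h}_0 + q_0 \modmultinv{q}_0 \bvec{h}_1 \pmod{q}$ with $\bvec{h}_j \in (\ZZ/q_j\ZZ)^\numvars$. Expanding via $F(\bvec{u}+\bvec{v}) = F(\bvec{u}) + F(\bvec{v}) + \bvec{u}^\top A \bvec{v}$, the cross term carries a factor of $q_0 q_1 = q$ and therefore contributes $\e{\mathrm{integer}} = 1$, while the pure terms reduce, modulo $q_0$, using $q_1 \modmultinv{q}_1 \equiv 1 \pmod{q_0}$, to give $\modmultinv{q}_1 F(\bvec{h}_0)$ and $\bvec{h}_0 \cdot (\modmultinv{q}_1 \bvec{r})$ inside the $\e{\cdot/q_0}$ factor; symmetrically for the $q_1$ piece. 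Matching these expressions against the definition of $\gausssumr{\cdot}{\cdot}{\cdot}$ yields
\begin{align*}
\gausssumr{-\modmultinv{d}}{q}{\bvec{r}} = \gausssumr{-\modmultinv{q}_1 \modmultinv{d}_0}{q_0}{\modmultinv{q}_1 \bvec{r}} \; \gausssumr{-\modmultinv{q}_0 \modmultinv{d}_1}{q_1}{\modmultinv{q}_0 \bvec{r}}.
\end{align*}

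Finally, combining these two factorizations and turning the single sum over $d \in (\ZZ/q\ZZ)^\times$ into the double sum over $(d_0, d_1) \in (\ZZ/q_0\ZZ)^\times \times (\ZZ/q_1\ZZ)^\times$ via the CRT bijection gives exactly $K^{(q_1)}(\ell, n, \bvec{r}; q_0) \, K^{(q_0)}(\ell, n, \bvec{r}; q_1)$. The only step requiring genuine care rather than routine bookkeeping is the Gauss sum factorization: one has to verify that the cross term $\bvec{h}_0^\top A \bvec{h}_1$ really dies (it does, thanks to the extra $q_0 q_1$) and that the integer prefactors $q_j \modmultinv{q}_j$ reduce correctly modulo the opposing modulus so that the exponents collapse to the clean form appearing in the definitions of $K^{(q_1)}$ and $K^{(q_0)}$. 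Everything else is bookkeeping with the CRT bijection.
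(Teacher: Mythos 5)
Your proposal is correct and follows essentially the same route as the paper: a CRT splitting of both the $d$-sum and the $\bvec{h}$-sum, with the Gauss sum factoring because the cross term $\bvec{h}_0^\top A \bvec{h}_1$ acquires a factor of $q_0 q_1$ and dies modulo $q$. The stated additive identity $\frac{a}{q_0 q_1} \equiv \frac{\modmultinv{q}_1 a}{q_0} + \frac{\modmultinv{q}_0 a}{q_1} \pmod{1}$ is a compact packaging of exactly the reduction the paper carries out more explicitly.
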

\begin{proof}
For a given $d$, we have $d_0 \in \ZZ/q_0\ZZ$ and $d_1 \in \ZZ/q_1\ZZ$ such that $d_0 \equiv d \pmod{q_0}$ and $d_1 \equiv d \pmod{q_1}$. By the Chinese remainder theorem, we see that if $d$ coprime to $q$, then $d \equiv d_0 q_1 \modmultinv{q}_1 + d_1 q_0 \modmultinv{q}_0 \pmod{q}$ and $\modmultinv{d} \equiv \modmultinv{d}_0 q_1 \modmultinv{q}_1 + \modmultinv{d}_1 q_0 \modmultinv{q}_0 \pmod{q}$. Applying this to $K(\ell, \numrepresented, \bvec{r}; q)$, we see that 
\begin{align*}
K(\ell, \numrepresented, \bvec{r}; q) &= \sum_{d_0 \in (\ZZ/q_0\ZZ)^\times} \sum_{d_1 \in (\ZZ/q_1\ZZ)^\times} \e{\frac{\ell (d_0 q_1 \modmultinv{q}_1 + d_1 q_0 \modmultinv{q}_0) + n (\modmultinv{d}_0 q_1 \modmultinv{q}_1 + \modmultinv{d}_1 q_0 \modmultinv{q}_0)}{q_0 q_1}} \\
		&\qquad\qquad\qquad\qquad\qquad \times \gausssumr{-(\modmultinv{d}_0 q_1 \modmultinv{q}_1 + \modmultinv{d}_1 q_0 \modmultinv{q}_0)}{q_0 q_1}{\bvec{r}} \\
	&= \sum_{d_0 \in (\ZZ/q_0\ZZ)^\times} \sum_{d_1 \in (\ZZ/q_1\ZZ)^\times} \e{\frac{\ell d_0 \modmultinv{q}_1 + n \modmultinv{d}_0 \modmultinv{q}_1}{q_0}} \e{\frac{\ell d_1 \modmultinv{q}_0 + n \modmultinv{d}_1 \modmultinv{q}_0}{q_1}} \\
		&\qquad\qquad\qquad\qquad\qquad \times \gausssumr{-(\modmultinv{d}_0 q_1 \modmultinv{q}_1 + \modmultinv{d}_1 q_0 \modmultinv{q}_0)}{q_0 q_1}{\bvec{r}} . \numberthis \label{eq:Ksplit1}
\end{align*}

We now take a look at the Gauss sum 
\begin{align}
\gausssumr{-(\modmultinv{d}_0 q_1 \modmultinv{q}_1 + \modmultinv{d}_1 q_0 \modmultinv{q}_0)}{q_0 q_1}{\bvec{r}} &= \sum_{\bvec{h} \in (\ZZ/q\ZZ)^\numvars} \e{\frac{1}{q_0 q_1} (-(\modmultinv{d}_0 q_1 \modmultinv{q}_1 + \modmultinv{d}_1 q_0 \modmultinv{q}_0) F(\bvec{h}) + \bvec{h} \cdot \bvec{r})} . \label{eq:gausssumrsplit1}
\end{align}
For a given $\bvec{h} \in (\ZZ/q\ZZ)^\numvars$, we have $\bvec{j} \in (\ZZ/q_0\ZZ)^\numvars$ and $\bvec{k} \in (\ZZ/q_1\ZZ)^\numvars$ such that $\bvec{j} \equiv \bvec{h} \pmod{q_0}$ and $\bvec{k} \equiv \bvec{h} \pmod{q_1}$. By the Chinese remainder theorem, we see that $\bvec{h} \equiv q_1 \modmultinv{q}_1 \bvec{j} + q_0 \modmultinv{q}_0 \bvec{k} \pmod{q}$. Therefore, \eqref{eq:gausssumrsplit1} becomes 
\begin{align*}
&\gausssumr{-(\modmultinv{d}_0 q_1 \modmultinv{q}_1 + \modmultinv{d}_1 q_0 \modmultinv{q}_0)}{q_0 q_1}{\bvec{r}} \\
	&= \sum_{\bvec{j} \in (\ZZ/q_0\ZZ)^\numvars} \sum_{\bvec{k} \in (\ZZ/q_1\ZZ)^\numvars} \e{\frac{1}{q_0 q_1} (-(\modmultinv{d}_0 q_1 \modmultinv{q}_1 + \modmultinv{d}_1 q_0 \modmultinv{q}_0) F(q_1 \modmultinv{q}_1 \bvec{j} + q_0 \modmultinv{q}_0 \bvec{k})} \\ 
		&\qquad\qquad\qquad\qquad \times \e{\frac{1}{q_0 q_1} (q_1 \modmultinv{q}_1 \bvec{j} + q_0 \modmultinv{q}_0 \bvec{k}) \cdot \bvec{r})} . \\
	&= \sum_{\bvec{j} \in (\ZZ/q_0\ZZ)^\numvars} \sum_{\bvec{k} \in (\ZZ/q_1\ZZ)^\numvars} \e{\frac{1}{q_0 q_1} (-(\modmultinv{d}_0 q_1 \modmultinv{q}_1 + \modmultinv{d}_1 q_0 \modmultinv{q}_0) F(q_1 \modmultinv{q}_1 \bvec{j} + q_0 \modmultinv{q}_0 \bvec{k}) )} \\
		&\qquad\qquad\qquad\qquad \times \e{\frac{\modmultinv{q}_1}{q_0} \bvec{j} \cdot \bvec{r}} \e{\frac{\modmultinv{q}_0}{q_1} \bvec{k} \cdot \bvec{r}} . \numberthis \label{eq:gausssumrsplit2}
\end{align*}

We now take a closer look at the quadratic form appearing in the Gauss sum:
\begin{align*}
F(q_1 \modmultinv{q}_1 \bvec{j} + q_0 \modmultinv{q}_0 \bvec{k}) &= \frac{1}{2} (q_1 \modmultinv{q}_1 \bvec{j} + q_0 \modmultinv{q}_0 \bvec{k})^\top A (q_1 \modmultinv{q}_1 \bvec{j} + q_0 \modmultinv{q}_0 \bvec{k}) \\
	&= \frac{1}{2} (q_1 \modmultinv{q}_1)^2 \bvec{j}^\top A \bvec{j} + q_0 \modmultinv{q}_0 q_1 \modmultinv{q}_1 \bvec{j}^\top A \bvec{k} + \frac{1}{2} (q_0 \modmultinv{q}_0)^2 \bvec{k}^\top A \bvec{k} \\
	&= (q_1 \modmultinv{q}_1)^2 F(\bvec{j}) + q_0 \modmultinv{q}_0 q_1 \modmultinv{q}_1 \bvec{j}^\top A \bvec{k} + (q_0 \modmultinv{q}_0)^2 F(\bvec{k}) \\
	&\equiv (q_1 \modmultinv{q}_1)^2 F(\bvec{j}) + (q_0 \modmultinv{q}_0)^2 F(\bvec{k}) \pmod{q_0 q_1} . \numberthis \label{eq:quadformsplit1}
\end{align*}
Substituting this into \eqref{eq:gausssumrsplit2}, we obtain 
\begin{align*}
&\gausssumr{-(\modmultinv{d}_0 q_1 \modmultinv{q}_1 + \modmultinv{d}_1 q_0 \modmultinv{q}_0)}{q_0 q_1}{\bvec{r}} \\
	&= \sum_{\bvec{j} \in (\ZZ/q_0\ZZ)^\numvars} \sum_{\bvec{k} \in (\ZZ/q_1\ZZ)^\numvars} \e{\frac{1}{q_0} (-\modmultinv{d}_0 \modmultinv{q}_1((q_1 \modmultinv{q}_1)^2 F(\bvec{j}) + (q_0 \modmultinv{q}_0)^2 F(\bvec{k})) )} \\
		&\qquad\qquad \times \e{\frac{1}{q_1} (-\modmultinv{d}_1 \modmultinv{q}_0 ((q_1 \modmultinv{q}_1)^2 F(\bvec{j}) + (q_0 \modmultinv{q}_0)^2 F(\bvec{k})) )} \e{\frac{\modmultinv{q}_1}{q_0} \bvec{j} \cdot \bvec{r}} \e{\frac{\modmultinv{q}_0}{q_1} \bvec{k} \cdot \bvec{r}} \\
	&= \sum_{\bvec{j} \in (\ZZ/q_0\ZZ)^\numvars} \e{\frac{1}{q_0} (-\modmultinv{d}_0 \modmultinv{q}_1 F(\bvec{j}) + \modmultinv{q}_1 \bvec{j} \cdot \bvec{r})} \sum_{\bvec{k} \in (\ZZ/q_1\ZZ)^\numvars} \e{\frac{1}{q_1} (-\modmultinv{d}_1 \modmultinv{q}_0 F(\bvec{k}) + \modmultinv{q}_0 \bvec{k} \cdot \bvec{r})} \\
	&= \gausssumr{-\modmultinv{d}_0 \modmultinv{q}_1}{q_0}{\modmultinv{q}_1 \bvec{r}} \gausssumr{-\modmultinv{d}_1 \modmultinv{q}_0}{q_1}{\modmultinv{q}_0 \bvec{r}} . \numberthis \label{eq:gausssumrsplit3}
\end{align*}
Substituting this into \eqref{eq:Ksplit1}, we obtain \eqref{eq:Ksplit}.
\end{proof}

From now on, unless otherwise specified, let $q_0$ be the largest factor of $q$ having all of its prime divisors dividing $2 \det(A)$ and $q_1 = q/q_0$ so that $\gcd(q_1 , 2 \det(A)) = 1$. Note that $q_0$ and $q_1$ are coprime.

\subsection{Estimating some complete sums}

Now that we are able to decompose $K(\ell, \numrepresented, \bvec{r}; q)$ into other sums, we can provide better estimates on $K(\ell, \numrepresented, \bvec{r}; q)$. We begin by bounding $K^{(q_1)}(\ell, \numrepresented, \bvec{r}; q_0)$.

\begin{lem} \label{lem:Kq1lmnq0 bound}
The sum $K^{(q_1)}(\ell, \numrepresented, \bvec{r}; q_0)$ satisfies the following:
\begin{align*}
| K^{(q_1)}(\ell, \numrepresented, \bvec{r}; q_0) | &\le (\gcd(L, q_0))^{\numvars/2} q_0^{\numvars/2 + 1} .
\end{align*}
\end{lem}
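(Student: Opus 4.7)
The plan is to estimate $K^{(q_1)}(\ell, \numrepresented, \bvec{r}; q_0)$ by applying the triangle inequality directly to its defining sum and then bounding each Gauss sum appearing inside via Lemma~\ref{lem:gausssumr bound}. Explicitly, since the exponential factor $\e{\modmultinv{q}_1 (\ell d_0 + n \modmultinv{d}_0)/q_0}$ has absolute value $1$, the triangle inequality gives
\begin{align*}
| K^{(q_1)}(\ell, \numrepresented, \bvec{r}; q_0) | \le \sum_{d_0 \in (\ZZ/q_0\ZZ)^\times} \left| \gausssumr{- \modmultinv{q}_1 \modmultinv{d}_0}{q_0}{\modmultinv{q}_1 \bvec{r}} \right|.
\end{align*}

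Next I would verify the hypothesis of Lemma~\ref{lem:gausssumr bound}: the Gauss sum in question is $\gausssumr{d'}{q_0}{\bvec{r}'}$ with $d' = -\modmultinv{q}_1 \modmultinv{d}_0$ and $\bvec{r}' = \modmultinv{q}_1 \bvec{r}$. Since $d_0$ and $q_1$ are both units modulo $q_0$, the integer $d'$ is coprime to $q_0$, so Lemma~\ref{lem:gausssumr bound} applies and yields
\begin{align*}
\left| \gausssumr{- \modmultinv{q}_1 \modmultinv{d}_0}{q_0}{\modmultinv{q}_1 \bvec{r}} \right| \le (\gcd(L, q_0))^{\numvars/2} q_0^{\numvars/2}.
\end{align*}

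Finally I would count terms: the sum ranges over $d_0 \in (\ZZ/q_0\ZZ)^\times$, which has $\varphi(q_0) \le q_0$ elements. Combining this with the pointwise bound produces
\begin{align*}
| K^{(q_1)}(\ell, \numrepresented, \bvec{r}; q_0) | \le q_0 \cdot (\gcd(L, q_0))^{\numvars/2} q_0^{\numvars/2} = (\gcd(L, q_0))^{\numvars/2} q_0^{\numvars/2 + 1},
\end{align*}
as claimed. There is essentially no obstacle here; the proof is a two-line reduction to Lemma~\ref{lem:gausssumr bound}, and the bound is sharp only up to the trivial estimate $\varphi(q_0) \le q_0$. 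The more delicate analysis (which presumably takes place in the subsequent estimate for $K^{(q_0)}(\ell, \numrepresented, \bvec{r}; q_1)$, where $\gcd(q_1, 2\det(A)) = 1$ allows one to invoke the explicit evaluation in Lemma~\ref{lem:gausssumr coprime eval} and identify Kloosterman or Sali\'{e} sums with square-root cancellation) is deferred to later lemmas.
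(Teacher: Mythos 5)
Your proof is correct and follows exactly the same route as the paper: triangle inequality, apply Lemma~\ref{lem:gausssumr bound} to each Gauss sum, and bound the number of terms by $q_0$. Your explicit check that $-\modmultinv{q}_1 \modmultinv{d}_0$ is coprime to $q_0$ is a small but welcome addition that the paper leaves implicit.
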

\begin{proof}
Observe that 
\begin{align*}
| K^{(q_1)}(\ell, \numrepresented, \bvec{r}; q_0) | &= \left| \sum_{d_0 \in (\ZZ/q_0\ZZ)^\times} \e{\frac{\modmultinv{q}_1 (\ell d_0 + n \modmultinv{d}_0)}{q_0}} \gausssumr{- \modmultinv{q}_1 \modmultinv{d}_0}{q_0}{\modmultinv{q}_1 \bvec{r}} \right| \\
	&\le \sum_{d_0 \in (\ZZ/q_0\ZZ)^\times} \left| \gausssumr{- \modmultinv{q}_1 \modmultinv{d}_0}{q_0}{\modmultinv{q}_1 \bvec{r}} \right| . \numberthis \label{ineq:Kq1l\numrepresented rq0 ineq1}
\end{align*}
By applying Lemma~\ref{lem:gausssumr bound} to $\left| \gausssumr{- \modmultinv{q}_1 \modmultinv{d}_0}{q_0}{\modmultinv{q}_1 \bvec{r}} \right|$ in \eqref{ineq:Kq1l\numrepresented rq0 ineq1}, we obtain
\begin{align*}
| K^{(q_1)}(\ell, \numrepresented, \bvec{r}; q_0) | &\le \sum_{d_0 \in (\ZZ/q_0\ZZ)^\times} (\gcd(L, q_0))^{\numvars/2} q_0^{\numvars/2} \\
	&\le (\gcd(L, q_0))^{\numvars/2} q_0^{\numvars/2 + 1} . \qedhere
\end{align*}
\end{proof}

Since $q_1$ is coprime to $2\det(A)$, we can better evaluate and bound $K^{(q_0)}(\ell, \numrepresented, \bvec{r}; q_1)$. We first write $K^{(q_0)}(\ell, \numrepresented, \bvec{r}; q_1)$ in terms of a Kloosterman sum or a Sali\'{e} sum. 
We obtain the following by applying Lemma~\ref{lem:gausssumr coprime eval} to the definition of $K^{(q_0)}(\ell, \numrepresented, \bvec{r}; q_1)$.
\begin{lem} \label{lem:K(q_0)(l, \numrepresented, r; q_1) eval}
Let $\alpha \in \ZZ$ be such that $\alpha A^{-1} \in \Mat_\numvars (\ZZ)$ and $\gcd(\alpha, q_1) = 1$.
The sum $K^{(q_0)}(\ell, \numrepresented, \bvec{r}; q_1)$ has the following evaluation:
\begin{align}
K^{(q_0)}(\ell, \numrepresented, \bvec{r}; q_1) &= \kronsym{\det(A)}{q_1} \left( \varepsilon_{q_1} \kronsym{-2 \modmultinv{q}_0 }{q_1} \sqrt{q_1} \right)^\numvars \kappa_{\numvars, q_1} (\modmultinv{q}_0 (\ell + \modmultinv{2} \modmultinv{\alpha} 2 \alpha F^*(\bvec{r})), \modmultinv{q}_0 \numrepresented) , \label{eq:K(q_0)(l, \numrepresented, r; q_1) eval}
\end{align}
where 
\begin{align}
\kappa_{\numvars, q_1} (a,b) &= \sum_{d \pmod{q_1}} \kronsym{d}{q_1}^\numvars \e{\frac{a d + b \modmultinv{d}}{q_1}} 
\end{align}
is either a Kloosterman sum (if $\numvars$ is even) or a Sali\'{e} sum (if $\numvars$ is odd).
\end{lem}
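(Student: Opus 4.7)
The plan is to reduce the statement to a direct application of Lemma~\ref{lem:gausssumr coprime eval} to the Gauss sum appearing in the definition of $K^{(q_0)}(\ell, \numrepresented, \bvec{r}; q_1)$, followed by routine algebraic simplification.

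First, I would verify that Lemma~\ref{lem:gausssumr coprime eval} applies to $\gausssumr{-\modmultinv{q}_0 \modmultinv{d}_1}{q_1}{\modmultinv{q}_0 \bvec{r}}$. Since $\gcd(q_1, 2\det(A)) = 1$ by the choice of $q_1$, and both $\modmultinv{q}_0$ and $\modmultinv{d}_1$ are units modulo $q_1$, we have $\gcd(q_1, 2 \det(A)(-\modmultinv{q}_0 \modmultinv{d}_1)) = 1$, so the hypothesis is satisfied with the ``$d$'' of the earlier lemma being $-\modmultinv{q}_0 \modmultinv{d}_1$ and the ``$\bvec{r}$'' being $\modmultinv{q}_0 \bvec{r}$. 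Applying Lemma~\ref{lem:gausssumr coprime eval} evaluates the Gauss sum explicitly in terms of a Jacobi symbol, a power of $\varepsilon_{q_1} \sqrt{q_1}$, and an exponential factor.

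Next, I would simplify the resulting expression. The quantity $F^*(\modmultinv{q}_0 \bvec{r})$ simplifies to $\modmultinv{q}_0^{\,2} F^*(\bvec{r})$ by the homogeneity of $F^*$. The multiplicative inverse of $-\modmultinv{q}_0 \modmultinv{d}_1$ modulo $q_1$ is $-q_0 d_1$, and using $q_0 \modmultinv{q}_0^{\,2} \equiv \modmultinv{q}_0 \pmod{q_1}$ the exponential phase becomes $\e{\modmultinv{q}_0 d_1 \modmultinv{2} \modmultinv{\alpha} 2\alpha F^*(\bvec{r}) / q_1}$. For the Jacobi symbol, I use multiplicativity to write $\kronsym{-2\modmultinv{q}_0 \modmultinv{d}_1}{q_1}^\numvars = \kronsym{-2\modmultinv{q}_0}{q_1}^\numvars \kronsym{d_1}{q_1}^\numvars$, where the equality $\kronsym{\modmultinv{d}_1}{q_1} = \kronsym{d_1}{q_1}$ holds because these are mutually inverse $\pm 1$ values.

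Finally, I would substitute the evaluated Gauss sum back into the defining sum for $K^{(q_0)}(\ell, \numrepresented, \bvec{r}; q_1)$, pull the $d_1$-independent factor $\kronsym{\det(A)}{q_1} (\varepsilon_{q_1} \kronsym{-2\modmultinv{q}_0}{q_1} \sqrt{q_1})^\numvars$ out of the sum over $d_1$, and combine the two exponential factors involving $d_1$ and $\modmultinv{d}_1$. The remaining sum matches, term by term, the definition of $\kappa_{\numvars, q_1}(\modmultinv{q}_0(\ell + \modmultinv{2}\modmultinv{\alpha} 2\alpha F^*(\bvec{r})), \modmultinv{q}_0 \numrepresented)$. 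The main obstacle here is purely bookkeeping: keeping track of the various modular inverses and verifying that the Jacobi symbol and phase manipulations are done consistently in $\ZZ/q_1\ZZ$, since several quantities are passed between $\ZZ$ and the finite quotient ring during the calculation.
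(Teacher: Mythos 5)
Your proposal is correct and follows essentially the same route as the paper: apply Lemma~\ref{lem:gausssumr coprime eval} to $\gausssumr{-\modmultinv{q}_0\modmultinv{d}_1}{q_1}{\modmultinv{q}_0\bvec{r}}$, simplify the phase using $F^*(\modmultinv{q}_0\bvec{r}) = \modmultinv{q}_0^2 F^*(\bvec{r})$ and $\modmultinv{(-\modmultinv{q}_0\modmultinv{d}_1)} \equiv -q_0 d_1 \pmod{q_1}$, pull $\kronsym{d_1}{q_1}^\numvars$ out via multiplicativity and $\kronsym{\modmultinv{d}_1}{q_1} = \kronsym{d_1}{q_1}$, and recognize the remaining sum as $\kappa_{\numvars,q_1}$. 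The bookkeeping checks out.
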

\begin{proof}
We first evaluate $\gausssumr{- \modmultinv{q}_0 \modmultinv{d}}{q_1}{\modmultinv{q}_0 \bvec{r}}$ when $d$ is coprime to $q_1$. Lemma~\ref{lem:gausssumr coprime eval} shows us that 
\begin{align*}
\gausssumr{- \modmultinv{q}_0 \modmultinv{d}}{q_1}{\modmultinv{q}_0 \bvec{r}} &= \kronsym{\det(A)}{q_1} \left( \varepsilon_{q_1} \kronsym{- 2 \modmultinv{q}_0 \modmultinv{d}}{q_1} \sqrt{q_1} \right)^\numvars \e{- \frac{\modmultinv{(- \modmultinv{q}_0 \modmultinv{d})}}{q_1} \modmultinv{2} \modmultinv{\alpha} 2 \alpha F^*(\modmultinv{q}_0 \bvec{r})} \\
	&= \kronsym{\det(A)}{q_1} \left( \varepsilon_{q_1} \kronsym{- 2 \modmultinv{q}_0}{q_1} \sqrt{q_1} \right)^\numvars \kronsym{d}{q_1}^\numvars \e{\frac{d \modmultinv{q}_0 \modmultinv{2} \modmultinv{\alpha} 2 \alpha F^*(\bvec{r})}{q_1}} \numberthis \label{eq:gausssumr in Kq0l\numrepresented rq1}
\end{align*}
since $\kronsym{\modmultinv{d}}{q_1} = \kronsym{d}{q_1}$.

Substituting \eqref{eq:gausssumr in Kq0l\numrepresented rq1} into the definition of $K^{(q_0)}(\ell, \numrepresented, \bvec{r}; q_1)$, we obtain 
\begin{align*}
&K^{(q_0)}(\ell, \numrepresented, \bvec{r}; q_1) \\
	&= \kronsym{\det(A)}{q_1} \left( \varepsilon_{q_1} \kronsym{- 2 \modmultinv{q}_0}{q_1} \sqrt{q_1} \right)^\numvars \\
		&\qquad \times \sum_{\substack{d \pmod{q_1} \\ \gcd(d, q_1) = 1}} \e{\frac{\modmultinv{q}_0 ((\ell + \modmultinv{2} \modmultinv{\alpha} 2 \alpha F^*(\bvec{r})) d + n \modmultinv{d})}{q_1}}\kronsym{d}{q_1}^\numvars ,
\end{align*}
which implies \eqref{eq:K(q_0)(l, \numrepresented, r; q_1) eval} since $\kronsym{d}{q_1} = 0$ if $\gcd(d, q_1) \ne 1$.
\end{proof}

To obtain a bound on $K^{(q_0)}(\ell, \numrepresented, \bvec{r}; q_1)$, we need bounds on Kloosterman sums and Sali\'{e} sums. When $\numvars$ is even, we use a corollary (Corollary~11.12 in \cite{IwaniecKowalskiAnalyticNumThy}) of Weil's bound for Kloosterman sums.
\begin{lem}[Corollary~11.12 in \cite{IwaniecKowalskiAnalyticNumThy}] \label{lem:WeilKloostbound}
If $\numvars$ is even, $a$ and $b$ are integers, and $q$ is a positive integer, then
\begin{align*}
|\kappa_{\numvars, q} (a,b)| &\le \tau(q) (\gcd(a, b, q))^{1/2} q^{1/2} .
\end{align*}
\end{lem}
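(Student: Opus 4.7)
The plan is to reduce the stated bound to the classical Weil bound for Kloosterman sums. When $\numvars$ is even, $\kronsym{d}{q_1}^\numvars$ is $1$ if $\gcd(d,q_1) = 1$ and $0$ otherwise, so
\begin{align*}
\kappa_{\numvars, q}(a,b) = S(a,b;q) := \sum_{d \in (\ZZ/q\ZZ)^\times} \e{\frac{a d + b \modmultinv{d}}{q}},
\end{align*}
the standard Kloosterman sum. The task therefore becomes proving $|S(a,b;q)| \le \tau(q) (\gcd(a,b,q))^{1/2} q^{1/2}$.

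First I would establish the twisted multiplicativity of $S$: for $q = q_1 q_2$ with $\gcd(q_1, q_2) = 1$,
\begin{align*}
S(a, b; q_1 q_2) = S(a \modmultinv{q_2}, b \modmultinv{q_2}; q_1) \, S(a \modmultinv{q_1}, b \modmultinv{q_1}; q_2),
\end{align*}
where the inverses are taken modulo $q_1$ and $q_2$ respectively. This follows by writing $d \equiv d_1 q_2 \modmultinv{q_2} + d_2 q_1 \modmultinv{q_1} \pmod{q_1 q_2}$ via the Chinese remainder theorem, very much in the spirit of the decomposition already carried out in Lemma~\ref{lem:Ksumdecomp}. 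The identity reduces the problem to proving, for each prime power $p^k \| q$, the local bound
\begin{align*}
|S(a, b; p^k)| \le 2 (\gcd(a, b, p^k))^{1/2} p^{k/2}.
\end{align*}
Taking the product over the prime power divisors gives an overall factor of $2^{\omega(q)} \le \tau(q)$, producing the $\tau(q)$ in the final estimate.

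For the prime power bound I would split on the exponent $k$. When $k = 1$ and $\gcd(ab, p) = 1$, the estimate $|S(a,b;p)| \le 2\sqrt{p}$ is precisely Weil's theorem, the hard analytic input; when $p \mid a$ or $p \mid b$ one extracts a factor of $p$ and reduces to a Ramanujan-style sum (or a trivial sum) that is easily bounded. When $k \ge 2$, one applies the classical ``$p$-adic stationary phase'' trick: for $p$ odd, the substitution $d \mapsto d(1 + p^{\lceil k/2 \rceil} t)$ (with a separate argument for $p = 2$) allows one to complete the square in the exponent $a d + b \modmultinv{d}$, reducing $S(a, b; p^k)$ to an explicit expression involving a Gauss sum that can be evaluated exactly with the desired bound.

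The main obstacle is the prime case, which is the classical Weil bound; this is deep and well beyond an elementary treatment, so I would simply cite it (e.g., via Corollary~11.12 of \cite{IwaniecKowalskiAnalyticNumThy}, which is precisely the statement being invoked) rather than reprove it. The remaining pieces — the CRT-based multiplicativity and the explicit prime-power evaluation by completing the square — are elementary and can be assembled with the same techniques already used in Lemmas~\ref{lem:gausssumr coprime eval} and \ref{lem:Ksumdecomp}.
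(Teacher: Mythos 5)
The paper offers no proof of this lemma at all; it is invoked purely as a citation to Corollary~11.12 of Iwaniec--Kowalski, and the surrounding text makes clear that the author treats it as a black box (``we use a corollary \ldots of Weil's bound for Kloosterman sums''). Your proposal instead reconstructs the standard derivation of that corollary: the observation that $\kronsym{d}{q}^\numvars$ for even $\numvars$ is simply the indicator of $\gcd(d,q)=1$, so $\kappa_{\numvars,q}(a,b)$ is the classical Kloosterman sum $S(a,b;q)$; twisted multiplicativity via the Chinese remainder theorem reducing to prime powers; Weil's theorem at primes; and the $p$-adic stationary-phase argument at higher prime powers. This is a correct and genuinely more informative route than the paper's bare citation. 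It also harmonizes nicely with what the paper does elsewhere: Lemma~\ref{lem:Saliesumtwistedmult} proves the analogous twisted multiplicativity for Sali\'e sums, and Lemma~\ref{lem:Saliesumprimepows} carries out the prime-power analysis in the odd-$\numvars$ case, so your outline for the even-$\numvars$ case would sit naturally alongside them. The one step you rightly do not attempt to prove is the Weil bound for $S(a,b;p)$ at a prime $p$, which is the deep algebro-geometric input; citing it is appropriate. A small bookkeeping point worth making explicit in a full write-up: the $\tau(q)$ in the final bound arises as $2^{\omega(q)} \le \tau(q)$, and the gcd factor multiplies correctly across coprime moduli because $\gcd(a,b,q_1)\gcd(a,b,q_2) = \gcd(a,b,q_1 q_2)$ when $\gcd(q_1,q_2)=1$ and the CRT twists $\modmultinv{q_1}, \modmultinv{q_2}$ are units, so they do not alter the gcd.
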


When $\numvars$ is odd, notice that $\kappa_{\numvars,q}(a,b) = \kappa_{1,q}(a,b)$ is a Sali\'{e} sum.
To obtain a bound for Sali\'{e} sums, we use Lemma~12.4 in \cite{IwaniecKowalskiAnalyticNumThy} to evaluate Sali\'{e} sums under certain circumstances.
\begin{lem}[Lemma~12.4 in \cite{IwaniecKowalskiAnalyticNumThy}] \label{lem:Salieeval}
Suppose that $a$ and $b$ are integers, $q$ is a positive integer, and $\gcd(q, 2a) = 1$. Then
\begin{align*}
\kappa_{1,q}(a,b) 
	&= \varepsilon_q q^{1/2} \kronsym{a}{q} \sum_{\substack{v \pmod{q} \\ v^2 \equiv a b \pmod{q}}} \e{\frac{2v}{q}} .
\end{align*}
\end{lem}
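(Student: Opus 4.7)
The plan is to establish the identity by evaluating the claimed right-hand side directly and matching it to the Salié sum on the left. First, I would normalize $a$ to $1$: since $\gcd(a,q) = 1$, substituting $d \mapsto \modmultinv{a}\, d$ in the definition of $\kappa_{1,q}(a,b)$, combined with the multiplicativity of the Jacobi symbol and the identity $\kronsym{\modmultinv{a}}{q} = \kronsym{a}{q}$, gives $\kappa_{1,q}(a,b) = \kronsym{a}{q}\, \kappa_{1,q}(1, ab)$. Writing $c = ab$, it therefore suffices to show $\kappa_{1,q}(1,c) = \varepsilon_q \sqrt{q}\sum_{v^2 \equiv c \pmod q}\e{2v/q}$.

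For the main step, I would start from the right-hand side and detect the condition $v^2 \equiv c \pmod q$ using additive orthogonality, $\Ind{v^2 \equiv c \pmod q} = q^{-1}\sum_{u \pmod q}\e{u(v^2 - c)/q}$. This yields
\begin{align*}
\varepsilon_q \sqrt{q} \sum_{v^2 \equiv c \pmod q}\e{\frac{2v}{q}} = \frac{\varepsilon_q\sqrt{q}}{q}\sum_{u \pmod q}\e{-\frac{uc}{q}}\sum_{v \pmod q}\e{\frac{uv^2 + 2v}{q}}.
\end{align*}
For $\gcd(u,q) = 1$, I would complete the square using $uv^2 + 2v \equiv u(v + \modmultinv{u})^2 - \modmultinv{u} \pmod q$, so the inner sum equals $\e{-\modmultinv{u}/q}\gausssum{u}{q}$, which by Lemma~\ref{lem:gausssum rel prime} evaluates to $\e{-\modmultinv{u}/q}\varepsilon_q\sqrt{q}\, \kronsym{u}{q}$. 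For $u$ with $\gcd(u,q) > 1$, I would show the inner sum vanishes by Chinese Remainder decomposition to odd prime power moduli, where the vanishing crucially uses that $\gcd(q,2)=1$.

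Substituting back and invoking $\varepsilon_q^2 = \kronsym{-1}{q}$, the right-hand side collapses to $\kronsym{-1}{q}\sum_{u \in (\ZZ/q\ZZ)^\times}\kronsym{u}{q}\,\e{(-uc - \modmultinv{u})/q}$. The change of variables $u \mapsto -u$ absorbs a second factor of $\kronsym{-1}{q}$ (noting $\kronsym{-1}{q}^2=1$) and produces the Salié sum $\kappa_{1,q}(c, 1)$. Finally, the symmetry $\kappa_{1,q}(c, 1) = \kappa_{1,q}(1, c)$, obtained from $d \mapsto \modmultinv{d}$ together with $\kronsym{\modmultinv{d}}{q} = \kronsym{d}{q}$, completes the proof. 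The main obstacle I anticipate is verifying that the $u$-terms with $\gcd(u,q) > 1$ contribute zero; this step requires a careful prime-power-by-prime-power analysis, but ultimately hinges on the hypothesis that $2$ is a unit modulo $q$.
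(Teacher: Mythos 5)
The paper does not prove this lemma; it simply cites Lemma~12.4 of Iwaniec--Kowalski, so any complete argument you supply is necessarily new relative to the paper. Your sketch is a correct self-contained proof along the classical lines (essentially Sali\'e's original evaluation): reduce to $a=1$ using multiplicativity of the Jacobi symbol and $\kronsym{\modmultinv{a}}{q}=\kronsym{a}{q}$, detect the quadratic congruence $v^2 \equiv ab \pmod q$ with additive characters as in the paper's Lemma~\ref{lem:sumelin}, complete the square, invoke the Gauss sum evaluation of Lemma~\ref{lem:gausssum rel prime}, and use $\varepsilon_q^2 = \kronsym{-1}{q}$ together with the substitutions $u \mapsto -u$ and $d \mapsto \modmultinv{d}$ to recognize the Sali\'e sum. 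The step you flag as the main obstacle --- showing $\sum_{v \pmod q}\e{(uv^2+2v)/q}=0$ when $g := \gcd(u,q) > 1$ --- does in fact go through and needs no prime-by-prime analysis: writing $q=gq'$, $u=gu'$ with $\gcd(u',q')=1$ and substituting $v = v_0 + q'w$ with $v_0 \pmod{q'}$ and $w \pmod{g}$, one finds that $u'q'w^2$ and $2u'v_0 w$ are integers, so the sum factors as
\begin{align*}
\left( \sum_{v_0 \pmod{q'}} \e{\frac{u'v_0^2}{q'} + \frac{2v_0}{gq'}} \right) \left( \sum_{w \pmod{g}} \e{\frac{2w}{g}} \right),
\end{align*}
and the second factor vanishes because $g > 1$ and $g$ is odd (so $2$ is a unit modulo $g$, making the inner sum a complete nontrivial exponential sum). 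Thus the hypothesis $\gcd(q,2)=1$ is exactly what is needed, as you anticipated, and the proof is sound.
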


We would like to partially remove the coprimallity condition for the evaluation of Sali\'{e} sums. To do this, we notice that $\kappa_{1,q}(a,b) = \kappa_{1,q}(b,a)$ 
since $\kronsym{x}{q} = \kronsym{\modmultinv{x}}{q}$. Therefore, we only need to consider the case when $a$, $b$, and $q$ have a common divisor. 
(For our purposes, $q$ will already be odd.)

To do this, we first decompose Sali\'{e} sums so that we only have to concern ourselves with the case that $q$ is a prime power, i.e., $q = p^k$ for some odd prime $p$. This decomposition is made precise with the following lemma about the twisted multiplicatiive property of Sali\'{e} sums.
\begin{lem} \label{lem:Saliesumtwistedmult}
Suppose that $a$ and $b$ are integers and $q_1$ and $q_2$ are positive odd integers such that $\gcd(q_1, q_2) = 1$. Then
\begin{align*}
\kappa_{1,q_1 q_2} (a,b) &= \kappa_{1,q_1} (a \modmultinv{q}_2, b \modmultinv{q}_2) \kappa_{1,q_2} (a \modmultinv{q}_1, b \modmultinv{q}_1) ,
\end{align*}
where $q_2 \modmultinv{q}_2 \equiv 1 \pmod{q_1}$ and $q_1 \modmultinv{q}_1 \equiv 1 \pmod{q_2}$.
\end{lem}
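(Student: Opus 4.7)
The plan is to prove this by applying the Chinese Remainder Theorem to parametrize residues modulo $q_1 q_2$ and then separating the exponential and the Jacobi symbol into factors depending only on one modulus. Since the summand in $\kappa_{1,q_1q_2}(a,b)$ vanishes whenever $d$ is not coprime to $q_1q_2$, the sum is effectively taken over $d \in (\ZZ/q_1 q_2 \ZZ)^\times$, and CRT gives a bijection $d \leftrightarrow (d_1, d_2)$ where $d_1 \equiv d \pmod{q_1}$ and $d_2 \equiv d \pmod{q_2}$, and similarly $\modmultinv{d} \leftrightarrow (\modmultinv{d}_1, \modmultinv{d}_2)$.

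First I would recall (or briefly verify) the two multiplicativity statements that drive the argument. The Jacobi symbol is completely multiplicative in the denominator, so
\begin{align*}
\kronsym{d}{q_1 q_2} = \kronsym{d}{q_1}\kronsym{d}{q_2} = \kronsym{d_1}{q_1}\kronsym{d_2}{q_2}.
\end{align*}
For the exponential, I would invoke the standard CRT partial-fraction identity: for any integer $x$ and coprime $q_1, q_2$,
\begin{align*}
\frac{x}{q_1 q_2} \equiv \frac{x \modmultinv{q}_2}{q_1} + \frac{x \modmultinv{q}_1}{q_2} \pmod{1},
\end{align*}
applied with $x = ad + b\modmultinv{d}$. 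Reducing modulo $q_1$ one has $d \equiv d_1$ and $\modmultinv{d} \equiv \modmultinv{d}_1$, so the first term becomes $((a \modmultinv{q}_2) d_1 + (b \modmultinv{q}_2) \modmultinv{d}_1)/q_1$, and symmetrically for the $q_2$ factor.

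Substituting both decompositions into the definition of $\kappa_{1,q_1q_2}(a,b)$ and using that $\e{\cdot}$ is a group homomorphism, the sum over $d \in (\ZZ/q_1q_2\ZZ)^\times$ splits as an iterated sum over $(d_1, d_2) \in (\ZZ/q_1\ZZ)^\times \times (\ZZ/q_2\ZZ)^\times$ whose summand is a product of a factor depending only on $d_1$ and a factor depending only on $d_2$. This factors the double sum into a product of two Salié sums, which, upon recognition, is exactly $\kappa_{1,q_1}(a\modmultinv{q}_2, b\modmultinv{q}_2) \, \kappa_{1,q_2}(a\modmultinv{q}_1, b\modmultinv{q}_1)$.

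There is no real obstacle here; the argument is essentially bookkeeping. The only place where one has to be a little careful is tracking which multiplicative inverse is meant where, since $\modmultinv{d}_1$ denotes the inverse of $d_1$ modulo $q_1$, not a reduction modulo $q_1$ of the inverse of $d$ modulo $q_1 q_2$; these coincide, but the notational overload warrants a short sentence of clarification. Otherwise the lemma follows directly from CRT and the complete multiplicativity of the Jacobi symbol, and the odd hypothesis on $q_1, q_2$ is used only insofar as it ensures the Jacobi symbols are defined and behave as expected.
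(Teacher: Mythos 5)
Your proposal is correct and follows the same route as the paper's proof: both apply the Chinese Remainder Theorem to parametrize $d$ by $(d_1, d_2)$, split the Jacobi symbol multiplicatively, and decompose the exponential $\e{(ad + b\modmultinv{d})/(q_1q_2)}$ into factors modulo $q_1$ and $q_2$ separately. The only cosmetic difference is that you invoke the partial-fraction identity $\frac{x}{q_1 q_2} \equiv \frac{x\modmultinv{q}_2}{q_1} + \frac{x\modmultinv{q}_1}{q_2} \pmod{1}$, whereas the paper writes out the equivalent CRT decomposition $d \equiv d_1 q_2 \modmultinv{q}_2 + d_2 q_1 \modmultinv{q}_1 \pmod{q_1 q_2}$ directly; these are the same computation.
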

\begin{proof}
By definition,
\begin{align*}
\kappa_{1, q_1 q_2} (a,b) &= \sum_{d \pmod{q_1 q_2}} \kronsym{d}{q_1 q_2} \e{\frac{a d + b \modmultinv{d}}{q_1 q_2}} \\
	&= \sum_{d \pmod{q_1 q_2}} \kronsym{d}{q_1} \kronsym{d}{q_2} \e{\frac{a d + b \modmultinv{d}}{q_1 q_2}} . \numberthis \label{eq:Saliesummult1}
\end{align*}
For a given $d$ coprime to $q_1 q_2$, we have $d_1 \in \ZZ/q_1\ZZ$ and $d_2 \in \ZZ/q_2\ZZ$ such that $d_1 \equiv d \pmod{q_1}$ and $d_2 \equiv d \pmod{q_2}$. 
Let $\modmultinv{d}_1 \in \ZZ/q_1\ZZ$ be such that $d_1 \modmultinv{d}_1 \equiv 1 \pmod{q_1}$, and let $\modmultinv{d}_2 \in \ZZ/q_2\ZZ$ be such that $d_2 \modmultinv{d}_2 \equiv 1 \pmod{q_2}$.
By the Chinese remainder theorem, we have $d \equiv d_1 q_2 \modmultinv{q}_2 + d_2 q_1 \modmultinv{q}_1 \pmod{q_1 q_2}$ and $\modmultinv{d} \equiv \modmultinv{d}_1 q_2 \modmultinv{q}_2 + \modmultinv{d}_2 q_1 \modmultinv{q}_1 \pmod{q_1 q_2}$. 
Applying these facts to \eqref{eq:Saliesummult1}, we see that
\begin{align*}
&\kappa_{1, q_1 q_2} (a,b) \\
	&= \sum_{d_1 \pmod{q_1}} \sum_{d_2 \pmod{q_2}} \kronsym{d_1}{q_1} \kronsym{d_2}{q_2} \e{\frac{a (d_1 q_2 \modmultinv{q}_2 + d_2 q_1 \modmultinv{q}_1) + b (\modmultinv{d}_1 q_2 \modmultinv{q}_2 + \modmultinv{d}_2 q_1 \modmultinv{q}_1)}{q_1 q_2}} \\
	&= \sum_{d_1 \pmod{q_1}} \kronsym{d_1}{q_1} \e{\frac{a \modmultinv{q}_2 d_1 + b \modmultinv{q}_2 \modmultinv{d}_1}{q_1}} \sum_{d_2 \pmod{q_2}} \kronsym{d_2}{q_2} \e{\frac{a \modmultinv{q}_1 d_2 + b \modmultinv{q}_1 \modmultinv{d}_2}{q_2}} \\
	&= \kappa_{1,q_1} (a \modmultinv{q}_2, b \modmultinv{q}_2) \kappa_{1,q_2} (a \modmultinv{q}_1, b \modmultinv{q}_1) .  \qedhere
\end{align*}
\end{proof}

The previous lemma can be applied repeatedly to $\kappa_{1,c}(a,b)$ to obtain a product of Sali\'{e} sums with each of their denominators being prime powers. This allows us to only consider the case of $q$ being a prime power. 
We would like to bound on Sali\'{e} sums when $q$ is a prime power. To do this, we need the following bound on an exponential sum.
\begin{lem} \label{lem:v2sum in Salie}
Suppose that $a$ is an integer, $p$ is an odd prime, and $k$ is a positive integer. Then 
\begin{align}
\left| \sum_{\substack{v \pmod{p^k} \\ v^2 \equiv a \pmod{p^k}}} \e{\frac{2v}{p^k}} \right| \le 2 . \label{eq:sumv2cong}
\end{align}
\end{lem}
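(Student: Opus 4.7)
The plan is to do a short case analysis on the $p$-adic valuation $j = v_p(a)$, exploiting the fact that for odd $p$ the solutions of $v^2 \equiv a \pmod{p^k}$ come in pairs $\{v, -v\}$. The key observation is that the solution set has a clean parameterization that makes the exponential sum factor as (a $\pm$-pair contribution) times (a geometric sum), and the geometric sum vanishes by Lemma~\ref{lem:sumelin} in all but a trivial subcase.

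For the main case $a \not\equiv 0 \pmod{p^k}$, I would write $a = p^j b$ with $\gcd(b, p) = 1$ and $0 \le j < k$. Since $v_p(v^2) = 2 v_p(v)$ is always even, $j$ must be even for any solution to exist; write $j = 2i$. Every solution then has $v_p(v) = i$, so putting $v = p^i w$ with $p \nmid w$ reduces the congruence to $w^2 \equiv b \pmod{p^{k-2i}}$, which by Hensel lifting (available since $p$ is odd and $p \nmid 2 w_0$) has either $0$ or exactly $2$ solutions $\pm w_0$ modulo $p^{k-2i}$. Each such residue class lifts to $p^i$ distinct classes modulo $p^{k-i}$ of the form $w \equiv \pm w_0 + t p^{k-2i}$ for $t = 0, 1, \ldots, p^i - 1$, and hence to $p^i$ classes of $v = p^i w$ modulo $p^k$. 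Substituting yields
\begin{align*}
\sum_{\substack{v \pmod{p^k} \\ v^2 \equiv a \pmod{p^k}}} \e{\frac{2v}{p^k}} &= \left( \e{\frac{2 w_0}{p^{k-i}}} + \e{\frac{-2 w_0}{p^{k-i}}} \right) \sum_{t=0}^{p^i - 1} \e{\frac{2 t}{p^i}}.
\end{align*}
When $i = 0$ the geometric sum is trivially $1$ and the first factor has absolute value at most $2$. When $i \ge 1$ we have $p^i \ge p \ge 3$, so $p^i \nmid 2$ and Lemma~\ref{lem:sumelin} forces the geometric sum to vanish, giving $0$.

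The residual case $a \equiv 0 \pmod{p^k}$ is similar but simpler: the solutions are exactly $v = m p^{\lceil k/2 \rceil}$ with $m$ running over $\ZZ/p^{\lfloor k/2 \rfloor}\ZZ$, and the sum collapses to $\sum_m \e{2 m / p^{\lfloor k/2 \rfloor}}$, which by Lemma~\ref{lem:sumelin} equals $1$ when $k = 1$ and $0$ when $k \ge 2$. The main bookkeeping hurdle, such as it is, is verifying the count $p^i$ when lifting residues of $w$ from modulus $p^{k-2i}$ to modulus $p^{k-i}$ and tracking how this interacts with $v = p^i w$ modulo $p^k$; once the sum is correctly factored the argument closes in two lines via Lemma~\ref{lem:sumelin}.
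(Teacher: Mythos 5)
Your proposal is correct and takes essentially the same route as the paper's proof: split into the cases $a \equiv 0$ and $a \not\equiv 0 \pmod{p^k}$, parameterize solutions by the $p$-adic valuation of $v$ to reduce to a unit-square-root congruence with $0$ or $2$ solutions, count the lifts to modulus $p^{k-i}$, factor the exponential sum into a $\pm$-pair times a geometric sum, and kill the geometric sum with Lemma~\ref{lem:sumelin} unless $i=0$. The only differences from the paper's write-up are notational (your $j=2i$, $b$, $w_0$, $t$ correspond to the paper's $a_1$, $a_1/2$, $a_0$, $x_0$, $x_1$) and a cleaner up-front statement that the valuation of $a$ must be even for solutions to exist.
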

\begin{proof}
First suppose that $a \equiv 0 \pmod{p^k}$. Then the sum in \eqref{eq:sumv2cong} is a sum over $v$ of the form $v = p^{\left\lceil k/2 \right\rceil} u$, where $u \in \ZZ/p^{\left\lfloor k/2 \right\rfloor} \ZZ$. That is,
\begin{align*}
 \sum_{\substack{v \pmod{p^k} \\ v^2 \equiv a \pmod{p^k}}} \e{\frac{2v}{p^k}} &= \sum_{u \in \ZZ/p^{\left\lfloor k/2 \right\rfloor} \ZZ} \e{\frac{2 p^{\left\lceil k/2 \right\rceil} u}{p^{k}}} \\
	&= \sum_{u \in \ZZ/p^{\left\lfloor k/2 \right\rfloor} \ZZ} \e{\frac{2 u}{p^{\left\lfloor k/2 \right\rfloor}}} \\
	&= \Ind{\left\lfloor k/2 \right\rfloor = 0} 
\end{align*}
by Lemma~\ref{lem:sumelin} and the fact that $p$ is odd. Therefore, if $a \equiv 0 \pmod{p^{k}}$, then \eqref{eq:sumv2cong} holds.

Now suppose that $a \not\equiv 0 \pmod{p^{k}}$. Let $a_0 p^{a_1} \equiv a \pmod{p^{k}}$, where $0 \le a_1 \le k-1$ and $a_0 \in {(\ZZ/p^{k-a_1}\ZZ)}^\times$. We first explore the possible solutions to 
\begin{align}
v^2 \equiv a \pmod{p^{k}} . \label{cong:v2apk}
\end{align}
If there exists a solution $v \in \ZZ/p^{k}\ZZ$ to \eqref{cong:v2apk}, then $v \not\equiv 0 \pmod{p^{k}}$ and there exist integers $v_0$ and $v_1$ such that $0 \le v_1 \le k-1$, $0 \le v_0 \le p^{k-v_1} - 1$, $\gcd(v_0 , p^{k-v_1}) = 1$, and $v \equiv v_0 p^{v_1} \pmod{p^{k}}$. In order for $v^2 \equiv a \pmod{p^{k}}$, we must have 
\begin{align}
v_1 = \frac{a_1}{2} \label{eq:v1 cond}
\end{align}
and 
\begin{align}
v_0^2 \equiv a_0 \pmod{p^{k-a_1}} . \label{cong:v0 cond}
\end{align}
Because ${(\ZZ/p^{k-a_1}\ZZ)}^\times$ is cyclic, there are either zero or two solutions $x_0 \in \ZZ$ such that $0 \le x_0 \le p^{k-a_1} - 1$, $\gcd(x_0 , p^{k-a_1}) = 1$, and $x_0$ satisfies 
\begin{align}
x_0^2 \equiv a_0 \pmod{p^{k-a_1}} . \label{cong:x0 cond}
\end{align}
Each solution $x_0$ lifts to $p^{a_1 - v_1} = p^{\frac{a_1}{2}}$ solutions for $v_0 \in {(\ZZ/p^{k-v_1}\ZZ)}^\times = {(\ZZ/p^{k-\frac{a_1}{2}}\ZZ)}^\times$. Each lift is of the form 
\begin{align}
v_0 &\equiv x_0 + x_1 p^{k-a_1} \pmod{p^{k-\frac{a_1}{2}}} , \label{cong:v0x0lift}
\end{align}
where $x_1 \in \ZZ/p^{\frac{a_1}{2}}\ZZ$. One can verify that each $v_0$ of the form in \eqref{cong:v0x0lift} contributes to a unique solution $v = v_0 p^{\frac{a_1}{2}}$ to \eqref{cong:v2apk}.
Therefore, we have shown that there are either $0$ or $2 p^{\frac{a_1}{2}}$ solutions to \eqref{cong:v2apk} modulo $p^{k}$.

It is clear that if there are no solutions to \eqref{cong:v2apk}, then 
\begin{align*}
\sum_{\substack{v \pmod{p^k} \\ v^2 \equiv a \pmod{p^k}}} \e{\frac{2v}{p^k}} = 0 .
\end{align*}
Thus, we only need to concern ourselves with the case that there are $2 p^{\frac{a_1}{2}}$ solutions to \eqref{cong:v2apk}. Let $x_0 \in \ZZ \cap [0, p^{k-a_1} - 1]$ be coprime to $p^{k-a_1}$ and satisfy \eqref{cong:x0 cond}. 
The only other integer in $[0, p^{k-a_1} -1]$ that is coprime to $p^{k-a_1}$ and satisfies \eqref{cong:x0 cond} is $p^{k-a_1} - x_0$. 
Therefore, using \eqref{cong:v0x0lift}, we find that 
\begin{align*}
&\sum_{\substack{v \pmod{p^k} \\ v^2 \equiv a \pmod{p^k}}} \e{\frac{2v}{p^k}} \\
	&= \sum_{x_1 \in \ZZ/p^{\frac{a_1}{2}}\ZZ} \left( \e{\frac{2 (x_0 + x_1 p^{k-a_1}) p^{\frac{a_1}{2}}}{p^{k}}} + \e{\frac{2 (p^{k-a_1} - x_0 + x_1 p^{k-a_1}) p^{\frac{a_1}{2}}}{p^{k}}} \right) \\
	&= \left( \e{\frac{2 x_0}{p^{k-\frac{a_1}{2}}}} + \e{\frac{2 (p^{k-a_1} - x_0)}{p^{k-\frac{a_1}{2}}}} \right) \sum_{x_1 \in \ZZ/p^{\frac{a_1}{2}}\ZZ} \e{\frac{2 x_1}{p^{\frac{a_1}{2}}}} . 
\end{align*}
By Lemma~\ref{lem:sumelin} and the fact that $p$ is odd, we obtain
\begin{align}
\sum_{\substack{v \pmod{p^k} \\ v^2 \equiv a \pmod{p^k}}} \e{\frac{2v}{p^k}} 
	&= \left( \e{\frac{2 x_0}{p^{k-\frac{a_1}{2}}}} + \e{\frac{2 (p^{k-a_1} - x_0)}{p^{k-\frac{a_1}{2}}}} \right) \Ind{a_1 = 0} . \label{eq:sumv2cong split}
\end{align}
We take absolute values of both sides of \eqref{eq:sumv2cong split} to obtain \eqref{eq:sumv2cong}.
\end{proof}

Using Lemma~\ref{lem:v2sum in Salie}, we obtain the following bound on Sali\'{e} sums when $q$ is a prime power.
\begin{lem} \label{lem:Saliesumprimepows}
Suppose that $a$ and $b$ are integers, $k$ is a positive integer, $\ell$ is a nonnegative integer, $p$ is an odd prime, and $\gcd(a,p)=1$. Then
\begin{align}
|\kappa_{1, p^k} (a p^\ell, b p^\ell)| \le \tau(p^k) (\gcd(a p^\ell, b p^\ell, p^k))^{1/2} p^{k/2} . \label{ineq:Saliesumprimepows bound}
\end{align}
\end{lem}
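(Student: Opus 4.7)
The plan is to split on whether $\ell\ge k$ or $\ell<k$. When $\ell\ge k$, both $ap^\ell$ and $bp^\ell$ vanish modulo $p^k$, so $\kappa_{1,p^k}(ap^\ell,bp^\ell)=\sum_{d\in(\ZZ/p^k\ZZ)^\times}\kronsym{d}{p}^k$, which equals $0$ for $k$ odd and $\phi(p^k)<p^k$ for $k$ even. In both cases it is dominated by the right-hand side, since $\gcd(ap^\ell,bp^\ell,p^k)=p^k$ in this range.

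For $\ell<k$, set $m=k-\ell\ge 1$. I would first group $d\in(\ZZ/p^k\ZZ)^\times$ into its $p^\ell$ lifts from each residue class modulo $p^m$: writing $d=d_0+p^m j$ with $d_0\in(\ZZ/p^m\ZZ)^\times$ and $j\in\ZZ/p^\ell\ZZ$, both $\kronsym{d}{p}^k$ and $(ap^\ell d+bp^\ell\modmultinv{d})/p^k\equiv(ad_0+b\modmultinv{d_0})/p^m\pmod{1}$ (with $\modmultinv{d_0}$ the inverse of $d_0$ modulo $p^m$) depend only on $d_0$. Consequently,
\[
\kappa_{1,p^k}(ap^\ell,bp^\ell)=p^\ell W,\qquad W:=\sum_{d\in(\ZZ/p^m\ZZ)^\times}\kronsym{d}{p}^k\e{\frac{ad+b\modmultinv{d}}{p^m}}.
\]
Because $\gcd(a,p)=1$ forces $\gcd(ap^\ell,bp^\ell,p^k)=p^\ell$, the lemma reduces to the bound $|W|\le(k+1)p^{m/2}$.

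I would then split on the parities of $k$ and $m$. If $k$ is even, then $\kronsym{d}{p}^k=1$ and $W=\kappa_{2,p^m}(a,b)$ is a Kloosterman sum, so Lemma~\ref{lem:WeilKloostbound} (with $\numvars=2$) gives $|W|\le\tau(p^m)p^{m/2}$. If $k$ is odd and $m$ is odd (equivalently, $\ell$ is even), then $\kronsym{d}{p}^k=\kronsym{d}{p^m}$ and $W=\kappa_{1,p^m}(a,b)$ is a genuine Sali\'{e} sum with $\gcd(p^m,2a)=1$; Lemma~\ref{lem:Salieeval} evaluates it as a unit multiple of $p^{m/2}$ times the exponential sum bounded in Lemma~\ref{lem:v2sum in Salie}, yielding $|W|\le 2p^{m/2}$. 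In either situation the desired bound follows from $\tau(p^m)\le\tau(p^k)=k+1$.

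The remaining subcase, where both $k$ and $\ell$ are odd (so $m$ is even and $m\ge 2$), is the main obstacle: $W=\sum_d\kronsym{d}{p}\e{(ad+b\modmultinv{d})/p^m}$ is a character-twisted sum that fits neither Lemma~\ref{lem:Salieeval} nor Lemma~\ref{lem:WeilKloostbound}. For this I would run a stationary phase argument. Parameterize $d=u+p^{m/2}v$ with $u$ a lift of $(\ZZ/p^{m/2}\ZZ)^\times$ into $(\ZZ/p^m\ZZ)^\times$ and $v\in\ZZ/p^{m/2}\ZZ$; a short Hensel-type expansion shows $\modmultinv{d}\equiv\modmultinv{u}-p^{m/2}v\modmultinv{u}^2\pmod{p^m}$, and $\kronsym{d}{p}=\kronsym{u}{p}$ is independent of $v$. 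The inner $v$-sum then reads $\sum_{v\in\ZZ/p^{m/2}\ZZ}\e{v(a-b\modmultinv{u}^2)/p^{m/2}}$, which by Lemma~\ref{lem:sumelin} equals $p^{m/2}$ when the stationary condition $u^2\equiv b\modmultinv{a}\pmod{p^{m/2}}$ holds and vanishes otherwise. By Hensel's lemma this condition has at most two solutions modulo $p^{m/2}$, giving $|W|\le 2p^{m/2}$ and completing the proof.
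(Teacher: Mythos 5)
Your proposal is correct and follows essentially the same strategy as the paper: split on $\ell \ge k$ versus $\ell < k$, factor $p^\ell$ out of the sum to reduce to a complete sum modulo $p^{k-\ell}$, handle $k$ even via the Kloosterman bound, $k$ odd with $\ell$ even via the Sali\'{e} evaluation and the exponential-sum bound of Lemma~\ref{lem:v2sum in Salie}, and $k,\ell$ both odd via the same $d = u + p^{m/2}v$ decomposition, orthogonality over $v$, and a Hensel count. The only cosmetic difference is that you record a sharper observation in the $\ell \ge k$ case (the sum is $0$ or $\phi(p^k)$) where the paper uses the trivial bound $p^k$, but the outcome is the same.
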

\begin{proof}
By definition,
\begin{align*}
\kappa_{1, p^k} (a p^\ell, b p^\ell) &= \sum_{d \pmod{p^k}} \kronsym{d}{p^k} \e{\frac{a p^\ell d + b p^\ell \modmultinv{d}}{p^k}} .
\end{align*}

Suppose that $\ell \ge k$. Then $\e{\frac{a p^\ell d + b p^\ell \modmultinv{d}}{p^k}}$ is always $1$. Thus, since $\gcd(a p^\ell, b p^\ell, p^k) = p^k$, when we apply a trivial bound for the sum, we obtain 
\begin{align*}
| \kappa_{1, p^k} (a p^\ell, b p^\ell) | 
\le p^k \le \tau(p^k) (\gcd(a p^\ell, b p^\ell, p^k))^{1/2} p^{k/2} .
\end{align*}

Now suppose that $\ell < k$. Then
\begin{align*}
\kappa_{1, p^k} (a p^\ell, b p^\ell) &= \sum_{d \pmod{p^k}} \kronsym{d}{p}^k \e{\frac{a d + b \modmultinv{d}}{p^{k-\ell}}} .
\end{align*}
Now $ \kronsym{d}{p}^k \e{\dfrac{a d + b \modmultinv{d}}{p^{k-\ell}}}$ is periodic modulo $p^{k-\ell}$, so
\begin{align}
\kappa_{1, p^k} (a p^\ell, b p^\ell) &= p^\ell \sum_{d \pmod{p^{k-\ell}}} \kronsym{d}{p}^k \e{\frac{a d + b \modmultinv{d}}{p^{k-\ell}}} . \label{eq:Saliesumprimepows1}
\end{align}

If $k$ is even, then $\sum_{d \pmod{p^{k-\ell}}} \kronsym{d}{p}^k \e{\dfrac{a d + b \modmultinv{d}}{p^{k-\ell}}}$ is a Kloosterman sum and we can apply Lemma~\ref{lem:WeilKloostbound}
 and see that 
\begin{align*}
| \kappa_{1, p^k} (a p^\ell, b p^\ell) | &= p^\ell \tau(p^{k-\ell}) \gcd(a, b, p^{k-\ell})^{1/2} (p^{k-\ell})^{1/2} = \tau(p^{k-\ell}) (p^{\ell})^{1/2} (p^{k})^{1/2} \\
	&\le \tau(p^{k}) (p^{\ell})^{1/2} (p^{k})^{1/2} .
\end{align*}
Since $\gcd(a p^\ell, b p^\ell, p^k) = p^\ell$, we obtain \eqref{ineq:Saliesumprimepows bound} in the case that $\ell < k$ and $k$ is even.

Now suppose that $k$ is odd and $\ell$ is even. Then 
\begin{align*}
\sum_{d \pmod{p^{k-\ell}}} \kronsym{d}{p}^k \e{\frac{a d + b \modmultinv{d}}{p^{k-\ell}}} = \kappa_{1, p^{k-\ell}} (a,b) ,
\end{align*}
so we can apply Lemma~\ref{lem:Salieeval} and obtain 
\begin{align*}
\kappa_{1, p^k} (a p^\ell, b p^\ell) &= p^\ell \varepsilon_ {p^{k-\ell}} (p^{k-\ell})^{1/2} \kronsym{a}{p^{k-\ell}} \sum_{\substack{v \pmod{p^{k-\ell}} \\ v^2 \equiv a b \pmod{p^{k-\ell}}}} \e{\frac{2v}{p^{k-\ell}}} .
\end{align*}
By taking absolute values of both sides and applying Lemma~\ref{lem:v2sum in Salie}, we find that 
\begin{align}
| \kappa_{1, p^k} (a p^\ell, b p^\ell) | &\le 2 (p^\ell)^{1/2} (p^{k})^{1/2} . \label{ineq:Saliesumkoddleven}
\end{align}
Since $\gcd(a p^\ell, b p^\ell, p^k) = p^\ell$ and $\tau(p^k) \ge 2$, we obtain \eqref{ineq:Saliesumprimepows bound} in the case that $\ell < k$, $k$ is odd, and $\ell$ is even.

Now suppose that $k$ and $\ell$ are odd. Then
\begin{align}
\sum_{d \pmod{p^{k-\ell}}} \kronsym{d}{p}^k \e{\frac{a d + b \modmultinv{d}}{p^{k-\ell}}} &= \sum_{d \pmod{p^{k-\ell}}} \kronsym{d}{p} \e{\frac{a d + b \modmultinv{d}}{p^{k-\ell}}} \label{eq:Saliesumklodd}
\end{align}
and $k-\ell$ is even. 

Let $d = d_1 + d_2 p^{\frac{k-\ell}{2}}$, where $0 \le d_1 \le p^{\frac{k-\ell}{2}} -1$ and $d_2 \in \ZZ / p^{\frac{k-\ell}{2}} \ZZ$. If $\gcd(d_1 , p) = 1$, then let $\modmultinv{d}_1 \in \ZZ$ be such that $0 < \modmultinv{d}_1 < p^{k-\ell}$ and $d_1 \modmultinv{d}_1 \equiv 1 \pmod{p^{k-\ell}}$. Observe that $\modmultinv{d} \equiv \modmultinv{d}_1 - (\modmultinv{d}_1)^2 d_2 p^{\frac{k-\ell}{2}} \pmod{p^{k-\ell}}$ since
$( d_1 + d_2 p^{\frac{k-\ell}{2}}) (\modmultinv{d}_1 - (\modmultinv{d}_1)^2 d_2 p^{\frac{k-\ell}{2}}) \equiv 1 \pmod{p^{k-\ell}}$.
Using this in \eqref{eq:Saliesumklodd}, we see that 
\begin{align*}
&\sum_{d \pmod{p^{k-\ell}}} \kronsym{d}{p}^k \e{\frac{a d + b \modmultinv{d}}{p^{k-\ell}}} \\
	&= \sum_{d_1 =0}^{p^{\frac{k-\ell}{2}} - 1} \sum_{d_2 \pmod{p^{\frac{k-\ell}{2}}}} \kronsym{d_1 + d_2 p^{\frac{k-\ell}{2}}}{p} \e{\frac{a (d_1 + d_2 p^{\frac{k-\ell}{2}}) + b (\modmultinv{d}_1 - (\modmultinv{d}_1)^2 d_2 p^{\frac{k-\ell}{2}})}{p^{k-\ell}}} \\
	&= \sum_{d_1 =0}^{p^{\frac{k-\ell}{2}} - 1} \kronsym{d_1}{p} \e{\frac{a d_1 + b \modmultinv{d}_1}{p^{k-\ell}}} \sum_{d_2 \pmod{p^{\frac{k-\ell}{2}}}} \e{\frac{(a - b (\modmultinv{d}_1)^2) d_2}{p^{\frac{k-\ell}{2}}}} . \numberthis \label{eq:Saliesumklodd1}
\end{align*}

Applying Lemma~\ref{lem:sumelin} to the sum over $d_2$ in \eqref{eq:Saliesumklodd1}, we obtain
\begin{align*}
\sum_{d \pmod{p^{k-\ell}}} \kronsym{d}{p}^k \e{\frac{a d + b \modmultinv{d}}{p^{k-\ell}}} 
	&= p^{\frac{k-\ell}{2}} \sum_{d_1 =0}^{p^{\frac{k-\ell}{2}} - 1} \kronsym{d_1}{p} \e{\frac{a d_1 + b \modmultinv{d}_1}{p^{k-\ell}}} \Ind{a \equiv b (\modmultinv{d}_1)^2 \pmod{p^{\frac{k-\ell}{2}}}} .
\end{align*}
We take absolute values of both sides to obtain 
\begin{align*}
&\left| \sum_{d \pmod{p^{k-\ell}}} \kronsym{d}{p}^k \e{\frac{a d + b \modmultinv{d}}{p^{k-\ell}}} \right| \\
	&\le p^{\frac{k-\ell}{2}} \left| \left\{ d_1 \in \ZZ : 0 \le d_1 \le p^{\frac{k-\ell}{2}} -1, \, \gcd(d_1,p)=1, \, a \equiv b (\modmultinv{d}_1)^2 \pmod{p^{\frac{k-\ell}{2}}} \right\} \right| . \numberthis \label{ineq:Saliesumklodd2}
\end{align*}
since $\kronsym{d_1}{p} = 0$ if $\gcd(d_1 , p) \ne 1$.

Note that if $\gcd(d_1,p)=1$, then the condition $a \equiv b (\modmultinv{d}_1)^2 \pmod{p^{\frac{k-\ell}{2}}}$ is equivalent to the condition $d_1^2 \equiv \modmultinv{a} b \pmod{p^{\frac{k-\ell}{2}}}$. Thus, it suffices to count the number of $d_1$ such that $0 \le d_1 \le p^{\frac{k-\ell}{2}} -1$, $\gcd(d_1,p)=1$, 
and $d_1^2 \equiv \modmultinv{a} b \pmod{p^{\frac{k-\ell}{2}}}$. 
Now the number of $d_0 \pmod{p}$ such that $\gcd(d_0,p)=1$ and 
$d_0^2 \equiv \modmultinv{a} b \pmod{p}$ is $1 + \kronsym{\modmultinv{a} b}{p}$. 
By applying Hensel's lemma, we see that the number of $d_1 \pmod{p^{\frac{k-\ell}{2}}}$ such that $\gcd(d_1, p) = 1$ and 
$d_1^2 \equiv \modmultinv{a} b \pmod{p^{\frac{k-\ell}{2}}}$ is also $1 + \kronsym{\modmultinv{a} b}{p}$. 
Hence, we have
\begin{align*}
&\left| \left\{ d_1 \in \ZZ : 0 \le d_1 \le p^{\frac{k-\ell}{2}} -1, \, \gcd(d_1,p)=1, \, a \equiv b (\modmultinv{d}_1)^2 \pmod{p^{\frac{k-\ell}{2}}} \right\} \right| \\
	&= 1 + \kronsym{\modmultinv{a} b}{p} .
\end{align*}

Applying this to \eqref{ineq:Saliesumklodd2}, we find that
\begin{align*}
\left| \sum_{d \pmod{p^{k-\ell}}} \kronsym{d}{p}^k \e{\frac{a d + b \modmultinv{d}}{p^{k-\ell}}} \right| &\le p^{\frac{k-\ell}{2}} \left( 1 + \kronsym{\modmultinv{a} b}{p} \right) 
\le 2 p^{\frac{k-\ell}{2}} .
\end{align*}
We apply this to \eqref{eq:Saliesumprimepows1} to see that 
\begin{align*}
|\kappa_{1, p^k} (a p^\ell, b p^\ell)| \le 2 p^\ell p^{\frac{k-\ell}{2}} = 2 (p^\ell)^{1/2} (p^k)^{1/2} \le \tau(p^k) (p^\ell)^{1/2} (p^k)^{1/2} .
\end{align*}
Since $\gcd(a p^\ell, b p^\ell, p^k) = p^\ell$, we obtain \eqref{ineq:Saliesumprimepows bound} in the case that $\ell < k$ and $k$ and $\ell$ are both odd.
\end{proof}

Because the expression $\tau(q) (\gcd(a , b, q))^{1/2} q^{1/2}$ is multiplicative as a function of $q$, we can repeatedly apply Lemmas~\ref{lem:Saliesumtwistedmult} and \ref{lem:Saliesumprimepows} to obtain the following result. 
\begin{lem} \label{lem:WeilSaliebound}
If $\numvars$ is odd, $a$ and $b$ are integers, and $q$ is a positive odd integer, then
\begin{align}
|\kappa_{\numvars, q} (a,b)| &\le \tau(q) (\gcd(a, b, q))^{1/2} q^{1/2} . \label{ineq:Saliebound}
\end{align}
\end{lem}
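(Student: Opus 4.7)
The plan is to reduce to prime powers via the twisted multiplicativity established in Lemma~\ref{lem:Saliesumtwistedmult}, handle each prime power factor with Lemma~\ref{lem:Saliesumprimepows}, and then reassemble using the multiplicativity of $\tau(q)$, $q^{1/2}$, and $\gcd(a,b,q)^{1/2}$.

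First I would note that since $\numvars$ is odd, $\kronsym{d}{q}^{\numvars}=\kronsym{d}{q}$, so that $\kappa_{\numvars,q}(a,b)=\kappa_{1,q}(a,b)$, i.e., it suffices to prove \eqref{ineq:Saliebound} for Sali\'e sums. Factor $q=\prod_{j=1}^{t}p_j^{k_j}$ with the $p_j$ distinct odd primes. Iterating Lemma~\ref{lem:Saliesumtwistedmult} gives
\begin{align*}
\kappa_{1,q}(a,b)=\prod_{j=1}^{t}\kappa_{1,p_j^{k_j}}(a u_j,b u_j),
\end{align*}
where $u_j\in\ZZ$ satisfies $u_j\equiv \modmultinv{(q/p_j^{k_j})}\pmod{p_j^{k_j}}$ and $\gcd(u_j,p_j)=1$.

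Next, for each $j$, write $a u_j\equiv a_j'p_j^{\ell_j}\pmod{p_j^{k_j}}$ with $0\le \ell_j\le k_j$ and either $\ell_j=k_j$ (the case $au_j\equiv 0$) or $\gcd(a_j',p_j)=1$. After possibly swapping the roles of $a$ and $b$ (which is legal since $\kronsym{d}{q}=\kronsym{\modmultinv{d}}{q}$ implies $\kappa_{1,q}(a,b)=\kappa_{1,q}(b,a)$), I may arrange that $a_j'$ is a $p_j$-adic unit unless both $a u_j$ and $b u_j$ are divisible by $p_j^{k_j}$; in that latter case the sum is bounded trivially by $p_j^{k_j}$, which agrees with \eqref{ineq:Saliebound} since then $\gcd(au_j,bu_j,p_j^{k_j})=p_j^{k_j}$. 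Applying Lemma~\ref{lem:Saliesumprimepows} in the generic case yields
\begin{align*}
|\kappa_{1,p_j^{k_j}}(a u_j,b u_j)|\le \tau(p_j^{k_j})\,\bigl(\gcd(a u_j,b u_j,p_j^{k_j})\bigr)^{1/2}p_j^{k_j/2}.
\end{align*}
Since $\gcd(u_j,p_j^{k_j})=1$, we have $\gcd(a u_j,b u_j,p_j^{k_j})=\gcd(a,b,p_j^{k_j})$.

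Taking the product over $j$ and using that $\tau$ is multiplicative, $q^{1/2}=\prod_j p_j^{k_j/2}$, and $\gcd(a,b,q)=\prod_j \gcd(a,b,p_j^{k_j})$ by the Chinese remainder theorem, I obtain
\begin{align*}
|\kappa_{1,q}(a,b)|\le \prod_{j=1}^{t}\tau(p_j^{k_j})\,\gcd(a,b,p_j^{k_j})^{1/2}p_j^{k_j/2}=\tau(q)\,\gcd(a,b,q)^{1/2}\,q^{1/2},
\end{align*}
which is \eqref{ineq:Saliebound}. The main bookkeeping point, rather than any real obstacle, is to verify that the trivial estimate accounts for the corner case $a u_j\equiv b u_j\equiv 0\pmod{p_j^{k_j}}$ (when neither swap produces a unit), but in that situation the right-hand side of \eqref{ineq:Saliebound} absorbs the bound $p_j^{k_j}\le \tau(p_j^{k_j})p_j^{k_j/2}\cdot p_j^{k_j/2}$ automatically.
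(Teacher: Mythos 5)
Your proof is correct and follows the paper's intended route exactly: the paper states that Lemma~\ref{lem:WeilSaliebound} ``follows from multiple applications of Lemmas~\ref{lem:Saliesumtwistedmult} and \ref{lem:Saliesumprimepows},'' and that is what you carry out. You simply supply the bookkeeping the paper omits --- reduction to prime powers via twisted multiplicativity, per-factor use of the symmetry $\kappa_{1,m}(a,b)=\kappa_{1,m}(b,a)$ to arrange the $p$-adic valuation (with the fully degenerate case handled trivially), and reassembly via the multiplicativity of $\tau$, $q^{1/2}$, and $\gcd(a,b,\cdot)^{1/2}$.
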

Lemma~\ref{lem:WeilSaliebound} follows from multiple applications of Lemmas~\ref{lem:Saliesumtwistedmult} and \ref{lem:Saliesumprimepows}.

Combining the previous lemma with Lemma~\ref{lem:WeilKloostbound}, we have the following bound on $\kappa_{\numvars, q} (a,b)$ when $q$ is odd.
\begin{lem} \label{lem:KloostSaliebound}
If $a$, $b$, and $\numvars$ are integers and $q$ is a positive odd integer, then
\begin{align}
|\kappa_{\numvars, q} (a,b)| &\le \tau(q) (\gcd(a, b, q))^{1/2} q^{1/2} .
\end{align}
\end{lem}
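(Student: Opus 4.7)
The plan is to split on the parity of $\numvars$ and invoke the two bounds already established for the two kinds of sums that $\kappa_{\numvars, q}(a,b)$ reduces to. Observe that for any integer $d$ coprime to $q$, the value $\kronsym{d}{q}^\numvars$ equals $1$ if $\numvars$ is even and equals $\kronsym{d}{q}$ if $\numvars$ is odd, while for $\gcd(d,q) > 1$ the Jacobi symbol vanishes. Consequently, when $\numvars$ is even, $\kappa_{\numvars,q}(a,b)$ collapses to the standard Kloosterman sum $\sum_{d \in (\ZZ/q\ZZ)^\times} \e{(ad + b\modmultinv{d})/q}$, and when $\numvars$ is odd, it is exactly the Salié sum $\kappa_{1,q}(a,b)$.

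If $\numvars$ is even, I would apply Lemma~\ref{lem:WeilKloostbound} directly; its hypotheses require only that $\numvars$ be even, and the conclusion is precisely the desired bound $\tau(q) (\gcd(a,b,q))^{1/2} q^{1/2}$, with no parity constraint on $q$. If $\numvars$ is odd, I would apply Lemma~\ref{lem:WeilSaliebound}, which requires $q$ to be odd — a hypothesis available here — and again delivers the same bound.

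Since the two cases cover all integers $\numvars$ and both yield the identical upper bound under the given assumption that $q$ is a positive odd integer, concatenating them completes the proof. There is no genuine obstacle: the lemma is essentially a bookkeeping statement that unifies Lemma~\ref{lem:WeilKloostbound} and Lemma~\ref{lem:WeilSaliebound} into a single bound valid for every integer $\numvars$, and the only mild subtlety is the parity reduction of $\kronsym{d}{q}^\numvars$ that identifies $\kappa_{\numvars,q}$ with a Kloosterman sum or Salié sum in the respective cases.
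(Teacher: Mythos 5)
Your proposal matches the paper's argument exactly: the paper derives Lemma~\ref{lem:KloostSaliebound} by combining Lemma~\ref{lem:WeilKloostbound} for $\numvars$ even with Lemma~\ref{lem:WeilSaliebound} for $\numvars$ odd, just as you do. Your added remark about how $\kronsym{d}{q}^\numvars$ collapses to either the trivial character or the Jacobi symbol (depending on parity) is a correct and slightly more explicit unpacking of why the two lemmas are the right ones to invoke, but it is the same proof.
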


By applying Lemmas~\ref{lem:K(q_0)(l, \numrepresented, r; q_1) eval} and \ref{lem:KloostSaliebound} to $K^{(q_0)}(\ell, \numrepresented, \bvec{r}; q_1)$, we obtain the following result.
\begin{lem} \label{lem:Kq0lmnq1 bound}
Let $\alpha \in \ZZ$ be such that $\alpha A^{-1} \in \Mat_\numvars (\ZZ)$ and $\gcd(\alpha, q_1) = 1$. 
The sum $K^{(q_0)}(\ell, \numrepresented, \bvec{r}; q_1)$ satisfies the following:
\begin{align*}
|K^{(q_0)}(\ell, \numrepresented, \bvec{r}; q_1)| &\le q_1^{(\numvars+1)/2} \tau(q_1) (\gcd(\ell + \modmultinv{2} \modmultinv{\alpha} 2 \alpha F^*(\bvec{r}), \numrepresented , q_1))^{1/2} .
\end{align*}
\end{lem}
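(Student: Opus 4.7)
The plan is to combine the exact evaluation of $K^{(q_0)}(\ell, \numrepresented, \bvec{r}; q_1)$ from Lemma~\ref{lem:K(q_0)(l, \numrepresented, r; q_1) eval} with the uniform bound for $\kappa_{\numvars, q_1}$ from Lemma~\ref{lem:KloostSaliebound}. By definition of $q_0$ and $q_1$, the integer $q_1$ is coprime to $2 \det(A)$, so in particular $q_1$ is odd, which makes Lemma~\ref{lem:KloostSaliebound} applicable. An $\alpha$ with $\alpha A^{-1} \in \Mat_\numvars(\ZZ)$ and $\gcd(\alpha, q_1) = 1$ exists since we may take $\alpha = \lvl$ (adjusted by a unit mod $q_1$ if necessary), so the hypothesis of Lemma~\ref{lem:K(q_0)(l, \numrepresented, r; q_1) eval} is satisfied.

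First I would apply Lemma~\ref{lem:K(q_0)(l, \numrepresented, r; q_1) eval} and take absolute values. The prefactor
\begin{align*}
\kronsym{\det(A)}{q_1} \left( \varepsilon_{q_1} \kronsym{-2 \modmultinv{q}_0 }{q_1} \sqrt{q_1} \right)^\numvars
\end{align*}
has modulus at most $q_1^{\numvars/2}$, since each Jacobi symbol has modulus at most $1$ and $|\varepsilon_{q_1}| = 1$. Thus
\begin{align*}
|K^{(q_0)}(\ell, \numrepresented, \bvec{r}; q_1)| \le q_1^{\numvars/2} \, \left| \kappa_{\numvars, q_1}\!\left( \modmultinv{q}_0 (\ell + \modmultinv{2} \modmultinv{\alpha} 2 \alpha F^*(\bvec{r})) , \, \modmultinv{q}_0 \numrepresented \right) \right| .
\end{align*}

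Next I would invoke Lemma~\ref{lem:KloostSaliebound} on the Kloosterman/Sali\'{e} sum. This gives
\begin{align*}
\left| \kappa_{\numvars, q_1}\!\left( \modmultinv{q}_0 (\ell + \modmultinv{2} \modmultinv{\alpha} 2 \alpha F^*(\bvec{r})) , \, \modmultinv{q}_0 \numrepresented \right) \right| \le \tau(q_1) \, q_1^{1/2} \, \bigl( \gcd(\modmultinv{q}_0 (\ell + \modmultinv{2} \modmultinv{\alpha} 2 \alpha F^*(\bvec{r})) , \, \modmultinv{q}_0 \numrepresented , \, q_1) \bigr)^{1/2} .
\end{align*}
Since $\modmultinv{q}_0$ is a unit modulo $q_1$, multiplying both of the first two arguments of the gcd by $\modmultinv{q}_0$ does not change the gcd with $q_1$, so the gcd equals $\gcd(\ell + \modmultinv{2} \modmultinv{\alpha} 2 \alpha F^*(\bvec{r}), \numrepresented, q_1)$.

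Combining these two estimates yields the claimed bound with exponent $\numvars/2 + 1/2 = (\numvars + 1)/2$ on $q_1$. There is no real obstacle here — the lemma is essentially a packaging step that records the immediate consequence of the evaluation of $K^{(q_0)}$ and the Weil--type bounds on Kloosterman and Sali\'{e} sums. The only minor point worth double-checking is the invariance of the gcd under multiplication by $\modmultinv{q}_0$, which is immediate from $\gcd(\modmultinv{q}_0, q_1) = 1$.
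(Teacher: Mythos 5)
Your proposal is correct and matches the paper's proof exactly: both apply Lemma~\ref{lem:K(q_0)(l, \numrepresented, r; q_1) eval} together with Lemma~\ref{lem:KloostSaliebound} and then remove the unit $\modmultinv{q}_0$ from the gcd using $\gcd(\modmultinv{q}_0, q_1) = 1$. Nothing further to add.
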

\begin{proof}
By applying Lemmas~\ref{lem:K(q_0)(l, \numrepresented, r; q_1) eval} and \ref{lem:KloostSaliebound} to $K^{(q_0)}(\ell, \numrepresented, \bvec{r}; q_1)$, we see that 
\begin{align*}
|K^{(q_0)}(\ell, \numrepresented, \bvec{r}; q_1)| &\le q_1^{(\numvars+1)/2} \tau(q_1) (\gcd(\modmultinv{q}_0 (\ell + \modmultinv{2} \modmultinv{\alpha} 2 \alpha F^*(\bvec{r})), \modmultinv{q}_0 \numrepresented , q_1))^{1/2} .
\end{align*}
Because $\gcd(\modmultinv{q}_0, q_1) =1$, we obtain the result of the lemma.
\end{proof}

\subsection{Bounding the arithmetic part}

We have now provided bounds for $\gamma(\ell)$, and $K^{(q_1)}(\ell, \numrepresented, \bvec{r}; q_0)$, and $K^{(q_0)}(\ell, \numrepresented, \bvec{r}; q_1)$. 
Therefore, we have what we need to compute an upper bound for the absolute value of $T_{\bvec{r}} (q, \numrepresented; x)$. 
Such an upper bound appears in the following lemma. 

\begin{lem} \label{lem:Trq\numrepresented x bound}
For $\bvec{r} \in \ZZ^\numvars$, $\numrepresented \in \ZZ$, $x \in \RR$, and positive $q \in \ZZ$, the sum $T_{\bvec{r}} (q, \numrepresented; x)$ is 
\begin{align}
T_{\bvec{r}} (q, \numrepresented; x) &\ll (\gcd(\lvl,q_0))^{\numvars/2} (\gcd(\numrepresented, q_1))^{1/2} q_0^{1/2} q^{(\numvars+1)/2} \tau(q) \log(2q) .
\end{align}
The implied constant does not depend on $F$.
\end{lem}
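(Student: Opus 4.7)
The plan is to combine Lemma~\ref{lem:completeTl} with all of the bounds that have been established for the pieces appearing in the decomposition of $K(\ell, \numrepresented, \bvec{r}; q)$, and then estimate the resulting sum over $\ell$ in a fairly crude way, keeping only the part of the arithmetic information that survives after we lose track of the exact residue class.

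First, I would apply Lemma~\ref{lem:completeTl} with $\ell$ ranging over representatives with $|\ell| \le q/2$, obtaining
\[
T_{\bvec{r}} (q, \numrepresented; x) = \sum_{|\ell| \le q/2} \gamma(\ell) K(\ell, \numrepresented, \bvec{r}; q).
\]
Next, Lemma~\ref{lem:Ksumdecomp} factors $K(\ell, \numrepresented, \bvec{r}; q) = K^{(q_1)}(\ell, \numrepresented, \bvec{r}; q_0) K^{(q_0)}(\ell, \numrepresented, \bvec{r}; q_1)$, and Lemmas~\ref{lem:Kq1lmnq0 bound} and \ref{lem:Kq0lmnq1 bound} give
\[
| K^{(q_1)}(\ell, \numrepresented, \bvec{r}; q_0) | \le (\gcd(\lvl, q_0))^{\numvars/2} q_0^{\numvars/2 + 1},
\]
\[
| K^{(q_0)}(\ell, \numrepresented, \bvec{r}; q_1) | \le q_1^{(\numvars+1)/2} \tau(q_1) (\gcd(\ell + c, \numrepresented, q_1))^{1/2},
\]
where $c = \modmultinv{2} \modmultinv{\alpha} 2 \alpha F^*(\bvec{r})$. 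Combining this with the bound $|\gamma(\ell)| \le (1+|\ell|)^{-1}$ from Lemma~\ref{lem:gamma bound}, the key step is to control
\[
\sum_{|\ell| \le q/2} \frac{(\gcd(\ell + c, \numrepresented, q_1))^{1/2}}{1+|\ell|}.
\]

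For this sum I would simply use the trivial bound $\gcd(\ell + c, \numrepresented, q_1)^{1/2} \le \gcd(\numrepresented, q_1)^{1/2}$ and then sum the harmonic-type tail $\sum_{|\ell| \le q/2} (1+|\ell|)^{-1} \ll \log(2q)$. Putting everything together yields
\[
|T_{\bvec{r}} (q, \numrepresented; x)| \ll (\gcd(\lvl, q_0))^{\numvars/2} q_0^{\numvars/2 + 1} q_1^{(\numvars+1)/2} \tau(q_1) \gcd(\numrepresented, q_1)^{1/2} \log(2q).
\]
Finally, using $q = q_0 q_1$, we have $q_0^{\numvars/2 + 1} q_1^{(\numvars+1)/2} = q_0^{1/2} q^{(\numvars+1)/2}$, and $\tau(q_1) \le \tau(q)$ by multiplicativity of $\tau$, which exactly matches the stated bound.

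The main obstacle is really just the bookkeeping to match exponents on $q_0$ and $q_1$ against the compact form $q_0^{1/2} q^{(\numvars+1)/2}$; there is no serious analytic difficulty since the trivial bound on $\gcd(\ell + c, \numrepresented, q_1)$ is sharp enough for the target estimate. It is worth noting that the implied constant indeed does not depend on $F$ because none of the bounds in Lemmas~\ref{lem:gamma bound}, \ref{lem:Kq1lmnq0 bound}, or \ref{lem:Kq0lmnq1 bound} introduces a constant that depends on $F$; the only $F$-dependence is carried by the explicit quantities $\lvl$, $q_0$, $\alpha$, and the inner argument $c$, which gets discarded in the final crude bound.
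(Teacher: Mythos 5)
Your proposal is correct and takes essentially the same route as the paper: applying Lemmas~\ref{lem:completeTl} and \ref{lem:Ksumdecomp}, then combining the bounds from Lemmas~\ref{lem:gamma bound}, \ref{lem:Kq1lmnq0 bound}, and \ref{lem:Kq0lmnq1 bound}, discarding the $\ell$-dependence of the gcd via the trivial bound $\gcd(\ell+c,\numrepresented,q_1)\le\gcd(\numrepresented,q_1)$, bounding the harmonic sum by $\log(2q)$, and matching the exponents via $q_0^{\numvars/2+1}q_1^{(\numvars+1)/2}=q_0^{1/2}q^{(\numvars+1)/2}$ and $\tau(q_1)\le\tau(q)$. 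The only minor difference is that the paper spells out the harmonic bound via Euler summation, but the conclusion is identical.
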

\begin{proof}
We use Lemmas~\ref{lem:completeTl} and \ref{lem:Ksumdecomp} to obtain
\begin{align}
T_{\bvec{r}} (q, \numrepresented; x) &= \sum_{\ell \pmod{q}} \gamma(\ell) K^{(q_1)}(\ell, \numrepresented, \bvec{r}; q_0) K^{(q_0)}(\ell, \numrepresented, \bvec{r}; q_1) .
\end{align}
We now apply Lemmas~\ref{lem:gamma bound}, \ref{lem:Kq1lmnq0 bound}, and \ref{lem:Kq0lmnq1 bound} to see that 
\begin{align*}
| T_{\bvec{r}} (q, \numrepresented; x) | &\le 
	\sum_{-\frac{q}{2} < \ell \le \frac{q}{2}} (1+|\ell|)^{-1} (\gcd(L, q_0))^{\numvars/2} q_0^{\numvars/2 + 1} q_1^{(\numvars+1)/2} \\
		&\qquad\qquad \times \tau(q_1) (\gcd(\ell + \modmultinv{2} \modmultinv{\alpha} 2 \alpha F^*(\bvec{r}), \numrepresented , q_1))^{1/2} \\
	&= 
	q^{(\numvars+1)/2} q_0^{1/2} \tau(q_1) (\gcd(L, q_0))^{\numvars/2} \\
		&\qquad\qquad \times \sum_{-\frac{q}{2} < \ell \le \frac{q}{2}} (1+|\ell|)^{-1} (\gcd(\ell + \modmultinv{2} \modmultinv{\alpha} 2 \alpha F^*(\bvec{r}), \numrepresented , q_1))^{1/2} \\
	&\le q^{(\numvars+1)/2} q_0^{1/2} \tau(q) (\gcd(L, q_0))^{\numvars/2} \sum_{-\frac{q}{2} < \ell \le \frac{q}{2}} (1+|\ell|)^{-1} (\gcd(\numrepresented , q_1))^{1/2} \\
	&\le q^{(\numvars+1)/2} q_0^{1/2} \tau(q) (\gcd(L, q_0))^{\numvars/2} (\gcd(\numrepresented , q_1))^{1/2} \left( 1 + 2 \sum_{\ell=1}^{\left\lfloor \frac{q}{2} \right\rfloor} (1+\ell)^{-1} \right) \numberthis \label{ineq:Trqn} .
\end{align*}

We use Euler's summation formula \cite[Theorem~3.1]{ApostolAnalNumThy} 
to bound the sum in \eqref{ineq:Trqn}:
\begin{align*}
\sum_{\ell=1}^{\left\lfloor \frac{q}{2} \right\rfloor} (1+\ell)^{-1} &= \frac{1}{2} + \int_{1}^{\left\lfloor \frac{q}{2} \right\rfloor} (1+\ell)^{-1} \ d\ell - \int_{1}^{\left\lfloor \frac{q}{2} \right\rfloor} (\ell - \lfloor \ell \rfloor) (1+\ell)^{-2} \ d\ell \\
	&= \frac{1}{2} + \log\left( 1 + \left\lfloor \frac{q}{2} \right\rfloor \right) - \log(2) - \int_{1}^{\left\lfloor \frac{q}{2} \right\rfloor} (\ell - \lfloor \ell \rfloor) (1+\ell)^{-2} \ d\ell . \numberthis \label{eq:logsumTrqn}
\end{align*}
Because $0 \le \ell - \lfloor \ell \rfloor\le 1$, the integral in \eqref{eq:logsumTrqn} is bounded (in absolute value) by
\begin{align*}
\left| \int_{1}^{\left\lfloor \frac{q}{2} \right\rfloor} (1+\ell)^{-2} \ d\ell \right| &= \left| \frac{1}{2} - \left(1+ \left\lfloor \frac{q}{2} \right\rfloor \right)^{-1} \right| \le \frac{1}{2} .
\end{align*}
Therefore,
\begin{align}
\sum_{\ell=1}^{\left\lfloor \frac{q}{2} \right\rfloor} (1+\ell)^{-1} &= \log\left( 1 + \left\lfloor \frac{q}{2} \right\rfloor \right) + O(1) = \log\left( 1 + \frac{q}{2} \right) + O(1) \label{eq:logsumTrqn bound}
\end{align}
since $\left| \log\left( 1 + \frac{q}{2} \right) - \log\left( 1 + \left\lfloor \frac{q}{2} \right\rfloor \right) \right| \le \frac{1}{1 + \lfloor q/2 \rfloor} \le 1$.

Substituting \eqref{eq:logsumTrqn bound} into \eqref{ineq:Trqn}, we see that 
\begin{align*}
| T_{\bvec{r}} (q, \numrepresented; x) | &\le 
	q^{(\numvars+1)/2} q_0^{1/2} \tau(q) (\gcd(L, q_0))^{\numvars/2} (\gcd(\numrepresented , q_1))^{1/2} \left( 2 \log\left( 1 + \frac{q}{2} \right) + O(1) \right) .
\end{align*}
We obtain the result of this lemma by noticing that
\begin{align*}
2 \log\left( 1 + \frac{q}{2} \right) + O(1) \ll \log(2q)
\end{align*}
 for $q \ge 1$.
\end{proof}

\section{Analyzing the archimedean part} \label{chapter:archimedeanpart}

In this section, we analyze the archimedean part $\mathcal{I}_{F, \Upspsi} (x, \coordsize, \bvec{r}, q)$ using a principle of nonstationary phase and other bounds on certain oscillatory integrals. 
An \emph{oscillatory integral} is an integral of the form 
\begin{align}
\int_{\RR^\numvars} \e{f(\bvec{m})} \psi(\bvec{m}) \ d \bvec{m} , \label{expr:oscillatoryint}
\end{align}
where $f \in C^\infty (\RR^\numvars)$ and $\psi \in C_c^\infty (\RR^\numvars)$.

Typically, a principle of nonstationary phase says that the oscillatory integral in \eqref{expr:oscillatoryint} is relatively small in absolute value if the gradient of $f$ is nonzero for all $\bvec{m} \in \supp(\psi)$.
In Subsection~\ref{sec:PNSP}, we describe a one-dimensional version of the principle of nonstationary phase. 

In Subsection~\ref{sec:oscillatoryintbound}, we look at oscillatory integrals in which the function $f$ in \eqref{expr:oscillatoryint} is a quadratic polynomial. In that subsection, we give an upper bound for the absolute value of such an oscillatory integral.

In Subsection~\ref{sec:applyoscillatoryintbounds}, we use the results of Subsections~\ref{sec:PNSP} and \ref{sec:oscillatoryintbound} to obtain bounds for the archimedean part $\mathcal{I}_{F, \Upspsi} (x, \coordsize, \bvec{r}, q)$.

\subsection{A principle of nonstationary phase} \label{sec:PNSP}

To analyze the archimedean part $\mathcal{I}_{F, \Upspsi} (x, \coordsize, \bvec{r}, q)$, we will use a one-dimensional principle of nonstationary phase. We will use a principle of nonstationary phase that is different than what is found in Proposition~1 in Chapter~8 of \cite{SteinHarmonicAnalysis}, Proposition~6.1 of \cite{WolffHarmonicAnalysis}, Lemma~2.6 in \cite{TaoOscillatoryIntegralsNotes}, and Proposition~B.1 in \cite{GreenRestrictKakeyaPhenom}. (The results in \cite{SteinHarmonicAnalysis}, \cite{WolffHarmonicAnalysis}, \cite{TaoOscillatoryIntegralsNotes}, and \cite{GreenRestrictKakeyaPhenom} are not directly applicable for our purposes.)

The next theorem is our one-dimensional version of the principle of \mbox{nonstationary} phase and is a slight generalization of the statement 
\mbox{``Non-stationary} phase'' on p.~94 in \cite{ZhangL-G3ACP}. 
\begin{thm}[Principle of nonstationary phase in one variable]
 \label{thm:PNSP1d}
Let $\psi \in C_c^\infty (\RR)$ and let $M \ge 0$. 
Let $f \in C^\infty (\RR)$ be such that $|f'(x)| \ge B > 0$ and $| f^{(j)} (x) | \le |f'(x)|$ for all $x \in \supp(\psi)$ and for each integer $j$ satisfying $2 \le j \le \lceil M \rceil$. 
Then
\begin{align*}
\int_{\RR} \e{f(x)} \psi(x) \ d x \ll_{\psi, M} B^{-M}.
\end{align*}
\end{thm}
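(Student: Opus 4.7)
The plan is to prove the bound by iterated integration by parts, exploiting the identity
\begin{equation*}
\e{f(x)} = \frac{1}{2\pi i f'(x)} \frac{d}{dx}\e{f(x)},
\end{equation*}
which is valid on $\supp(\psi)$ because $|f'| \ge B > 0$ there. First I would dispose of the trivial cases: if $M = 0$ or if $B \le 1$, then $B^{-M} \ge 1$ and the estimate follows immediately from the trivial bound $\bigl| \int \e{f(x)} \psi(x)\,dx \bigr| \le \|\psi\|_1 \ll_\psi 1$. So for the remainder I may assume $M > 0$ and $B > 1$.

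For the main case, I would define the differential operator $L[\phi] = -\tfrac{d}{dx}\bigl(\phi/(2\pi i f')\bigr)$. Since $\psi$ has compact support, integration by parts produces no boundary term, giving
\begin{equation*}
\int_\RR \e{f(x)} \psi(x)\,dx = \int_\RR \e{f(x)} L^k[\psi](x)\,dx
\end{equation*}
for each non-negative integer $k$. The key technical step, proved by induction on $k$, is that $L^k[\psi]$ is a finite linear combination (with numerical coefficients depending only on $k$) of terms of the form
\begin{equation*}
\frac{\psi^{(\ell)}\,\prod_{i=1}^p f^{(a_i)}}{(f')^{k+p}}, \qquad a_i \ge 2, \qquad \ell + \sum_{i=1}^p (a_i - 1) = k.
\end{equation*}
The induction step uses only the product and quotient rules: each application of $L$ either differentiates the $\psi^{(\ell)}$ factor, differentiates an existing $f^{(a_i)}$ factor, or pulls out a new factor $f^{(2)}$ by differentiating the denominator $(f')^{-(k+p)}$. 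Under the hypothesis $|f^{(j)}| \le |f'|$, each such term is pointwise bounded on $\supp(\psi)$ by $\|\psi^{(\ell)}\|_\infty\,|f'|^{-k} \le \|\psi^{(\ell)}\|_\infty B^{-k}$, so $|L^k[\psi]| \ll_{k,\psi} B^{-k}$.

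Finally, integrating over $\supp(\psi)$, which has finite measure depending only on $\psi$, yields $\bigl|\int \e{f}\psi\,dx\bigr| \ll_{k,\psi} B^{-k}$. Taking $k$ to be the appropriate integer comparable to $M$ (so that the hypothesis $|f^{(j)}| \le |f'|$ for $2 \le j \le \lceil M \rceil$ covers every derivative order appearing in $L^k[\psi]$) and using $B^{-k} \le B^{-M}$ for $B > 1$ completes the argument. The main obstacle is the inductive bookkeeping that verifies the structural description of $L^k[\psi]$ and confirms that the orders of the $f$-derivatives appearing there remain within the range permitted by the hypothesis; beyond this combinatorial verification, every step is routine.
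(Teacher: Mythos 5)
Your approach — iterated integration by parts with the operator $L[\phi] = -\tfrac{d}{dx}\bigl(\phi/(2\pi i f')\bigr)$ — is exactly what the paper intends, your structural description of $L^k[\psi]$ is correct, and the term-by-term bound $\ll \|\psi^{(\ell)}\|_\infty B^{-k}$ is right. However, the final step hides an arithmetic gap. Among the terms of $L^k[\psi]$ in your parametrization is the one with $\ell = 0$, $p = 1$, $a_1 = k+1$, namely $\psi\, f^{(k+1)}/(f')^{k+1}$, so bounding $L^k[\psi]$ requires $|f^{(j)}| \le |f'|$ for every $2 \le j \le k+1$. The theorem's hypothesis supplies this only for $2 \le j \le \lceil M \rceil$, which forces $k \le \lceil M \rceil - 1$. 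But $\lceil M \rceil - 1 < M$ for every $M > 0$, so the strongest bound available from this hypothesis is $B^{-(\lceil M \rceil - 1)}$, which for $B > 1$ is strictly weaker than $B^{-M}$. The two requirements you impose on $k$ — that the hypothesis covers every derivative order appearing in $L^k[\psi]$, and that $B^{-k} \le B^{-M}$ — are therefore mutually exclusive, and ``taking $k$ to be the appropriate integer comparable to $M$'' cannot satisfy both.

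This is best understood as an off-by-one imprecision in the theorem's statement rather than a defect in your method: to conclude $B^{-M}$ by integration by parts one needs control of $f^{(j)}$ through order $\lceil M \rceil + 1$ (so that $k = \lceil M \rceil$ integrations by parts are available), or else the conclusion should read $B^{-(\lceil M \rceil - 1)}$. The paper itself gives only the one-line justification ``follows from multiple applications of integration by parts,'' and in its sole application of this theorem (Theorem~\ref{thm:IFx\coordsize rq PNSP bound in terms of r} in Section~\ref{chapter:archimedeanpart}) the phase $f$ is a quadratic polynomial, so $f^{(j)} \equiv 0$ for $j \ge 3$ and the hypothesis holds vacuously for every order; the off-by-one issue therefore never surfaces downstream. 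But as a proof of the theorem as literally stated, your argument does not close; you should either strengthen the hypothesis to $2 \le j \le \lceil M \rceil + 1$ or weaken the conclusion.
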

Theorem~\ref{thm:PNSP1d} follows from multiple applications of integration by parts.

\begin{rmk}
A similar result to Theorem~\ref{thm:PNSP1d} can be if shown if $\psi$ is a Schwartz function (not just a bump function). However, we just require that $\psi$ is a bump function for our purposes.
\end{rmk}

\subsection{Bounding an oscillatory integral} \label{sec:oscillatoryintbound}

In this subsection, we give an upper bound for the absolute value of an oscillatory integral associated with a quadratic polynomial.
The following theorem generalizes the statement ``Stationary phase'' on p.~95 of \cite{ZhangL-G3ACP} to quadratic polynomials in any positive number of variables. 
\begin{thm}
 \label{thm:quadoscillatorybound}
Suppose that $A$ is the Hessian matrix of a nonsingular quadratic form in $\numvars$ variables. Suppose that $\bvec{b} \in \RR^\numvars$, $c \in \RR$, and $\psi \in C_c^\infty (\RR^\numvars)$. 
Then
\begin{align}
\int_{\RR^\numvars} \e{\frac{1}{2} \bvec{x}^\top A \bvec{x} + \bvec{b} \cdot \bvec{x} + c} \psi(\bvec{x}) \ d \bvec{x} &\ll_\psi |\det(A)|^{-1/2} . \label{ineq:quadPSP}
\end{align}
\end{thm}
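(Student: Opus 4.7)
The plan is to reduce the integral to a diagonal Fresnel integral and then control it by Parseval's identity. First I complete the square in the quadratic phase,
\[
\tfrac{1}{2}\bvec{x}^\top A \bvec{x} + \bvec{b}\cdot\bvec{x} + c = \tfrac{1}{2}(\bvec{x}+A^{-1}\bvec{b})^\top A (\bvec{x}+A^{-1}\bvec{b}) + c - \tfrac{1}{2}\bvec{b}^\top A^{-1}\bvec{b},
\]
and substitute $\bvec{x} \mapsto \bvec{x} - A^{-1}\bvec{b}$; this removes the linear term entirely and produces only a constant phase factor of modulus~$1$. The bump function $\psi$ is replaced by a translate $\tilde\psi(\bvec{x}) := \psi(\bvec{x} - A^{-1}\bvec{b})$, and since translation multiplies $\widehat{\psi}$ by a unimodular character, $|\widehat{\tilde\psi}(\bvec{\xi})| = |\widehat{\psi}(\bvec{\xi})|$ pointwise. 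Thus the dependence on $\bvec{b}$ and $c$ has been absorbed with no cost to the Fourier $L^1$-norm.

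Next I diagonalize $A$ by the spectral theorem, writing $A = P^\top D P$ with $P$ orthogonal and $D = \diag(\lambda_1,\ldots,\lambda_\numvars)$ the eigenvalues of $A$. The orthogonal substitution $\bvec{x} = P^\top \bvec{y}$ (Jacobian~$1$) recasts the integral as
\[
\int_{\RR^\numvars} \e{\tfrac{1}{2} \sum_{j=1}^\numvars \lambda_j y_j^2}\, \Psi(\bvec{y})\, d\bvec{y}, \qquad \Psi(\bvec{y}) := \tilde\psi(P^\top\bvec{y}),
\]
and an orthogonal change of variables on the Fourier side gives $\|\widehat{\Psi}\|_{L^1} = \|\widehat{\tilde\psi}\|_{L^1} = \|\widehat{\psi}\|_{L^1}$. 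All dependence on $A$ has thereby been pushed into the diagonal Gaussian kernel.

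The core analytic input is the classical one-dimensional Fresnel identity
\[
\int_{\RR} \e{\tfrac{1}{2}\lambda y^2 - y\xi}\, dy = |\lambda|^{-1/2}\, e^{i\pi\operatorname{sgn}(\lambda)/4}\, \e{-\xi^2/(2\lambda)},
\]
which tensorizes over the $\numvars$ coordinates to give a distributional Fourier transform of the diagonal Gaussian kernel whose modulus is uniformly $\prod_j |\lambda_j|^{-1/2} = |\det A|^{-1/2}$. Pairing this transform against $\Psi$ via Parseval's identity --- rigorized by inserting a damping factor $e^{-\varepsilon \|\bvec{y}\|^2/2}$, applying Parseval to the resulting fully Schwartz-class Gaussian, and passing $\varepsilon \to 0^+$ by dominated convergence on both sides (using that $\Psi$ has compact support and that $\widehat{\Psi}$ is Schwartz) --- yields
\[
|I| \;\le\; |\det A|^{-1/2}\, \|\widehat{\Psi}\|_{L^1} \;=\; |\det A|^{-1/2}\, \|\widehat{\psi}\|_{L^1},
\]
which is the claimed bound, since $\|\widehat{\psi}\|_{L^1}$ is a finite constant depending only on $\psi$.

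The main obstacle will be this limiting Parseval argument, since $\e{Q(\bvec{x})}$ is not in $L^2(\RR^\numvars)$ and so the Fourier pairing has to be interpreted distributionally; the Gaussian damping is the standard device for making the pairing rigorous. A less efficient alternative, avoiding distributional Fourier analysis, would be to iterate the one-dimensional Fresnel estimate (or equivalently the second-derivative van der Corput bound) coordinate-by-coordinate after the diagonalization step, but this would require careful bookkeeping of iterated $L^\infty$-derivative seminorms of $\Psi$ along each coordinate direction and would produce a bound depending on more seminorms of $\psi$ than strictly necessary.
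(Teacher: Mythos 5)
Your proof is correct and follows essentially the same route as the paper: both diagonalize $A$ by an orthogonal change of variables, complete the square to remove the linear term, and invoke Plancherel/Parseval with a Gaussian regularization (the paper analytically continues the real Gaussian parameter $z_j$ to the imaginary axis; you damp by $e^{-\varepsilon\|\bvec{y}\|^2/2}$ and pass $\varepsilon\to 0^+$) to arrive at the bound $|\det(A)|^{-1/2}\int_{\RR^\numvars}|\widehat{\psi}|$. The only cosmetic difference is the order of the algebraic reductions, since you complete the square before diagonalizing whereas the paper does it after.
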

\begin{rmk}
With the hypotheses of Theorem~\ref{thm:quadoscillatorybound}, one can prove that 
\begin{align*}
&\int_{\RR^\numvars} \e{\frac{1}{2} \bvec{x}^\top A \bvec{x} + \bvec{b} \cdot \bvec{x} + c} \psi(\bvec{x}) \ d \bvec{x} \\
	&= \e{c - \frac{1}{2} \bvec{b}^\top A^{-1} \bvec{b}} \psi(-A^{-1} \bvec{b}) e^{i \pi \sgnt(A)/4} |\det(A)|^{-1/2} \\
		&\qquad + O_{\psi} (|\det(A)|^{-1/2} {\sigma_\numvars}^{-1}) , \numberthis \label{eq:quadPSP asymptotic}
\end{align*}
where $\sigma_\numvars$ is the smallest singular value of $A$. 
Because this is an asymptotic for an oscillatory integral that depends on when the gradient of $\frac{1}{2} \bvec{x}^\top A \bvec{x} + \bvec{b} \cdot \bvec{x} + c$ is zero,
this result could be called a principle of stationary phase for quadratic polynomials.
However, 
we will not need this result for our purposes.
\end{rmk}

Before we prove Theorem~\ref{thm:quadoscillatorybound}, we prove a number of lemmas that will be used in our proof of Theorem~\ref{thm:quadoscillatorybound}.
Our first lemma gives the Fourier transform of the one-dimensional Gaussian function $e^{- a x^2}$ with $a > 0$. 
\begin{lem} \label{lem:expFourier1d}
For $a > 0$ and $y \in \CC$, we have 
\begin{align}
\int_\RR e^{- a x^2} \e{- x y} \ d x = \sqrt{\frac{\pi}{a}} e^{- \pi^2 y^2 / a} . \label{eq:expFourier1d}
\end{align}
\end{lem}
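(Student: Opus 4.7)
The plan is to reduce this to the classical real Gaussian integral $\int_{\RR} e^{-ax^2}\,dx = \sqrt{\pi/a}$ by completing the square in the exponent and then shifting the contour of integration. Writing $\e{-xy} = e^{-2\pi i x y}$, I first handle the case $y \in \RR$ and then extend to $y \in \CC$ by analytic continuation.

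For $y \in \RR$, I combine the exponentials and complete the square:
\begin{align*}
-ax^2 - 2\pi i x y &= -a\left(x + \frac{\pi i y}{a}\right)^2 - \frac{\pi^2 y^2}{a} .
\end{align*}
Thus the integral becomes
\begin{align*}
\int_\RR e^{-ax^2} \e{-xy}\,dx \;=\; e^{-\pi^2 y^2 / a} \int_{\RR} e^{-a\left(x + \pi i y/a\right)^2}\,dx .
\end{align*}
The remaining integral is over the horizontal line $\Im(z) = \pi y/a$ in the complex plane. Since $e^{-az^2}$ is entire and decays rapidly as $|\Re(z)| \to \infty$ uniformly on any horizontal strip (because $|e^{-az^2}| = e^{-a(\Re z)^2 + a(\Im z)^2}$ and $\Im z$ is bounded on the strip), Cauchy's theorem applied to a tall rectangle with vertices at $\pm R$ and $\pm R + \pi i y/a$ gives, in the limit $R \to \infty$,
\begin{align*}
\int_{\RR} e^{-a(x + \pi i y/a)^2}\,dx \;=\; \int_\RR e^{-ax^2}\,dx \;=\; \sqrt{\pi/a} .
\end{align*}
This proves \eqref{eq:expFourier1d} for $y \in \RR$.

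To pass from real $y$ to complex $y$, I note that both sides of \eqref{eq:expFourier1d} are entire functions of $y$. The right-hand side is manifestly entire. For the left-hand side, the integrand $e^{-ax^2}\e{-xy}$ is holomorphic in $y$ for each fixed $x$, and on any compact set $K \subset \CC$ one has the uniform bound $|e^{-ax^2} \e{-xy}| \le e^{-ax^2 + 2\pi |x| C_K}$ with $C_K = \sup_{y \in K} |\Im y|$, which is integrable in $x$. Holomorphy then follows from Morera's theorem (or differentiation under the integral sign). Two entire functions that agree on $\RR$ agree everywhere, so \eqref{eq:expFourier1d} holds for all $y \in \CC$.

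The only step with any subtlety is the contour shift, which is routine once the rectangular contour and strip-decay estimate are in place; no further obstacles are anticipated.
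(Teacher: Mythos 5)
Your proof is correct, and it takes a more self-contained route than the paper's. The paper simply cites the special case $\int_\RR e^{-\pi x^2}\e{-xy}\,dx = e^{-\pi y^2}$ from Stein--Shakarchi and then reduces the general $a > 0$ case to it by the scaling substitution $u = \sqrt{a/\pi}\,x$; the contour shift and the analytic continuation in $y$ are thus outsourced to the reference. You instead complete the square directly in the exponent, carry out the rectangular contour shift yourself (with the correct decay estimate $|e^{-az^2}| = e^{-a((\Re z)^2 - (\Im z)^2)}$ controlling the vertical edges), and then extend from real to complex $y$ by Morera plus the identity theorem. Both approaches are valid; the paper's is shorter at the cost of a citation, while yours is fully elementary and makes explicit exactly the two analytic inputs (Cauchy's theorem on a strip, analytic continuation) that the cited result itself rests on. One small remark: since the rectangular-contour argument works just as well when $\pi i y/a$ is an arbitrary complex number (the endpoint of the shift is then not purely imaginary, but the same decay bound still kills the two short edges), you could have handled all $y \in \CC$ in one pass; splitting into real $y$ followed by analytic continuation is nevertheless a perfectly clean way to organize it and sidesteps having to describe the slanted rectangle.
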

\begin{proof}
It is well-known that 
\begin{align}
\int_\RR e^{- \pi x^2} \e{- x y} \ d x = e^{- \pi y^2} . \label{eq:FourierTransform exppix2}
\end{align}
(For example, see Example~1 and Exercise~4 of Chapter~2 of \cite{SSComplexAnalysis}.) Now
\begin{align*}
\int_\RR e^{- a x^2} \e{- x y} \ d x &= \int_\RR e^{- \pi \left(\sqrt{\frac{a}{\pi}} x \right)^2} \e{- \left(\sqrt{\frac{a}{\pi}} x\right) \left(\sqrt{\frac{\pi}{a}} y \right)} \ d x .
\end{align*}
Let $u = \sqrt{\frac{a}{\pi}} x$. Then
\begin{align*}
\int_\RR e^{- a x^2} \e{- x y} \ d x &= \sqrt{\frac{\pi}{a}} \int_\RR e^{- \pi u^2} \e{- u \sqrt{\frac{\pi}{a}} y} \ d u .
\end{align*}
We use \eqref{eq:FourierTransform exppix2} with the previous equation to conclude the result of the lemma.
\end{proof}

Fourier transform of the one-dimensional Gaussian function allows us to prove the following lemma about the Fourier transform of a multi-dimensional Gaussian function.
\begin{lem} \label{lem:expFouriermultid}
Suppose that $\bvec{a} = (a_1, \ldots , a_\numvars)^\top \in \RR^\numvars$ be such that $a_j > 0$ for each $1 \le j \le \numvars$. Then 
\begin{align}
\int_{\RR^\numvars} e^{-\sum_{j=1}^\numvars a_j x_j^2} \e{- \bvec{x} \cdot \bvec{y}} \ d x &= e^{-\sum_{j=1}^\numvars \pi^2 y_j^2 / a_j} \prod_{j=1}^\numvars \frac{\pi^{1/2}}{a_j^{1/2}} \label{eq:expFouriermultid}
\end{align}
for $\bvec{y} \in \CC^\numvars$.
\end{lem}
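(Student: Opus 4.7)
The plan is to reduce the multi-dimensional Fourier transform to a product of one-dimensional Fourier transforms by exploiting the complete separation of the exponent, and then to apply Lemma~\ref{lem:expFourier1d} coordinate by coordinate. Since both the Gaussian factor $e^{-\sum_{j=1}^\numvars a_j x_j^2}$ and the linear phase $\e{-\bvec{x} \cdot \bvec{y}} = \prod_{j=1}^\numvars \e{-x_j y_j}$ factor as products over the coordinates, the integrand is a product of functions each depending on a single variable.

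First I would write $e^{-\sum_{j=1}^\numvars a_j x_j^2} = \prod_{j=1}^\numvars e^{-a_j x_j^2}$ and $\e{-\bvec{x} \cdot \bvec{y}} = \prod_{j=1}^\numvars \e{-x_j y_j}$, and then invoke Fubini's theorem (justified by absolute integrability: the Gaussian decay in each variable is integrable, and multiplication by the unit-modulus factor $\e{-x_j y_j}$ only contributes $e^{2\pi x_j \Im(y_j)}$, which is dominated by the Gaussian) to rewrite
\begin{align*}
\int_{\RR^\numvars} e^{-\sum_{j=1}^\numvars a_j x_j^2} \e{-\bvec{x} \cdot \bvec{y}} \, d\bvec{x} = \prod_{j=1}^\numvars \int_\RR e^{-a_j x_j^2} \e{-x_j y_j} \, d x_j.
\end{align*}

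Next I would apply Lemma~\ref{lem:expFourier1d} with $a = a_j$ and $y = y_j$ (noting that the lemma is stated for $y \in \CC$, so allowing complex $y_j$ presents no difficulty) to each factor, obtaining
\begin{align*}
\int_\RR e^{-a_j x_j^2} \e{-x_j y_j} \, d x_j = \sqrt{\frac{\pi}{a_j}} e^{-\pi^2 y_j^2 / a_j}.
\end{align*}
Taking the product over $j = 1, \ldots, \numvars$ gives
\begin{align*}
\prod_{j=1}^\numvars \sqrt{\frac{\pi}{a_j}} e^{-\pi^2 y_j^2 / a_j} = e^{-\sum_{j=1}^\numvars \pi^2 y_j^2 / a_j} \prod_{j=1}^\numvars \frac{\pi^{1/2}}{a_j^{1/2}},
\end{align*}
which is exactly the right-hand side of \eqref{eq:expFouriermultid}.

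There is no real obstacle here — the entire proof is essentially a separation of variables followed by an application of the one-dimensional result. The only minor care needed is in justifying Fubini's theorem for complex $\bvec{y}$; the integrand is absolutely integrable because $\left| e^{-a_j x_j^2} \e{-x_j y_j} \right| = e^{-a_j x_j^2 + 2\pi x_j \Im(y_j)}$, which after completing the square is a shifted Gaussian and hence integrable on $\RR$.
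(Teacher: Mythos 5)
Your proof is correct and follows essentially the same route as the paper: factor the integrand over coordinates, invoke Fubini--Tonelli to reduce to a product of one-dimensional integrals, and apply Lemma~\ref{lem:expFourier1d} to each factor. The only difference is that you spell out the absolute-integrability justification for Fubini with complex $\bvec{y}$, which the paper leaves implicit.
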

\begin{proof}
Using the Fubini-Tonelli theorem, we obtain 
\begin{align*}
\int_{\RR^\numvars} e^{-\sum_{j=1}^\numvars a_j x_j^2} \e{- \bvec{x} \cdot \bvec{y}} \ d x &= \prod_{j=1}^\numvars \int_{\RR} e^{- a_j x_j^2} \e{-x_j y_j} \ d x_j .
\end{align*}
By Lemma~\ref{lem:expFourier1d}, the integral becomes
\begin{align*}
\int_{\RR^\numvars} e^{-\sum_{j=1}^\numvars z_j x_j^2} \e{- \bvec{x} \cdot \bvec{y}} \ d x &= \prod_{j=1}^\numvars \sqrt{\frac{\pi}{a_j}} e^{- \pi^2 y_j^2 / a_j} ,
\end{align*}
which is equal to \eqref{eq:expFouriermultid}.
\end{proof}

We can now apply Plancherel's theorem to obtain the following result.
\begin{lem} \label{lem:Plancherel expbump}
Let $D$ be an $\numvars \times \numvars$ diagonal matrix with real entries and rank~$\numvars$. Suppose that $\psi \in C_c^\infty (\RR^\numvars)$. Then 
\begin{align*}
\int_{\RR^\numvars} \e{\frac{1}{2} \bvec{x}^\top D \bvec{x}} \psi(\bvec{x}) \ d \bvec{x} &= e^{i \pi \sgnt(D)/4} |\det(D)|^{-1/2} \int_{\RR^\numvars} \e{- \frac{1}{2} \bvec{x}^\top D^{-1} \bvec{x}} \widehat{\psi}(\bvec{x}) \ d \bvec{x} .
\end{align*}
\end{lem}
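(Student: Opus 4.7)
The plan is to prove the identity by regularizing the non-integrable oscillatory factor $\e{\tfrac{1}{2}\bvec{x}^\top D\bvec{x}}$ with a Gaussian damping term, and then applying Fourier inversion together with dominated convergence.

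For $\varepsilon > 0$, I would define the regularized function
\[
H_\varepsilon(\bvec{x}) = e^{-\varepsilon \|\bvec{x}\|^2} \, \e{\tfrac{1}{2} \bvec{x}^\top D \bvec{x}},
\]
which lies in $L^1(\RR^\numvars)$. Since $D$ is diagonal, writing $D = \diag(d_1, \ldots, d_\numvars)$ gives $H_\varepsilon(\bvec{x}) = \prod_{j=1}^\numvars e^{-a_j x_j^2}$ with $a_j = \varepsilon - i\pi d_j$, so $\Re(a_j) = \varepsilon > 0$. The identity in Lemma~\ref{lem:expFouriermultid} extends by analytic continuation in each $a_j$ to the right half-plane $\Re(a_j) > 0$ (both sides are holomorphic and agree on the positive real axis); using the principal branch of the square root, this yields
\[
\widehat{H_\varepsilon}(\bvec{y}) = \prod_{j=1}^\numvars \frac{\pi^{1/2}}{(\varepsilon - i\pi d_j)^{1/2}} \exp\!\left(-\frac{\pi^2 y_j^2}{\varepsilon - i\pi d_j}\right).
\]

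Next, I would expand $\psi$ via Fourier inversion, $\psi(\bvec{x}) = \int_{\RR^\numvars} \widehat{\psi}(\bvec{y}) \e{\bvec{x} \cdot \bvec{y}} \, d\bvec{y}$, substitute into $\int H_\varepsilon \psi$, and swap the order of integration by Fubini (justified since $H_\varepsilon \in L^1$ and $\widehat{\psi}$ is Schwartz). The inner integral equals $\widehat{H_\varepsilon}(-\bvec{y})$, producing
\[
\int_{\RR^\numvars} H_\varepsilon(\bvec{x}) \psi(\bvec{x}) \, d\bvec{x} = \int_{\RR^\numvars} \widehat{H_\varepsilon}(-\bvec{y}) \, \widehat{\psi}(\bvec{y}) \, d\bvec{y}.
\]
I would then let $\varepsilon \to 0^+$ and apply dominated convergence on both sides. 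On the left, $|H_\varepsilon| \le 1$ and $\psi$ is compactly supported, so the limit is $\int \e{\tfrac{1}{2}\bvec{x}^\top D \bvec{x}} \psi(\bvec{x}) \, d\bvec{x}$. On the right, a direct computation with the principal branch gives the pointwise limit
\[
\lim_{\varepsilon \to 0^+} \widehat{H_\varepsilon}(-\bvec{y}) = e^{i\pi \sgnt(D)/4} \, |\det(D)|^{-1/2} \, \e{-\tfrac{1}{2} \bvec{y}^\top D^{-1} \bvec{y}},
\]
since each factor $\sqrt{\pi/(\varepsilon - i\pi d_j)}$ tends to $e^{i\pi \sgn(d_j)/4}/|d_j|^{1/2}$. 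A uniform bound $|\widehat{H_\varepsilon}(-\bvec{y})| \le |\det(D)|^{-1/2}$ follows from $|\varepsilon - i\pi d_j| \ge \pi |d_j|$ and $\Re\bigl(\pi^2 y_j^2/(\varepsilon - i\pi d_j)\bigr) \ge 0$, and combined with $\widehat{\psi} \in L^1$ this justifies dominated convergence, yielding the right-hand side of the claimed identity.

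The main delicacy is the bookkeeping of the square-root branches. One must split into the cases $d_j > 0$ and $d_j < 0$ (since $1/(\varepsilon - i\pi d_j)$ approaches the positive or negative imaginary axis accordingly) and verify that the principal branches align so that the individual phases sum exactly to $e^{i\pi \sgnt(D)/4}$. Once this phase accounting is carried out carefully, and the Fubini and dominated-convergence hypotheses are checked as above, the conclusion follows.
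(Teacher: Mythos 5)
Your argument is correct and shares the core mechanism of the paper's proof: damp the oscillatory Gaussian, compute its Fourier transform via (an analytic continuation of) Lemma~\ref{lem:expFouriermultid}, and pass to the boundary. The implementation differs in two respects. The paper first proves the pairing identity for real positive Gaussian parameters $z_j$ by citing Plancherel's theorem, and then extends the whole identity to the closed right half-plane by the identity principle before substituting $z_j = -i\pi d_j$; you instead derive the pairing $\int H_\varepsilon \psi = \int \widehat{H_\varepsilon}(-\cdot)\,\widehat{\psi}$ directly at the complex parameters $a_j = \varepsilon - i\pi d_j$ from Fourier inversion of $\psi$ plus Fubini, and then send $\varepsilon \to 0^+$ by dominated convergence rather than by analytic continuation of the identity. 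Both routes are sound, and yours is a bit more self-contained since it avoids invoking Plancherel and the identity principle in several variables. The point you flag as the main delicacy is exactly the right one to spell out, and your handling of it is correct: the uniform bound $|\widehat{H_\varepsilon}(\bvec{y})| \le |\det(D)|^{-1/2}$, following from $|\varepsilon - i\pi d_j| \ge \pi |d_j|$ together with $\Re\bigl(\pi^2 y_j^2/(\varepsilon - i\pi d_j)\bigr) \ge 0$, supplies an $L^1$ majorant against the Schwartz function $\widehat{\psi}$, and each factor $\sqrt{\pi/(\varepsilon - i\pi d_j)}$ tends under the principal branch to $e^{\pm i\pi/4} |d_j|^{-1/2}$ according to whether $d_j$ is positive or negative, so the phases assemble into $e^{i\pi \sgnt(D)/4}$ as required.
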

\begin{proof}
Plancherel's theorem states that 
\begin{align*}
\int_{\RR^\numvars} f(x) \lbar{g(x)} \ d x = \int_{\RR^\numvars} \widehat{f}(y) \lbar{\widehat{g}(y)} \ d y
\end{align*}
if $f, g \in L^1 (\RR^\numvars) \cap L^2 (\RR^\numvars)$. Let $\bvec{z} = (z_1, \ldots , z_\numvars)^\top \in \RR^\numvars$ be such that $z_j > 0$ for each $1 \le j \le \numvars$.
Plancherel's theorem with $f(\bvec{x}) = \psi(\bvec{x})$ and $g(\bvec{x}) = e^{-\sum_{j=1}^\numvars z_j x_j^2}$ gives us 
\begin{align}
\int_{\RR^\numvars} e^{-\sum_{j=1}^\numvars z_j x_j^2} \psi(\bvec{x}) \ d \bvec{x} &= \int_{\RR^\numvars} e^{-\sum_{j=1}^\numvars \pi^2 x_j^2 / z_j} \widehat{\psi}(\bvec{x}) \ d \bvec{x} \prod_{j=1}^\numvars \frac{\pi^{1/2}}{z_j^{1/2}} . \label{eq:ebump Plancherel}
\end{align}
(We used Lemma~\ref{lem:expFouriermultid} to compute the Fourier transform of $e^{-\sum_{j=1}^\numvars z_j x_j^2}$.)

The left-hand side of \eqref{eq:ebump Plancherel} can be extended to a analytic function for all $\bvec{z} \in \CC^\numvars$. 
Using the Lebesgue dominated convergence theorem, the right-hand side can be extended to a continuous function on 
\begin{align*}
S = \{ \bvec{z} \in \CC^\numvars : \bvec{z} \ne \bvec{0}, \Re(z_j) \ge 0 \text{ for all $1 \le j \le \numvars$}\}
\end{align*}
 that is analytic on the interior of $S$. 
It follows from the identity principle that \eqref{eq:ebump Plancherel} holds for all $\bvec{z} \in S$. 
(For the square root function, we use the principal branch cut of the logarithm function along the nonpositive real axis.) 
In particular, \eqref{eq:ebump Plancherel} holds when each $z_j = - i \pi d_j$, where $D = \diag(d_1 , \ldots , d_\numvars)$. Therefore, 
\begin{align*}
\int_{\RR^\numvars} \e{\frac{1}{2} \bvec{x}^\top D \bvec{x}} \psi(\bvec{x}) \ d \bvec{x} &= \int_{\RR^\numvars} e^{-\sum_{j=1}^\numvars \pi^2 x_j^2 / (- i \pi d_j)} \widehat{\psi}(\bvec{x}) \ d \bvec{x} \prod_{j=1}^\numvars \frac{\pi^{1/2}}{(- i \pi d_j)^{1/2}} \\
	&= \int_{\RR^\numvars} e^{- i \pi \sum_{j=1}^\numvars x_j^2 / d_j} \widehat{\psi}(\bvec{x}) \ d \bvec{x} \prod_{j=1}^\numvars (- i d_j)^{-1/2} \\
	&= e^{i \pi \sgnt(D)/4} |\det(D)|^{-1/2} \int_{\RR^\numvars} \e{- \frac{1}{2} \bvec{x}^\top D^{-1} \bvec{x}} \widehat{\psi}(\bvec{x}) \ d \bvec{x} . \qedhere
\end{align*}
\end{proof}

We are now in a position to give a proof for Theorem~\ref{thm:quadoscillatorybound}.
\begin{proof}[Proof of Theorem~\ref{thm:quadoscillatorybound}]
Using the spectral theorem for symmetric matrices, we can write the symmetric matrix $A$ as 
\begin{align*}
A = P^\top D P,
\end{align*}
where $P$ is an orthogonal matrix and $D$ is a diagonal matrix. Therefore, $P^\top = P^{-1}$, and 
\begin{align*}
&\int_{\RR^\numvars} \e{\frac{1}{2} \bvec{x}^\top A \bvec{x} + \bvec{b} \cdot \bvec{x} + c} \psi(\bvec{x}) \ d \bvec{x} \\
	&= \int_{\RR^\numvars} \e{\frac{1}{2} \bvec{x}^\top P^\top D P \bvec{x} + \bvec{b}^\top P^\top P \bvec{x} + c} \psi(P^{-1} P \bvec{x}) \ d \bvec{x} \\
	&= \e{c} \int_{\RR^\numvars} \e{\frac{1}{2} (P \bvec{x})^\top D P \bvec{x} + (P \bvec{b})^\top P \bvec{x}} \psi(P^{-1} P \bvec{x}) \ d \bvec{x} .
\end{align*}

Let $\bvec{y} = P \bvec{x}$. Because $P$ is orthogonal, we know that $|\det(P)| = 1$. Thus,
\begin{align*}
&\int_{\RR^\numvars} \e{\frac{1}{2} \bvec{x}^\top A \bvec{x} + \bvec{b} \cdot \bvec{x} + c} \psi(\bvec{x}) \ d \bvec{x} \\
	&= \e{c} \int_{\RR^\numvars} \e{\frac{1}{2} \bvec{y}^\top D \bvec{y} + (P \bvec{b})^\top \bvec{y}} \psi(P^{-1} \bvec{y}) \frac{1}{|\det(P)|} \ d \bvec{y} \\
	&= \e{c} \int_{\RR^\numvars} \e{\frac{1}{2} \bvec{y}^\top D \bvec{y} + (P \bvec{b})^\top \bvec{y}} \psi(P^{-1} \bvec{y}) \ d \bvec{y} .
\end{align*}
We now complete the square and see that 
\begin{align*}
&\int_{\RR^\numvars} \e{\frac{1}{2} \bvec{x}^\top A \bvec{x} + \bvec{b} \cdot \bvec{x} + c} \psi(\bvec{x}) \ d \bvec{x} \\
	&= \e{c - \frac{1}{2} \bvec{b}^\top P^\top D^{-1} P \bvec{b}} \int_{\RR^\numvars} \e{\frac{1}{2} (\bvec{y} + D^{-1} P \bvec{b})^\top D (\bvec{y} + D^{-1} P \bvec{b})} \psi(P^{-1} \bvec{y}) \ d \bvec{y} .
\end{align*}

Let $\bvec{z} = \bvec{y} + D^{-1} P \bvec{b}$. Then
\begin{align*}
&\int_{\RR^\numvars} \e{\frac{1}{2} \bvec{x}^\top A \bvec{x} + \bvec{b} \cdot \bvec{x} + c} \psi(\bvec{x}) \ d \bvec{x} \\
	&= \e{c - \frac{1}{2} \bvec{b}^\top P^\top D^{-1} P \bvec{b}} \int_{\RR^\numvars} \e{\frac{1}{2} \bvec{z}^\top D \bvec{z}} \psi(P^{-1} \bvec{z} - P^{-1} D^{-1} P \bvec{b}) \ d \bvec{z} . \numberthis \label{eq:quadint completedsquare}
\end{align*}

Now 
\begin{align}
A^{-1} 
	&= P^{-1} D^{-1} P = P^\top D^{-1} P \label{eq:A-1 PTD-1P}
\end{align}
since $P^{-1} = P^\top$.
We apply \eqref{eq:A-1 PTD-1P} to \eqref{eq:quadint completedsquare} and obtain
\begin{align*}
&\int_{\RR^\numvars} \e{\frac{1}{2} \bvec{x}^\top A \bvec{x} + \bvec{b} \cdot \bvec{x} + c} \psi(\bvec{x}) \ d \bvec{x} \\
	&= \e{c - \frac{1}{2} \bvec{b}^\top A^{-1} \bvec{b}} \int_{\RR^\numvars} \e{\frac{1}{2} \bvec{z}^\top D \bvec{z}} \psi(P^{-1} \bvec{z} - A^{-1} \bvec{b}) \ d \bvec{z} \\
	&= \e{c - \frac{1}{2} \bvec{b}^\top A^{-1} \bvec{b}} \int_{\RR^\numvars} \e{\frac{1}{2} \bvec{z}^\top D \bvec{z}} \varphi(\bvec{z}) \ d \bvec{z} , \numberthis \label{eq:quadint nice}
\end{align*}
where $\varphi(\bvec{z}) = \psi(P^{-1} \bvec{z} - A^{-1} \bvec{b})$ for $\bvec{z} \in \RR^\numvars$.

We apply Lemma~\ref{lem:Plancherel expbump} to \eqref{eq:quadint nice} to obtain
\begin{align*}
&\int_{\RR^\numvars} \e{\frac{1}{2} \bvec{x}^\top A \bvec{x} + \bvec{b} \cdot \bvec{x} + c} \psi(\bvec{x}) \ d \bvec{x} \\
	&= \e{c - \frac{1}{2} \bvec{b}^\top A^{-1} \bvec{b}} e^{i \pi \sgnt(D)/4} |\det(D)|^{-1/2} \int_{\RR^\numvars} \e{- \frac{1}{2} \bvec{z}^\top D^{-1} \bvec{z}} \widehat{\varphi}(\bvec{z}) \ d \bvec{z} . \numberthis \label{eq:quadint Plancherel}
\end{align*}

Before we bound \eqref{eq:quadint Plancherel}, we compute $\widehat{\varphi}$ in terms of $A$ and $\bvec{b}$.
\begin{lem} \label{lem:phiFourierTransform}
Suppose that $\varphi(\bvec{z}) = \psi(P^{-1} \bvec{z} - A^{-1} \bvec{b})$, where $A$, $P$, $\bvec{b}$, and $\psi$ are as above. Then
\begin{align}
\widehat{\varphi}(\bvec{w}) = \e{-(P A^{-1} \bvec{b})^\top \bvec{w}} \widehat{\psi}(P^\top \bvec{w}) \label{eq:phiFourierTransform}
\end{align}
for $\bvec{w} \in \RR^\numvars$.
\end{lem}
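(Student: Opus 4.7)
The plan is to compute $\widehat{\varphi}(\bvec{w})$ directly from the definition of the Fourier transform by making the affine change of variables that undoes the shift and linear distortion built into $\varphi$. Specifically, I would start from
\begin{align*}
\widehat{\varphi}(\bvec{w}) = \int_{\RR^\numvars} \psi(P^{-1} \bvec{z} - A^{-1} \bvec{b}) \, \e{- \bvec{z} \cdot \bvec{w}} \ d\bvec{z}
\end{align*}
and substitute $\bvec{u} = P^{-1} \bvec{z} - A^{-1} \bvec{b}$, so that $\bvec{z} = P(\bvec{u} + A^{-1} \bvec{b})$. Since $P$ is orthogonal we have $|\det(P)| = 1$, so the Jacobian factor is trivial and $d\bvec{z} = d\bvec{u}$.

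Next I would expand the phase. Using $\bvec{z} \cdot \bvec{w} = \bvec{z}^\top \bvec{w}$ and the substitution,
\begin{align*}
\bvec{z} \cdot \bvec{w} = (P(\bvec{u} + A^{-1}\bvec{b}))^\top \bvec{w} = \bvec{u}^\top P^\top \bvec{w} + (P A^{-1} \bvec{b})^\top \bvec{w} = \bvec{u} \cdot (P^\top \bvec{w}) + (P A^{-1} \bvec{b})^\top \bvec{w}.
\end{align*}
The second term is independent of $\bvec{u}$, so $\e{-(P A^{-1}\bvec{b})^\top \bvec{w}}$ factors out of the integral. What remains is exactly the Fourier transform of $\psi$ evaluated at $P^\top \bvec{w}$:
\begin{align*}
\widehat{\varphi}(\bvec{w}) = \e{-(P A^{-1}\bvec{b})^\top \bvec{w}} \int_{\RR^\numvars} \psi(\bvec{u}) \, \e{-\bvec{u} \cdot (P^\top \bvec{w})} \ d\bvec{u} = \e{-(P A^{-1}\bvec{b})^\top \bvec{w}} \widehat{\psi}(P^\top \bvec{w}),
\end{align*}
which is \eqref{eq:phiFourierTransform}.

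There is no real obstacle here; the lemma is a standard computation combining the translation rule and the linear-change-of-variables rule for the Fourier transform, simplified by the fact that $P$ is orthogonal so that $P^{-1} = P^\top$ and the Jacobian determinant is $1$. The only point that warrants a line of bookkeeping is transposing the linear part correctly so that $P^{-1}$ acting on $\bvec{z}$ inside $\psi$ reappears as $P^\top$ acting on $\bvec{w}$ inside $\widehat{\psi}$, and making sure the shift by $-A^{-1}\bvec{b}$ produces the modulation factor $\e{-(PA^{-1}\bvec{b})^\top \bvec{w}}$ rather than one involving $A^{-1}\bvec{b}$ directly.
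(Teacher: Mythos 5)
Your computation matches the paper's proof essentially line for line: you apply the definition of the Fourier transform, substitute $\bvec{u} = P^{-1}\bvec{z} - A^{-1}\bvec{b}$ (the paper writes $\bvec{v}$), use $|\det(P)| = 1$ for the Jacobian, expand the phase to factor out $\e{-(PA^{-1}\bvec{b})^\top\bvec{w}}$, and recognize the remaining integral as $\widehat{\psi}(P^\top\bvec{w})$. This is correct and is the same argument.
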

\begin{proof}[Proof of Lemma~\ref{lem:phiFourierTransform}]
By definition, the Fourier transform of $\varphi$ is
\begin{align*}
\widehat{\varphi}(\bvec{w}) &= \int_{\RR^\numvars} \psi(P^{-1} \bvec{z} - A^{-1} \bvec{b}) \e{- \bvec{z} \cdot \bvec{w}} \ d \bvec{z} .
\end{align*}
Let $\bvec{v} = P^{-1} \bvec{z} - A^{-1} \bvec{b}$. Then $\bvec{z} = P \bvec{v} + P A^{-1} \bvec{b}$, and 
\begin{align*}
\widehat{\varphi}(\bvec{w}) &= \int_{\RR^\numvars} \psi(\bvec{v}) \e{-(P \bvec{v} + P A^{-1} \bvec{b})^\top \bvec{w}} |\det(P)| \ d \bvec{v} \\
	&= \e{-(P A^{-1} \bvec{b})^\top \bvec{w}} \int_{\RR^\numvars} \psi(\bvec{v}) \e{- \bvec{v}^\top P^\top \bvec{w}} \ d \bvec{v} 
\end{align*}
since $P$ is orthogonal and $|\det(P)| = 1$. 
We obtain \eqref{eq:phiFourierTransform} by noting that 
\begin{align*}
\int_{\RR^\numvars} \psi(\bvec{v}) \e{- \bvec{v}^\top P^\top \bvec{w}} \ d \bvec{v} = \widehat{\psi}(P^\top \bvec{w}) . 
\end{align*}
\end{proof}

We are now able to estimate $\int_{\RR^\numvars} \e{\frac{1}{2} \bvec{x}^\top A \bvec{x} + \bvec{b} \cdot \bvec{x} + c} \psi(\bvec{x}) \ d \bvec{x}$. Using Lemma~\ref{lem:phiFourierTransform} in \eqref{eq:quadint Plancherel}, we obtain 
\begin{align*}
&\int_{\RR^\numvars} \e{\frac{1}{2} \bvec{x}^\top A \bvec{x} + \bvec{b} \cdot \bvec{x} + c} \psi(\bvec{x}) \ d \bvec{x} \\
	&= \e{c - \frac{1}{2} \bvec{b}^\top A^{-1} \bvec{b}} e^{i \pi \sgnt(D)/4} |\det(D)|^{-1/2} \\
		&\qquad \times \int_{\RR^\numvars} \e{- \frac{1}{2} \bvec{z}^\top D^{-1} \bvec{z}} \e{-(P A^{-1} \bvec{b})^\top \bvec{z}} \widehat{\psi}(P^\top \bvec{z}) \ d \bvec{z} .
\end{align*}
We take absolute values of both sides and see that
\begin{align*}
\left|\int_{\RR^\numvars} \e{\frac{1}{2} \bvec{x}^\top A \bvec{x} + \bvec{b} \cdot \bvec{x} + c} \psi(\bvec{x}) \ d \bvec{x} \right| 
	&\le |\det(D)|^{-1/2} \int_{\RR^\numvars} \left| \widehat{\psi}(P^\top \bvec{z}) \right| \ d \bvec{z} . \numberthis \label{eq:quadint boundwithconstants}
\end{align*}

Let $\bvec{w} = P^\top \bvec{z}$. Then $\bvec{z} = P \bvec{w}$ (because $P^{-1} = P^\top$), and 
\begin{align*}
\int_{\RR^\numvars} \left| \widehat{\psi}(P^\top \bvec{z}) \right| \ d \bvec{z} &= \int_{\RR^\numvars} \left| \widehat{\psi}(\bvec{w}) \right| |\det(P)| \ d \bvec{w} \\
	&= \int_{\RR^\numvars} \left| \widehat{\psi}(\bvec{w}) \right| \ d \bvec{w} \numberthis \label{eq:psihatL1bound}
\end{align*}
since $|\det(P)| = 1$. Substituting \eqref{eq:psihatL1bound} into \eqref{eq:quadint boundwithconstants}, we obtain
\begin{align*}
\left|\int_{\RR^\numvars} \e{\frac{1}{2} \bvec{x}^\top A \bvec{x} + \bvec{b} \cdot \bvec{x} + c} \psi(\bvec{x}) \ d \bvec{x} \right| &\le |\det(D)|^{-1/2} \int_{\RR^\numvars} \left| \widehat{\psi}(\bvec{w}) \right| \ d \bvec{w} .
\end{align*}
Because $A = P^\top D P$ and $|\det(P)| = 1$, we know that $\det(A) = \det(D)$. Therefore,
\begin{align*}
\left|\int_{\RR^\numvars} \e{\frac{1}{2} \bvec{x}^\top A \bvec{x} + \bvec{b} \cdot \bvec{x} + c} \psi(\bvec{x}) \ d \bvec{x} \right| &\le |\det(A)|^{-1/2} \int_{\RR^\numvars} \left| \widehat{\psi}(\bvec{w}) \right| \ d \bvec{w} ,
\end{align*}
which shows that \eqref{ineq:quadPSP} is true with an implied constant of $\int_{\RR^\numvars} \left| \widehat{\psi}(w) \right| \ d w$.
\end{proof}

\subsection{Applying bounds for oscillatory integrals} \label{sec:applyoscillatoryintbounds}

In this subsection, we apply the results of Subsections~\ref{sec:PNSP} and \ref{sec:oscillatoryintbound} to the archimedean part $\mathcal{I}_{F, \Upspsi} (x, \coordsize, \bvec{r}, q)$.
To make the results of Subsections~\ref{sec:PNSP} and \ref{sec:oscillatoryintbound} applicable to the archimedean part, we normalize our bump function so that it remains same regardless the value of $\coordsize$. We do this by applying a change a variables ($\bvec{m} \mapsto \coordsize \bvec{m}$) in \eqref{eq:IFx\coordsize rq} obtain the following:
\begin{align*}
\mathcal{I}_{F, \Upspsi} (x, \coordsize, \bvec{r}, q) &= \coordsize^\numvars \int_{\RR^\numvars} \e{x F(\coordsize \bvec{m}) - \frac{1}{q} \coordsize \bvec{m} \cdot \bvec{r}} \Upspsi_\coordsize (\coordsize \bvec{m}) \ d \bvec{m} \\
	&= \coordsize^\numvars \int_{\RR^\numvars} \e{x F(\coordsize \bvec{m}) - \frac{1}{q} \coordsize \bvec{m} \cdot \bvec{r}} \Upspsi_1 \left( \frac{\coordsize}{\coordsize} \bvec{m} \right) \ d \bvec{m} \\
	&= \coordsize^\numvars \int_{\RR^\numvars} \e{\coordsize^2 x F(\bvec{m}) - \frac{1}{q} \coordsize \bvec{m} \cdot \bvec{r}} \Upspsi(\bvec{m}) \ d \bvec{m} . \numberthis \label{eq:IFx\coordsize rq const bump}
\end{align*}

The following lemma gives a trivial bound for $\mathcal{I}_{F, \Upspsi} (x, \coordsize, \bvec{r}, q)$. 
\begin{lem} \label{lem:IFx\coordsize rq trivial bound}
Suppose that $x \in \RR$, $\coordsize >0$, $\bvec{r} \in \ZZ^\numvars$, and $q$ is a positive integer. Then 
\begin{align}
\mathcal{I}_{F, \Upspsi} (x, \coordsize, \bvec{r}, q) &\ll_{\Upspsi} \coordsize^\numvars . \label{ineq:IFx\coordsize rq trivial bound}
\end{align}
\end{lem}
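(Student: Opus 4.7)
The plan is to observe that this is a trivial bound coming from the fact that $|\e{\theta}| = 1$ for all real $\theta$, combined with the compactness of $\supp(\Upspsi)$. Concretely, I would start from the normalized representation derived immediately above the lemma, namely
\begin{align*}
\mathcal{I}_{F, \Upspsi} (x, \coordsize, \bvec{r}, q) &= \coordsize^\numvars \int_{\RR^\numvars} \e{\coordsize^2 x F(\bvec{m}) - \tfrac{1}{q} \coordsize \bvec{m} \cdot \bvec{r}} \Upspsi(\bvec{m}) \ d \bvec{m} .
\end{align*}
The phase $\coordsize^2 x F(\bvec{m}) - \frac{1}{q} \coordsize \bvec{m} \cdot \bvec{r}$ is real for $\bvec{m} \in \RR^\numvars$, so the exponential factor has modulus $1$.

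Taking absolute values inside the integral therefore gives
\begin{align*}
|\mathcal{I}_{F, \Upspsi} (x, \coordsize, \bvec{r}, q)| &\le \coordsize^\numvars \int_{\RR^\numvars} |\Upspsi(\bvec{m})| \ d \bvec{m} .
\end{align*}
Since $\Upspsi \in C_c^\infty(\RR^\numvars)$, the function $|\Upspsi|$ is continuous and supported on the compact set $\{ \bvec{m} : \| \bvec{m} \| \le \Upsuppvar_\Upspsi \}$, so the integral $\int_{\RR^\numvars} |\Upspsi(\bvec{m})| \ d \bvec{m}$ is a finite constant depending only on $\Upspsi$. This yields the desired bound with implied constant $\int_{\RR^\numvars} |\Upspsi(\bvec{m})| \ d \bvec{m}$.

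There is no real obstacle here; the purpose of this lemma appears to be to record the elementary baseline estimate that will be compared against, and superseded by, the nonstationary-phase and stationary-phase bounds developed from Theorems~\ref{thm:PNSP1d} and \ref{thm:quadoscillatorybound} in the subsequent lemmas of this subsection. The only subtlety worth mentioning is that the bound holds uniformly in $x$, $\bvec{r}$, and $q$, which is immediate from the argument above because those parameters enter only through the unit-modulus exponential.
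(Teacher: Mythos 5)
Your proof is correct and follows essentially the same route as the paper: start from the normalized expression \eqref{eq:IFx\coordsize rq const bump}, take absolute values using $|\e{\theta}|=1$, and bound by $\coordsize^\numvars \int_{\RR^\numvars} |\Upspsi(\bvec{m})|\,d\bvec{m}$, which is finite since $\Upspsi$ is a compactly supported continuous function. The paper's proof is a one-line version of exactly this argument.
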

\begin{proof}
By \eqref{eq:IFx\coordsize rq const bump}, we know that 
\begin{align*}
|\mathcal{I}_{F, \Upspsi} (x, \coordsize, \bvec{r}, q)| &\le \coordsize^\numvars \int_{\RR^\numvars}| \Upspsi(\bvec{m}) | \ d \bvec{m} \ll_{\Upspsi} \coordsize^\numvars . \qedhere 
\end{align*}
\end{proof}

We will need less trivial bounds for $\mathcal{I}_{F, \Upspsi} (x, \coordsize, \bvec{r}, q)$. 
We can apply Theorem~\ref{thm:quadoscillatorybound} to the integral in \eqref{eq:IFx\coordsize rq const bump} to obtain the following bound on $\mathcal{I}_{F, \Upspsi} (x, \coordsize, \bvec{r}, q)$.
\begin{thm} \label{thm:IFx\coordsize rq PSP bound}
Suppose that $x \in \RR$, $\coordsize >0$, $\bvec{r} \in \ZZ^\numvars$, and $q$ is a positive integer. Then 
\begin{align}
\mathcal{I}_{F, \Upspsi} (x, \coordsize, \bvec{r}, q) &\ll_{\Upspsi} \min\left\{ \coordsize^\numvars , |x|^{-\numvars/2} |\det(A)|^{-1/2} \right\} . \label{ineq:IFx\coordsize rq PSP bound combo}
\end{align}
\end{thm}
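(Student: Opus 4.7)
The plan is to combine the trivial bound from Lemma~\ref{lem:IFx\coordsize rq trivial bound} with a second bound obtained directly from Theorem~\ref{thm:quadoscillatorybound}. Since the first term $\coordsize^\numvars$ in the minimum is already provided by Lemma~\ref{lem:IFx\coordsize rq trivial bound}, the only real content is to establish
\begin{align*}
\mathcal{I}_{F, \Upspsi} (x, \coordsize, \bvec{r}, q) &\ll_{\Upspsi} |x|^{-\numvars/2} |\det(A)|^{-1/2}
\end{align*}
for $x \ne 0$; the $x = 0$ case is handled automatically because $|x|^{-\numvars/2} = \infty$ makes the minimum equal to the trivial bound.

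To obtain the second bound, I would start from the normalized form \eqref{eq:IFx\coordsize rq const bump} and rewrite the phase as a quadratic polynomial in the integration variable. Using $F(\bvec{m}) = \tfrac{1}{2} \bvec{m}^\top A \bvec{m}$, the exponent becomes
\begin{align*}
\coordsize^2 x F(\bvec{m}) - \frac{\coordsize}{q} \bvec{m} \cdot \bvec{r} &= \frac{1}{2} \bvec{m}^\top (\coordsize^2 x A) \bvec{m} + \bvec{b} \cdot \bvec{m} ,
\end{align*}
where $\bvec{b} = -\frac{\coordsize}{q} \bvec{r}$. Since $A$ is nonsingular and $x \ne 0$, the symmetric matrix $\coordsize^2 x A$ is the Hessian of a nonsingular quadratic form, so Theorem~\ref{thm:quadoscillatorybound} applies with this Hessian and yields
\begin{align*}
\int_{\RR^\numvars} \e{\coordsize^2 x F(\bvec{m}) - \tfrac{\coordsize}{q} \bvec{m} \cdot \bvec{r}} \Upspsi(\bvec{m}) \ d \bvec{m} &\ll_{\Upspsi} |\det(\coordsize^2 x A)|^{-1/2} = \coordsize^{-\numvars} |x|^{-\numvars/2} |\det(A)|^{-1/2} .
\end{align*}
Multiplying by the prefactor $\coordsize^\numvars$ from \eqref{eq:IFx\coordsize rq const bump} cancels the $\coordsize^{-\numvars}$, yielding the desired bound $|x|^{-\numvars/2} |\det(A)|^{-1/2}$.

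Combining this with Lemma~\ref{lem:IFx\coordsize rq trivial bound} gives \eqref{ineq:IFx\coordsize rq PSP bound combo}. There is no real obstacle here, since the hard analytic work is all contained in Theorem~\ref{thm:quadoscillatorybound}; the only minor point to check is that the implied constant in Theorem~\ref{thm:quadoscillatorybound} depends only on the bump function $\Upspsi$ and not on the Hessian matrix $\coordsize^2 x A$ (this is visible from the end of the proof of Theorem~\ref{thm:quadoscillatorybound}, where the implied constant was $\int_{\RR^\numvars} |\widehat{\Upspsi}(\bvec{w})| \, d\bvec{w}$, which depends only on $\Upspsi$).
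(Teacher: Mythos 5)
Your proposal is correct and follows essentially the same route as the paper: apply Theorem~\ref{thm:quadoscillatorybound} to the normalized integral in \eqref{eq:IFx\coordsize rq const bump} with Hessian $\coordsize^2 x A$, simplify $|\det(\coordsize^2 x A)|^{-1/2}$, multiply by the prefactor $\coordsize^\numvars$, and combine with the trivial bound from Lemma~\ref{lem:IFx\coordsize rq trivial bound}. Your observation that the implied constant in Theorem~\ref{thm:quadoscillatorybound} is $\int_{\RR^\numvars} |\widehat{\Upspsi}(\bvec{w})| \, d\bvec{w}$, hence independent of the Hessian, is exactly the point that makes the bound uniform in $x$, $\coordsize$, and $\bvec{r}$.
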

\begin{proof}
By Theorem~\ref{thm:quadoscillatorybound}, the integral in \eqref{eq:IFx\coordsize rq const bump} is 
\begin{align*}
\int_{\RR^\numvars} \e{\coordsize^2 x F(\bvec{m}) - \frac{1}{q} \coordsize \bvec{m} \cdot \bvec{r}} \Upspsi(\bvec{m}) \ d \bvec{m} &\ll_{\Upspsi} | \det(\coordsize^2 x A) |^{-1/2} \\
	&= | \coordsize^{2 \numvars} x^\numvars \det(A) |^{-1/2} . \numberthis \label{ineq:IFx\coordsize rq PSP bound1}
\end{align*}
We notice that $\coordsize>0$, so \eqref{ineq:IFx\coordsize rq PSP bound1} implies that 
\begin{align*}
\int_{\RR^\numvars} \e{\coordsize^2 x F(\bvec{m}) - \frac{1}{q} \coordsize \bvec{m} \cdot \bvec{r}} \Upspsi(\bvec{m}) \ d \bvec{m} &\ll_{\Upspsi} \coordsize^{- \numvars} |x|^{-\numvars/2} |\det(A)|^{-1/2} .
\end{align*}
We multiply by $\coordsize^\numvars$ to obtain 
\begin{align}
\mathcal{I}_{F, \Upspsi} (x, \coordsize, \bvec{r}, q) &\ll_{\Upspsi} |x|^{-\numvars/2} |\det(A)|^{-1/2} . \label{ineq:IFx\coordsize rq PSP bound2}
\end{align}

The statement \eqref{ineq:IFx\coordsize rq PSP bound combo} is obtained by taking the minimum of \eqref{ineq:IFx\coordsize rq trivial bound} and \eqref{ineq:IFx\coordsize rq PSP bound2}. 
\end{proof}

To obtain better estimates on sums and integrals involving $\mathcal{I}_{F, \Upspsi} (x, \coordsize, \bvec{r}, q)$, we need to determine when we can apply Theorem~\ref{thm:PNSP1d}. To do this, we use partial derivatives and directional derivatives. We use the notation $\frac{\partial f}{\partial x_j}(\bvec{m})$ to denote the the partial derivative in the $j$th coordinate evaluated at the point $\bvec{m}$. We now define a directional derivative.

\begin{defn}[Directional derivative] \label{defn:directionalderiv}
For a unit vector $\bvec{u} \in \RR^\numvars$ and a differentiable function $f \colon \RR^\numvars \to \RR$, 
define the directional derivative $\nabla_{\bvec{u}} f$ of $f$ along $\bvec{u}$ to be
\begin{align*}
\nabla_{\bvec{u}} f = \bvec{u} \cdot (\nabla f) ,
\end{align*}
where $\nabla f$ is the gradient of $f$.
\end{defn}
\begin{rmk}
The quantity $\frac{\partial f}{\partial x_j}(\bvec{m})$ is equal to $\nabla_{\bvec{e}_j} f(\bvec{m})$, where $\bvec{e}_j \in \RR^\numvars$ is the unit vector whose $j$th entry is equal to $1$ and is the only nonzero entry of $\bvec{e}_j$.
\end{rmk}

Directional derivatives allow us to take repeatedly the derivative of a function $f$ in a particular direction. This is needed for Theorem~\ref{thm:PNSP1d} to apply. 
We use directional derivatives, partial derivatives, and Theorem~\ref{thm:PNSP1d} to prove the following theorem.
\begin{thm} \label{thm:IFx\coordsize rq PNSP bound in terms of r}
Suppose that there exists an integer $j$ with $1 \le j \le \numvars$ such that 
\begin{align}
| r_j | \ge q \coordsize |x| \sigma_1 (\Upsuppvar_\Upspsi + 1) . \label{ineq:rjPNSP cond}
\end{align}
Then 
\begin{align}
\mathcal{I}_{F, \Upspsi} (x, \coordsize, \bvec{r}, q) &\ll_{M, \Upspsi} \min \left\{ \coordsize^\numvars , \coordsize^{\numvars-M} \left( \frac{1}{q (\Upsuppvar_\Upspsi + 1) \sqrt{\numvars}} \| \bvec{r} \| \right)^{-M} \right\}
\end{align}
for all $M \ge 0$. 
\end{thm}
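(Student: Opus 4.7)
The plan is to combine the trivial bound from Lemma~\ref{lem:IFx\coordsize rq trivial bound} with the one-dimensional principle of nonstationary phase (Theorem~\ref{thm:PNSP1d}) applied along a carefully chosen coordinate direction. Using \eqref{eq:IFx\coordsize rq const bump}, I write
\begin{align*}
\mathcal{I}_{F, \Upspsi}(x, \coordsize, \bvec{r}, q) = \coordsize^\numvars \int_{\RR^\numvars} \e{\phi(\bvec{m})} \Upspsi(\bvec{m}) \, d\bvec{m}, \qquad \phi(\bvec{m}) = \coordsize^2 x F(\bvec{m}) - \frac{\coordsize}{q}\bvec{m}\cdot\bvec{r}.
\end{align*}
Let $j^*$ be an index achieving $|r_{j^*}| = \max_{1 \le j \le \numvars} |r_j|$. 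Since some $j$ satisfies the hypothesis \eqref{ineq:rjPNSP cond} and $|r_{j^*}| \ge |r_j|$, the index $j^*$ also satisfies \eqref{ineq:rjPNSP cond}, and moreover $|r_{j^*}| \ge \|\bvec{r}\|/\sqrt{\numvars}$.

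Next I would verify the hypotheses of Theorem~\ref{thm:PNSP1d} applied to $\phi$ viewed as a function of $m_{j^*}$ alone, with the remaining coordinates held fixed. Since $F$ is quadratic,
\begin{align*}
\frac{\partial \phi}{\partial m_{j^*}}(\bvec{m}) = \coordsize^2 x (A\bvec{m})_{j^*} - \frac{\coordsize\, r_{j^*}}{q}.
\end{align*}
For $\bvec{m} \in \supp(\Upspsi)$ we have $|(A\bvec{m})_{j^*}| \le \|A\bvec{m}\| \le \sigma_1\Upsuppvar_\Upspsi$, so the first term has absolute value at most $\coordsize^2|x|\sigma_1\Upsuppvar_\Upspsi$, while the hypothesis forces $|\coordsize r_{j^*}/q| \ge \coordsize^2|x|\sigma_1(\Upsuppvar_\Upspsi+1)$. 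The ratio of these quantities is at most $\Upsuppvar_\Upspsi/(\Upsuppvar_\Upspsi+1)$, so a reverse-triangle estimate gives
\begin{align*}
\left|\frac{\partial \phi}{\partial m_{j^*}}(\bvec{m})\right| \ge \frac{\coordsize|r_{j^*}|}{q(\Upsuppvar_\Upspsi+1)} \ge \frac{\coordsize\|\bvec{r}\|}{q(\Upsuppvar_\Upspsi+1)\sqrt{\numvars}} =: B.
\end{align*}
The second partial $\partial^2\phi/\partial m_{j^*}^2 = \coordsize^2 x\, a_{j^*j^*}$ is bounded in absolute value by $\coordsize^2|x|\sigma_1$ (since $|a_{jj}| \le \|A\|_2 = \sigma_1$), and the weaker subtraction bound $|\partial\phi/\partial m_{j^*}| \ge |\coordsize r_{j^*}/q| - \coordsize^2|x|\sigma_1\Upsuppvar_\Upspsi \ge \coordsize^2|x|\sigma_1$ shows that this is $\le |\partial\phi/\partial m_{j^*}|$. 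All higher partials in $m_{j^*}$ vanish, so the remaining hypotheses of Theorem~\ref{thm:PNSP1d} hold trivially.

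I would then apply Theorem~\ref{thm:PNSP1d} to the innermost one-variable integral (in $m_{j^*}$), obtaining an $O_{\Upspsi, M}(B^{-M})$ bound uniformly in the other coordinates, and integrate over the remaining $\numvars-1$ coordinates against $\supp(\Upspsi)$ to pick up an $O_{\Upspsi}(1)$ factor. Multiplying by $\coordsize^\numvars$ yields
\begin{align*}
\mathcal{I}_{F, \Upspsi}(x, \coordsize, \bvec{r}, q) \ll_{\Upspsi, M} \coordsize^\numvars B^{-M} = \coordsize^{\numvars-M}\left(\frac{\|\bvec{r}\|}{q(\Upsuppvar_\Upspsi+1)\sqrt{\numvars}}\right)^{-M},
\end{align*}
and taking the minimum with the trivial bound from Lemma~\ref{lem:IFx\coordsize rq trivial bound} finishes the proof. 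The main obstacle will be bookkeeping the $\Upspsi$-dependence of the implied constant when threading the 1D PNSP through the $\numvars$-dimensional integral: one must confirm that the derivatives of $\Upspsi$ in the $m_{j^*}$-direction produced by the underlying integration by parts are majorized by quantities that are uniformly bounded and compactly supported in the transverse variables, so that the outer $(\numvars-1)$-dimensional integration only contributes a constant depending on $\Upspsi$ (and not on $\bvec{r}$, $x$, $q$, $\coordsize$, or $F$).
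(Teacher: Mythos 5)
Your proposal is correct and follows essentially the same route as the paper's proof: pick a coordinate $j^*$ maximizing $|r_j|$ (which both satisfies the hypothesis and obeys $|r_{j^*}| \ge \|\bvec{r}\|/\sqrt{\numvars}$), verify the lower bound on the first partial derivative and the upper bound on the second partial via $\|A\bvec{m}\| \le \sigma_1 \Upsuppvar_\Upspsi$, apply the one-dimensional principle of nonstationary phase in the $m_{j^*}$ slice, and integrate over the compactly supported transverse variables. The one cosmetic difference is that you bound $|a_{j^*j^*}|$ directly by $\sigma_1$, whereas the paper routes through directional second derivatives and the quadratic-form bound $|\bvec{u}^\top A\bvec{u}| \le \sigma_1$; both are instances of the same linear-algebra fact.
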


The proof of Theorem~\ref{thm:IFx\coordsize rq PNSP bound in terms of r} uses the following lemma that gives an upper bound for $\bvec{u}^\top B \bvec{u}$ when $B$ is a symmetric matrix and $\bvec{u}$ is a unit vector. Proofs of this lemma can be found in linear algebra textbooks, such as \cite{LeonLinAlg}. (See Theorem~7.4.3 in \cite{LeonLinAlg}.)
\begin{lem} \label{lem:wtBw upperbound}
Suppose $\numvars$ is a positive integer. 
If $B \in \Mat_\numvars (\RR)$ is a symmetric $\numvars \times \numvars$ matrix and $\bvec{w} \in \RR^\numvars$, then
\begin{align*}
| \bvec{w}^\top B \bvec{w} | \le \sigma_1 \| \bvec{w} \|^2 .
\end{align*}
\end{lem}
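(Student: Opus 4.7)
The plan is to invoke the spectral theorem for real symmetric matrices to diagonalize $B$, then exploit orthogonality to reduce to a coordinate-wise inequality. Since $B \in \Mat_\numvars(\RR)$ is symmetric, we can write $B = P^\top D P$, where $P$ is an orthogonal $\numvars \times \numvars$ matrix (so $P^\top = P^{-1}$ and $\|P\bvec{w}\| = \|\bvec{w}\|$) and $D = \diag(\lambda_1, \ldots, \lambda_\numvars)$ collects the real eigenvalues of $B$. Because $B$ is symmetric, its singular values are the absolute values of its eigenvalues, so $\sigma_1 = \max_{1 \le j \le \numvars} |\lambda_j|$.

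Setting $\bvec{v} = P \bvec{w}$, I would then compute
\begin{align*}
\bvec{w}^\top B \bvec{w} &= \bvec{w}^\top P^\top D P \bvec{w} = \bvec{v}^\top D \bvec{v} = \sum_{j=1}^{\numvars} \lambda_j v_j^2.
\end{align*}
The triangle inequality and the bound $|\lambda_j| \le \sigma_1$ yield
\begin{align*}
| \bvec{w}^\top B \bvec{w} | \le \sum_{j=1}^{\numvars} |\lambda_j| v_j^2 \le \sigma_1 \sum_{j=1}^{\numvars} v_j^2 = \sigma_1 \| \bvec{v} \|^2 = \sigma_1 \| \bvec{w} \|^2,
\end{align*}
where the final equality uses that $P$ preserves the Euclidean norm. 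This completes the proof. There is no real obstacle here; the only subtlety worth flagging is the identification of $\sigma_1$ with $\max_j |\lambda_j|$, which is immediate from symmetry of $B$ (so that $B^\top B = B^2$ has eigenvalues $\lambda_j^2$, whence the singular values are $|\lambda_j|$).
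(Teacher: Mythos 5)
Your proof is correct, and it is the standard spectral-theorem argument; the paper itself omits a proof of this lemma and simply cites a linear algebra textbook (Theorem~7.4.3 in \cite{LeonLinAlg}). Your identification of $\sigma_1$ with $\max_j |\lambda_j|$ via $B^\top B = B^2$ is exactly the right reading of the statement and is consistent with how the lemma is later applied in the paper (e.g.\ to $B = A^\top A$, whose largest singular value is $\sigma_1^2$).
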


Now that we have Lemma~\ref{lem:wtBw upperbound}, we prove Theorem~\ref{thm:IFx\coordsize rq PNSP bound in terms of r}.
\begin{proof}[Proof of Theorem~\ref{thm:IFx\coordsize rq PNSP bound in terms of r}]
Suppose that $\bvec{r}$ has an entry $r_j$ that satisfies \eqref{ineq:rjPNSP cond}. Set $j$ with $1 \le j \le \numvars$ to be such that $r_j \ge r_k$ for all $1 \le k \le \numvars$. Then $r_j$ satisfies \eqref{ineq:rjPNSP cond} and 
\begin{align}
| r_j | &\ge \frac{1}{\sqrt{\numvars}} \| \bvec{r} \| . \label{ineq:rj&r}
\end{align}

Lemma~\ref{lem:IFx\coordsize rq trivial bound} says that 
\begin{align*}
\mathcal{I}_{F, \Upspsi} (x, \coordsize, \bvec{r}, q) &\ll_{\Upspsi} \coordsize^\numvars ,
\end{align*}
so it suffices to prove that 
\begin{align}
\mathcal{I}_{F, \Upspsi} (x, \coordsize, \bvec{r}, q) &\ll_{M, \Upspsi} \coordsize^{\numvars-M} \left( \frac{1}{q (\Upsuppvar_\Upspsi + 1) \sqrt{\numvars}} \| \bvec{r} \| \right)^{-M} . \label{ineq:IFx\coordsize rq PNSP bound1}
\end{align}

From \eqref{eq:IFx\coordsize rq const bump}, we have 
\begin{align*}
\mathcal{I}_{F, \Upspsi} (x, \coordsize, \bvec{r}, q) &= \coordsize^\numvars \int_{\RR^\numvars} \e{f(\bvec{m})} \Upspsi(\bvec{m}) \ d \bvec{m} , 
\end{align*}
where 
\begin{align}
f(\bvec{m}) &= \coordsize^2 x F(\bvec{m}) - \frac{1}{q} \coordsize \bvec{m} \cdot \bvec{r} . \label{eq:f in PNSParg}
\end{align}
Therefore, to prove \eqref{ineq:IFx\coordsize rq PNSP bound1}, it suffices to show that
\begin{align}
\int_{\RR^\numvars} \e{f(\bvec{m})} \Upspsi(\bvec{m}) \ d \bvec{m} &\ll_{M, \Upspsi} \left( \frac{\coordsize}{q (\Upsuppvar_\Upspsi + 1) \sqrt{\numvars}} \| \bvec{r} \| \right)^{-M} . \label{ineq:efbumpPNSPbound1}
\end{align}

We will use partial derivatives, directional derivatives, and Theorem~\ref{thm:PNSP1d} to prove \eqref{ineq:efbumpPNSPbound1}.
In order to apply Theorem~\ref{thm:PNSP1d}, we show that for all $\bvec{m} \in \supp(\Upspsi)$, we have 
\begin{align}
\left| \frac{\partial f}{\partial x_j}(\bvec{m}) \right| \ge \frac{\coordsize}{q (\Upsuppvar_\Upspsi + 1) \sqrt{\numvars}} \| \bvec{r} \| \label{ineq:PNSPffirstderiv bound}
\end{align}
and 
\begin{align*}
\left| \frac{\partial^k f}{\partial x_j^k}(\bvec{m}) \right| \le \left| \frac{\partial f}{\partial x_j}(\bvec{m}) \right|
\end{align*}
 for all $k \ge 2$.
Because $f$ is a quadratic polynomial in terms of the $m_j$, it suffices to show that for all $\bvec{m} \in \supp(\Upspsi)$, we have \eqref{ineq:PNSPffirstderiv bound} holding and 
\begin{align}
\left| \frac{\partial^2 f}{\partial x_j^2}(\bvec{m}) \right| \le \left| \frac{\partial f}{\partial x_j}(\bvec{m}) \right| . \label{ineq:PNSPfderivs bound}
\end{align}

Let $\bvec{u} \in \RR^\numvars$ be a unit vector. We would like to compute the directional derivative $\nabla_{\bvec{u}} f$. In order to do this, we need to know the gradient of $f$, which is stated in the next lemma that can be proved with a standard gradient computation.
\begin{lem} \label{lem:gradf in PNSParg}
Let $f$ be as defined in \eqref{eq:f in PNSParg}. 
The gradient of $f$ is 
\begin{align}
\nabla f(\bvec{m}) &= \coordsize^2 x A \bvec{m} - \frac{1}{q} \coordsize \bvec{r} . \label{eq:gradf in PNSParg}
\end{align}
In particular, the partial derivative $\frac{\partial f}{\partial x_j}(\bvec{m})$ in the $j$th coordinate is
\begin{align}
\frac{\partial f}{\partial x_j}(\bvec{m}) &= \coordsize^2 x \sum_{k=1}^{\numvars} a_{j k} m_k - \frac{1}{q} \coordsize r_j . \label{eq:jthpartialf in PNSParg}
\end{align}
\end{lem}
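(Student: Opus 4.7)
The plan is to proceed by direct computation using the linearity of the gradient and the quadratic form representation $F(\bvec{m}) = \tfrac{1}{2}\bvec{m}^\top A \bvec{m}$. Since $f$ is the sum of a quadratic term $\coordsize^2 x F(\bvec{m})$ and a linear term $-\tfrac{1}{q}\coordsize\, \bvec{m}\cdot \bvec{r}$, I would compute the gradient of each piece separately and then combine.

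First I would handle the quadratic piece. Writing $F(\bvec{m}) = \tfrac{1}{2}\sum_{j,k} a_{jk} m_j m_k$ and differentiating with respect to $m_\ell$, I would obtain $\frac{\partial F}{\partial x_\ell}(\bvec{m}) = \tfrac{1}{2}\sum_k a_{\ell k} m_k + \tfrac{1}{2}\sum_j a_{j\ell} m_j$. Because $A$ is the Hessian matrix of $F$ and therefore symmetric, $a_{j\ell} = a_{\ell j}$, and the two sums coincide, giving $\frac{\partial F}{\partial x_\ell}(\bvec{m}) = \sum_k a_{\ell k} m_k$. In vector form this is exactly the $\ell$th entry of $A\bvec{m}$, so $\nabla F(\bvec{m}) = A\bvec{m}$.

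Next I would handle the linear piece. Since $\bvec{m}\cdot \bvec{r} = \sum_k m_k r_k$, differentiation with respect to $m_\ell$ yields $r_\ell$, so $\nabla(\bvec{m}\cdot\bvec{r}) = \bvec{r}$. Combining the two gradient computations with the appropriate scalar multipliers gives
\begin{equation*}
\nabla f(\bvec{m}) = \coordsize^2 x\, A\bvec{m} - \tfrac{1}{q}\coordsize\, \bvec{r},
\end{equation*}
which is the first claim. Reading off the $j$th entry yields $\frac{\partial f}{\partial x_j}(\bvec{m}) = \coordsize^2 x \sum_{k=1}^\numvars a_{jk} m_k - \tfrac{1}{q}\coordsize\, r_j$, which is the second claim.

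There is no real obstacle here; this is a routine gradient calculation, and the only subtle point worth remarking on is the use of the symmetry of $A$ to collapse the two sums coming from differentiating the quadratic form. The computation is short enough that I would present it in just a few lines without introducing additional machinery.
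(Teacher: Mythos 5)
Your proposal is correct and is exactly the ``standard gradient computation'' that the paper cites as the justification for this lemma without spelling it out. Splitting $f$ into the quadratic part $\coordsize^2 x F(\bvec{m})$ and the linear part $-\tfrac{1}{q}\coordsize\,\bvec{m}\cdot\bvec{r}$, and using the symmetry of $A$ to obtain $\nabla F(\bvec{m}) = A\bvec{m}$, is precisely the intended argument.
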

%

In light of Lemma~\ref{lem:gradf in PNSParg}, the directional derivative $\nabla_{\bvec{u}} f$ is 
\begin{align*}
\nabla_{\bvec{u}} f (\bvec{m}) &= \bvec{u} \cdot \left( \coordsize^2 x A \bvec{m} - \frac{1}{q} \coordsize \bvec{r} \right) \\
	&= \coordsize^2 x \bvec{u}^\top A \bvec{m} - \frac{1}{q} \coordsize \bvec{u} \cdot \bvec{r} . 
\end{align*}
Using the linearity of the gradient, we obtain 
\begin{align*}
\nabla (\nabla_{\bvec{u}} f) (\bvec{m}) &= \coordsize^2 x A \bvec{u} .
\end{align*}
Therefore, 
\begin{align}
(\nabla_{\bvec{u}})^2 f (\bvec{m}) &= \coordsize^2 x \bvec{u}^\top A \bvec{u} . \label{eq:dirderiv2uf}
\end{align}

Towards showing that \eqref{ineq:PNSPfderivs bound} holds, we prove an upper bound for $\left| (\nabla_{\bvec{u}})^2 f (\bvec{m}) \right|$ that does not depend on $\bvec{u}$ or $\bvec{m}$.
\begin{lem} \label{lem:dirderiv2uf upperbound}
For $f$ as in \eqref{eq:f in PNSParg}, we have 
\begin{align}
\left| (\nabla_{\bvec{u}})^2 f (\bvec{m}) \right| \le \coordsize^2 |x| \sigma_1 . \label{ineq:dirderiv2uf upperbound}
\end{align}
In particular,
\begin{align}
\left| \frac{\partial^2 f}{\partial x_j^2}(\bvec{m}) \right| \le \coordsize^2 |x| \sigma_1 . \label{ineq:partial2f upperbound}
\end{align}
\end{lem}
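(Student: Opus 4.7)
The plan is to read off the result directly from the computation already performed in equation~\eqref{eq:dirderiv2uf}, invoking the linear-algebraic bound in Lemma~\ref{lem:wtBw upperbound}. Since $\bvec{u}$ is a unit vector and $A$ is symmetric (being the Hessian matrix of the quadratic form $F$), Lemma~\ref{lem:wtBw upperbound} gives $|\bvec{u}^\top A \bvec{u}| \le \sigma_1 \|\bvec{u}\|^2 = \sigma_1$.

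Concretely, I would start by taking absolute values on both sides of \eqref{eq:dirderiv2uf} to obtain
\[
\left| (\nabla_{\bvec{u}})^2 f (\bvec{m}) \right| = \coordsize^2 |x| \, \left| \bvec{u}^\top A \bvec{u} \right|,
\]
and then apply Lemma~\ref{lem:wtBw upperbound} to the factor $|\bvec{u}^\top A \bvec{u}|$. This immediately yields \eqref{ineq:dirderiv2uf upperbound}. Note that the bound is independent of both $\bvec{u}$ (as long as it is a unit vector) and $\bvec{m}$, which is exactly what is needed in the subsequent application to Theorem~\ref{thm:PNSP1d}.

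For the in-particular statement \eqref{ineq:partial2f upperbound}, I would use the observation recorded in the remark after Definition~\ref{defn:directionalderiv}: the standard basis vector $\bvec{e}_j$ is a unit vector, and $\frac{\partial f}{\partial x_j} = \nabla_{\bvec{e}_j} f$, so iterating gives $\frac{\partial^2 f}{\partial x_j^2} = (\nabla_{\bvec{e}_j})^2 f$. Applying \eqref{ineq:dirderiv2uf upperbound} with $\bvec{u} = \bvec{e}_j$ then delivers the desired bound. There is no real obstacle here; the whole lemma is a one-line consequence of the already-computed formula $(\nabla_{\bvec{u}})^2 f(\bvec{m}) = \coordsize^2 x \bvec{u}^\top A \bvec{u}$ together with the standard spectral bound for quadratic forms in a symmetric matrix.
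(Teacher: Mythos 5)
Your proposal is correct and matches the paper's proof exactly: both take absolute values in \eqref{eq:dirderiv2uf}, apply Lemma~\ref{lem:wtBw upperbound} to $|\bvec{u}^\top A \bvec{u}|$ using $\|\bvec{u}\| = 1$, and obtain \eqref{ineq:partial2f upperbound} by specializing to $\bvec{u} = \bvec{e}_j$.
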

\begin{proof}[Proof of Lemma~\ref{lem:dirderiv2uf upperbound}]
An application of Lemma~\ref{lem:wtBw upperbound} 
to \eqref{eq:dirderiv2uf} proves \eqref{ineq:dirderiv2uf upperbound} since $\bvec{u}$ is a unit vector.
We obtain \eqref{ineq:partial2f upperbound} by noting that $\frac{\partial^2 f}{\partial x_j^2}(\bvec{m})$ equals $(\nabla_{\bvec{e}_j})^2 f (\bvec{m})$.
\end{proof}

We now begin to compute an lower bound for $\left| \frac{\partial f}{\partial x_j}(\bvec{m}) \right|$. 
By using the triangle inequality with \eqref{eq:jthpartialf in PNSParg} in Lemma~\ref{lem:gradf in PNSParg}, we obtain
\begin{align}
\left| \frac{\partial f}{\partial x_j}(\bvec{m}) \right| &\ge \frac{1}{q} \coordsize | r_j | - \coordsize^2 |x| \left| \sum_{k=1}^{\numvars} a_{j k} m_k \right| . \label{ineq:jthpartialf lowerbound1}
\end{align}
To effectively use this lower bound for $\left| \frac{\partial f}{\partial x_j}(\bvec{m}) \right|$, we need an upper bound for \\
$\left| \sum_{k=1}^{\numvars} a_{j k} m_k \right|$. The following lemma is a step towards finding such an upper bound.
\begin{lem} \label{lem:|Am| upperbound}
For $\bvec{m} \in \supp(\Upspsi)$, then
\begin{align}
\left\| A \bvec{m} \right\| &\le \sigma_1 \Upsuppvar_\Upspsi . \label{ineq:|Am| upperbound1}
\end{align}
\end{lem}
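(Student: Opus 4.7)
The plan is to bound $\|A\bvec{m}\|$ by decomposing the problem into two standard pieces: an operator norm estimate on $A$ and the support condition on $\bvec{m}$.

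First I would invoke the fact that the operator norm of $A$ (with respect to the Euclidean norm) equals its largest singular value $\sigma_1$. This can be seen directly from the singular value decomposition: write $A = U \Sigma V^\top$ where $U, V \in \Mat_\numvars(\RR)$ are orthogonal and $\Sigma = \diag(\sigma_1 , \ldots , \sigma_\numvars)$. Since orthogonal matrices preserve the Euclidean norm,
\begin{align*}
\| A \bvec{m} \|^2 = \| U \Sigma V^\top \bvec{m} \|^2 = \| \Sigma V^\top \bvec{m} \|^2 .
\end{align*}
Setting $\bvec{w} = V^\top \bvec{m}$, which also satisfies $\| \bvec{w} \| = \| \bvec{m} \|$, we obtain
\begin{align*}
\| \Sigma \bvec{w} \|^2 = \sum_{j=1}^{\numvars} \sigma_j^2 w_j^2 \le \sigma_1^2 \sum_{j=1}^{\numvars} w_j^2 = \sigma_1^2 \| \bvec{m} \|^2 ,
\end{align*}
so $\| A \bvec{m} \| \le \sigma_1 \| \bvec{m} \|$ for every $\bvec{m} \in \RR^\numvars$.

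Second, I would use the defining property of $\Upsuppvar_\Upspsi$ from \eqref{subset:Upsupp def}, namely that $\supp(\Upspsi) \subseteq \{ \bvec{x} \in \RR^\numvars : \| \bvec{x} \| \le \Upsuppvar_\Upspsi \}$. Thus if $\bvec{m} \in \supp(\Upspsi)$, then $\| \bvec{m} \| \le \Upsuppvar_\Upspsi$. Combining this with the previous step gives $\| A \bvec{m} \| \le \sigma_1 \| \bvec{m} \| \le \sigma_1 \Upsuppvar_\Upspsi$, which is precisely \eqref{ineq:|Am| upperbound1}.

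There is no real obstacle here: the lemma is essentially the statement that the operator norm equals the largest singular value, restricted to the support of $\Upspsi$. The only minor care needed is to verify that the definition of ``singular values'' used in the paper agrees with the SVD-based description, which it does since $A$ is a real (in fact symmetric) matrix and the $\sigma_j$ were introduced at the start of Section~\ref{chapter:Kloostermancirclemethodsetup} as the singular values ordered by $\sigma_1 \ge \cdots \ge \sigma_\numvars > 0$.
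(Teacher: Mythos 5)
Your argument is correct and follows the same two-step structure as the paper: first establish the operator-norm bound $\| A \bvec{m} \| \le \sigma_1 \| \bvec{m} \|$, then combine with $\| \bvec{m} \| \le \Upsuppvar_\Upspsi$ from the support condition. The only difference is in how the first step is obtained: you re-derive the operator-norm inequality directly from the singular value decomposition $A = U \Sigma V^\top$, whereas the paper writes $\| A \bvec{m} \|^2 = \bvec{m}^\top (A^\top A) \bvec{m}$ and cites its own Lemma~\ref{lem:wtBw upperbound} applied to the symmetric matrix $A^\top A$, whose largest singular value (equivalently, largest eigenvalue) is $\sigma_1^2$. These are two ways of packaging the same standard linear-algebra fact, so the two proofs are substantively identical; the paper's version is just marginally more economical because it reuses a lemma already stated for another purpose.
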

\begin{proof}[Proof of Lemma~\ref{lem:|Am| upperbound}]
By definition of the Eurclidean norm, we have
\begin{align*}
\left\| A \bvec{m} \right\|^2 &= \bvec{m}^\top A^\top A \bvec{m} .
\end{align*}
Because $A^\top A$ is a symmetric matrix, Lemma~\ref{lem:wtBw upperbound} applies, and we obtain 
\begin{align}
\left\| A \bvec{m} \right\|^2 &\le \| \bvec{m} \|^2 \sigma_1^2 . \label{ineq:|Am|^2 upperbound}
\end{align}
By taking square roots of both sides of \eqref{ineq:|Am|^2 upperbound}, we obtain 
\begin{align}
\left\| A \bvec{m} \right\| &\le \sigma_1 \| \bvec{m} \| . \label{ineq:|Am| upperbound2}
\end{align}
Because $\bvec{m} \in \supp(\Upspsi)$, we know that $\| \bvec{m} \| \le \Upsuppvar_\Upspsi$. Applying this to \eqref{ineq:|Am| upperbound2}, we obtain \eqref{ineq:|Am| upperbound1}.
\end{proof}

Since $\sum_{k=1}^{\numvars} a_{j k} m_k$ is the $j$th entry of the vector $A \bvec{m}$, we use the previous lemma to give an upper bound for $\left| \sum_{k=1}^{\numvars} a_{j k} m_k \right|$. 
We state this upper bound in the next lemma, which follows from the previous lemma and the fact that 
\begin{align*}
\left| \sum_{k=1}^{\numvars} a_{j k} m_k \right| \le \left\| A \bvec{m} \right\| .
\end{align*}
\begin{lem} \label{lem:|Amentry| upperbound}
For $\bvec{m} \in \supp(\Upspsi)$, then
\begin{align}
\left| \sum_{k=1}^{\numvars} a_{j k} m_k \right| &\le \sigma_1 \Upsuppvar_\Upspsi . \label{ineq:|Amentry| upperbound}
\end{align}
\end{lem}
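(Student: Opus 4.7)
The plan is to observe that $\sum_{k=1}^{\numvars} a_{j k} m_k$ is exactly the $j$th entry of the vector $A \bvec{m}$, so the lemma reduces immediately to Lemma~\ref{lem:|Am| upperbound} together with the elementary fact that the absolute value of any entry of a vector is bounded by its Euclidean norm.

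More precisely, I would first write
\begin{align*}
\sum_{k=1}^{\numvars} a_{j k} m_k &= (A \bvec{m})_j = \bvec{e}_j \cdot (A \bvec{m}),
\end{align*}
where $\bvec{e}_j \in \RR^\numvars$ is the $j$th standard basis vector. Then by the Cauchy--Schwarz inequality (or just by the definition of the Euclidean norm, since a single coordinate squared is bounded by the sum of squares of all coordinates),
\begin{align*}
\left| \sum_{k=1}^{\numvars} a_{j k} m_k \right| &\le \| \bvec{e}_j \| \, \| A \bvec{m} \| = \| A \bvec{m} \|.
\end{align*}

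At this point I would invoke Lemma~\ref{lem:|Am| upperbound}, which says that $\| A \bvec{m} \| \le \sigma_1 \Upsuppvar_\Upspsi$ for any $\bvec{m} \in \supp(\Upspsi)$. Chaining the two bounds yields the claimed inequality $\left| \sum_{k=1}^{\numvars} a_{j k} m_k \right| \le \sigma_1 \Upsuppvar_\Upspsi$. There is no real obstacle here; the whole content is packaged into the preceding lemma, and this statement is essentially a corollary extracting a coordinate bound from a norm bound.
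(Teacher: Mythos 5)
Your proof is correct and matches the paper's argument essentially verbatim: identify $\sum_{k=1}^{\numvars} a_{jk} m_k$ as the $j$th entry of $A\bvec{m}$, bound a coordinate by the Euclidean norm, and invoke Lemma~\ref{lem:|Am| upperbound}. The Cauchy--Schwarz framing is a minor stylistic elaboration of the same elementary coordinate-versus-norm fact the paper uses.
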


We now apply Lemma~\ref{lem:|Amentry| upperbound} to \eqref{ineq:jthpartialf lowerbound1} to obtain
\begin{align}
\left| \frac{\partial f}{\partial x_j}(\bvec{m}) \right| &\ge \frac{1}{q} \coordsize | r_j | - \coordsize^2 |x| \sigma_1 \Upsuppvar_\Upspsi \label{ineq:jthpartialf lowerbound2}
\end{align}
for all $\bvec{m} \in \supp(\Upspsi)$.
We are now in a position to prove that \eqref{ineq:PNSPffirstderiv bound} and \eqref{ineq:PNSPfderivs bound} hold for all $\bvec{m} \in \supp(\Upspsi)$.
\begin{lem} \label{lem:PNSPffirstderiv bound}
For all $\bvec{m} \in \supp(\Upspsi)$, the statement \eqref{ineq:PNSPffirstderiv bound} holds.
\end{lem}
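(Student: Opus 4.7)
The plan is to combine three ingredients already in hand: the lower bound \eqref{ineq:jthpartialf lowerbound2} on $|\partial f/\partial x_j(\bvec{m})|$, the hypothesis \eqref{ineq:rjPNSP cond} on the size of $|r_j|$, and the inequality \eqref{ineq:rj&r} coming from the choice of $j$ as the index maximizing $|r_k|$. The strategy is purely algebraic: use the hypothesis to absorb the negative term in \eqref{ineq:jthpartialf lowerbound2} into a fraction of the positive term, and then use \eqref{ineq:rj&r} to replace $|r_j|$ by $\|\bvec{r}\|/\sqrt{\numvars}$.

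More concretely, first I would rearrange \eqref{ineq:rjPNSP cond} to read
\begin{align*}
\coordsize^2 |x| \sigma_1 \Upsuppvar_\Upspsi \;\le\; \frac{\coordsize |r_j| \Upsuppvar_\Upspsi}{q(\Upsuppvar_\Upspsi+1)} .
\end{align*}
Substituting this into \eqref{ineq:jthpartialf lowerbound2} yields
\begin{align*}
\left| \frac{\partial f}{\partial x_j}(\bvec{m}) \right| \;\ge\; \frac{\coordsize |r_j|}{q} - \frac{\coordsize |r_j| \Upsuppvar_\Upspsi}{q(\Upsuppvar_\Upspsi+1)} \;=\; \frac{\coordsize |r_j|}{q(\Upsuppvar_\Upspsi+1)},
\end{align*}
where the equality is just $1 - \Upsuppvar_\Upspsi/(\Upsuppvar_\Upspsi+1) = 1/(\Upsuppvar_\Upspsi+1)$. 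Finally, applying \eqref{ineq:rj&r} in the form $|r_j| \ge \|\bvec{r}\|/\sqrt{\numvars}$ gives exactly the desired estimate \eqref{ineq:PNSPffirstderiv bound}.

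I do not expect any genuine obstacle in this step: the hypothesis \eqref{ineq:rjPNSP cond} has been engineered precisely so that the ``bad'' term $\coordsize^2|x|\sigma_1\Upsuppvar_\Upspsi$ is dominated by the ``good'' term $\coordsize|r_j|/q$ up to a factor of $\Upsuppvar_\Upspsi/(\Upsuppvar_\Upspsi+1)$, leaving a clean fraction $1/(\Upsuppvar_\Upspsi+1)$ of the good term. The only mild subtlety is noting that the entire argument is uniform in $\bvec{m} \in \supp(\Upspsi)$ because the bound on $|\sum_k a_{jk} m_k|$ used in deriving \eqref{ineq:jthpartialf lowerbound2} (via Lemma~\ref{lem:|Amentry| upperbound}) was itself uniform over $\supp(\Upspsi)$; no further choice of $\bvec{m}$ enters the algebraic manipulation above.
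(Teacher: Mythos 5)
Your proof is correct and matches the paper's own argument essentially line for line: both use \eqref{ineq:rjPNSP cond} to absorb the $\coordsize^2|x|\sigma_1\Upsuppvar_\Upspsi$ term in \eqref{ineq:jthpartialf lowerbound2}, simplify via $1 - \Upsuppvar_\Upspsi/(\Upsuppvar_\Upspsi+1) = 1/(\Upsuppvar_\Upspsi+1)$, and then invoke \eqref{ineq:rj&r}. The closing observation about uniformity in $\bvec{m}$ is a nice explicit acknowledgment of why the bound is indeed over all of $\supp(\Upspsi)$, though the paper leaves it implicit.
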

\begin{proof}[Proof of Lemma~\ref{lem:PNSPffirstderiv bound}]
Using \eqref{ineq:rjPNSP cond} in \eqref{ineq:jthpartialf lowerbound2}, we obtain
\begin{align*}
\left| \frac{\partial f}{\partial x_j}(\bvec{m}) \right| &\ge \frac{1}{q} \coordsize | r_j | - \frac{\Upsuppvar_\Upspsi}{q (\Upsuppvar_\Upspsi + 1)} \coordsize | r_j | 
	= \frac{1}{q (\Upsuppvar_\Upspsi + 1)} \coordsize | r_j | . \numberthis \label{ineq:jthpartialf lowerbound3}
\end{align*}
By applying \eqref{ineq:rj&r} to \eqref{ineq:jthpartialf lowerbound3}, we obtain \eqref{ineq:PNSPffirstderiv bound}.
\end{proof}

We now prove that \eqref{ineq:PNSPfderivs bound} holds for all $\bvec{m} \in \supp(\Upspsi)$.
\begin{lem} \label{lem:PNSPfderivs bound}
For all $\bvec{m} \in \supp(\Upspsi)$, the statement \eqref{ineq:PNSPfderivs bound} holds.
\end{lem}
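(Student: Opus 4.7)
The plan is to combine the upper bound on the second partial derivative from Lemma~\ref{lem:dirderiv2uf upperbound} with the lower bound on the first partial derivative that was just established in the proof of Lemma~\ref{lem:PNSPffirstderiv bound}, namely inequality~\eqref{ineq:jthpartialf lowerbound3}, and then invoke the hypothesis~\eqref{ineq:rjPNSP cond} on $|r_j|$ to close the gap.

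More concretely, I would proceed as follows. First I would recall from \eqref{ineq:partial2f upperbound} in Lemma~\ref{lem:dirderiv2uf upperbound} that
\begin{align*}
\left| \frac{\partial^2 f}{\partial x_j^2}(\bvec{m}) \right| \le \coordsize^2 |x| \sigma_1
\end{align*}
for every $\bvec{m}$ (in particular for $\bvec{m} \in \supp(\Upspsi)$). Next I would recall the lower bound \eqref{ineq:jthpartialf lowerbound3}, which says that
\begin{align*}
\left| \frac{\partial f}{\partial x_j}(\bvec{m}) \right| \ge \frac{1}{q (\Upsuppvar_\Upspsi + 1)} \coordsize \, | r_j |
\end{align*}
for all $\bvec{m} \in \supp(\Upspsi)$, and then substitute the hypothesis \eqref{ineq:rjPNSP cond}, namely $| r_j | \ge q \coordsize |x| \sigma_1 (\Upsuppvar_\Upspsi + 1)$, into this to obtain
\begin{align*}
\left| \frac{\partial f}{\partial x_j}(\bvec{m}) \right| \ge \frac{\coordsize}{q (\Upsuppvar_\Upspsi + 1)} \cdot q \coordsize |x| \sigma_1 (\Upsuppvar_\Upspsi + 1) = \coordsize^2 |x| \sigma_1 .
\end{align*}
Chaining these two inequalities gives $\left| \partial^2 f/\partial x_j^2 (\bvec{m})\right| \le \coordsize^2 |x| \sigma_1 \le \left|\partial f/\partial x_j(\bvec{m})\right|$, which is exactly \eqref{ineq:PNSPfderivs bound}.

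There is really no obstacle here: the lemma is bookkeeping that ensures the hypotheses of the one-dimensional principle of nonstationary phase (Theorem~\ref{thm:PNSP1d}) are satisfied so that it can be applied to $x_j \mapsto f(\bvec{m})$ with the other coordinates fixed. The only subtlety worth double-checking is that \eqref{ineq:jthpartialf lowerbound3} already incorporates the same assumption~\eqref{ineq:rjPNSP cond} being used here, so there is no circularity and no additional hypothesis is needed; the factor $(\Upsuppvar_\Upspsi + 1)$ in \eqref{ineq:rjPNSP cond} was chosen precisely so that, after cancellation, the residual first-derivative lower bound matches the universal second-derivative upper bound $\coordsize^2 |x| \sigma_1$.
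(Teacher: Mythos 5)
Your proof is correct and follows essentially the same route as the paper's: apply the hypothesis \eqref{ineq:rjPNSP cond} to the lower bound on the first partial derivative to obtain $\left|\partial f / \partial x_j (\bvec{m})\right| \ge \coordsize^2 |x| \sigma_1$, then compare with the upper bound \eqref{ineq:partial2f upperbound} from Lemma~\ref{lem:dirderiv2uf upperbound}. The only cosmetic difference is that the paper starts from \eqref{ineq:jthpartialf lowerbound2} and applies \eqref{ineq:rjPNSP cond} to the $|r_j|$ term once, whereas you start from \eqref{ineq:jthpartialf lowerbound3} (which already used \eqref{ineq:rjPNSP cond} once) and apply it again; both yield the identical bound $\coordsize^2 |x| \sigma_1$.
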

\begin{proof}[Proof of Lemma~\ref{lem:PNSPffirstderiv bound}]
By applying \eqref{ineq:rjPNSP cond} to \eqref{ineq:jthpartialf lowerbound2}, we find that 
\begin{align*}
\left| \frac{\partial f}{\partial x_j}(\bvec{m}) \right| &\ge \coordsize^2 |x| (\Upsuppvar_\Upspsi + 1) \sigma_1 - \coordsize^2 |x| \sigma_1 \Upsuppvar_\Upspsi 
	= \coordsize^2 |x| \sigma_1 . 
\end{align*}
Now \eqref{ineq:partial2f upperbound} in Lemma~\ref{lem:dirderiv2uf upperbound} says that $\left| \frac{\partial^2 f}{\partial x_j^2}(\bvec{m}) \right| \le \coordsize^2 |x| \sigma_1$, so we obtain \eqref{ineq:PNSPfderivs bound}.
\end{proof}

Lemmas~\ref{lem:PNSPffirstderiv bound} and \ref{lem:PNSPfderivs bound} state that \eqref{ineq:PNSPffirstderiv bound} and \eqref{ineq:PNSPfderivs bound} hold for all $\bvec{m} \in \supp(\Upspsi)$. This is sufficient to apply Theorem~\ref{thm:PNSP1d} and obtain for all $M \ge 0$, 
\begin{align*}
&\int_{\RR^\numvars} \e{f(\bvec{m})} \Upspsi(\bvec{m}) \ d \bvec{m} \\
	&= \int_{\RR} \cdots \int_{\RR} \e{f(\bvec{m})} \Upspsi(\bvec{m}) \ d m_j \ d m_1 \ d m_2 \cdots \ d m_{j-1} \ d m_{j+1} \ d m_{j+2} \cdots \ d m_\numvars \\
	&\ll_{M, \Upspsi} \int_{S_{\Upspsi , j}} \left( \frac{\coordsize}{q (\Upsuppvar_\Upspsi + 1) \sqrt{\numvars}} \| \bvec{r} \| \right)^{-M} \ d m_1 \ d m_2 \cdots \ d m_{j-1} \ d m_{j+1} \ d m_{j+2} \cdots \ d m_\numvars , \numberthis \label{ineq:efbumpPNSPbound inter}
\end{align*}
where $S_{\Upspsi , j} \subseteq \RR^{\numvars-1}$ is the set of $(m_1 , m_2 , \ldots , m_{j-1} , m_{j+1} , m_{j+2} , \ldots , m_{\numvars})^\top$ in which there exists $m_j \in \RR$ such that $(m_1 , m_2 , \ldots , m_{\numvars})^\top \in \supp(\Upspsi)$. Because $\Upspsi$ has compact support, the set $S_{\Upspsi , j}$ is bounded and \eqref{ineq:efbumpPNSPbound inter} implies \eqref{ineq:efbumpPNSPbound1}.
\end{proof}

For our purposes, we will want to apply the principle of nonstationary phase outside of an $\numvars$-dimensional ball as opposed to outside of an $\numvars$-dimensional cube. Therefore, we have the following corollary.
\begin{cor} \label{cor:IFx\coordsize rq PNSP bound in terms of r}
If 
\begin{align}
\| \bvec{r} \| \ge q \coordsize |x| \sigma_1 (\Upsuppvar_\Upspsi + 1) \sqrt{\numvars} , \label{ineq:rPNSP cond}
\end{align}
 then 
\begin{align}
\mathcal{I}_{F, \Upspsi} (x, \coordsize, \bvec{r}, q) &\ll_{M, \Upspsi} \min \left\{ \coordsize^\numvars , \coordsize^{\numvars-M} \left( \frac{1}{q (\Upsuppvar_\Upspsi + 1) \sqrt{\numvars}} \| \bvec{r} \| \right)^{-M} \right\}
\end{align}
for all $M \ge 0$. 
\end{cor}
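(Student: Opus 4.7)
The plan is to derive this corollary as an immediate consequence of Theorem~\ref{thm:IFx\coordsize rq PNSP bound in terms of r}. The hypothesis of the theorem requires some individual coordinate $r_j$ to satisfy $|r_j| \ge q \coordsize |x| \sigma_1 (\Upsuppvar_\Upspsi + 1)$, whereas the hypothesis of the corollary controls the Euclidean norm $\| \bvec{r} \|$. So the only task is to translate the norm bound into a coordinate bound, and this is exactly what the factor of $\sqrt{\numvars}$ in \eqref{ineq:rPNSP cond} is designed to provide.

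First, I would choose $j \in \{1, 2, \ldots, \numvars\}$ to be an index such that $|r_j| = \max_{1 \le k \le \numvars} |r_k|$. For this choice of $j$, the elementary inequality
\begin{align*}
\| \bvec{r} \|^2 = \sum_{k=1}^{\numvars} r_k^2 \le \numvars \cdot r_j^2
\end{align*}
yields $|r_j| \ge \| \bvec{r} \| / \sqrt{\numvars}$. Combining this with the hypothesis \eqref{ineq:rPNSP cond}, I get
\begin{align*}
|r_j| \ge \frac{\| \bvec{r} \|}{\sqrt{\numvars}} \ge q \coordsize |x| \sigma_1 (\Upsuppvar_\Upspsi + 1),
\end{align*}
which is precisely the hypothesis \eqref{ineq:rjPNSP cond} of Theorem~\ref{thm:IFx\coordsize rq PNSP bound in terms of r}.

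Finally, I would invoke Theorem~\ref{thm:IFx\coordsize rq PNSP bound in terms of r} to conclude the stated bound, observing that its conclusion is already phrased in terms of $\| \bvec{r} \|$ rather than $|r_j|$, so no further manipulation is needed. Since every substantive ingredient (the nonstationary phase estimate, the derivative bounds, the singular-value control via Lemma~\ref{lem:wtBw upperbound}) is already packaged into the theorem, there is no real obstacle here; the corollary is purely a reformulation that replaces a coordinate-wise largeness hypothesis with the more natural Euclidean-norm hypothesis, at the cost of a harmless $\sqrt{\numvars}$ factor.
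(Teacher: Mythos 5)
Your proposal is correct and matches the paper's own argument: the paper's proof observes that \eqref{ineq:rPNSP cond} forces some coordinate $r_j$ to satisfy \eqref{ineq:rjPNSP cond}, and then invokes Theorem~\ref{thm:IFx\coordsize rq PNSP bound in terms of r}, whose conclusion is already stated in terms of $\|\bvec{r}\|$. You simply spell out the pigeonhole step (picking $j$ to maximize $|r_j|$ so that $|r_j|\ge\|\bvec{r}\|/\sqrt{\numvars}$), which the paper leaves implicit; this is the same choice the paper makes inside the proof of the theorem itself at \eqref{ineq:rj&r}.
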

\begin{proof}
If $\bvec{r}$ satisfies \eqref{ineq:rPNSP cond}, then there exists an integer $j$ with $1 \le j \le \numvars$ such that $r_j$ satisfies \eqref{ineq:rjPNSP cond}. 
An application of Theorem~\ref{thm:IFx\coordsize rq PNSP bound in terms of r} gives the result of this corollary.
\end{proof}

\section{Putting estimates together} \label{chapter:puttogether}

In this section, we use results from previous sections to prove Theorem~\ref{thm:R\coordsize\numrepresented asymp1} and Corollaries~\ref{cor:R\coordsize\numrepresented asymp2} and \ref{cor:repnum F asymp}. 
We begin this section by splitting up the weighted representation number $\repnum_{F, \Upspsi, \coordsize} (\numrepresented)$ into a main term and some error terms. 
From \eqref{eq:R\coordsize \numrepresented with Trq\numrepresented x}, we see that 
\begin{align}
\repnum_{F, \Upspsi, \coordsize} (\numrepresented) &= M_{F, \Upspsi, \coordsize} (\numrepresented) + E_{F, \Upspsi, \coordsize, 1} (\numrepresented) + E_{F, \Upspsi, \coordsize, 2} (\numrepresented) + E_{F, \Upspsi, \coordsize, 3} (\numrepresented) , \label{eq:R\coordsize \numrepresented split}
\end{align}
where
\begin{align}
M_{F, \Upspsi, \coordsize} (\numrepresented) &= 2 \Re\left( \sum_{1 \le q \le Q} \frac{1}{q^\numvars} \int_{0}^{\frac{1}{q (q + Q)}} \e{-\numrepresented x} \mathcal{I}_{F, \Upspsi} (x, \coordsize, \bvec{0}, q) T_{\bvec{0}} (q, \numrepresented; x) \ d x \right) , \label{eq:M\coordsize \numrepresented} 
\end{align}
\begin{align*}
E_{F, \Upspsi, \coordsize, 1} (\numrepresented) &= 2 \Re\Bigg( \sum_{1 \le q \le Q} \frac{1}{q^\numvars} \int_{0}^{\frac{1}{q (q + Q)}} \e{-\numrepresented x} \\
		&\qquad\qquad\qquad \times \sum_{\substack{\bvec{r} \in \ZZ^\numvars \\ 0 < \| \bvec{r} \| \le q \coordsize |x| \sigma_1 (\Upsuppvar_\Upspsi + 1) \sqrt{\numvars}}} \mathcal{I}_{F, \Upspsi} (x, \coordsize, \bvec{r}, q) T_{\bvec{r}} (q, \numrepresented; x) \ d x \Bigg) , \numberthis \label{eq:E\coordsize1n} 
\end{align*}
\begin{align*}
E_{F, \Upspsi, \coordsize, 2} (\numrepresented) &= 2 \Re\Bigg( \sum_{1 \le q \le Q} \frac{1}{q^\numvars} \int_{\frac{1}{q (q + Q)}}^{\frac{1}{q Q}} \e{-\numrepresented x}  \\
		&\qquad\qquad\qquad \times \sum_{\substack{\bvec{r} \in \ZZ^\numvars \\ \| \bvec{r} \| \le q \coordsize |x| \sigma_1 (\Upsuppvar_\Upspsi + 1) \sqrt{\numvars}}} \mathcal{I}_{F, \Upspsi} (x, \coordsize, \bvec{r}, q) T_{\bvec{r}} (q, \numrepresented; x) \ d x \Bigg) , \numberthis \label{eq:E\coordsize2n} 
\end{align*}
and 
\begin{align*}
E_{F, \Upspsi, \coordsize, 3} (\numrepresented) &= 2 \Re\Bigg( \sum_{1 \le q \le Q} \frac{1}{q^\numvars} \int_{0}^{\frac{1}{q Q}} \e{-\numrepresented x} \\
		&\qquad\qquad\qquad \times \sum_{\substack{\bvec{r} \in \ZZ^\numvars \\ \| \bvec{r} \| > q \coordsize |x| \sigma_1 (\Upsuppvar_\Upspsi + 1) \sqrt{\numvars}}} \mathcal{I}_{F, \Upspsi} (x, \coordsize, \bvec{r}, q) T_{\bvec{r}} (q, \numrepresented; x) \ d x \Bigg) . \numberthis \label{eq:E\coordsize3}
\end{align*}
We call $M_{F, \Upspsi, \coordsize} (\numrepresented)$ the main term of $\repnum_{F, \Upspsi, \coordsize} (\numrepresented)$. We call $E_{F, \Upspsi, \coordsize, 1} (\numrepresented)$, $E_{F, \Upspsi, \coordsize, 2} (\numrepresented)$, and $E_{F, \Upspsi, \coordsize, 3} (\numrepresented)$ the error terms of $\repnum_{F, \Upspsi, \coordsize} (\numrepresented)$.

In this section, we will provide an asymptotic for the main term $M_{F, \Upspsi, \coordsize} (\numrepresented)$ and upper bounds for the absolute values of the error terms. To do this, we will first prove some more results that will help us prove Theorem~\ref{thm:R\coordsize\numrepresented asymp1} and Corollaries~\ref{cor:R\coordsize\numrepresented asymp2} and \ref{cor:repnum F asymp}.

\subsection{Stating some supporting results} \label{sec:suppresults}

In this subsection, we state some lemmas that will be used to provide an asymptotic for the main term $M_{F, \Upspsi, \coordsize} (\numrepresented)$ and upper bounds for the absolute values of the error terms. 

\subsubsection{An upper bound for the absolute value of a particular sum}

The sum in the following lemma will come up multiple times in our estimates. (The sum is related to our estimate of the sum $T_{\bvec{r}} (q, \numrepresented; x)$.) The lemma provides provides an upper bound for the absolute value of this sum. 
\begin{lem} \label{lem:q1gcdsum+eps}
Let $Q \ge 1$ and let $C$ and $\numrepresented$ be nonzero integers. 
For an integer $q$, we split $q$ into $q = q_0 q_1$ such that $q_0$ is the largest factor of $q$ having all of its prime divisors dividing $C$ so that $\gcd(q_1 , C) = 1$. Then 
\begin{align}
\sum_{1 \le q \le Q} (\gcd(\numrepresented, q_1))^{1/2} q_1^{-1/2} \tau(q) \log(2q) \ll_\varepsilon Q^{1/2 + \varepsilon} \tau(\numrepresented) \prod_{p \mid C} ( 1 - p^{-1/2} )^{-1} \label{ineq:q1gcdsum+eps bound}
\end{align}
for any $\varepsilon > 0$.
\end{lem}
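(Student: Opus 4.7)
The plan is to reduce the sum to a manageable double sum by exploiting the unique factorization $q = q_0 q_1$, then bound the $q_1$-part by an arithmetic trick involving divisors of $n$, and finally sum the $q_0$-part as an Euler product over primes dividing $C$.

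First I would absorb the tame factors by using the standard estimate $\tau(q) \log(2q) \ll_\varepsilon q^\varepsilon \le Q^\varepsilon$, uniformly for $1 \le q \le Q$. This reduces the problem to bounding
\[
S \ll_\varepsilon Q^{\varepsilon} \sum_{\substack{q_0 \mid C^{\infty} \\ q_0 \le Q}} \sum_{\substack{q_1 \le Q/q_0 \\ \gcd(q_1,C)=1}} (\gcd(n,q_1))^{1/2} \, q_1^{-1/2},
\]
where $q_0 \mid C^\infty$ means every prime factor of $q_0$ divides $C$. The decomposition is valid because $q_0$ and $q_1$ are uniquely determined by $q$ and coprime.

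Next I would estimate the inner sum over $q_1$. The key arithmetic input is the bound
\[
(\gcd(n,q_1))^{1/2} \;=\; \Bigl(\sum_{d \mid \gcd(n,q_1)} \phi(d)\Bigr)^{1/2} \;\le\; \sum_{d \mid n,\; d \mid q_1} d^{1/2},
\]
obtained from $\sqrt{\sum a_i} \le \sum \sqrt{a_i}$ applied to $\sum_{d \mid a}\phi(d) = a$ and $\phi(d) \le d$. Swapping the order of summation and writing $q_1 = d m$ (so that $\gcd(m,C)=1$), the factor $d^{1/2}$ cancels against $(dm)^{-1/2}$, leaving
\[
\sum_{\substack{q_1 \le Y \\ \gcd(q_1,C)=1}} (\gcd(n,q_1))^{1/2} q_1^{-1/2} \;\le\; \sum_{d \mid n} \sum_{m \le Y/d} m^{-1/2} \;\ll\; Y^{1/2} \sum_{d \mid n} d^{-1/2} \;\le\; Y^{1/2}\,\tau(n),
\]
where the final inequality uses $d^{-1/2} \le 1$ for $d \ge 1$.

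Finally I would apply this with $Y = Q/q_0$ and sum over $q_0$. This produces the factor
\[
Q^{1/2} \sum_{\substack{q_0 \mid C^\infty \\ q_0 \le Q}} q_0^{-1/2} \;\le\; Q^{1/2} \prod_{p \mid C} \sum_{k=0}^{\infty} p^{-k/2} \;=\; Q^{1/2} \prod_{p \mid C} (1 - p^{-1/2})^{-1},
\]
giving the claimed bound $S \ll_\varepsilon Q^{1/2+\varepsilon} \, \tau(n) \prod_{p \mid C}(1-p^{-1/2})^{-1}$. The main obstacle is really just identifying the right pointwise bound on $(\gcd(n,q_1))^{1/2}$; once one commits to $\sum_{d\mid\gcd(n,q_1)} d^{1/2}$, the rest is mechanical manipulation of Euler products and divisor sums. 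One minor point to verify carefully is that one does not pick up an extra power of $\tau$ on $n$ or an extra $(1-p^{-1/2})^{-1}$ on $C$; the computation above shows both come out with exactly the exponents claimed.
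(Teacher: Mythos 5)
Your proof is correct and follows essentially the same route as the paper: absorb $\tau(q)\log(2q)$ into $Q^\varepsilon$, bound $(\gcd(n,q_1))^{1/2} \le \sum_{d \mid n,\, d\mid q_1} d^{1/2}$, swap sums, substitute $q_1 = dm$, estimate $\sum m^{-1/2}$, and finish with the Euler product over $p \mid C$. The only cosmetic difference is that you justify the gcd bound via $\sum_{d\mid a}\phi(d) = a$, whereas it follows immediately from the fact that $\gcd(n,q_1)$ is itself one of the divisors $d$ appearing in the sum.
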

\begin{rmk}
In our applications of Lemma~\ref{lem:q1gcdsum+eps}, the integer $C$ is equal to $2 \det(A)$.
\end{rmk}

The proof of Lemma~\ref{lem:q1gcdsum+eps} requires another lemma, which we now state and prove.
\begin{lem} \label{lem:q1gcdsum}
Let $Q \ge 1$ and let $C$ and $\numrepresented$ be nonzero integers. 
For an integer $q$, we split $q$ into $q = q_0 q_1$ such that $q_0$ is the largest factor of $q$ having all of its prime divisors dividing $C$ so that $\gcd(q_1 , C) = 1$. Then 
\begin{align}
\sum_{1 \le q \le Q} q_1^{-1/2} (\gcd(\numrepresented, q_1))^{1/2} \ll Q^{1/2} \tau(\numrepresented) \prod_{p \mid C} ( 1 - p^{-1/2} )^{-1} . \label{ineq:q1gcdsum bound}
\end{align}
\end{lem}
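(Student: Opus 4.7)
The plan is to exploit the product structure of the decomposition $q = q_0 q_1$: since the summand depends only on $q_1$, the sum naturally splits into an outer sum over $q_0$ (whose prime divisors all lie in $C$) and an inner sum over $q_1$ coprime to $C$. I would first write
\begin{align*}
\sum_{1 \le q \le Q} q_1^{-1/2} (\gcd(\numrepresented, q_1))^{1/2} = \sum_{\substack{q_0 \ge 1 \\ p \mid q_0 \Rightarrow p \mid C}} \sum_{\substack{1 \le q_1 \le Q/q_0 \\ \gcd(q_1, C) = 1}} q_1^{-1/2} (\gcd(\numrepresented, q_1))^{1/2}.
\end{align*}

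Next I would tackle the inner sum by grouping terms according to $e := \gcd(\numrepresented, q_1)$, an integer divisor of $\numrepresented$ coprime to $C$. Writing $q_1 = e m$ produces the chain of estimates
\begin{align*}
\sum_{\substack{1 \le q_1 \le Y \\ \gcd(q_1, C) = 1}} q_1^{-1/2} (\gcd(\numrepresented, q_1))^{1/2} \le \sum_{\substack{e \mid \numrepresented \\ \gcd(e, C) = 1}} e^{1/2} \sum_{m \le Y/e} (em)^{-1/2} \ll Y^{1/2} \sum_{e \mid \numrepresented} e^{-1/2} \le Y^{1/2} \tau(\numrepresented),
\end{align*}
using the elementary bound $\sum_{m \le X} m^{-1/2} \ll X^{1/2}$ and the crude observation $e^{-1/2} \le 1$ for each divisor.

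Finally, substituting $Y = Q/q_0$ and performing the outer sum over $q_0$ yields the Euler product
\begin{align*}
\sum_{\substack{q_0 \ge 1 \\ p \mid q_0 \Rightarrow p \mid C}} q_0^{-1/2} = \prod_{p \mid C} \bigl(1 - p^{-1/2}\bigr)^{-1},
\end{align*}
which is precisely the $C$-dependent factor appearing in the statement. Combining the two estimates gives the bound $\ll Q^{1/2} \tau(\numrepresented) \prod_{p \mid C} (1 - p^{-1/2})^{-1}$.

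I do not anticipate any substantial obstacle, since the entire argument rests on separating $q_0$ from $q_1$ and then invoking standard divisor and geometric sums. The mildest subtlety is the harmless overcounting incurred by relaxing the condition $\gcd(\numrepresented, q_1) = e$ to $e \mid q_1$; this is absorbed into the $\tau(\numrepresented)$ factor. One technical check worth flagging is to verify that the geometric sum over $q_0$ indeed converges (it does, since $p^{-1/2} < 1$ for every prime $p \mid C$), so the infinite tail of $q_0$ extending beyond $Q$ can be included without affecting the order of magnitude.
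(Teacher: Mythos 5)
Your proposal is correct and follows essentially the same route as the paper's proof: split $q$ into $q_0 q_1$, bound the gcd factor by grouping over divisors of $\numrepresented$ (the paper uses $\gcd(\numrepresented, q_1)^{1/2} \le \sum_{d \mid \numrepresented,\, d \mid q_1} d^{1/2}$, you group by the exact gcd and then drop the side conditions on $m$ — an equivalent overcount), reduce to $\sum_{m \le X} m^{-1/2} \ll X^{1/2}$, and finish with the Euler product $\sum_{q_0} q_0^{-1/2} = \prod_{p \mid C}(1 - p^{-1/2})^{-1}$. The only cosmetic differences are that you keep the coprimality condition $\gcd(q_1, C) = 1$ in the inner sum (the paper drops it, which is harmless for an upper bound) and you extend the $q_0$-range to all of $\ZZ_{>0}$ from the outset rather than at the last step.
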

\begin{proof}
Throughout this proof, we have $q_0$ such that if a prime $p$ divides $q_0$ then $p$ divides $C$.

Observe that 
\begin{align*}
\sum_{1 \le q \le Q} q_1^{-1/2} (\gcd(\numrepresented, q_1))^{1/2} &\le \sum_{1 \le q \le Q} q_1^{-1/2} \sum_{\substack{d \mid \numrepresented \\ d \mid q_1}} d^{1/2} \\
	&\le \sum_{1 \le q_0 \le Q} \sum_{1 \le q_1 \le Q / q_0} q_1^{-1/2} \sum_{\substack{d \mid \numrepresented \\ d \mid q_1}} d^{1/2} \\
	&= \sum_{d \mid \numrepresented} d^{1/2} \sum_{1 \le q_0 \le Q} \sum_{\substack{1 \le q_1 \le Q / q_0 \\ q_1 \equiv 0 \pmod{d}}} q_1^{-1/2} \numberthis \label{ineq:q1gcdsum bound1}
\end{align*}
by switching the order of summation.

Let $q_2$ be $q_1/d$. Then
\begin{align*}
\sum_{d \mid \numrepresented} d^{1/2} \sum_{1 \le q_0 \le Q} \sum_{\substack{1 \le q_1 \le Q / q_0 \\ q_1 \equiv 0 \pmod{d}}} q_1^{-1/2} &= \sum_{d \mid \numrepresented} d^{1/2} \sum_{1 \le q_0 \le Q} \sum_{1 \le q_2 \le Q / (q_0 d)} (q_2 d)^{-1/2} \\
	&= \sum_{d \mid \numrepresented} \sum_{1 \le q_0 \le Q} \sum_{1 \le q_2 \le Q / (q_0 d)} q_2^{-1/2} .
\end{align*}
Substituting this into \eqref{ineq:q1gcdsum bound1}, we have
\begin{align*}
\sum_{1 \le q \le Q} q_1^{-1/2} (\gcd(\numrepresented, q_1))^{1/2} &\le \sum_{d \mid \numrepresented} \sum_{1 \le q_0 \le Q} \sum_{1 \le q_2 \le Q / (q_0 d)} q_2^{-1/2} \\
	&\le \sum_{d \mid \numrepresented} \sum_{1 \le q_0 \le Q} \sum_{1 \le q_2 \le Q / q_0} q_2^{-1/2} .
\end{align*}

Using part~(b) Theorem~3.2 of \cite{ApostolAnalNumThy}, we obtain
\begin{align*}
\sum_{1 \le q \le Q} q_1^{-1/2} (\gcd(\numrepresented, q_1))^{1/2} &\ll \sum_{d \mid \numrepresented} \sum_{1 \le q_0 \le Q} \left( \frac{Q}{q_0} \right)^{1/2} \\
	&= Q^{1/2} \sum_{d \mid \numrepresented} \sum_{1 \le q_0 \le Q} q_0^{-1/2} \\
	&\le Q^{1/2} \sum_{d \mid \numrepresented} \sum_{q_0 > 0} q_0^{-1/2} \\
	&= Q^{1/2} \tau(\numrepresented) \sum_{q_0 > 0} q_0^{-1/2} . \numberthis \label{ineq:q1gcdsum bound2}
\end{align*}
We see that \eqref{ineq:q1gcdsum bound} follows from \eqref{ineq:q1gcdsum bound2}, because 
\begin{align*}
\sum_{q_0 > 0} q_0^{-1/2} &= \prod_{p \mid C} \sum_{j=0}^\infty (p^{-1/2})^j \\
	&= \prod_{p \mid C} ( 1 - p^{-1/2} )^{-1} . \qedhere
\end{align*}
\end{proof}

The proof of Lemma~\ref{lem:q1gcdsum+eps} follows quickly from Lemma~\ref{lem:q1gcdsum}
\begin{proof}[Proof of Lemma~\ref{lem:q1gcdsum+eps}]
For any $\varepsilon > 0$, we have $\tau(q) \ll_\varepsilon q^\varepsilon \le Q^\varepsilon$ and $\log(2q) \ll_\varepsilon (2q)^\varepsilon \le (2Q)^\varepsilon \ll_\varepsilon Q^\varepsilon$. 
Thus, 
\begin{align*}
\sum_{1 \le q \le Q} (\gcd(\numrepresented, q_1))^{1/2} q_1^{-1/2} \tau(q) \log(2q) \ll_{\varepsilon} Q^\varepsilon \sum_{1 \le q \le Q} (\gcd(\numrepresented, q_1))^{1/2} q_1^{-1/2} . 
\end{align*}
By applying Lemma~\ref{lem:q1gcdsum}, we obtain the result of Lemma~\ref{lem:q1gcdsum+eps}.
\end{proof}

\subsubsection{The volume of an $\numvars$-dimensional ball, integer lattice point counting, and sums over integer lattice points} \label{subsec:volball intlatticeptcounting sumsoverintlatticepts}

This subsection contains information about the volume of an $\numvars$-dimensional ball, integer lattice point counting, and sums over integer lattice points. Throughout this subsection (Subsection~\ref{subsec:volball intlatticeptcounting sumsoverintlatticepts}), the number $\numvars$ is only required to be a positive integer.

We begin by stating (without proof) the following result about the volume of an $\numvars$-dimensional ball that can be found in a number of sources (including, for example, Section~2.C of Chapter~21 of \cite{ConwaySloaneSpherePackings}).
\begin{lem} \label{lem:\numvars-ballvol}
The volume of an $\numvars$-dimensional ball of radius $R$ is 
\begin{align}
\frac{\pi^{\numvars/2}}{\Gamma \left( \numvars/2 + 1 \right)} R^\numvars ,
\end{align}
where $\numvars$ is a positive integer.
\end{lem}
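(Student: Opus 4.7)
The plan is to reduce to the unit ball by scaling and then compute the volume of the unit ball via the standard Gaussian integral trick. First I would set $B_R = \{\bvec{x} \in \RR^\numvars : \|\bvec{x}\| \le R\}$, and observe that the linear change of variables $\bvec{x} \mapsto R \bvec{x}$ (whose Jacobian determinant has absolute value $R^\numvars$) gives $\Vol(B_R) = R^\numvars \Vol(B_1)$. Hence it suffices to prove that $\Vol(B_1) = \pi^{\numvars/2}/\Gamma(\numvars/2 + 1)$.

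Next I would compute the Gaussian integral
\begin{align*}
I \;=\; \int_{\RR^\numvars} e^{-\|\bvec{x}\|^2} \, d \bvec{x}
\end{align*}
in two different ways. On the one hand, by Fubini--Tonelli, $I$ factors as a product of $\numvars$ copies of the classical integral $\int_\RR e^{-t^2} \, d t = \sqrt{\pi}$, yielding $I = \pi^{\numvars/2}$. On the other hand, I would integrate in ``radial layers'': since $\{\bvec{x} : \|\bvec{x}\| \le r\} = B_r$ has volume $\Vol(B_1) r^\numvars$, differentiating in $r$ shows that the $(\numvars-1)$-dimensional surface area of the sphere of radius $r$ equals $\numvars \Vol(B_1) r^{\numvars - 1}$. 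Therefore
\begin{align*}
I \;=\; \int_0^\infty e^{-r^2} \, \numvars \Vol(B_1) \, r^{\numvars - 1} \, d r.
\end{align*}

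Substituting $u = r^2$ (so $d u = 2 r \, d r$ and $r^{\numvars - 2} \, d r = \tfrac{1}{2} u^{\numvars/2 - 1} \, d u$), I would obtain
\begin{align*}
I \;=\; \numvars \Vol(B_1) \cdot \frac{1}{2} \int_0^\infty u^{\numvars/2 - 1} e^{-u} \, d u \;=\; \frac{\numvars}{2} \Gamma(\numvars/2) \, \Vol(B_1) \;=\; \Gamma(\numvars/2 + 1) \, \Vol(B_1),
\end{align*}
using the recurrence $z \Gamma(z) = \Gamma(z+1)$. Equating the two evaluations of $I$ yields $\Vol(B_1) = \pi^{\numvars/2}/\Gamma(\numvars/2 + 1)$, and combining with the scaling step gives the lemma.

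There is no real obstacle here; the argument is entirely classical. The only minor point worth being explicit about is the radial decomposition (equivalently, the relationship between the volume and surface area of a ball), which I would either derive directly from the scaling identity $\Vol(B_r) = \Vol(B_1) r^\numvars$ or cite as a standard consequence of integration in spherical coordinates.
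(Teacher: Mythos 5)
Your proof is correct. Note, however, that the paper does not actually prove this lemma; it states the formula without proof and cites it to Section~2.C of Chapter~21 of \cite{ConwaySloaneSpherePackings}, so there is no argument in the paper to compare against. Your Gaussian-integral derivation is the standard self-contained one: the scaling step $\Vol(B_R)=R^\numvars\Vol(B_1)$ is fine, the two evaluations of $\int_{\RR^\numvars}e^{-\|\bvec{x}\|^2}\,d\bvec{x}$ are both correct, the substitution $u=r^2$ cleanly produces $\Gamma(\numvars/2)$, and the recurrence $\Gamma(z+1)=z\Gamma(z)$ gives the stated constant. The one point you flagged yourself --- justifying that the radial density is $\numvars\Vol(B_1)r^{\numvars-1}$ --- is indeed the only place needing a word; your suggestion to obtain it by differentiating $\Vol(B_r)=\Vol(B_1)r^\numvars$ (i.e.\ the Lebesgue coarea/layer-cake decomposition) is adequate. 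In a paper that is otherwise content to cite the volume formula, one would simply keep the citation rather than include this proof, but as a proof it is complete.
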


We are primarily concerned with $\numvars$-dimensional balls centered at the origin.
Let $B_\numvars (R)$ be the closed $\numvars$-dimensional ball centered at the origin with radius $R$. This $\numvars$-dimensional ball is defined by 
\begin{align}
B_\numvars (R) &= \{ \bvec{x} \in \RR^\numvars : \| \bvec{x} \| \le R \} . \label{eq:closedsballorigin}
\end{align}
Let $B^o_\numvars (R)$ be the open $\numvars$-dimensional ball centered at the origin with radius $R$. This $\numvars$-dimensional ball is defined by 
\begin{align}
B^o_\numvars (R) &= \{ \bvec{x} \in \RR^\numvars : \| \bvec{x} \| < R \} . \label{eq:opensballorigin}
\end{align}

For a Lebesgue measurable subset $W$ of $\RR^\numvars$, let $\Vol_\numvars (W)$ be the $\numvars$-dimensional volume of $W$. 
Then Lemma~\ref{lem:\numvars-ballvol} implies that 
\begin{align}
\Vol_\numvars (B_\numvars (R)) = \Vol_\numvars (B^o_\numvars (R)) &= \frac{\pi^{\numvars/2}}{\Gamma \left( \numvars/2 + 1 \right)} R^\numvars \label{eq:\numvars-balloriginvol}
\end{align}
since the the boundary of an $\numvars$-dimensional ball has zero volume.

We will also need to count the number of integer lattices points in an $\numvars$-dimensional ball centered at the origin. We first state a result that can be proven with a geometric argument (originally due to Gauss~\cite{GaussCircleProbWerke} for counting lattice points inside a circle).
\begin{lem} \label{lem:latticeptsinballR>=1}
Let $\numvars$ be a positive integer and $R \ge 1$. 
Then the number of integer lattice points in $B_\numvars (R)$ is 
\begin{align}
\left| \{ \bvec{m} \in \ZZ^\numvars : \| \bvec{m} \| \le R \} \right| 
	&= \frac{\pi^{\numvars/2}}{\Gamma \left( \numvars/2 + 1 \right)} R^\numvars + O_\numvars (R^{\numvars - 1}) . \label{eq:latticeptsinballsufflarge}
\end{align}
\end{lem}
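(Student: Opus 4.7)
The plan is to prove the estimate by a standard Gauss-style cube sandwich argument. To each integer lattice point $\bvec{m} \in \ZZ^\numvars$ I associate the half-open unit cube $C_{\bvec{m}} = \bvec{m} + [-1/2, 1/2)^\numvars$; these cubes are pairwise disjoint, tile $\RR^\numvars$, and each has volume $1$. Since the diameter of $C_{\bvec{m}}$ is $\sqrt{\numvars}$ and its center is $\bvec{m}$, every point of $C_{\bvec{m}}$ lies within distance $\sqrt{\numvars}/2$ of $\bvec{m}$.

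Using this, I would establish the two-sided containment
\begin{align*}
B_\numvars\!\left(R - \tfrac{\sqrt{\numvars}}{2}\right) \;\subseteq\; \bigsqcup_{\substack{\bvec{m} \in \ZZ^\numvars \\ \|\bvec{m}\| \le R}} C_{\bvec{m}} \;\subseteq\; B_\numvars\!\left(R + \tfrac{\sqrt{\numvars}}{2}\right) .
\end{align*}
The right-hand inclusion follows because if $\bvec{x} \in C_{\bvec{m}}$ with $\|\bvec{m}\| \le R$, then $\|\bvec{x}\| \le \|\bvec{m}\| + \sqrt{\numvars}/2 \le R + \sqrt{\numvars}/2$. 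For the left-hand inclusion, any $\bvec{x} \in B_\numvars(R - \sqrt{\numvars}/2)$ lies in some unique cube $C_{\bvec{m}}$, and then $\|\bvec{m}\| \le \|\bvec{x}\| + \|\bvec{x} - \bvec{m}\| \le (R - \sqrt{\numvars}/2) + \sqrt{\numvars}/2 = R$, so $\bvec{m}$ is counted on the left of \eqref{eq:latticeptsinballsufflarge}. Taking $\numvars$-dimensional volumes and using the fact that each cube has volume $1$ yields
\begin{align*}
\Vol_\numvars\!\left(B_\numvars\!\left(R - \tfrac{\sqrt{\numvars}}{2}\right)\right) \;\le\; \left| \{ \bvec{m} \in \ZZ^\numvars : \| \bvec{m} \| \le R \} \right| \;\le\; \Vol_\numvars\!\left(B_\numvars\!\left(R + \tfrac{\sqrt{\numvars}}{2}\right)\right).
\end{align*}

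Next I would apply Lemma~\ref{lem:\numvars-ballvol} (equivalently \eqref{eq:\numvars-balloriginvol}) to both sides and expand via the binomial theorem:
\begin{align*}
\left(R \pm \tfrac{\sqrt{\numvars}}{2}\right)^{\numvars} \;=\; R^\numvars \;+\; \sum_{k=1}^{\numvars} \binom{\numvars}{k} \left(\pm \tfrac{\sqrt{\numvars}}{2}\right)^k R^{\numvars - k} \;=\; R^\numvars + O_\numvars(R^{\numvars - 1}),
\end{align*}
where in the last step I used the hypothesis $R \ge 1$ to absorb all terms $R^{\numvars - k}$ with $k \ge 1$ into $R^{\numvars - 1}$ (the implied constant depends only on $\numvars$). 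Multiplying by $\pi^{\numvars/2}/\Gamma(\numvars/2 + 1)$ and applying the sandwich inequality gives \eqref{eq:latticeptsinballsufflarge}.

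There is no real obstacle here; the argument is essentially bookkeeping. The only point requiring a bit of care is to ensure that the cubes $C_{\bvec{m}}$ used to sandwich the ball are chosen so that they tile $\RR^\numvars$ (hence the half-open convention) and that $R \ge 1$ is used to convert the binomial tail into a single $O_\numvars(R^{\numvars - 1})$ term; for $R < 1$ the statement would fail because $R^{\numvars-1}$ would no longer dominate the lower-order contributions.
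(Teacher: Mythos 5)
Your argument is the classical Gauss cube-sandwich proof, which is exactly the ``geometric argument (originally due to Gauss)'' that the paper cites without writing out; the paper provides no explicit proof of this lemma, so your write-up fills that in faithfully.

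One small point worth tightening: when $\numvars \ge 5$ the hypothesis $R \ge 1$ does not rule out $R < \sqrt{\numvars}/2$, in which case the inner ball $B_\numvars(R - \sqrt{\numvars}/2)$ is empty and you cannot blindly apply the volume formula to a negative ``radius'' before expanding binomially (for even $\numvars$ the quantity $(R - \sqrt{\numvars}/2)^\numvars$ is positive and overshoots the true volume $0$, so the chain ``count $\ge$ volume $=$ formula'' breaks at the equality). The fix is immediate: in that range one has $R^\numvars < (\sqrt{\numvars}/2)\,R^{\numvars-1}$, so the target lower bound $\frac{\pi^{\numvars/2}}{\Gamma(\numvars/2+1)}R^\numvars - O_\numvars(R^{\numvars-1})$ is nonpositive and is trivially dominated by the count, which is at least $1$. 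With that one-line case added, or equivalently by writing the lower-bound radius as $\max(0, R - \sqrt{\numvars}/2)$ and splitting accordingly, the proof is complete and matches the intent of the reference.
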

\begin{rmk}
The error term in \eqref{eq:latticeptsinballsufflarge} is not the best possible error term for $\numvars \ge 2$. 
For example, for $\numvars = 2$, Huxley~\cite{HuxleyIntPtsExpSumsRZF} obtained an error term of $O(R^{131/208})$. (This is not necessarily the best result for $\numvars = 2$; there is a preprint by Bourgain and Watt~\cite{BourgainWattMeanSquareZetaCircleDivisorProbRevisited} that claims a better error term for $\numvars = 2$.) 
For $\numvars = 3$, Heath-Brown~\cite{H-BLatticePtsInSphere} obtained an error term of $O(R^{21/16})$.
For $\numvars \ge 4$, the best known error term is $O_\numvars (R^{\numvars - 2})$, which can be proved using a classical formula for the number of representations of an integer as the sum of four squares. (See, for instance, \cite{FrickerLatticePts}.)
Given all of this, the error term in \eqref{eq:latticeptsinballsufflarge} is sufficient for our purposes.
\end{rmk}
\begin{rmk} \label{rmk:latticptsinsmallballerror}
Lemma~\ref{lem:latticeptsinballR>=1} requires that $R \ge 1$. A lower bound like this is required, because for fixed $\numvars$, we have
\begin{align}
\lim_{R \to 0^+} R^\numvars = \lim_{R \to 0^+} R^{\numvars - 1} = 0 .
\end{align}
However, since the zero vector is contained in the set $\{ \bvec{m} \in \ZZ^\numvars : \| \bvec{m} \| \le R \}$, we have
\begin{align}
\left| \{ \bvec{m} \in \ZZ^\numvars : \| \bvec{m} \| \le R \} \right| \ge 1 \label{ineq:latticeptsinball triviallowerbound}
\end{align}
for all positive $R$. Therefore, we cannot have \eqref{eq:latticeptsinballsufflarge} be true for all $R > 0$. 
\end{rmk}

As noted in Remark~\ref{rmk:latticptsinsmallballerror}, we cannot have \eqref{eq:latticeptsinballsufflarge} be true for all $R > 0$. However, we do want to have an upper bound for the number of points in $B_\numvars (R)$ that is true for all $R > 0$.
We first state a result that gives an exact count for the number of points in $B_\numvars (R)$ if $0 < R < 1$.
\begin{lem} \label{lem:latticeptsinballR<1}
Let $\numvars$ be a positive integer and $0 < R < 1$. 
Then the number of integer lattice points in $B_\numvars (R)$ is 
\begin{align*}
\left| \{ \bvec{m} \in \ZZ^\numvars : \| \bvec{m} \| \le R \} \right| &= 1 . 
\end{align*}
\end{lem}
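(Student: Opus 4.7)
The plan is to observe that the claim follows immediately from two elementary facts about the Euclidean norm on $\ZZ^\numvars$: first, the zero vector always lies in $B_\numvars(R)$ since $\|\bvec{0}\| = 0 < R$, and second, every nonzero integer vector has Euclidean norm at least $1$, hence lies outside $B_\numvars(R)$ whenever $R < 1$.

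More concretely, I would argue as follows. Let $\bvec{m} \in \ZZ^\numvars$ with $\bvec{m} \neq \bvec{0}$. Then some coordinate $m_j$ is a nonzero integer, so $|m_j| \ge 1$, and hence
\[
\|\bvec{m}\| = \Bigl(\sum_{k=1}^{\numvars} m_k^2\Bigr)^{1/2} \ge |m_j| \ge 1 > R.
\]
This shows $\bvec{m} \notin B_\numvars(R)$ for any nonzero $\bvec{m} \in \ZZ^\numvars$. Since $\bvec{0} \in B_\numvars(R)$, the set $\{\bvec{m} \in \ZZ^\numvars : \|\bvec{m}\| \le R\}$ equals $\{\bvec{0}\}$, which has cardinality $1$.

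There is no real obstacle here; the lemma is a routine observation whose only purpose is to complement Lemma~\ref{lem:latticeptsinballR>=1} by supplying an exact count in the small-radius regime where the error term $O_\numvars(R^{\numvars-1})$ would otherwise be uninformative (cf.\ Remark~\ref{rmk:latticptsinsmallballerror}). The proof is two lines and requires nothing beyond the definition of $B_\numvars(R)$ in \eqref{eq:closedsballorigin} and the fact that the only integer in the open interval $(-1,1)$ is $0$.
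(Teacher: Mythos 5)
Your proof is correct and takes essentially the same approach as the paper: observe that $\bvec{0}$ belongs to the set and that any nonzero integer vector has norm at least $1 > R$, so the set is exactly $\{\bvec{0}\}$. You merely spell out the norm inequality in a bit more detail than the paper does.
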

\begin{proof}
The zero vector is in $\{ \bvec{m} \in \ZZ^\numvars : \| \bvec{m} \| \le R \}$. 
If $\bvec{m} \in \ZZ^\numvars$ is a nonzero vector, then $\| \bvec{m} \| \ge 1 > R$. Therefore, the only vector in $\{ \bvec{m} \in \ZZ^\numvars : \| \bvec{m} \| \le R \}$ is the zero vector, and $\left| \{ \bvec{m} \in \ZZ^\numvars : \| \bvec{m} \| \le R \} \right| = 1$.
\end{proof}

We use Lemmas~\ref{lem:latticeptsinballR>=1} and \ref{lem:latticeptsinballR<1} to obtain the following upper bound for the number of points in $B_\numvars (R)$.
\begin{thm} \label{thm:latticeptsinball upperbound}
Let $\numvars$ be a positive integer and $R > 0$. 
Then the number of integer lattice points in $B_\numvars (R)$ is 
\begin{align*}
\left| \{ \bvec{m} \in \ZZ^\numvars : \| \bvec{m} \| \le R \} \right| &\ll_{\numvars} R^\numvars + 1.
\end{align*}
\end{thm}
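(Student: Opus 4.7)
The plan is to prove the bound by splitting into the two cases covered by Lemmas~\ref{lem:latticeptsinballR>=1} and \ref{lem:latticeptsinballR<1}, namely $R \ge 1$ and $0 < R < 1$. The motivation for the ``$+1$'' term in the bound is precisely to absorb the trivial contribution from the zero vector when $R$ is small, as discussed in Remark~\ref{rmk:latticptsinsmallballerror}; everything else is handled by the leading-term asymptotic.

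First I would handle the case $R \ge 1$. Lemma~\ref{lem:latticeptsinballR>=1} gives
\[
\left| \{ \bvec{m} \in \ZZ^\numvars : \| \bvec{m} \| \le R \} \right| = \frac{\pi^{\numvars/2}}{\Gamma(\numvars/2 + 1)} R^\numvars + O_\numvars(R^{\numvars-1}).
\]
Since $R \ge 1$, we have $R^{\numvars - 1} \le R^\numvars$ (for $\numvars \ge 1$), so the right-hand side is $\ll_\numvars R^\numvars \le R^\numvars + 1$. For the case $0 < R < 1$, Lemma~\ref{lem:latticeptsinballR<1} gives the exact value $1$, and clearly $1 \le R^\numvars + 1$. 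Combining the two cases yields the desired bound, with the implied constant depending only on $\numvars$ (inherited from Lemma~\ref{lem:latticeptsinballR>=1}).

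There is no real obstacle here: the statement is designed so that the ``$+1$'' covers the regime where $R^\numvars \to 0$, and the two lemmas already in the paper cover the two regimes cleanly. The only thing to be careful about is the degenerate case $\numvars = 1$, where the $O(R^{\numvars-1})$ term becomes $O(1)$, but this is still $\ll_\numvars R^\numvars$ when $R \ge 1$, so the argument goes through uniformly.
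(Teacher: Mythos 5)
Your proof is correct and follows essentially the same two-case split (via Lemmas~\ref{lem:latticeptsinballR>=1} and \ref{lem:latticeptsinballR<1}) that the paper itself uses. The only addition is spelling out why $R^{\numvars-1} \le R^\numvars$ when $R \ge 1$, which the paper leaves implicit.
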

\begin{proof}
If $R \ge 1$, then
\begin{align*}
\left| \{ \bvec{m} \in \ZZ^\numvars : \| \bvec{m} \| \le R \} \right| &\ll_{\numvars} R^\numvars \le R^\numvars + 1
\end{align*}
by Lemma~\ref{lem:latticeptsinballR>=1}.

If $0 < R < 1$, then 
\begin{align*}
\left| \{ \bvec{m} \in \ZZ^\numvars : \| \bvec{m} \| \le R \} \right| &= 1 \le R^\numvars + 1
\end{align*}
by Lemma~\ref{lem:latticeptsinballR<1}.
\end{proof}

For $R > 0$, sometimes we will want to have an upper bound for the number of integer lattice points $\bvec{m} \in \ZZ^\numvars$ satisfying $0 < \| \bvec{m} \| \le R$. Such an upper bound is stated in the next corollary.
\begin{cor} \label{cor:latticeptsinpuncball upperbound}
Let $\numvars$ be a positive integer and $R > 0$. 
Then the number of nonzero integer lattice points in $B_\numvars (R)$ is 
\begin{align*}
\left| \{ \bvec{m} \in \ZZ^\numvars : 0 < \| \bvec{m} \| \le R \} \right| &\ll_{\numvars} R^\numvars .
\end{align*}
\end{cor}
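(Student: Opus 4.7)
The plan is to split into two cases based on whether $R \ge 1$ or $0 < R < 1$, using the previously established results about $|\{\bvec{m} \in \ZZ^\numvars : \|\bvec{m}\| \le R\}|$. The key observation that makes this corollary sharper than Theorem~\ref{thm:latticeptsinball upperbound} is that the removal of the origin kills the ``$+1$'' contribution that dominated in the small-$R$ regime.

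First I would handle the case $R \ge 1$. Here the set $\{\bvec{m} \in \ZZ^\numvars : 0 < \|\bvec{m}\| \le R\}$ is contained in $\{\bvec{m} \in \ZZ^\numvars : \|\bvec{m}\| \le R\}$, so by Lemma~\ref{lem:latticeptsinballR>=1} its cardinality is bounded by
\[
\frac{\pi^{\numvars/2}}{\Gamma(\numvars/2+1)} R^{\numvars} + O_{\numvars}(R^{\numvars-1}) \ll_{\numvars} R^{\numvars},
\]
since $R^{\numvars-1} \le R^{\numvars}$ when $R \ge 1$.

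Next I would handle the case $0 < R < 1$. Lemma~\ref{lem:latticeptsinballR<1} tells us that the only integer lattice point in $B_\numvars(R)$ is the zero vector. Excluding the zero vector leaves the empty set, so $|\{\bvec{m} \in \ZZ^\numvars : 0 < \|\bvec{m}\| \le R\}| = 0$, which is trivially $\ll_{\numvars} R^{\numvars}$.

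Combining the two cases yields the stated bound, with an implied constant depending only on $\numvars$. There is no real obstacle here; the whole content is the case split that removes the additive ``$+1$'' from Theorem~\ref{thm:latticeptsinball upperbound} by noting that the ``$+1$'' came precisely from the origin, which is now excluded by hypothesis.
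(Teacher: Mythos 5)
Your proof is correct and takes essentially the same approach as the paper: a case split on $R \ge 1$ versus $0 < R < 1$, invoking Lemma~\ref{lem:latticeptsinballR>=1} in the first case and Lemma~\ref{lem:latticeptsinballR<1} in the second. The only difference is that you spell out the $R \ge 1$ bound slightly more explicitly, but the argument is identical.
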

\begin{proof}
If $R \ge 1$, then the result of the corollary follows from Lemma~\ref{lem:latticeptsinballR>=1}.

If $0 < R < 1$, then 
\begin{align*}
\left| \{ \bvec{m} \in \ZZ^\numvars : 0 < \| \bvec{m} \| \le R \} \right| &= 0 \le R^\numvars . \qedhere
\end{align*}
\end{proof}

We will have summations involving the Euclidean norm of vectors. The following theorem provides an upper bound for such a sum.
\begin{thm} \label{thm:latticesumbound B>=1}
Suppose that $B \ge 1$ and $M > \numvars > 0$. Then
\begin{align*}
\sum_{\substack{\bvec{r} \in \ZZ^\numvars \\ \| \bvec{r} \| > B}} \| \bvec{r} \|^{-M} \ll_{\numvars} \left( 1 + \frac{4}{M (M - \numvars)} \right) B^{\numvars-M} . 
\end{align*}
\end{thm}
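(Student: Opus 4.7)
My plan is to apply partial summation to rewrite the tail sum as an integral, then estimate that integral using the lattice point count from Corollary~\ref{cor:latticeptsinpuncball upperbound}. Specifically, I would define $a(R) = |\{\bvec{r} \in \ZZ^\numvars : B < \|\bvec{r}\| \le R\}|$ for $R \ge B$; since $a(R) \le |\{\bvec{r} \in \ZZ^\numvars : 0 < \|\bvec{r}\| \le R\}|$, that corollary gives $a(R) \ll_\numvars R^\numvars$. Writing the sum as a Riemann--Stieltjes integral and integrating by parts,
\begin{align*}
\sum_{\substack{\bvec{r} \in \ZZ^\numvars \\ B < \|\bvec{r}\| \le X}} \|\bvec{r}\|^{-M} = \int_B^X R^{-M}\, da(R) = X^{-M} a(X) + M \int_B^X a(R) R^{-M-1}\, dR.
\end{align*}
Because $M > \numvars$, we have $X^{-M} a(X) \ll_\numvars X^{\numvars - M} \to 0$ as $X \to \infty$, so letting $X \to \infty$ yields
\begin{align*}
\sum_{\substack{\bvec{r} \in \ZZ^\numvars \\ \|\bvec{r}\| > B}} \|\bvec{r}\|^{-M} = M \int_B^\infty a(R) R^{-M-1}\, dR.
\end{align*}

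Next I would substitute $a(R) \ll_\numvars R^\numvars$ and evaluate the elementary resulting integral to obtain
\begin{align*}
\sum_{\substack{\bvec{r} \in \ZZ^\numvars \\ \|\bvec{r}\| > B}} \|\bvec{r}\|^{-M} \ll_\numvars M \int_B^\infty R^{\numvars - M - 1}\, dR = \frac{M}{M - \numvars} B^{\numvars - M},
\end{align*}
where convergence is precisely the condition $M > \numvars$. To match the stated form of the constant, I would verify $\frac{M}{M - \numvars} \ll_\numvars 1 + \frac{4}{M(M - \numvars)}$, equivalently $\frac{M^2}{M(M - \numvars) + 4} \ll_\numvars 1$, via a short case split on $M$: when $M \ge 2\numvars$ we have $M - \numvars \ge M/2$, so the ratio is at most $2$; when $\numvars < M < 2\numvars$, the denominator is at least $4$, so the ratio is at most $M^2/4 < \numvars^2$. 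Either way the ratio is $O_\numvars(1)$.

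The only delicate point is the partial-summation bookkeeping: ensuring the boundary term at infinity vanishes and the tail integral converges, both of which hinge on $M > \numvars$. Everything after that reduces to the lattice point count of Corollary~\ref{cor:latticeptsinpuncball upperbound} together with the elementary evaluation of $\int_B^\infty R^{\numvars - M - 1}\, dR$, so I do not expect any serious obstacle.
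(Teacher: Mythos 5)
Your proof is correct and uses essentially the same idea as the paper: partial (Abel) summation against the lattice-point counting function, followed by the elementary tail integral. The paper carries out a discrete Abel summation in the squared-norm variable $j = \|\bvec{r}\|^2$ using $S(y) = |\{\bvec{r} : \|\bvec{r}\|^2 \le y\}|$ and Lemma~\ref{lem:latticeptsinballR>=1}, which makes the constant $1 + \tfrac{4}{M(M-\numvars)}$ appear directly; you instead work with a continuous Riemann--Stieltjes integral in $R = \|\bvec{r}\|$ using Corollary~\ref{cor:latticeptsinpuncball upperbound}, land on the cleaner intermediate form $\tfrac{M}{M-\numvars}B^{\numvars-M}$, and then verify it is $\ll_\numvars (1 + \tfrac{4}{M(M-\numvars)})B^{\numvars-M}$ via your case split — a presentational difference, not a different method, and your verification of the constant comparison is sound.
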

\begin{proof}
Define $\mathcal{L}$ to be the lattice sum 
\begin{align*}
\mathcal{L} = \sum_{\substack{\bvec{r} \in \ZZ^\numvars \\ \| \bvec{r} \| > B}} \| \bvec{r} \|^{-M} & = \sum_{\substack{\bvec{r} \in \ZZ^\numvars \\ \| \bvec{r} \|^2 > B^2}} (\| \bvec{r} \|^2)^{-M/2} .
\end{align*}
For a nonnegative integer $j$, let 
\begin{align*}
a(j) &= \left| \{ \bvec{r} \in \ZZ^\numvars : \| \bvec{r} \|^2 = j \} \right|.
\end{align*}
Then
\begin{align*}
\mathcal{L} = \sum_{j > B^2} a(j) j^{-M/2} .
\end{align*}
For $R \ge 1$, let $\mathcal{L}(R)$ be
\begin{align*}
\mathcal{L}(R) &= \sum_{\substack{\bvec{r} \in \ZZ^\numvars \\ B < \| \bvec{r} \| \le R}} \| \bvec{r} \|^{-M} = \sum_{B^2 < j \le R^2} a(j) j^{-M/2} 
\end{align*}
so that $\lim_{R \to \infty} \mathcal{L}(R) = \mathcal{L}$.

We will use Abel's summation formula, also called Abel's identity. (See Theorem~4.2 of \cite{ApostolAnalNumThy}.) Therefore, it is useful to define the following sum: 
For $y \ge 0$, let
\begin{align}
S(y) &= \sum_{j=0}^{\lfloor y \rfloor} a(j) .
\end{align}
The definition of $a(j)$ implies that 
\begin{align*}
S(y) &= \left| \{ \bvec{r} \in \ZZ^\numvars : \| \bvec{r} \|^2 \le y \} \right| \\
	&= \left| \{ \bvec{r} \in \ZZ^\numvars : \| \bvec{r} \| \le \sqrt{y} \} \right| . \numberthis \label{eq:Sytolatticecount}
\end{align*}
Therefore, if $y \ge 1$, we know from Lemma~\ref{lem:latticeptsinballR>=1} that
\begin{align}
S(y) &\ll_\numvars y^{\numvars/2} . \label{ineq:Sylarge upperbound}
\end{align}

By Abel's summation formula,
\begin{align}
\mathcal{L}(R) &= S(R^2) R^{-M} - S(B^2) B^{-M} + \frac{2}{M} \int_{B^2}^{R^2} S(t) t^{-M/2 - 1} \ d t . \label{eq:LRsumbyparts}
\end{align}
Using \eqref{ineq:Sylarge upperbound} in \eqref{eq:LRsumbyparts}, we find that
\begin{align*}
\mathcal{L}(R) &\ll_\numvars  R^{\numvars - M} + B^{\numvars - M} + \frac{2}{M} \int_{B^2}^{R^2} t^{(\numvars - M)/2 - 1} \ d t \\
	&= R^{\numvars - M} + B^{\numvars - M} +  \frac{4}{M ( M - \numvars )} \left( B^{\numvars - M} - R^{\numvars - M} \right) . \numberthis \label{ineq:LR upperbound1}
\end{align*}
Taking the limits in \eqref{ineq:LR upperbound1} as $R \to \infty$, we find that 
\begin{align*}
\mathcal{L} &\ll_\numvars  B^{\numvars - M} +  \frac{4}{M ( M - \numvars )} B^{\numvars - M}
\end{align*}
since $M > \numvars > 0$.
\end{proof}

Instead of using the previous theorem, we will use a corollary of it for ease of use.
\begin{cor} \label{cor:latticesumbound B>=1}
Suppose that $B \ge 1$ and $M \ge \numvars + 1$. Then
\begin{align}
\sum_{\substack{\bvec{r} \in \ZZ^\numvars \\ \| \bvec{r} \| > B}} \| \bvec{r} \|^{-M} \ll_{\numvars} B^{\numvars-M} . \label{ineq:latticesumbound B>=1 cor}
\end{align}
\end{cor}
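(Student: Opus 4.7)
The plan is to derive Corollary~\ref{cor:latticesumbound B>=1} directly from Theorem~\ref{thm:latticesumbound B>=1} by absorbing the $M$-dependent factor into a constant that depends only on $\numvars$ (in fact, into an absolute constant). First I will invoke Theorem~\ref{thm:latticesumbound B>=1}, which applies since the hypothesis $M \ge \numvars + 1$ (together with $\numvars \ge 1$) certainly gives $M > \numvars > 0$. This provides the bound
\begin{align*}
\sum_{\substack{\bvec{r} \in \ZZ^\numvars \\ \| \bvec{r} \| > B}} \| \bvec{r} \|^{-M} \ll_{\numvars} \left( 1 + \frac{4}{M (M - \numvars)} \right) B^{\numvars-M}.
\end{align*}

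Next I will show that the prefactor is bounded by an absolute constant under the assumption $M \ge \numvars + 1$. The hypothesis forces $M - \numvars \ge 1$ and $M \ge \numvars + 1 \ge 2$, so $M(M-\numvars) \ge 2$, giving $\frac{4}{M(M-\numvars)} \le 2$ and hence $1 + \frac{4}{M(M-\numvars)} \le 3$. Substituting this uniform bound into the inequality from Theorem~\ref{thm:latticesumbound B>=1} yields \eqref{ineq:latticesumbound B>=1 cor}, with the same $\numvars$-dependence in the implied constant. There is essentially no obstacle here; the only step requiring any care is verifying that $\numvars \ge 1$ can be assumed (which is implicit in the setup, since $\numvars$ is the number of variables of the quadratic form and is at least $4$ throughout the paper) so that $M \ge \numvars + 1 \ge 2$ gives the clean bound on $M(M-\numvars)$.
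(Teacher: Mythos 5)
Your proof is correct and follows the paper's own approach: invoke Theorem~\ref{thm:latticesumbound B>=1} and absorb the prefactor $1 + \frac{4}{M(M-\numvars)}$ into the implied constant using $M \ge \numvars + 1$. The only cosmetic difference is that you bound the prefactor by the absolute constant $3$, whereas the paper bounds it by $1 + \frac{4}{\numvars+1}$; both are valid.
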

\begin{proof}
Because $M \ge \numvars + 1$, we have
\begin{align*}
1 + \frac{4}{M (M - \numvars)} \le 1 + \frac{4}{\numvars+1} ,
\end{align*}
so 
\begin{align*}
\left( 1 + \frac{4}{M (M - \numvars)} \right) B^{\numvars-M} \ll_{\numvars} B^{\numvars-M} .
\end{align*}
The result of the corollary follows from Theorem~\ref{thm:latticesumbound B>=1}.
\end{proof}

Because such a sum will arise, we provide an upper bound for $\sum_{\substack{\bvec{r} \in \ZZ^\numvars \\ \| \bvec{r} \| > B}} \| \bvec{r} \|^{-M}$ when $B \ge 0$.
\begin{cor} \label{cor:latticesumbound B>=0}
Suppose that $B \ge 0$ and $M \ge \numvars + 1$. Then
\begin{align}
\sum_{\substack{\bvec{r} \in \ZZ^\numvars \\ \| \bvec{r} \| > B}} \| \bvec{r} \|^{-M} \ll_{\numvars} 1 . \label{ineq:latticesumbound B>=0 cor}
\end{align}
\end{cor}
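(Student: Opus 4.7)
The plan is to reduce the statement directly to Corollary~\ref{cor:latticesumbound B>=1} by splitting into two cases depending on whether $B\ge 1$ or $0\le B<1$.

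First, suppose $B\ge 1$. Then Corollary~\ref{cor:latticesumbound B>=1} immediately gives
$\sum_{\|\bvec{r}\|>B}\|\bvec{r}\|^{-M}\ll_{\numvars} B^{\numvars-M}$.
Since $M\ge \numvars+1$, the exponent $\numvars-M$ is nonpositive (in fact at most $-1$), and $B\ge 1$ forces $B^{\numvars-M}\le 1$. This handles the first case with an implied constant depending only on $\numvars$.

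Next, suppose $0\le B<1$. The key observation is that the only integer vector in $\RR^\numvars$ with Euclidean norm strictly less than $1$ is $\bvec{0}$, and $\bvec{0}$ is excluded from the sum because $\|\bvec{0}\|=0$ is not strictly greater than $B$ when $B=0$, and is obviously not greater than $B$ when $B>0$. Hence every $\bvec{r}\in\ZZ^\numvars$ appearing in the summation has $\|\bvec{r}\|\ge 1$, and so
\[
\sum_{\substack{\bvec{r}\in\ZZ^\numvars \\ \|\bvec{r}\|>B}} \|\bvec{r}\|^{-M}
\;\le\; \sum_{\substack{\bvec{r}\in\ZZ^\numvars \\ \|\bvec{r}\|\ge 1}} \|\bvec{r}\|^{-M}
\;=\; \bigl|\{\bvec{r}\in\ZZ^\numvars:\|\bvec{r}\|=1\}\bigr|
\;+\; \sum_{\substack{\bvec{r}\in\ZZ^\numvars \\ \|\bvec{r}\|>1}} \|\bvec{r}\|^{-M}.
\]
The first term on the right equals $2\numvars$ (the $\pm$ standard basis vectors), and the second term is $\ll_{\numvars} 1^{\numvars-M} = 1$ by Corollary~\ref{cor:latticesumbound B>=1} applied with $B=1$. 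Thus the whole sum is $\ll_{\numvars} 1$, completing the second case.

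Combining the two cases yields \eqref{ineq:latticesumbound B>=0 cor}. There is no real obstacle here; the mild subtlety is just confirming that no lattice point escapes between the boundaries $0\le B<1$ and $B=1$, which is resolved by the ``norm less than $1$'' observation above.
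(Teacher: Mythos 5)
Your proof is correct and follows essentially the same route as the paper: both handle $B\ge 1$ directly via Corollary~\ref{cor:latticesumbound B>=1} together with $B^{\numvars-M}\le 1$, and both handle $0\le B<1$ by observing that the sum reduces to the $\|\bvec{r}\|\ge 1$ lattice points, then splitting off the $2\numvars$ unit vectors and applying Corollary~\ref{cor:latticesumbound B>=1} at $B=1$ to the tail.
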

\begin{proof}
If $B \ge 1$, then it follows from Corollary~\ref{cor:latticesumbound B>=1} that
\begin{align*}
\sum_{\substack{\bvec{r} \in \ZZ^\numvars \\ \| \bvec{r} \| > B}} \| \bvec{r} \|^{-M} \ll_{\numvars} B^{\numvars-M} \le 1
\end{align*}
since $M \ge \numvars + 1$.

Now suppose that $0 \le B < 1$. Then
\begin{align}
\sum_{\substack{\bvec{r} \in \ZZ^\numvars \\ \| \bvec{r} \| > B}} \| \bvec{r} \|^{-M} &= \left| \{ \bvec{r} \in \ZZ^\numvars : \| \bvec{r} \| = 1 \} \right| + \sum_{\substack{\bvec{r} \in \ZZ^\numvars \\ \| \bvec{r} \| > 1}} \| \bvec{r} \|^{-M} . \label{eq:latticesum 0<=B<1}
\end{align}
Now 
\begin{align}
\left| \{ \bvec{r} \in \ZZ^\numvars : \| \bvec{r} \| = 1 \} \right| & = 2 \numvars \label{eq:|r|=1}
\end{align}
since the vectors $\bvec{r} \in \ZZ^\numvars$ with $\| \bvec{r} \| = 1$ are the vectors with exactly one nonzero entry and that nonzero entry is either $1$ or $-1$. 
From Corollary~\ref{cor:latticesumbound B>=1}, we know that 
\begin{align}
\sum_{\substack{\bvec{r} \in \ZZ^\numvars \\ \| \bvec{r} \| > 1}} \| \bvec{r} \|^{-M} \ll_{\numvars} 1 . \label{ineq:latticesumbound B=1}
\end{align}
Substituting \eqref{eq:|r|=1} and \eqref{ineq:latticesumbound B=1} into \eqref{eq:latticesum 0<=B<1}, we obtain \eqref{ineq:latticesumbound B>=0 cor}.
\end{proof}

\subsubsection{The compactness of the preimage of a positive definite quadratic form} \label{subsec:compactpreimageF}

In this subsection, we prove that the set $V = \{ \bvec{m} \in \RR^\numvars : F(\bvec{m}) = \numrepresented \}$ is compact when $F$ is a positive definite quadratic form and $\numrepresented$ is a positive real number.
Before we do this, we prove that if $\bvec{m}$ is in $V$, then there are bounds on $\| \bvec{m} \|$.
We begin by proving that if $F$ is a nonsingular quadratic form and  $F(\bvec{m}) = \numrepresented$, then there is a lower bound on $\| \bvec{m} \|$.
\begin{lem} \label{lem:preimageFlowerbound}
Suppose that $F$ is a nonsingular quadratic form in $\numvars \ge 1$ variables. Suppose that $\numrepresented$ is a positive real number. 
Let $A \in \Mat_\numvars (\RR)$ be the Hessian matrix of $F$.
Let $\sigma_1$ be the largest singular value of $A$.
Suppose that $\bvec{m} \in \RR^\numvars$ satisfies $F(\bvec{m}) = \numrepresented$. 
Then
\begin{align}
\| \bvec{m} \| \ge \sqrt{\frac{2 \numrepresented}{\sigma_1}}  . \label{ineq:m in preimageF lower bound}
\end{align}
\end{lem}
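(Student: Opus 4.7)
The plan is to apply Lemma~\ref{lem:wtBw upperbound} directly to $\bvec{m}$ and the Hessian matrix $A$.

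First, I will use the identity $F(\bvec{m}) = \frac{1}{2} \bvec{m}^\top A \bvec{m}$ to rewrite the hypothesis $F(\bvec{m}) = \numrepresented$ as $\bvec{m}^\top A \bvec{m} = 2\numrepresented$. Since $\numrepresented > 0$, taking absolute values gives $|\bvec{m}^\top A \bvec{m}| = 2\numrepresented$. Next, since $A$ is symmetric, Lemma~\ref{lem:wtBw upperbound} applies with $B = A$ and $\bvec{w} = \bvec{m}$, yielding $|\bvec{m}^\top A \bvec{m}| \le \sigma_1 \|\bvec{m}\|^2$. Combining these,
\begin{align*}
2\numrepresented \le \sigma_1 \|\bvec{m}\|^2.
\end{align*}
Finally, since $F$ is nonsingular, all singular values of $A$ are positive, so in particular $\sigma_1 > 0$, and dividing by $\sigma_1$ and taking a square root gives \eqref{ineq:m in preimageF lower bound}.

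There is no real obstacle: the result is essentially an immediate reformulation of the operator-norm bound stated in Lemma~\ref{lem:wtBw upperbound} applied to the symmetric matrix $A$, combined with the fact that $\sigma_1 > 0$ for a nonsingular $A$.
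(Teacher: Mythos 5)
Your proof is correct and follows the same approach as the paper: apply Lemma~\ref{lem:wtBw upperbound} to the symmetric Hessian $A$ and the vector $\bvec{m}$ to get $2\numrepresented = \bvec{m}^\top A \bvec{m} \le \sigma_1 \|\bvec{m}\|^2$, then solve for $\|\bvec{m}\|$. The only cosmetic difference is that you explicitly pass through absolute values, which the paper leaves implicit since $\bvec{m}^\top A \bvec{m} = 2\numrepresented > 0$.
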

\begin{proof}
 By Lemma~\ref{lem:wtBw upperbound}, we have 
\begin{align}
\numrepresented &= F(\bvec{m}) = \frac{1}{2} \bvec{m}^\top A \bvec{m} \le \frac{1}{2} \sigma_1 \| \bvec{m} \|^2 . \label{ineq:m sigmalowerbound}
\end{align}
Solving for $\| \bvec{m} \|$ in \eqref{ineq:m sigmalowerbound}, we obtain \eqref{ineq:m in preimageF lower bound}.
\end{proof}

We now prove an upper and lower bound for  $\| \bvec{m} \|$ when $F$ is a positive definite quadratic form and $F(\bvec{m}) = \numrepresented$.
\begin{lem} \label{lem:preimageFbounded}
Suppose that $F$ is a positive definite quadratic form in $\numvars \ge 1$ variables. Suppose that $\numrepresented$ is a positive real number. 
Let $A \in \Mat_\numvars (\RR)$ be the Hessian matrix of $F$.
Let $\lambda_\numvars$ be the smallest eigenvalue of $A$, and let $\lambda_1$ be the largest eigenvalue of $A$.
Suppose that $\bvec{m} \in \RR^\numvars$ satisfies $F(\bvec{m}) = \numrepresented$. 
Then
\begin{align}
\sqrt{\frac{2 \numrepresented}{\lambda_1}} \le \| \bvec{m} \| \le \sqrt{\frac{2 \numrepresented}{\lambda_\numvars}} . \label{ineqs:m in preimageF bounds}
\end{align}
\end{lem}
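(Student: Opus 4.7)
The plan is to obtain both inequalities in \eqref{ineqs:m in preimageF bounds} from the Rayleigh quotient inequalities $\lambda_\numvars \|\bvec{m}\|^2 \le \bvec{m}^\top A \bvec{m} \le \lambda_1 \|\bvec{m}\|^2$ applied to $F(\bvec{m}) = \tfrac{1}{2} \bvec{m}^\top A \bvec{m} = \numrepresented$, and then solve for $\|\bvec{m}\|$.

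For the lower bound, I would note that because $F$ is positive definite, every eigenvalue of $A$ is positive, so the singular values of $A$ coincide with its eigenvalues; in particular $\sigma_1 = \lambda_1$. Lemma~\ref{lem:preimageFlowerbound} then applies directly and yields $\|\bvec{m}\| \ge \sqrt{2\numrepresented/\lambda_1}$, which is the left inequality in \eqref{ineqs:m in preimageF bounds}.

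For the upper bound, I would invoke the spectral theorem to write $A = P^\top D P$, where $P$ is orthogonal and $D = \diag(\lambda_1, \ldots, \lambda_\numvars)$. Setting $\bvec{y} = P \bvec{m}$, orthogonality gives $\|\bvec{y}\| = \|\bvec{m}\|$, and
\begin{align*}
2 \numrepresented = \bvec{m}^\top A \bvec{m} = \bvec{y}^\top D \bvec{y} = \sum_{j=1}^{\numvars} \lambda_j y_j^2 \ge \lambda_\numvars \sum_{j=1}^{\numvars} y_j^2 = \lambda_\numvars \|\bvec{m}\|^2,
\end{align*}
since $\lambda_\numvars$ is the smallest eigenvalue. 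Solving for $\|\bvec{m}\|$ yields $\|\bvec{m}\| \le \sqrt{2\numrepresented/\lambda_\numvars}$, completing the right inequality in \eqref{ineqs:m in preimageF bounds}.

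There is no real obstacle here; the main conceptual point is just to recognize that Lemma~\ref{lem:wtBw upperbound} gives the upper Rayleigh bound, while the analogous lower bound (needed for the upper bound on $\|\bvec{m}\|$) follows immediately from the spectral decomposition together with positive definiteness of $A$, which is what ensures $\lambda_\numvars > 0$ so that the displayed chain of inequalities can be divided through by $\lambda_\numvars$.
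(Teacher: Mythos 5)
Your proof is correct and follows essentially the same route as the paper: the lower bound via Lemma~\ref{lem:preimageFlowerbound} after noting $\sigma_1 = \lambda_1$ for positive definite $A$, and the upper bound via the spectral decomposition $A = P^\top D P$ with the orthogonal change of variables and the bound $\sum_j \lambda_j y_j^2 \ge \lambda_\numvars \|\bvec{y}\|^2$. No meaningful differences.
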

\begin{rmk}
Other bounds for the size of a real solution of $F(\bvec{m}) = \numrepresented$ have been found when $F$ is a positive definite integral quadratic form. For example, let $f_{j k}$ be the coefficient of $m_j m_k$ in $F$, where $1 \le j \le k \le \numvars$. Kornhauser~\cite[Lemma~10]{KornhauserSmallSolnsQuadEqns5+} showed that if $F(\bvec{m}) = \numrepresented$, then
\begin{align*}
\| \bvec{m} \| < 2 (4 \numvars H)^{(\numvars - 1)/2} \numrepresented^{1/2} ,
\end{align*}
where $H = \max_{1 \le j \le k \le \numvars} |f_{j k}|$.
(For a proof of this result, see the proof of Lemma~10 in \cite{KornhauserBoundsSmallestIntSolnQuadEqns}.)
\end{rmk}
\begin{rmk}
The condition that $F$ is positive definite is crucial in Lemma~\ref{lem:preimageFbounded}. For example, the set $\{ (m_1 , m_2)^\top \in \RR^2 : m_1^2 - m_2^2 = 1 \}$ is a hyperbola and is not bounded.
\end{rmk}
\begin{proof}[Proof of Lemma~\ref{lem:preimageFbounded}]
Because $A$ is positive definite, the largest singular value of $A$ is $\lambda_1$. That is, $\sigma_1 = \lambda_1$.
Thus, the first inequality in \eqref{ineqs:m in preimageF bounds} follows from Lemma~\ref{lem:preimageFlowerbound}.

We now prove the second inequality in \eqref{ineqs:m in preimageF bounds}. 
Using the spectral theorem for symmetric matrices, we can write the matrix $A$ as 
\begin{align*}
A = P^\top D P,
\end{align*}
where $P$ is an orthogonal matrix and $D = \diag(\lambda_1 , \ldots , \lambda_\numvars)$ is a diagonal matrix. Note that $\{ \lambda_j \}_{j=1}^\numvars$ is the set of eigenvalues of $A$.
Then
\begin{align}
F(\bvec{m}) &= \frac{1}{2} \bvec{m}^\top P^\top D P \bvec{m} . \label{eq:F(m) diag}
\end{align}

Let $\bvec{v} = P \bvec{m}$. Note that $\| \bvec{v} \| = \| \bvec{m} \|$ since $P$ is orthogonal. 
Substituting $\bvec{v} = P \bvec{m}$ into \eqref{eq:F(m) diag}, we see that 
\begin{align}
F(\bvec{m}) &= \frac{1}{2} \bvec{v}^\top D \bvec{v} 
	= \frac{1}{2} \sum_{j=1}^\numvars \lambda_j v_j^2 . \label{eq:vtDv/2}
\end{align}

Since $A$ is positive definite, each eigenvalue $\lambda_j$ is positive. Without loss of generality, we assume that $\lambda_1 \ge \lambda_2 \ge \cdots \ge \lambda_{\numvars-1} \ge \lambda_\numvars > 0$. 
Therefore, \eqref{eq:vtDv/2} implies that 
\begin{align*}
F(\bvec{m}) &\ge \frac{1}{2} \sum_{j=1}^\numvars \lambda_\numvars v_j^2 = \frac{\lambda_\numvars}{2} \| \bvec{v} \|^2 . \numberthis \label{ineq:F(m) lowerbound}
\end{align*}

Solving \eqref{ineq:F(m) lowerbound} for $\| \bvec{v} \|$, we find that 
\begin{align*}
\| \bvec{v} \| \le \sqrt{\frac{2 F(\bvec{m})}{\lambda_\numvars}} .
\end{align*}
Because $\| \bvec{v} \| = \| \bvec{m} \|$ and $F(\bvec{m}) = \numrepresented$, we obtain the second inequality in \eqref{ineqs:m in preimageF bounds}.
\end{proof}

Lemma~\ref{lem:preimageFbounded} shows that $V$ is bounded. We now use the Heine--Borel theorem to show that $V$ compact. 
\begin{thm} \label{thm:preimageFcompact}
Suppose that $F$ is a positive definite quadratic form in $\numvars \ge 1$ variables. Suppose that $\numrepresented$ is a positive real number. 
Then the set 
\begin{align}
V &= \{ \bvec{m} \in \RR^\numvars : F(\bvec{m}) = \numrepresented \}
\end{align}
is compact.
\end{thm}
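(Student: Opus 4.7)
The plan is a short application of the Heine--Borel theorem: show that $V$ is both closed and bounded in $\RR^\numvars$, and then conclude compactness.

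First I would handle boundedness. Lemma~\ref{lem:preimageFbounded} already does the work: since $F$ is positive definite with smallest eigenvalue $\lambda_\numvars > 0$ of its Hessian $A$, every $\bvec{m} \in V$ satisfies
\begin{align*}
\| \bvec{m} \| \le \sqrt{\frac{2 \numrepresented}{\lambda_\numvars}} ,
\end{align*}
so $V$ is contained in the closed ball $B_\numvars\!\left(\sqrt{2\numrepresented/\lambda_\numvars}\right)$ and is therefore bounded.

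Next I would handle closedness. The quadratic form $F \colon \RR^\numvars \to \RR$ is a polynomial in the coordinates of $\bvec{m}$ and hence continuous. The singleton $\{\numrepresented\}$ is closed in $\RR$, so its preimage $V = F^{-1}(\{\numrepresented\})$ is closed in $\RR^\numvars$.

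Having established that $V$ is a closed and bounded subset of $\RR^\numvars$, the Heine--Borel theorem immediately gives that $V$ is compact. There is no real obstacle here; the only nontrivial ingredient is the boundedness statement, which is already isolated in Lemma~\ref{lem:preimageFbounded} and which in turn relies essentially on the positive definiteness hypothesis on $F$ (the same statement would fail for indefinite $F$, as noted in the remark after that lemma).
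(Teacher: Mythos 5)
Your proof is correct and follows essentially the same route as the paper: boundedness via Lemma~\ref{lem:preimageFbounded}, closedness since $V$ is the preimage of the closed set $\{\numrepresented\}$ under the continuous map $F$, and then the Heine--Borel theorem.
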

\begin{proof}
Because $F$ is continuous and $\{ \numrepresented \}$ is a closed set, the preimage $F^{-1}(\{ \numrepresented \}) = V$ is a closed set. 
Lemma~\ref{lem:preimageFbounded} says that $V$ is bounded. 
Therefore, since $V$ is closed and bounded, the Heine--Borel theorem tells us that $V$ is compact.
\end{proof}

\subsection{Analyzing the main term}

Given the previous sections, we now analyze the main term $M_{F, \Upspsi, \coordsize} (\numrepresented)$ of our weighted representation number. 
Using \eqref{eq:Trq\numrepresented x with gausssumr}, \eqref{eq:gausssumr}, and \eqref{eq:IFx\coordsize rq const bump}, we expand \eqref{eq:M\coordsize \numrepresented} to discover that 
\begin{align*}
M_{F, \Upspsi, \coordsize} (\numrepresented) &= 2 \Re\left( \sum_{1 \le q \le Q} \frac{1}{q^\numvars} \int_{0}^{\frac{1}{q (q + Q)}} \e{-\numrepresented x} \coordsize^\numvars \int_{\RR^\numvars} \e{\coordsize^2 x F(\bvec{m})} \Upspsi(\bvec{m}) \ d \bvec{m} \right. \\
		&\qquad\qquad \left. \times \sum_{\substack{Q < d \le q+Q \\ \gcd(d, q) = 1}} \e{\numrepresented \frac{\modmultinv{d}}{q}} \sum_{\bvec{h} \in (\ZZ/q\ZZ)^\numvars} \e{\frac{- \modmultinv{d}}{q} F(\bvec{h})} \ d x \right) . \numberthis \label{eq:M\coordsize \numrepresented expr1}
\end{align*}
We use the fact that $2 \Re(z) = z + \lbar{z}$ for any $z \in \CC$ to expand \eqref{eq:M\coordsize \numrepresented expr1} and obtain
\begin{align*}
M_{F, \Upspsi, \coordsize} (\numrepresented) 
	&= \sum_{1 \le q \le Q} \frac{1}{q^\numvars} \int_{0}^{\frac{1}{q (q + Q)}} \e{-\numrepresented x} \coordsize^\numvars \int_{\RR^\numvars} \e{\coordsize^2 x F(\bvec{m})} \Upspsi(\bvec{m}) \ d \bvec{m} \\
		&\qquad\qquad \times \sum_{\substack{Q < d \le q+Q \\ \gcd(d, q) = 1}} \e{\numrepresented \frac{\modmultinv{d}}{q}} \sum_{\bvec{h} \in (\ZZ/q\ZZ)^\numvars} \e{\frac{- \modmultinv{d}}{q} F(\bvec{h})} \ d x \\
		&\qquad + \sum_{1 \le q \le Q} \frac{1}{q^\numvars} \int_{0}^{\frac{1}{q (q + Q)}} \e{\numrepresented x} \coordsize^\numvars \int_{\RR^\numvars} \e{- \coordsize^2 x F(\bvec{m})} \Upspsi(\bvec{m}) \ d \bvec{m} \\
		&\qquad\qquad \times \sum_{\substack{Q < d \le q+Q \\ \gcd(d, q) = 1}} \e{- \numrepresented \frac{\modmultinv{d}}{q}} \sum_{\bvec{h} \in (\ZZ/q\ZZ)^\numvars} \e{\frac{\modmultinv{d}}{q} F(\bvec{h})} \ d x . \numberthis \label{eq:M\coordsize \numrepresented expand1}
\end{align*}

Now, by mapping $d$ to $-d$, we have
\begin{align*}
&\sum_{\substack{Q < d \le q+Q \\ \gcd(d, q) = 1}} \e{\numrepresented \frac{\modmultinv{d}}{q}} \sum_{\bvec{h} \in (\ZZ/q\ZZ)^\numvars} \e{\frac{\modmultinv{- d}}{q} F(\bvec{h})} \\
&= \sum_{\substack{-Q > d \ge -(q+Q) \\ \gcd(d, q) = 1}} \e{- \numrepresented \frac{\modmultinv{d}}{q}} \sum_{\bvec{h} \in (\ZZ/q\ZZ)^\numvars} \e{\frac{\modmultinv{d}}{q} F(\bvec{h})} . \numberthis \label{eq:T0q\numrepresented x main conjugate 1}
\end{align*}
Because $\e{\frac{\cdot}{q}}$ is periodic modulo $q$, we can sum $d$ over any reduced residue system and obtain the same value for \eqref{eq:T0qnx main conjugate 1}. Therefore, we rewrite \eqref{eq:T0q\numrepresented x main conjugate 1} as 
\begin{align}
\sum_{\substack{Q < d \le q+Q \\ \gcd(d, q) = 1}} \e{\numrepresented \frac{\modmultinv{d}}{q}} \sum_{\bvec{h} \in (\ZZ/q\ZZ)^\numvars} \e{\frac{- \modmultinv{d}}{q} F(\bvec{h})} &= \sum_{\substack{Q < d \le q+Q \\ \gcd(d, q) = 1}} \e{- \numrepresented \frac{\modmultinv{d}}{q}} \sum_{\bvec{h} \in (\ZZ/q\ZZ)^\numvars} \e{\frac{\modmultinv{d}}{q} F(\bvec{h})} . \label{eq:T0q\numrepresented x main conjugate 2}
\end{align}
Substituting this into \eqref{eq:M\coordsize \numrepresented expand1}, we obtain
\begin{align*}
M_{F, \Upspsi, \coordsize} (\numrepresented) &= \sum_{1 \le q \le Q} \frac{1}{q^\numvars} \int_{0}^{\frac{1}{q (q + Q)}} \e{-\numrepresented x} \coordsize^\numvars \int_{\RR^\numvars} \e{\coordsize^2 x F(\bvec{m})} \Upspsi(\bvec{m}) \ d \bvec{m} \\
		&\qquad\qquad \times \sum_{\substack{Q < d \le q+Q \\ \gcd(d, q) = 1}} \e{- \numrepresented \frac{\modmultinv{d}}{q}} \sum_{\bvec{h} \in (\ZZ/q\ZZ)^\numvars} \e{\frac{\modmultinv{d}}{q} F(\bvec{h})} \ d x \\
		&\qquad + \sum_{1 \le q \le Q} \frac{1}{q^\numvars} \int_{0}^{\frac{1}{q (q + Q)}} \e{\numrepresented x} \coordsize^\numvars \int_{\RR^\numvars} \e{- \coordsize^2 x F(\bvec{m})} \Upspsi(\bvec{m}) \ d \bvec{m} \\
		&\qquad\qquad \times \sum_{\substack{Q < d \le q+Q \\ \gcd(d, q) = 1}} \e{- \numrepresented \frac{\modmultinv{d}}{q}} \sum_{\bvec{h} \in (\ZZ/q\ZZ)^\numvars} \e{\frac{\modmultinv{d}}{q} F(\bvec{h})} \ d x . \numberthis \label{eq:M\coordsize \numrepresented expand2}
\end{align*}
By mapping $x$ to $-x$ in the second integral in $x$ and simplifying, we see that 
\begin{align*}
M_{F, \Upspsi, \coordsize} (\numrepresented) 
	&= \sum_{1 \le q \le Q} \frac{1}{q^\numvars} \sum_{\substack{Q < d \le q+Q \\ \gcd(d, q) = 1}} \e{- \numrepresented \frac{\modmultinv{d}}{q}} \sum_{\bvec{h} \in (\ZZ/q\ZZ)^\numvars} \e{\frac{\modmultinv{d}}{q} F(\bvec{h})} \\
		&\qquad\qquad \times \int_{-\frac{1}{q (q + Q)}}^{\frac{1}{q (q + Q)}} \e{-\numrepresented x} \coordsize^\numvars \int_{\RR^\numvars} \e{\coordsize^2 x F(\bvec{m})} \Upspsi(\bvec{m}) \ d \bvec{m} \ d x . \numberthis \label{eq:M\coordsize \numrepresented expand3}
\end{align*}
Because the sum over $d$ is a complete sum that only depends on $d$ modulo $q$, we can map $\modmultinv{d}$ to $d$ in \eqref{eq:M\coordsize \numrepresented expand3} and obtain 
\begin{align*}
M_{F, \Upspsi, \coordsize} (\numrepresented) 
	&= \sum_{1 \le q \le Q} \frac{1}{q^\numvars} \sum_{d \in (\ZZ/q\ZZ)^\times} \sum_{\bvec{h} \in (\ZZ/q\ZZ)^\numvars} \e{\frac{d}{q} \left( F(\bvec{h}) - \numrepresented \right)} \\
		&\qquad\qquad \times \int_{-\frac{1}{q (q + Q)}}^{\frac{1}{q (q + Q)}} \e{-\numrepresented x} \coordsize^\numvars \int_{\RR^\numvars} \e{\coordsize^2 x F(\bvec{m})} \Upspsi(\bvec{m}) \ d \bvec{m} \ d x . \numberthis \label{eq:M\coordsize \numrepresented expand4}
\end{align*}

To provide an asymptotic for $M_{F, \Upspsi, \coordsize} (\numrepresented)$ with an appropriate error term, we will need an upper bound for the absolute value of the sum 
\begin{align*}
\sum_{d \in (\ZZ/q\ZZ)^\times} \sum_{\bvec{h} \in (\ZZ/q\ZZ)^\numvars} \e{\frac{d}{q} \left( F(\bvec{h}) - \numrepresented \right)}. 
\end{align*}
The next lemma gives such an upper bound.
\begin{lem} \label{lem:dhmodqsum upperbound}
If $q$ is a positive integer and $\numrepresented$ is an integer, then
\begin{align*}
&\sum_{d \in (\ZZ/q\ZZ)^\times} \sum_{\bvec{h} \in (\ZZ/q\ZZ)^\numvars} \e{\frac{d}{q} \left( F(\bvec{h}) - \numrepresented \right)} \\
	&\ll (\gcd(\lvl,q_0))^{\numvars/2} (\gcd(\numrepresented, q_1))^{1/2} q_0^{1/2} q^{(\numvars+1)/2} \tau(q) \log(2q) . \numberthis \label{ineq:dhmodqsum upperbound}
\end{align*}
\end{lem}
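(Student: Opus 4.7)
The plan is to recognize the left-hand side as a complete version of the sum $T_{\bvec 0}(q, n; x)$ studied in Section \ref{chapter:arithmeticpart}, and to reuse the decomposition and estimates already developed there. Indeed, since the inner sum over $\bvec h$ is $\gausssumr{d}{q}{\bvec 0}$ evaluated at $\bvec r = \bvec 0$, we have
\[
\sum_{d \in (\ZZ/q\ZZ)^\times} \sum_{\bvec h \in (\ZZ/q\ZZ)^\numvars} \e{\frac{d}{q}(F(\bvec h) - n)}
= \sum_{d \in (\ZZ/q\ZZ)^\times} \e{-\frac{d n}{q}} \gausssumr{d}{q}{\bvec 0}.
\]
Substituting $d \mapsto -\modmultinv d$, which is a bijection of $(\ZZ/q\ZZ)^\times$, converts the right-hand side into exactly $K(0, n, \bvec 0; q)$, in the notation of Lemma \ref{lem:completeTl}. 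So the claim reduces to bounding $|K(0, n, \bvec 0; q)|$.

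Next, I would invoke Lemma \ref{lem:Ksumdecomp} to split
\[
K(0, n, \bvec 0; q) = K^{(q_1)}(0, n, \bvec 0; q_0) \cdot K^{(q_0)}(0, n, \bvec 0; q_1),
\]
and bound the two factors separately. Lemma \ref{lem:Kq1lmnq0 bound} gives
\[
|K^{(q_1)}(0, n, \bvec 0; q_0)| \le (\gcd(\lvl, q_0))^{\numvars/2}\, q_0^{\numvars/2 + 1}.
\]
For the second factor, I apply Lemma \ref{lem:Kq0lmnq1 bound} with $\ell = 0$ and $\bvec r = \bvec 0$. Since $F^*(\bvec 0) = 0$, the argument of the gcd simplifies to $\gcd(0, n, q_1) = \gcd(n, q_1)$, giving
\[
|K^{(q_0)}(0, n, \bvec 0; q_1)| \le q_1^{(\numvars+1)/2}\, \tau(q_1)\, (\gcd(n, q_1))^{1/2}.
\]

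Finally, multiplying these bounds yields
\[
|K(0, n, \bvec 0; q)| \le (\gcd(\lvl, q_0))^{\numvars/2}\, (\gcd(n, q_1))^{1/2}\, q_0^{\numvars/2 + 1}\, q_1^{(\numvars+1)/2}\, \tau(q_1).
\]
The exponent identity $\numvars/2 + 1 - (\numvars+1)/2 = 1/2$ lets me rewrite $q_0^{\numvars/2 + 1}\, q_1^{(\numvars+1)/2} = q_0^{1/2}\, q^{(\numvars+1)/2}$, and since $\tau(q_1) \le \tau(q) \ll \tau(q) \log(2q)$ for $q \ge 1$, the desired inequality \eqref{ineq:dhmodqsum upperbound} follows. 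There is no real obstacle here: the proof is essentially a specialization of the argument for Lemma \ref{lem:Trq\numrepresented x bound}, but simpler, because the sum is already complete and hence the $\gamma(\ell)$ weights and the logarithmic factor $\log(2q)$ (from summing $(1+|\ell|)^{-1}$) never need to be introduced—the $\log(2q)$ in the conclusion is only there for uniformity with Lemma \ref{lem:Trq\numrepresented x bound}.
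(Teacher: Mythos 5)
Your proof is correct, and it takes a slightly different and more direct route than the paper's. The paper's own proof is terse: it observes that whenever $0 < x < \frac{1}{q(q+Q)}$ the condition $qdx < 1$ holds automatically for all $Q < d \le q+Q$, so $T_{\bvec 0}(q,n;x)$ becomes the complete sum in question, and then applies Lemma~\ref{lem:Trq\numrepresented x bound} as a black box. That route inherits the $\gamma(\ell)$ weights and the resulting $\log(2q)$ factor even though they are superfluous for a complete sum.

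You instead bypass $T_{\bvec 0}$ entirely: the change of variables $d \mapsto -\modmultinv d$ identifies the sum directly as $K(0,n,\bvec 0;q)$, and you then multiply together Lemma~\ref{lem:Ksumdecomp} with Lemmas~\ref{lem:Kq1lmnq0 bound} and \ref{lem:Kq0lmnq1 bound} (using $F^*(\bvec 0) = 0$ so the gcd in the latter collapses to $\gcd(n,q_1)$), and finish with the exponent identity $q_0^{\numvars/2+1} q_1^{(\numvars+1)/2} = q_0^{1/2} q^{(\numvars+1)/2}$. Both proofs give exactly \eqref{ineq:dhmodqsum upperbound}, but yours is self-contained at one lower level of abstraction and makes visible that the factor $\tau(q)\log(2q)$ could be sharpened to $\tau(q_1)$ here — the weaker form is kept only for parallelism with Lemma~\ref{lem:Trq\numrepresented x bound}. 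One small detail you handled correctly: since $\log(2\cdot 1) = \log 2 < 1$, the last step must be $\ll$ rather than $\le$, which is how you stated it.
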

\begin{proof}
If $x < \frac{1}{q (q + Q)}$, then 
\begin{align}
T_{\bvec{0}} (q, \numrepresented; x) &= \sum_{\substack{Q < d \le q+Q \\ \gcd(d, q) = 1}} \e{\numrepresented \frac{\modmultinv{d}}{q}} \sum_{\bvec{h} \in (\ZZ/q\ZZ)^\numvars} \e{\frac{ - \modmultinv{d}}{q} F(\bvec{h})} \label{eq:T0q\numrepresented smallx 1} \\
	&= \sum_{d \in (\ZZ/q\ZZ)^\times} \sum_{\bvec{h} \in (\ZZ/q\ZZ)^\numvars} \e{\frac{d}{q} \left( F(\bvec{h}) - \numrepresented \right)} . \label{eq:T0q\numrepresented smallx 2}
\end{align}
Therefore, we can use Lemma~\ref{lem:Trq\numrepresented x bound} to provide an upper bound for the absolute value of $\sum_{d \in (\ZZ/q\ZZ)^\times} \sum_{\bvec{h} \in (\ZZ/q\ZZ)^\numvars} \e{\frac{d}{q} \left( F(\bvec{h}) - \numrepresented \right)}$ and obtain \eqref{ineq:dhmodqsum upperbound}.
\end{proof}

The rest of this subsection is devoted to providing an asymptotic for $M_{F, \Upspsi, \coordsize} (\numrepresented)$.

\subsubsection{Extending to the singular integral} \label{subsec:extendsingint}

For a bump function $\Upspsi \in C_c^\infty (\RR^\numvars)$, positive real numbers $\coordsize$ and $B$, and a real number $\numrepresented$, 
we define the \emph{truncated singular integral} $J_{F, \Upspsi} (\numrepresented, \coordsize; B)$ to be 
\begin{align}
J_{F, \Upspsi} (\numrepresented, \coordsize; B) &= \int_{-B}^{B} \e{-\numrepresented x} \coordsize^\numvars \int_{\RR^\numvars} \e{\coordsize^2 x F(\bvec{m})} \Upspsi(\bvec{m}) \ d \bvec{m} \ d x . \label{eq:truncsingint def}
\end{align}
Notice that the truncated singular integral $J_{F, \Upspsi} \left( \numrepresented, \coordsize; \frac{1}{q (q + Q)} \right)$ appears in \eqref{eq:M\coordsize \numrepresented expand4} so that
\begin{align}
M_{F, \Upspsi, \coordsize} (\numrepresented) 
	&= \sum_{1 \le q \le Q} \frac{1}{q^\numvars} \sum_{d \in (\ZZ/q\ZZ)^\times} \sum_{\bvec{h} \in (\ZZ/q\ZZ)^\numvars} \e{\frac{d}{q} \left( F(\bvec{h}) - \numrepresented \right)} J_{F, \Upspsi} \left( \numrepresented, \coordsize; \frac{1}{q (q + Q)} \right) . \numberthis \label{eq:M\coordsize \numrepresented with truncsingint}
\end{align}
The first step towards providing an asymptotic for $M_{F, \Upspsi, \coordsize} (\numrepresented)$ is to extend (up to some acceptable error term) the truncated singular integral $J_{F, \Upspsi} \left( \numrepresented, \coordsize; \frac{1}{q (q + Q)} \right)$
 to the \emph{singular integral} 
\begin{align}
J_{F, \Upspsi} (\numrepresented, \coordsize) &= \coordsize^\numvars \int_{-\infty}^{\infty} \e{-\numrepresented x} \int_{\RR^\numvars} \e{\coordsize^2 x F(\bvec{m})} \Upspsi(\bvec{m}) \ d \bvec{m} \ d x , \label{eq:singint def}
\end{align}
where $\Upspsi \in C_c^\infty (\RR^\numvars)$, $\coordsize > 0$, and $\numrepresented \in \RR$.
We do this in the following lemma.
\begin{lem} \label{lem:truncsingintB to singint}
For a bump function $\Upspsi \in C_c^\infty (\RR^\numvars)$, positive real numbers $\coordsize$ and $B$, and a real number $\numrepresented$, we have 
\begin{align}
J_{F, \Upspsi} (\numrepresented, \coordsize; B) &= J_{F, \Upspsi} (\numrepresented, \coordsize) +  O_{\Upspsi , \numvars} \left( |\det(A)|^{-1/2} B^{1 - \frac{\numvars}{2}} \right) \label{eq:truncsingintB to singint}
\end{align}
\end{lem}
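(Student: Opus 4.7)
The plan is to estimate the difference $J_{F, \Upspsi}(\numrepresented, \coordsize) - J_{F, \Upspsi}(\numrepresented, \coordsize; B)$, which by definitions \eqref{eq:truncsingint def} and \eqref{eq:singint def} equals
\begin{align*}
\coordsize^\numvars \int_{|x| > B} \e{-\numrepresented x} \int_{\RR^\numvars} \e{\coordsize^2 x F(\bvec{m})} \Upspsi(\bvec{m}) \, d \bvec{m} \, d x.
\end{align*}
First I would show that the inner integral over $\bvec{m}$ decays in $|x|$. Since $\coordsize^2 x F(\bvec{m}) = \frac{1}{2} \bvec{m}^\top (\coordsize^2 x A) \bvec{m}$ and the Hessian $\coordsize^2 x A$ is nonsingular for $x \ne 0$, Theorem~\ref{thm:quadoscillatorybound} applies (with $\bvec{b} = \bvec{0}$ and $c = 0$) and yields
\begin{align*}
\int_{\RR^\numvars} \e{\coordsize^2 x F(\bvec{m})} \Upspsi(\bvec{m}) \, d \bvec{m} \ll_\Upspsi |\det(\coordsize^2 x A)|^{-1/2} = \coordsize^{-\numvars} |x|^{-\numvars/2} |\det(A)|^{-1/2}.
\end{align*}

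Next I would take absolute values (bounding $|\e{-\numrepresented x}|$ by $1$) and obtain
\begin{align*}
\left| J_{F, \Upspsi}(\numrepresented, \coordsize) - J_{F, \Upspsi}(\numrepresented, \coordsize; B) \right| \ll_\Upspsi |\det(A)|^{-1/2} \int_{|x| > B} |x|^{-\numvars/2} \, d x.
\end{align*}
Provided $\numvars > 2$ (which is certainly the case in our setting, where $\numvars \ge 4$), the one-dimensional integral converges and a direct computation gives
\begin{align*}
\int_{|x| > B} |x|^{-\numvars/2} \, d x = 2 \int_B^\infty x^{-\numvars/2} \, d x = \frac{2}{\numvars/2 - 1} B^{1 - \numvars/2} \ll_\numvars B^{1 - \numvars/2}.
\end{align*}
Combining these bounds yields \eqref{eq:truncsingintB to singint}. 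As a side benefit, sending $B \to \infty$ in this estimate also shows that $J_{F, \Upspsi}(\numrepresented, \coordsize)$ itself is well-defined as an absolutely convergent integral, so the statement makes sense.

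The main obstacle is really just invoking Theorem~\ref{thm:quadoscillatorybound} correctly to get a bound that is uniform in $x$ away from zero; once that bound is in hand, the rest of the argument is a routine one-dimensional tail estimate. No difficulty with nonstationary phase (Theorem~\ref{thm:PNSP1d}) arises here, because we are cutting off only at large $|x|$ and the quadratic stationary-phase bound already provides the requisite $|x|^{-\numvars/2}$ decay.
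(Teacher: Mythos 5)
Your proposal is correct and follows the same route as the paper: you express the difference $J_{F,\Upspsi}(\numrepresented,\coordsize) - J_{F,\Upspsi}(\numrepresented,\coordsize;B)$ as an integral over $|x| > B$, apply the stationary-phase bound of Theorem~\ref{thm:quadoscillatorybound} to the inner $\bvec{m}$-integral to get $\ll_\Upspsi \coordsize^{-\numvars}|x|^{-\numvars/2}|\det(A)|^{-1/2}$, and then compute the one-dimensional tail. The only cosmetic difference is that the paper first recognizes the inner integral as $\mathcal{I}_{F,\Upspsi}(x,\coordsize,\bvec{0},q)$ and cites the already-packaged bound of Theorem~\ref{thm:IFx\coordsize rq PSP bound}, whereas you invoke Theorem~\ref{thm:quadoscillatorybound} directly, but these are the same estimate.
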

\begin{proof}
By \eqref{eq:IFx\coordsize rq const bump}, the difference between the singular integral and the truncated singular integral $J_{F, \Upspsi} (\numrepresented, \coordsize; B)$ is equal to 
\begin{align}
J_{F, \Upspsi} (\numrepresented, \coordsize) - J_{F, \Upspsi} (\numrepresented, \coordsize; B) &= \int_{|x| > B} \e{-\numrepresented x} \mathcal{I}_{F, \Upspsi} (x, \coordsize, \bvec{0}, q) \ d x . \label{eq:singintdiff}
\end{align}
We apply Theorem~\ref{thm:IFx\coordsize rq PSP bound} to this difference and obtain 
\begin{align*}
J_{F, \Upspsi} (\numrepresented, \coordsize) - J_{F, \Upspsi} (\numrepresented, \coordsize; B) &\ll_{\Upspsi} \int_{|x| > B} |x|^{-\numvars/2} |\det(A)|^{-1/2} \ d x \\
	&= 2 |\det(A)|^{-1/2} \int_{B}^\infty x^{-\numvars/2} \ d x \\
	&= \frac{2}{1 - \frac{\numvars}{2}} |\det(A)|^{-1/2} B^{1 - \frac{\numvars}{2}} . 
\end{align*}
This gives \eqref{eq:truncsingintB to singint} with an implied constant of $\frac{2}{1 - \frac{\numvars}{2}}$.
\end{proof}

We use Lemma~\ref{lem:truncsingintB to singint} with $B = \frac{1}{q (q + Q)}$ to conclude that 
\begin{align*}
J_{F, \Upspsi} \left( \numrepresented, \coordsize; \frac{1}{q (q + Q)} \right) &= J_{F, \Upspsi} (\numrepresented, \coordsize) + O_{\Upspsi , \numvars} \left( |\det(A)|^{-1/2} (q (q + Q))^{\numvars/2 - 1} \right). \numberthis \label{eq:truncsingint to singint}
\end{align*}
Substituting this into \eqref{eq:M\coordsize \numrepresented with truncsingint}, we see that 
\begin{align*}
M_{F, \Upspsi, \coordsize} (\numrepresented) 
	&= \sum_{1 \le q \le Q} \frac{1}{q^\numvars} \sum_{d \in (\ZZ/q\ZZ)^\times} \sum_{\bvec{h} \in (\ZZ/q\ZZ)^\numvars} \e{\frac{d}{q} \left( F(\bvec{h}) - \numrepresented \right)} J_{F, \Upspsi} (\numrepresented, \coordsize) \\
		&\qquad + O_{\Upspsi , \numvars} \Bigg( \Bigg| \sum_{1 \le q \le Q} \frac{1}{q^\numvars} \sum_{d \in (\ZZ/q\ZZ)^\times} \sum_{\bvec{h} \in (\ZZ/q\ZZ)^\numvars} \e{\frac{d}{q} \left( F(\bvec{h}) - \numrepresented \right)} \\
		&\qquad\qquad\qquad\qquad\qquad\qquad \times |\det(A)|^{-1/2} (q (q + Q))^{\numvars/2 - 1} \Bigg| \Bigg) . \numberthis \label{eq:M\coordsize \numrepresented with singint1}
\end{align*}

By Lemma~\ref{lem:dhmodqsum upperbound} and the fact that $\numvars \ge 4$, we have
\begin{align*}
&\sum_{1 \le q \le Q} \frac{1}{q^\numvars} \sum_{d \in (\ZZ/q\ZZ)^\times} \sum_{\bvec{h} \in (\ZZ/q\ZZ)^\numvars} \e{\frac{d}{q} \left( F(\bvec{h}) - \numrepresented \right)} |\det(A)|^{-1/2} (q (q + Q))^{\numvars/2 - 1} \\
	&\ll \sum_{1 \le q \le Q} (\gcd(\lvl,q_0))^{\numvars/2} (\gcd(\numrepresented, q_1))^{1/2} q_0^{1/2} q^{(1 - \numvars) / 2} \\
		&\qquad\qquad \times \tau(q) \log(2q) |\det(A)|^{-1/2} (q (q + Q))^{\numvars/2 - 1} \\
	&\le \lvl^{\numvars/2} |\det(A)|^{-1/2} (2 Q)^{\numvars/2 - 1} \sum_{1 \le q \le Q} (\gcd(\numrepresented, q_1))^{1/2} q_1^{-1/2} \tau(q) \log(2q) . \numberthis \label{ineq:extendingsinginterror1}
\end{align*}
We apply Lemma~\ref{lem:q1gcdsum+eps} to \eqref{ineq:extendingsinginterror1} to obtain 
\begin{align*}
&\sum_{1 \le q \le Q} \frac{1}{q^\numvars} \sum_{d \in (\ZZ/q\ZZ)^\times} \sum_{\bvec{h} \in (\ZZ/q\ZZ)^\numvars} \e{\frac{d}{q} \left( F(\bvec{h}) - \numrepresented \right)} |\det(A)|^{-1/2} (q (q + Q))^{\numvars/2 - 1} \\
	&\ll_\varepsilon \lvl^{\numvars/2} |\det(A)|^{-1/2} 2^{\numvars/2 - 1} Q^{(\numvars - 1) / 2 + \varepsilon} \tau(\numrepresented) \prod_{p \mid 2 \det(A)} ( 1 - p^{-1/2} )^{-1} \numberthis \label{ineq:extendingsinginterror2}
\end{align*}
for any $\varepsilon > 0$.

Substituting \eqref{ineq:extendingsinginterror2} into \eqref{eq:M\coordsize \numrepresented with singint1}, we conclude that
\begin{align*}
M_{F, \Upspsi, \coordsize} (\numrepresented) 
	&= \sum_{1 \le q \le Q} \frac{1}{q^\numvars} \sum_{d \in (\ZZ/q\ZZ)^\times} \sum_{\bvec{h} \in (\ZZ/q\ZZ)^\numvars} \e{\frac{d}{q} \left( F(\bvec{h}) - \numrepresented \right)} J_{F, \Upspsi} (\numrepresented, \coordsize) \\
		&\qquad + O_{\Upspsi , \numvars , \varepsilon} \left(\lvl^{\numvars/2} |\det(A)|^{-1/2} Q^{(\numvars - 1) / 2 + \varepsilon} \tau(\numrepresented) \prod_{p \mid 2 \det(A)} ( 1 - p^{-1/2} )^{-1} \right) . \numberthis \label{eq:M\coordsize \numrepresented with singint2}
\end{align*}

\subsubsection{Evaluating the singular integral} \label{subsec:evalsingint}

In this subsection, we evaluate the singular integral under certain conditions. 
Only in this subsection (Subsection~\ref{subsec:evalsingint}), the nonsingular form~$F$ in $\numvars$ variables might not be integral. All quantities used in this subsection that involve $F$ make sense if $F$ is not integral. 

We first apply to \eqref{eq:singint def} the change of variables $x \mapsto x / \coordsize^2$ to obtain 
\begin{align}
J_{F, \Upspsi} (\numrepresented, \coordsize) &= \coordsize^{\numvars - 2} \int_{-\infty}^{\infty} \int_{\RR^\numvars} \e{x \left( F(\bvec{m}) - \frac{\numrepresented}{\coordsize^2} \right)} \Upspsi(\bvec{m}) \ d \bvec{m} \ d x . \label{eq:singintnormalized}
\end{align}
Let $\tilde{\sigma}_{F, \Upspsi, \infty} (\numrepresented, \coordsize)$ be the quantity
\begin{align}
\tilde{\sigma}_{F, \Upspsi, \infty} (\numrepresented, \coordsize) &= \int_{-\infty}^{\infty} \int_{\RR^\numvars} \e{x \left( F(\bvec{m}) - \frac{\numrepresented}{\coordsize^2} \right)} \Upspsi(\bvec{m}) \ d \bvec{m} \ d x \label{eq:archmediandensity def}
\end{align}
so that 
\begin{align}
J_{F, \Upspsi} (\numrepresented, \coordsize) &= \tilde{\sigma}_{F, \Upspsi, \infty} (\numrepresented, \coordsize) \coordsize^{\numvars - 2} . \label{eq:singintnormalized with density}
\end{align}
We now prove the following theorem about $\tilde{\sigma}_{F, \Upspsi, \infty} (\numrepresented, \coordsize)$.
\begin{thm} \label{thm:archmediandensitytovol}
For a bump function $\psi \in C_c^\infty (\RR)$, a real number $\numrepresented$,  and a positive real number $\coordsize$, we have
\begin{align}
\tilde{\sigma}_{F, \Upspsi, \infty} (\numrepresented, \coordsize) &= \sigma_{F, \Upspsi, \infty} (\numrepresented, \coordsize) , \label{eq:singint archmediandensity vol}
\end{align}
where $\sigma_{F, \Upspsi, \infty} (\numrepresented, \coordsize)$ is 
the real factor defined in \eqref{eq:archmediandensity vol}. 
\end{thm}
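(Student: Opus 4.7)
The plan is to represent both $\sigma_{F, \Upspsi, \infty}(\numrepresented, \coordsize)$ and $\tilde{\sigma}_{F, \Upspsi, \infty}(\numrepresented, \coordsize)$ as the same value $h(y_0)$ of an auxiliary ``level-set density'' function, where $y_0 = \numrepresented/\coordsize^2$. First, since $F$ is Lipschitz and $\nabla F(\bvec{m}) = A \bvec{m}$ vanishes only at the origin (a null set), the coarea formula produces the disintegration identity
$$\int_{\RR^\numvars} \Upspsi(\bvec{m}) \phi(F(\bvec{m}))\,d\bvec{m} = \int_\RR h(y) \phi(y)\,dy,$$
valid for every bounded measurable $\phi \colon \RR \to \CC$, where
$$h(y) = \int_{F^{-1}(y)} \frac{\Upspsi(\bvec{m})}{\| A \bvec{m} \|}\,d\mathcal{H}^{\numvars-1}(\bvec{m})$$
and $\mathcal{H}^{\numvars-1}$ denotes $(\numvars-1)$-dimensional Hausdorff measure. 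Because $F$ is a nonsingular quadratic form, $F^{-1}(y_0)$ is a smooth $(\numvars-1)$-dimensional submanifold whenever $y_0 \ne 0$, and a standard implicit-function-theorem argument combined with compactness of $\supp(\Upspsi)$ shows that $h$ is continuous at every such $y_0$ and is supported in a compact interval.

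Next, specializing the disintegration identity with $\phi(y) = (2\varepsilon)^{-1} \Ind{|y - y_0| < \varepsilon}$ and letting $\varepsilon \to 0^+$ yields
$$\sigma_{F, \Upspsi, \infty}(\numrepresented, \coordsize) = \lim_{\varepsilon \to 0^+} \frac{1}{2\varepsilon} \int_{y_0 - \varepsilon}^{y_0 + \varepsilon} h(y)\,dy = h(y_0)$$
by continuity of $h$ at $y_0$ (equivalently, by the Lebesgue differentiation theorem). Specializing instead with $\phi(y) = \e{xy}$ yields
$$\int_{\RR^\numvars} \e{xF(\bvec{m})} \Upspsi(\bvec{m})\,d\bvec{m} = \int_\RR h(y) \e{xy}\,dy = \widehat{h}(-x),$$
so substituting into \eqref{eq:archmediandensity def} and changing variables $u = -x$ in the outer integral gives
$$\tilde{\sigma}_{F, \Upspsi, \infty}(\numrepresented, \coordsize) = \int_{-\infty}^{\infty} \widehat{h}(u) \e{u y_0}\,du.$$
By Fourier inversion this right-hand side equals $h(y_0)$, provided $\widehat{h}$ is absolutely integrable, at which point the identity $\tilde{\sigma}_{F, \Upspsi, \infty}(\numrepresented, \coordsize) = \sigma_{F, \Upspsi, \infty}(\numrepresented, \coordsize)$ follows.

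The main obstacle is verifying that $\widehat{h}$ is absolutely integrable on $\RR$. For $|x| \le 1$ the trivial bound $|\widehat{h}(-x)| \le \int_{\RR^\numvars} |\Upspsi(\bvec{m})|\,d\bvec{m}$ suffices; for $|x| \ge 1$ I would apply Theorem~\ref{thm:quadoscillatorybound} to the quadratic form $xF$ (whose Hessian $xA$ is nonsingular with determinant $x^\numvars \det(A)$) to obtain
$$|\widehat{h}(-x)| \ll_\Upspsi |x|^{-\numvars/2} |\det(A)|^{-1/2},$$
which is integrable on $|x| \ge 1$ since $\numvars \ge 4 > 2$. A secondary subtlety is the possible singularity of $h$ at $y = 0$ arising from the critical point of $F$ at the origin; however, in the main application of the theorem one has $\numrepresented$ a positive integer and $\coordsize > 0$, so $y_0 > 0$, and $h$ is never evaluated at its singular point.
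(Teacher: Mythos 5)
Your proposal is correct (with a caveat below) but takes a genuinely different route from the paper. The paper's proof sandwiches the indicator $\Ind{|y - y_0| < \varepsilon}$ between two continuous piecewise-linear ``trapezoid'' functions $\mathfrak{T}_\varepsilon^\pm$ built from tent functions. Since the Fourier transform of a tent function is the nonnegative $\sinc^2$ kernel, each trapezoid can be pushed through the Fourier transform and compared to the singular integral directly; Theorem~\ref{thm:quadoscillatorybound} then controls the tails, and a squeeze as $\varepsilon \to 0^+$ yields the identity. Your approach instead introduces the level-set density
$h(y) = \int_{F^{-1}(y)} \Upspsi(\bvec{m}) \| A\bvec{m} \|^{-1}\,d\mathcal{H}^{\numvars-1}$
via the coarea formula and identifies \emph{both} quantities as $h(y_0)$: the real factor via Lebesgue differentiation (using continuity of $h$ at $y_0$), and the singular integral via Fourier inversion (using integrability of $\widehat{h}$, which you correctly obtain from Theorem~\ref{thm:quadoscillatorybound}). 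Your route is conceptually illuminating --- it makes the real factor literally a density on the fiber $F^{-1}(y_0)$, and it shows that the limit defining $\sigma_{F,\Upspsi,\infty}$ exists as a byproduct --- whereas the paper's route is more elementary, needing only tent-function Fourier calculus and avoiding geometric measure theory.

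Two points deserve attention. First, you invoke a ``standard implicit-function-theorem argument'' for the continuity of $h$ at $y_0 \ne 0$ without spelling it out; the key fact making this work is the lower bound on $\|\bvec{m}\|$ for $\bvec{m} \in F^{-1}(y)$ with $y$ near $y_0$ (Lemma~\ref{lem:preimageFlowerbound} in the paper), which keeps the relevant portion of the foliation away from the critical point of $F$ at the origin and keeps $\| A\bvec{m} \|$ bounded away from zero on $\supp(\Upspsi) \cap F^{-1}([y_0 - \delta, y_0 + \delta])$. It would be worth stating this explicitly. Second, and more substantively: the theorem as stated allows $\numrepresented$ to be any real number, in particular $\numrepresented = 0$, whereas your argument requires $y_0 \ne 0$ for continuity of $h$. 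You flag this yourself and note that the paper only applies the theorem with $\numrepresented$ a positive integer, which is true; but as written your proposal establishes a slightly weaker statement than the one claimed. The paper's Fejér-kernel argument is indifferent to whether $y_0 = 0$, since Theorem~\ref{thm:quadoscillatorybound} gives a bound independent of $y_0$, and this is where the paper's more pedestrian approach buys genuine robustness.
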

\begin{proof}
We use tent functions to create continuous approximations to the indicator function $\Ind{|x| < \varepsilon}$, where $\varepsilon > 0$.

For $x \in \RR$, define the tent function $\mathfrak{t}$ by
\begin{align}
\mathfrak{t} (x) &= \max \{ 0 , 1 - |x| \} . \label{eq:tent}
\end{align}
For nonzero $x \in \RR$, define the $\sinc^2$ function by
\begin{align}
\sinc^2 (x) &= \left( \frac{\sin(\pi x)}{\pi x} \right)^2 . \label{eq:sinc}
\end{align}
Also, set $\sinc^2 (0) = 1$ so that $\sinc^2$ function is a continuous function on $\RR$.

It is well-known that the tent function $\mathfrak{t}$ is the Fourier transform of the $\sinc^2$ function and that the $\sinc^2$ function is the Fourier transform of the tent function $\mathfrak{t}$.
(See, for example, Appendix~2 of \cite{KammlerFourierAnal}.)

For $\eta > 0$, we define the function $\mathfrak{t}_{\eta}$ by
\begin{align}
\mathfrak{t}_{\eta} (x) &= \max \left\{ 0 , 1 - \frac{|x|}{\eta} \right\} . \label{eq:tenteta}
\end{align}
Using part~b of Theorem~8.22 in \cite{FollandRealAnalysis} about a scaling property for the Fourier transform, we find that the Fourier transform of $\mathfrak{t}_{\eta}$ is 
\begin{align}
w_{\eta} (x) &= \eta \left( \frac{\sin(\pi \eta x)}{\pi \eta x} \right)^2 \label{eq:weta}
\end{align}
and the Fourier transform of $w_{\eta}$ is $\mathfrak{t}_{\eta}$.

If $\eta, \delta > 0$, then we define the function $\mathfrak{T}_{\eta, \delta}$ by 
\begin{align}
\mathfrak{T}_{\eta, \delta} (x) &= \left( 1 + \frac{\eta}{\delta} \right) \mathfrak{t}_{\eta + \delta} (x) - \frac{\eta}{\delta} \mathfrak{t}_{\eta} (x) \label{eq:Tetadelta}
\end{align}
for $x \in \RR$.
After some manipulations, we find that
\begin{align}
\mathfrak{T}_{\eta, \delta} (x) &= 
	\begin{cases}
	1	&\text{if $|x| \le \eta$,} \\
	1 - \frac{|x| - \eta}{\delta}	&\text{if $\eta < |x| < \eta + \delta$,} \\
	0	&\text{if $|x| \ge \eta + \delta$.}
	\end{cases}
\label{eq:Tetadeltacases}
\end{align}

Let $0 < \varepsilon < 1$. For $x \in \RR$, define $\mathfrak{T}_{\varepsilon}^+$ and $\mathfrak{T}_{\varepsilon}^-$ to be
\begin{align}
\mathfrak{T}_{\varepsilon}^- &= \mathfrak{T}_{\varepsilon - \varepsilon^2, \varepsilon^2} = \varepsilon^{-1} \mathfrak{t}_{\varepsilon} + \left( 1 - \varepsilon^{-1} \right) \mathfrak{t}_{\varepsilon - \varepsilon^2} \label{eq:Teps-}
\end{align}
and
\begin{align}
\mathfrak{T}_{\varepsilon}^+ &= \mathfrak{T}_{\varepsilon, \varepsilon^2} = \left( 1 + \varepsilon^{-1} \right) \mathfrak{t}_{\varepsilon + \varepsilon^2} - \varepsilon^{-1} \mathfrak{t}_{\varepsilon} . \label{eq:Teps+}
\end{align}
Using \eqref{eq:Tetadeltacases}, we observe that 
\begin{align}
\mathfrak{T}_{\varepsilon}^- (x) \le \Ind{|x| < \varepsilon} \le \mathfrak{T}_{\varepsilon}^+ (x) \label{ineqs:indTeps bounds}
\end{align}
for all $x \in \RR$.
Therefore, $\mathfrak{T}_{\varepsilon}^- (x)$ provides a lower bound for $\Ind{|x| < \varepsilon}$, and $\mathfrak{T}_{\varepsilon}^+ (x)$ provides a upper bound for $\Ind{|x| < \varepsilon}$.

Let 
\begin{align*}
\beta_{F , \Upspsi , \numrepresented , \coordsize} (\varepsilon) &= \int_{\left| F(\bvec{m}) - \frac{\numrepresented}{\coordsize^2} \right| < \varepsilon} \Upspsi(\bvec{m}) \ d \bvec{m} .
\end{align*}
Since 
\begin{align*}
\int_{\left| F(\bvec{m}) - \frac{\numrepresented}{\coordsize^2} \right| < \varepsilon} \Upspsi(\bvec{m}) \ d \bvec{m} &= \int_{\RR^\numvars} \Ind{\left| F(\bvec{m}) - \frac{\numrepresented}{\coordsize^2} \right| < \varepsilon} \Upspsi(\bvec{m}) \ d \bvec{m} , 
\end{align*}
the inequalities in \eqref{ineqs:indTeps bounds} imply that 
\begin{align}
\int_{\RR^\numvars} \mathfrak{T}_{\varepsilon}^- \left( F(\bvec{m}) - \frac{\numrepresented}{\coordsize^2} \right) \Upspsi(\bvec{m}) \ d \bvec{m} 
	&\le \beta_{F , \Upspsi , \numrepresented , \coordsize} (\varepsilon)
	\le \int_{\RR^\numvars} \mathfrak{T}_{\varepsilon}^+ \left( F(\bvec{m}) - \frac{\numrepresented}{\coordsize^2} \right) \Upspsi(\bvec{m}) \ d \bvec{m} . \label{ineqs:intindTeps bounds}
\end{align}

Using \eqref{eq:Teps-} and \eqref{eq:Teps+}, we manipulate some of the integrals that appear in \eqref{ineqs:intindTeps bounds} and see that 
\begin{align}
\int_{\RR^\numvars} \mathfrak{T}_{\varepsilon}^- \left( F(\bvec{m}) - \frac{\numrepresented}{\coordsize^2} \right) \Upspsi(\bvec{m}) \ d \bvec{m} &= U_{F, \numrepresented, \coordsize} (\varepsilon) - (\varepsilon - 1)^2 U_{F, \numrepresented, \coordsize} (\varepsilon - \varepsilon^2) \label{eq:intTeps- with Ueta}
\end{align}
and 
\begin{align}
\int_{\RR^\numvars} \mathfrak{T}_{\varepsilon}^+ \left( F(\bvec{m}) - \frac{\numrepresented}{\coordsize^2} \right) \Upspsi(\bvec{m}) \ d \bvec{m} 
	&= (\varepsilon + 1)^2 U_{F, \numrepresented, \coordsize} (\varepsilon + \varepsilon^2) - U_{F, \numrepresented, \coordsize} (\varepsilon) , \label{eq:intTeps+ with Ueta}
\end{align}
where
\begin{align}
U_{F, \numrepresented, \coordsize} (\eta) &= \int_{\RR^\numvars} \eta^{-1} \mathfrak{t}_{\eta} \left( F(\bvec{m}) - \frac{\numrepresented}{\coordsize^2} \right) \Upspsi(\bvec{m}) \ d \bvec{m} \label{eq:Ueta}
\end{align}
for $\eta > 0$.

In light of \eqref{eq:intTeps- with Ueta} and \eqref{eq:intTeps+ with Ueta}, we provide an asymptotic for $U_{F, \numrepresented, \coordsize} (\eta)$ when $\eta$ is sufficiently close to $\varepsilon$.
\begin{lem} \label{lem:Uetaeps asymp}
Let $\varepsilon > 0$. 
For $\eta \in \RR$ with $| \eta - \varepsilon | < \varepsilon$, we have 
\begin{align}
U_{F, \numrepresented, \coordsize} (\eta) &= \tilde{\sigma}_{F, \Upspsi, \infty} (\numrepresented, \coordsize) + O_{F , \Upspsi} \left( \varepsilon^{(2 \numvars - 4) / (\numvars + 4)} \right) . \label{eq:Ueta sigmatilde asymp}
\end{align}
\end{lem}
\begin{proof}[Proof of Lemma~\ref{lem:Uetaeps asymp}]
Using the inverse Fourier transform and the fact that $\widehat{\mathfrak{t}}_{\eta} = w_{\eta}$, we find that
\begin{align*}
U_{F, \numrepresented, \coordsize} (\eta) &= \int_{\RR^\numvars} \int_{-\infty}^{\infty} \eta^{-1} w_{\eta} (x) \e{x \left( F(\bvec{m}) - \frac{\numrepresented}{\coordsize^2} \right)} \Upspsi(\bvec{m}) \ d x \ d \bvec{m} .
\end{align*}
Applying the Fubini-Tonelli theorem, we see that 
\begin{align}
U_{F, \numrepresented, \coordsize} (\eta) &= \int_{-\infty}^{\infty} \int_{\RR^\numvars} \eta^{-1} w_{\eta} (x) \e{x \left( F(\bvec{m}) - \frac{\numrepresented}{\coordsize^2} \right)} \Upspsi(\bvec{m}) \ d \bvec{m} \ d x . \label{eq:Ueta with weta}
\end{align}
Therefore, 
\begin{align*}
&U_{F, \numrepresented, \coordsize} (\eta) - \tilde{\sigma}_{F, \Upspsi, \infty} (\numrepresented, \coordsize) \\
&= \int_{-\infty}^{\infty} \left( \eta^{-1} w_{\eta} (x) - 1 \right) \int_{\RR^\numvars} \e{x \left( F(\bvec{m}) - \frac{\numrepresented}{\coordsize^2} \right)} \Upspsi(\bvec{m}) \ d \bvec{m} \ d x . \numberthis \label{eq:Ueta minus sigmatilde}
\end{align*}
In order to prove \eqref{eq:Ueta sigmatilde asymp}, it suffices to prove \eqref{eq:Ueta minus sigmatilde} is $O_{F , \Upspsi} \left( \varepsilon^{(2 \numvars - 4) / (\numvars + 4)} \right)$. To do this, we split up the integral over $x$ into two regions: one in which $|x| \le \varepsilon^\delta$ and the other in which $|x| > \varepsilon^\delta$, where $\delta \in \RR$ will be chosen later. Let $\mathfrak{D} = \left[ - \varepsilon^\delta , \varepsilon^\delta \right]$ so that the two regions under consideration for $x$ are $\mathfrak{D}$ and $\RR \setminus \mathfrak{D}$.

Observe that 
\begin{align}
0 \le 1 - \eta^{-1} w_{\eta} (x) \le 1 . \label{ineqs:1-weta bounds}
\end{align}
From a Taylor series approximation of $w_{\eta} (x)$, we see that 
\begin{align}
0 \le 1 - \eta^{-1} w_{\eta} (x) \ll \min \{ 1 , \eta^2 x^2 \} \ll \min \{ 1 , \varepsilon^2 x^2 \} \label{ineqs:1-weta taylor bounds}
\end{align}
since $0 < \eta < 2 \varepsilon$.

Now
\begin{align*}
&\left| \int_{\mathfrak{D}} \left( \eta^{-1} w_{\eta} (x) - 1 \right) \int_{\RR^\numvars} \e{x \left( F(\bvec{m}) - \frac{\numrepresented}{\coordsize^2} \right)} \Upspsi(\bvec{m}) \ d \bvec{m} \ d x \right| \\
	&\le \int_{\mathfrak{D}} \left| \eta^{-1} w_{\eta} (x) - 1 \right| \int_{\RR^\numvars} | \Upspsi(\bvec{m}) | \ d \bvec{m} \ d x . \numberthis \label{ineq:Ueta minus sigmatilde int|x|small upperbound1}
\end{align*}
Using \eqref{ineqs:1-weta taylor bounds}, we obtain
\begin{align*}
&\left| \int_{\mathfrak{D}} \left( \eta^{-1} w_{\eta} (x) - 1 \right) \int_{\RR^\numvars} \e{x \left( F(\bvec{m}) - \frac{\numrepresented}{\coordsize^2} \right)} |\Upspsi(\bvec{m})| \ d \bvec{m} \ d x \right| \\
	&\ll \int_{- \varepsilon^\delta}^{\varepsilon^\delta} \varepsilon^2 x^2 \int_{\RR^\numvars} |\Upspsi(\bvec{m})| \ d \bvec{m} \ d x \\
	&\ll_{\Upspsi} \varepsilon^{2 + 3 \delta} . \numberthis \label{ineq:Ueta minus sigmatilde int|x|small upperbound2}
\end{align*}

Now we look at the region in which $|x| > \varepsilon^\delta$. By \eqref{ineqs:1-weta bounds}, we have
\begin{align*}
&\left| \int_{\RR \setminus \mathfrak{D}} \left( \eta^{-1} w_{\eta} (x) - 1 \right) \int_{\RR^\numvars} \e{x \left( F(\bvec{m}) - \frac{\numrepresented}{\coordsize^2} \right)} \Upspsi(\bvec{m}) \ d \bvec{m} \ d x \right| \\
	&\le \int_{\RR \setminus \mathfrak{D}} \left| \int_{\RR^\numvars} \e{x \left( F(\bvec{m}) - \frac{\numrepresented}{\coordsize^2} \right)} \Upspsi(\bvec{m}) \ d \bvec{m} \right| \ d x . \numberthis \label{ineq:Ueta minus sigmatilde int|x|large upperbound1}
\end{align*}
We apply Theorem~\ref{thm:quadoscillatorybound} to \eqref{ineq:Ueta minus sigmatilde int|x|large upperbound1} to obtain
\begin{align*}
&\left| \int_{\RR \setminus \mathfrak{D}} \left( \eta^{-1} w_{\eta} (x) - 1 \right) \int_{\RR^\numvars} \e{x \left( F(\bvec{m}) - \frac{\numrepresented}{\coordsize^2} \right)} \Upspsi(\bvec{m}) \ d \bvec{m} \ d x \right| \\
	&\ll_{\Upspsi} \int_{\RR \setminus \mathfrak{D}} |x|^{-\numvars/2} |\det(A)|^{-1/2} \ d x . \numberthis \label{ineq:Ueta minus sigmatilde int|x|large upperbound2}
\end{align*}
Now 
\begin{align*}
\int_{\RR \setminus \mathfrak{D}} |x|^{-\numvars/2} |\det(A)|^{-1/2} \ d x &= 2 |\det(A)|^{-1/2} \int_{\varepsilon^\delta}^{\infty} x^{-\numvars/2} \ d x \\
	&\ll_{F} \varepsilon^{\delta (1 - \numvars/2)} . \numberthis \label{ineq:Ueta minus sigmatilde int|x|large upperbound3}
\end{align*}
We substitute \eqref{ineq:Ueta minus sigmatilde int|x|large upperbound3} into \eqref{ineq:Ueta minus sigmatilde int|x|large upperbound2} to obtain
\begin{align*}
&\int_{\RR \setminus \mathfrak{D}} \left( \eta^{-1} w_{\eta} (x) - 1 \right) \int_{\RR^\numvars} \e{x \left( F(\bvec{m}) - \frac{\numrepresented}{\coordsize^2} \right)} \Upspsi(\bvec{m}) \ d \bvec{m} \ d x \\
&\ll_{F , \Upspsi} \varepsilon^{\delta (1 - \numvars/2)} . \numberthis \label{ineq:Ueta minus sigmatilde int|x|large upperbound4}
\end{align*}

Combining \eqref{ineq:Ueta minus sigmatilde int|x|small upperbound2} and \eqref{ineq:Ueta minus sigmatilde int|x|large upperbound4} with \eqref{eq:Ueta minus sigmatilde}, we see that
\begin{align}
U_{F, \numrepresented, \coordsize} (\eta) - \tilde{\sigma}_{F, \Upspsi, \infty} (\numrepresented, \coordsize) &\ll_{F , \Upspsi} \varepsilon^{2 + 3 \delta} + \varepsilon^{\delta (1 - \numvars/2)} . \label{ineq:Ueta minus sigmatilde upperbound}
\end{align}
%
By choosing $\delta$ to be $- 4 / (\numvars + 4)$ in \eqref{ineq:Ueta minus sigmatilde upperbound}, we obtain \eqref{eq:Ueta sigmatilde asymp}.
\end{proof}

Because $\varepsilon^2 < \varepsilon$ when $0 < \varepsilon < 1$, Lemma~\ref{lem:Uetaeps asymp} applies to all instances of $U_{F, \numrepresented, \coordsize}$ in \eqref{eq:intTeps- with Ueta} and \eqref{eq:intTeps+ with Ueta}. By applying Lemma~\ref{lem:Uetaeps asymp} to \eqref{eq:intTeps- with Ueta} and \eqref{eq:intTeps+ with Ueta}, we find that 
\begin{align}
\int_{\RR^\numvars} \mathfrak{T}_{\varepsilon}^- \left( F(\bvec{m}) - \frac{\numrepresented}{\coordsize^2} \right) \Upspsi(\bvec{m}) \ d \bvec{m} &= (2 \varepsilon - \varepsilon^2) \left( \tilde{\sigma}_{F, \Upspsi, \infty} (\numrepresented, \coordsize) + O_{F , \Upspsi , \numvars} \left( \varepsilon^{(2 \numvars - 4) / (\numvars + 4)} \right) \right) \label{eq:intTeps- asymp}
\end{align}
and 
\begin{align}
\int_{\RR^\numvars} \mathfrak{T}_{\varepsilon}^+ \left( F(\bvec{m}) - \frac{\numrepresented}{\coordsize^2} \right) \Upspsi(\bvec{m}) \ d \bvec{m} 
	 &= (2 \varepsilon + \varepsilon^2) \left( \tilde{\sigma}_{F, \Upspsi, \infty} (\numrepresented, \coordsize) + O_{F , \Upspsi , \numvars} \left( \varepsilon^{(2 \numvars - 4) / (\numvars + 4)} \right) \right) . \label{eq:intTeps+ asymp}
\end{align}
By substituting \eqref{eq:intTeps- asymp} and \eqref{eq:intTeps+ asymp} into \eqref{ineqs:intindTeps bounds} and then dividing by $2 \varepsilon$, we obtain 
\begin{align*}
&\left( 1 - \frac{\varepsilon}{2} \right) \left( \tilde{\sigma}_{F, \Upspsi, \infty} (\numrepresented, \coordsize) + O_{F , \Upspsi , \numvars} \left( \varepsilon^{(2 \numvars - 4) / (\numvars + 4)} \right) \right) \\
	&\le \frac{1}{2 \varepsilon} \int_{\left| F(\bvec{m}) - \frac{\numrepresented}{\coordsize^2} \right| < \varepsilon} \Upspsi(\bvec{m}) \ d \bvec{m} \\
	&\le \left( 1 + \frac{\varepsilon}{2} \right) \left( \tilde{\sigma}_{F, \Upspsi, \infty} (\numrepresented, \coordsize) + O_{F , \Upspsi , \numvars} \left( \varepsilon^{(2 \numvars - 4) / (\numvars + 4)} \right) \right) . \numberthis \label{ineqs:intindTeps asymp bounds}
\end{align*}
Because $\numvars \ge 4$, we observe that $\frac{2 \numvars - 4}{\numvars + 4} \ge \frac{1}{2}$, so $\lim_{\varepsilon \to 0^+} \varepsilon^{(2 \numvars - 4) / (\numvars + 4)} = 0$.
Thus, by taking limits in \eqref{ineqs:intindTeps asymp bounds} as $\varepsilon \to 0^+$, we conclude that $\tilde{\sigma}_{F, \Upspsi, \infty} (\numrepresented, \coordsize) \le \sigma_{F, \Upspsi, \infty} (\numrepresented, \coordsize) \le \tilde{\sigma}_{F, \Upspsi, \infty} (\numrepresented, \coordsize)$. This implies \eqref{eq:singint archmediandensity vol}.
\end{proof}

Because we have determined that $\tilde{\sigma}_{F, \Upspsi, \infty} (\numrepresented, \coordsize) = \sigma_{F, \Upspsi, \infty} (\numrepresented, \coordsize)$ with Theorem~\ref{thm:archmediandensitytovol}, we now turn our attention to evaluating $\sigma_{F, \Upspsi, \infty} (\numrepresented, \coordsize)$. 
We first provide an upper bound for $\sigma_{F, \Upspsi, \infty} (\numrepresented, \coordsize)$. 
In order to do this, we first compute the density of real solutions to $ F(\bvec{m}) = \numrepresented$ when $F$ is a positive definite quadratic form in the following theorem. 
\begin{thm} \label{thm:limitshellvol}
Suppose that $F$ is a positive definite quadratic form in $\numvars \ge 1$ variables. Let $A$ be the Hessian matrix of $F$. Suppose that $\numrepresented$ is a real positive number and that $c$ is a real number. Then
\begin{align}
\lim_{\varepsilon \to 0^+} \frac{1}{2 \varepsilon} \int_{\left| F(\bvec{m}) - \numrepresented \right| < \varepsilon} c \ d \bvec{m} &= \frac{(2 \pi)^{\numvars/2} c}{\Gamma(\numvars/2) \sqrt{\det(A)}} \numrepresented^{\numvars/2 - 1} . \label{eq:limitshellvol}
\end{align}
\end{thm}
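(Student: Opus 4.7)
The plan is to diagonalize $F$ via the spectral theorem so that the level-set shell $\{|F(\mathbf{m}) - \numrepresented| < \varepsilon\}$ becomes a Euclidean spherical shell, whose volume we can write down exactly using Lemma~\ref{lem:\numvars-ballvol}, and then take the limit by a one-variable Taylor expansion.

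First I would use the spectral theorem to write $A = P^\top D P$ with $P$ orthogonal and $D = \diag(\lambda_1, \ldots, \lambda_\numvars)$, where all $\lambda_j > 0$ since $F$ is positive definite. Then I would perform the change of variables $\bvec{y} = D^{1/2} P \bvec{m}$. Since $P$ is orthogonal and $D^{1/2} = \diag(\sqrt{\lambda_1}, \ldots, \sqrt{\lambda_\numvars})$, the Jacobian determinant of this map is $\sqrt{\det(D)} = \sqrt{\det(A)}$, and one has $F(\bvec{m}) = \tfrac{1}{2}\|\bvec{y}\|^2$. Hence
\begin{align*}
\int_{|F(\bvec{m}) - \numrepresented| < \varepsilon} c \, d\bvec{m} = \frac{c}{\sqrt{\det(A)}} \int_{|\|\bvec{y}\|^2/2 - \numrepresented| < \varepsilon} d\bvec{y} = \frac{c}{\sqrt{\det(A)}}\bigl(\Vol_\numvars(B_\numvars(R_+)) - \Vol_\numvars(B_\numvars(R_-))\bigr),
\end{align*}
where $R_\pm = \sqrt{2\numrepresented \pm 2\varepsilon}$ (valid once $\varepsilon < \numrepresented$, which we may assume since we are taking $\varepsilon \to 0^+$).

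Next I would apply Lemma~\ref{lem:\numvars-ballvol} to get the shell volume explicitly:
\begin{align*}
\Vol_\numvars(B_\numvars(R_+)) - \Vol_\numvars(B_\numvars(R_-)) = \frac{\pi^{\numvars/2}}{\Gamma(\numvars/2 + 1)}\bigl[(2\numrepresented + 2\varepsilon)^{\numvars/2} - (2\numrepresented - 2\varepsilon)^{\numvars/2}\bigr].
\end{align*}
Dividing by $2\varepsilon$ and recognizing a symmetric difference quotient for the function $t \mapsto t^{\numvars/2}$ at $t = 2\numrepresented$, the limit as $\varepsilon \to 0^+$ equals $2 \cdot \tfrac{\numvars}{2}(2\numrepresented)^{\numvars/2 - 1} = \numvars (2\numrepresented)^{\numvars/2 - 1}$.

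Finally I would simplify using $\Gamma(\numvars/2 + 1) = (\numvars/2)\Gamma(\numvars/2)$, so that $\numvars/\Gamma(\numvars/2 + 1) = 2/\Gamma(\numvars/2)$. Combining everything gives
\begin{align*}
\lim_{\varepsilon \to 0^+} \frac{1}{2\varepsilon}\int_{|F(\bvec{m}) - \numrepresented| < \varepsilon} c \, d\bvec{m} = \frac{c}{\sqrt{\det(A)}} \cdot \frac{2\pi^{\numvars/2}}{\Gamma(\numvars/2)} \cdot (2\numrepresented)^{\numvars/2 - 1} = \frac{(2\pi)^{\numvars/2} c}{\Gamma(\numvars/2)\sqrt{\det(A)}} \numrepresented^{\numvars/2 - 1},
\end{align*}
as desired. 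There is no real obstacle here; the only thing to be careful about is that positive definiteness is essential (it ensures $D^{1/2}$ is real and the level sets are compact ellipsoids rather than unbounded hyperboloids), and that one should assume $\varepsilon < \numrepresented$ so that $R_-$ is real—this is harmless since we take $\varepsilon \to 0^+$.
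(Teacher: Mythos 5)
Your proof is correct and follows essentially the same approach as the paper's: change variables to reduce the ellipsoidal shell $\{|F(\bvec{m})-\numrepresented|<\varepsilon\}$ to a spherical shell, apply Lemma~\ref{lem:\numvars-ballvol} to compute its volume, and evaluate the limit via a symmetric difference quotient of $t\mapsto t^{\numvars/2}$. The only (cosmetic) difference is the choice of factorization: you diagonalize via the spectral theorem and change variables by $D^{1/2}P$, whereas the paper uses the Cholesky decomposition $A=BB^\top$ and changes variables by $(B^\top)^{-1}$; both yield the same Jacobian factor $\sqrt{\det(A)}$ and the same spherical shell.
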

\begin{proof}
The integral on the left-hand side of \eqref{eq:limitshellvol} is 
\begin{align*}
\int_{\left| F(\bvec{m}) - \numrepresented \right| < \varepsilon} c \ d \bvec{m} &= c \int_{F(\bvec{m}) < \numrepresented + \varepsilon} 1 \ d \bvec{m} - c \int_{F(\bvec{m}) \le \numrepresented - \varepsilon} 1 \ d \bvec{m} \\
	&= c \int_{\bvec{m}^\top A \bvec{m} < 2 (\numrepresented + \varepsilon)} 1 \ d \bvec{m} - c \int_{\bvec{m}^\top A \bvec{m} \le 2 (\numrepresented - \varepsilon)} 1 \ d \bvec{m} . \numberthis \label{eq:limitshellvolsplit}
\end{align*}

The Cholesky decomposition of the positive definite matrix $A$ gives us 
\begin{align*}
A = B B^\top ,
\end{align*}
where $B$ is a lower triangular matrix with positive diagonal entries.
Note that $\det(B) = \sqrt{\det(A)} > 0$.

We perform the change of variables of $\bvec{m} \mapsto (B^\top)^{-1} \bvec{m}$ in \eqref{eq:limitshellvolsplit} to obtain
\begin{align*}
\int_{\left| F(\bvec{m}) - \numrepresented \right| < \varepsilon} c \ d \bvec{m} 
	&= \frac{c}{| \det(B) |} \int_{((B^\top)^{-1} \bvec{m})^\top (B B^\top) (B^\top)^{-1}\bvec{m} < 2 (\numrepresented + \varepsilon)} 1 \ d \bvec{m} \\
		&\qquad - \frac{c}{| \det(B) |} \int_{((B^\top)^{-1} \bvec{m})^\top (B B^\top) (B^\top)^{-1}\bvec{m} \le 2 (\numrepresented - \varepsilon)} 1 \ d \bvec{m} \\
	&= \frac{c}{| \det(B) |} \left( \int_{\bvec{m}^\top \bvec{m} < 2 (\numrepresented + \varepsilon)} 1 \ d \bvec{m} - \int_{\bvec{m}^\top \bvec{m} \le 2 (\numrepresented - \varepsilon)} 1 \ d \bvec{m} \right) . \numberthis \label{eq:limtshellvolsplit intermed1}
\end{align*}
Because $\det(B) = \sqrt{\det(A)} > 0$ and $\bvec{m}^\top \bvec{m} = \| \bvec{m} \|^2$, we find that \eqref{eq:limtshellvolsplit intermed1} is equivalent to 
\begin{align*}
\int_{\left| F(\bvec{m}) - \numrepresented \right| < \varepsilon} c \ d \bvec{m} &= \frac{c}{\sqrt{\det(A)}} \left( \int_{\| \bvec{m} \|^2 < 2 (\numrepresented + \varepsilon)} 1 \ d \bvec{m} - \int_{\| \bvec{m} \|^2 \le 2 (\numrepresented - \varepsilon)} 1 \ d \bvec{m} \right) \\
	&= \frac{c}{\sqrt{\det(A)}} \left( \int_{\| \bvec{m} \| < \sqrt{2 (\numrepresented + \varepsilon)}} 1 \ d \bvec{m} - \int_{\| \bvec{m} \| \le \sqrt{2 (\numrepresented - \varepsilon)}} 1 \ d \bvec{m} \right) \\
	&= \frac{c}{\sqrt{\det(A)}} \left( \Vol_\numvars \left( B^o_\numvars \left( \sqrt{2 (\numrepresented + \varepsilon)} \right) \right) - \Vol_\numvars \left( B_\numvars \left( \sqrt{2 (\numrepresented - \varepsilon)} \right) \right) \right) . \numberthis \label{eq:limitshellvolsplit vols}
\end{align*}
By \eqref{eq:\numvars-balloriginvol}, we have 
\begin{align}
\int_{\left| F(\bvec{m}) - \numrepresented \right| < \varepsilon} c \ d \bvec{m} &= \frac{c}{\sqrt{\det(A)}} \left( \frac{\pi^{\numvars/2}}{\Gamma \left( \numvars/2 + 1 \right)} (2 (\numrepresented + \varepsilon))^{\numvars / 2} - \frac{\pi^{\numvars/2}}{\Gamma \left( \numvars/2 + 1 \right)} (2 (\numrepresented - \varepsilon))^{\numvars / 2} \right) \label{eq:limitshellvolsplit volseval}
\end{align}
Therefore,
\begin{align}
\lim_{\varepsilon \to 0^+} \frac{1}{2 \varepsilon} \int_{\left| F(\bvec{m}) - \numrepresented \right| < \varepsilon} c \ d \bvec{m} &= \frac{\pi^{\numvars/2} c}{\Gamma \left( \numvars/2 + 1 \right) \sqrt{\det(A)}} \lim_{\varepsilon \to 0^+} \frac{(2 (\numrepresented + \varepsilon))^{\numvars / 2} - (2 (\numrepresented - \varepsilon))^{\numvars / 2}}{2 \varepsilon} . \label{limtshellvolsplit symmetricderiv}
\end{align}
We recognize that the limit on the right-hand side of \eqref{limtshellvolsplit symmetricderiv} is the (symmetric) derivative of $(2 x)^{\numvars / 2}$ evaluated at $x = \numrepresented$. The derivative of $(2 x)^{\numvars / 2}$ is $\numvars (2 x)^{\numvars / 2 - 1}$, so 
\begin{align*}
\lim_{\varepsilon \to 0^+} \frac{1}{2 \varepsilon} \int_{\left| F(\bvec{m}) - \numrepresented \right| < \varepsilon} c \ d \bvec{m} &= \frac{\frac{\numvars}{2} (2 \pi)^{\numvars/2} c}{\Gamma \left( \numvars/2 + 1 \right) \sqrt{\det(A)}} \numrepresented^{\numvars/2 - 1} . 
\end{align*}
By noticing that $\Gamma(\numvars/2 + 1) = \frac{\numvars}{2} \Gamma(\numvars/2)$ and simplifying, we obtain \eqref{eq:limitshellvol}.
\end{proof}

We now provide an upper bound for the absolute value of $\sigma_{F, \Upspsi, \infty} (\numrepresented, \coordsize)$.
\begin{thm} \label{thm:sigma upperbound nonsing}
Suppose that $F$ is a nonsingular quadratic form in $\numvars \ge 1$ variables. Let $A$ be the Hessian matrix of $F$. Suppose that $\numrepresented$ and $\coordsize$ are positive real numbers. Let $\numposevals$ be the number of positive eigenvalues of $A$. Suppose that $\Upspsi \in C_c^\infty (\RR^\numvars)$. Then 
\begin{align*}
&| \sigma_{F, \Upspsi, \infty} (\numrepresented, \coordsize) | \\
&\le 
\begin{cases}
0 &\text{if $\numposevals = 0$,} \\
\displaystyle{(2 \Upsuppvar_\Upspsi)^{\numvars - 1} \frac{(2 \pi)^{1/2} \coordsize}{\Gamma(1/2) \left( \lambda_1 \numrepresented \right)^{1/2}} \max_{\bvec{m} \in \RR^\numvars} | \Upspsi(\bvec{m}) |} &\text{if $\numposevals = 1$,} \\
\displaystyle{(2 \Upsuppvar_\Upspsi)^{\numvars - \numposevals} \frac{(2 \pi)^{\numposevals/2}}{\Gamma(\numposevals/2) \left( \prod_{j=1}^\numposevals \lambda_j \right)^{1/2}} \left(\frac{\numrepresented}{\coordsize^2} - \frac{{\Upsuppvar_\Upspsi}^2}{2} \sum_{j=\numposevals+1}^\numvars \lambda_j \right)^{\numposevals/2 - 1} \max_{\bvec{m} \in \RR^\numvars} | \Upspsi(\bvec{m}) |} &\text{if $\numposevals > 1$,}
\end{cases}
 \numberthis \label{ineq:sigma upperbound nonsing}
\end{align*}
where $\lambda_1, \lambda_2, \ldots, \lambda_\numposevals$ are the positive eigenvalues of $A$ and $\lambda_{\numposevals+1}, \lambda_{\numposevals+2}, \ldots, \lambda_\numvars$ are the negative eigenvalues of $A$.
\end{thm}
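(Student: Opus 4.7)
The plan is to diagonalize $A$ by an orthogonal transformation and then apply Theorem~\ref{thm:limitshellvol} fiberwise in the positive-eigenvalue directions, treating the negative-eigenvalue directions as parameters absorbed into the level. First dispose of the trivial case $\numposevals = 0$: here $F$ is negative definite, so $F(\bvec{m}) \le 0 < \numrepresented/\coordsize^2$ everywhere, whence for small enough $\varepsilon$ the set $\{|F(\bvec{m}) - \numrepresented/\coordsize^2| < \varepsilon\}$ is empty and $\sigma_{F, \Upspsi, \infty}(\numrepresented, \coordsize) = 0$, matching the claimed bound. For $\numposevals \ge 1$, the spectral theorem gives $A = P^\top D P$ with $P$ orthogonal and $D = \diag(\lambda_1, \ldots, \lambda_\numvars)$. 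The change of variables $\bvec{v} = P \bvec{m}$ preserves Lebesgue measure and the Euclidean norm, so that $\tilde{\Upspsi}(\bvec{v}) := \Upspsi(P^\top \bvec{v})$ is supported in $\{\|\bvec{v}\| \le \Upsuppvar_\Upspsi\}$ with $\max |\tilde{\Upspsi}| = \max |\Upspsi|$, and $F(\bvec{m}) = \frac{1}{2}\sum_{j=1}^\numvars \lambda_j v_j^2$.

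Split $\bvec{v} = (\bvec{v}^+, \bvec{v}^-)$ with $\bvec{v}^+ \in \RR^{\numposevals}$ corresponding to the positive eigenvalues and $\bvec{v}^- \in \RR^{\numvars - \numposevals}$ to the negative ones. Setting $F^+(\bvec{v}^+) = \frac{1}{2}\sum_{j=1}^{\numposevals} \lambda_j v_j^2$ (positive definite on $\RR^{\numposevals}$ with Hessian determinant $\prod_{j=1}^{\numposevals} \lambda_j$), the slab condition $|F(\bvec{m}) - \numrepresented/\coordsize^2| < \varepsilon$ becomes $|F^+(\bvec{v}^+) - c(\bvec{v}^-)| < \varepsilon$, where
\[
c(\bvec{v}^-) = \frac{\numrepresented}{\coordsize^2} - \frac{1}{2}\sum_{j=\numposevals+1}^{\numvars} \lambda_j v_j^2 \ge \frac{\numrepresented}{\coordsize^2} > 0,
\]
since $\lambda_j < 0$ for $j > \numposevals$. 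Using the pointwise bound $|\tilde{\Upspsi}| \le \max|\Upspsi|$ and Fubini, then passing the limit inside the outer integral (justified by dominated convergence, since the inner prelimit shell volume can be written explicitly via \eqref{eq:limitshellvolsplit volseval} and is uniformly bounded on the compact set $\{\|\bvec{v}^-\| \le \Upsuppvar_\Upspsi\}$ because $c(\bvec{v}^-)$ is bounded away from zero), one obtains
\[
\bigl|\sigma_{F, \Upspsi, \infty}(\numrepresented, \coordsize)\bigr| \le \max|\Upspsi| \int_{\|\bvec{v}^-\| \le \Upsuppvar_\Upspsi} \frac{(2\pi)^{\numposevals/2}}{\Gamma(\numposevals/2) \bigl(\prod_{j=1}^{\numposevals}\lambda_j\bigr)^{1/2}} \, c(\bvec{v}^-)^{\numposevals/2 - 1} \, d\bvec{v}^-,
\]
with the inner limit coming from Theorem~\ref{thm:limitshellvol} applied to $F^+$ in $\numposevals$ variables at positive level $c(\bvec{v}^-)$.

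It remains to estimate $c(\bvec{v}^-)^{\numposevals/2 - 1}$ uniformly on the support and integrate over $\bvec{v}^-$. When $\numposevals = 1$ the exponent is $-1/2$, so I use the lower bound $c(\bvec{v}^-) \ge \numrepresented/\coordsize^2$ to get $c(\bvec{v}^-)^{-1/2} \le \coordsize \, \numrepresented^{-1/2}$. When $\numposevals > 1$ the exponent is nonnegative, so I use the upper bound $c(\bvec{v}^-) \le \numrepresented/\coordsize^2 - \frac{\Upsuppvar_\Upspsi^2}{2}\sum_{j > \numposevals}\lambda_j$, valid since $|v_j| \le \Upsuppvar_\Upspsi$ for each $j$ on $\supp(\tilde{\Upspsi})$ and $-\lambda_j > 0$. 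Finally, enclosing the ball $\{\|\bvec{v}^-\| \le \Upsuppvar_\Upspsi\}$ in the cube $[-\Upsuppvar_\Upspsi, \Upsuppvar_\Upspsi]^{\numvars-\numposevals}$ of volume $(2\Upsuppvar_\Upspsi)^{\numvars - \numposevals}$ yields the leading combinatorial factor in \eqref{ineq:sigma upperbound nonsing}. The main technical obstacle is justifying the interchange of the $\varepsilon \to 0^+$ limit with the outer integration over $\bvec{v}^-$; this is handled by extracting an explicit, uniform-in-$\bvec{v}^-$ domination for the prelimit shell volume from the computation \eqref{eq:limitshellvolsplit volseval} and invoking dominated convergence on the compact parameter set.
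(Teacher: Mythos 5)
Your proof follows essentially the same approach as the paper's: orthogonally diagonalize $A$, split the coordinates by sign of eigenvalue, apply Theorem~\ref{thm:limitshellvol} fiberwise in the positive directions with level $c(\bvec{v}^-)$, handle $\numposevals=1$ and $\numposevals>1$ separately via the lower and upper bounds on $c(\bvec{v}^-)$, and enclose the ball in a cube. The one place you go beyond the paper is that you explicitly justify interchanging the $\varepsilon\to 0^+$ limit with the $\bvec{v}^-$ integration via dominated convergence using the explicit shell-volume formula; the paper performs this interchange silently, so your version is a welcome tightening of the same argument.
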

 \begin{proof}
 Suppose $\numposevals = 0$. Then $F$ is negative definite. Because $\numrepresented$ and $\coordsize$ are positive, this implies that $\sigma_{F, \Upspsi, \infty} (\numrepresented, \coordsize) = 0$, and \eqref{ineq:sigma upperbound nonsing} follows.

Suppose that $\numposevals \ge 1$.
By taking absolute values of both sides of \eqref{eq:archmediandensity vol}, we obtain 
\begin{align}
| \sigma_{F, \Upspsi, \infty} (\numrepresented, \coordsize) | &\le \lim_{\varepsilon \to 0^+} \frac{1}{2 \varepsilon} \int_{\left| F(\bvec{m}) - \frac{\numrepresented}{\coordsize^2} \right| < \varepsilon} | \Upspsi(\bvec{m}) | \ d \bvec{m} . \label{ineq:sigma upperbound nonsing1}
\end{align}
 
Using the spectral theorem for symmetric matrices, we can write the symmetric matrix $A$ as 
\begin{align*}
A = P^\top D P,
\end{align*}
where $P$ is an orthogonal matrix and $D = \diag(\lambda_1 , \ldots , \lambda_\numvars)$ is a diagonal matrix with the eigenvalues of $A$ as diagonal entries.
Without loss of generality, we assume that $\lambda_1 , \lambda_2 , \ldots, \lambda_\numposevals$ are positive and $\lambda_{\numposevals + 1} , \lambda_{\numposevals + 2} , \ldots, \lambda_\numvars$ are negative.

Let $\bvec{k} = P \bvec{m}$. Because $P$ is orthogonal, we know that $| \det(P) | = 1$.
Therefore, 
\begin{align*}
\lim_{\varepsilon \to 0^+} \frac{1}{2 \varepsilon} \int_{\left| F(\bvec{m}) - \frac{\numrepresented}{\coordsize^2} \right| < \varepsilon} | \Upspsi(\bvec{m}) | \ d \bvec{m} &= \lim_{\varepsilon \to 0^+} \frac{1}{2 \varepsilon} \int_{\left| \frac{1}{2} \bvec{m}^\top P^\top D P \bvec{m} - \frac{\numrepresented}{\coordsize^2} \right| < \varepsilon} | \Upspsi(\bvec{m}) | \ d \bvec{m} \\
	&= \lim_{\varepsilon \to 0^+} \frac{1}{2 \varepsilon} \int_{\left| \frac{1}{2} \bvec{k}^\top D  \bvec{k} - \frac{\numrepresented}{\coordsize^2} \right| < \varepsilon} | \Upspsi(P^{-1} \bvec{k}) | \frac{1}{| \det(P) |} \ d \bvec{k} \\
	&= \lim_{\varepsilon \to 0^+} \frac{1}{2 \varepsilon} \int_{\left| \frac{1}{2} \sum_{j=1}^\numvars \lambda_j {k_j}^2 - \frac{\numrepresented}{\coordsize^2} \right| < \varepsilon} | \Upspsi(P^{-1} \bvec{k}) | \ d \bvec{k}  . \numberthis \label{ineq:sigma upperbound nonsing2}
\end{align*}
Rearranging the terms in \eqref{ineq:sigma upperbound nonsing2}, we obtain 
\begin{align}
\lim_{\varepsilon \to 0^+} \frac{1}{2 \varepsilon} \int_{\left| F(\bvec{m}) - \frac{\numrepresented}{\coordsize^2} \right| < \varepsilon} | \Upspsi(\bvec{m}) | \ d \bvec{m} &= \lim_{\varepsilon \to 0^+} \frac{1}{2 \varepsilon} \int_{\left| \frac{1}{2} \sum_{j=1}^\numposevals \lambda_j {k_j}^2 - \left(\frac{\numrepresented}{\coordsize^2} - \frac{1}{2} \sum_{j=\numposevals+1}^\numvars \lambda_j {k_j}^2\right) \right| < \varepsilon} | \Upspsi(P^{-1} \bvec{k}) | \ d \bvec{k} .  \label{ineq:sigma upperbound nonsing3}
\end{align}

Let $\bvec{x} = (k_1, k_2, \ldots, k_\numposevals)^\top$ and $\bvec{y} = (k_{\numposevals+1}, k_2, \ldots, k_\numvars)^\top$ so that $\bvec{k} = \begin{pmatrix} \bvec{x} \\ \bvec{y} \end{pmatrix}$. Since $\supp(\Upspsi) \subseteq B_\numvars (\Upsuppvar_\Upspsi)$ and $P$ is orthogonal, we have 
\begin{align*}
&\lim_{\varepsilon \to 0^+} \frac{1}{2 \varepsilon} \int_{\left| \frac{1}{2} \sum_{j=1}^\numposevals \lambda_j {k_j}^2 - \left(\frac{\numrepresented}{\coordsize^2} - \frac{1}{2} \sum_{j=\numposevals+1}^\numvars \lambda_j {k_j}^2\right) \right| < \varepsilon} | \Upspsi(P^{-1} \bvec{k}) | \ d \bvec{k} \\
	&\le \int_{[-\Upsuppvar_\Upspsi , \Upsuppvar_\Upspsi]^{\numvars - \numposevals}} \ \lim_{\varepsilon \to 0^+} \frac{1}{2 \varepsilon} \int_{\left| \frac{1}{2} \sum_{j=1}^\numposevals \lambda_j {k_j}^2 - \left(\frac{\numrepresented}{\coordsize^2} - \frac{1}{2} \sum_{j=\numposevals+1}^\numvars \lambda_j {k_j}^2\right) \right| < \varepsilon} \max_{\bvec{w} \in \RR^\numvars} | \Upspsi(\bvec{w}) | \ d \bvec{x} \ d \bvec{y} .  \numberthis \label{ineq:sigma upperbound nonsing4}
\end{align*}

We apply Theorem~\ref{thm:limitshellvol} to \eqref{ineq:sigma upperbound nonsing4} to obtain 
\begin{align*}
&\lim_{\varepsilon \to 0^+} \frac{1}{2 \varepsilon} \int_{\left| \frac{1}{2} \sum_{j=1}^\numposevals \lambda_j {k_j}^2 - \left(\frac{\numrepresented}{\coordsize^2} - \frac{1}{2} \sum_{j=\numposevals+1}^\numvars \lambda_j {k_j}^2\right) \right| < \varepsilon} | \Upspsi(P^{-1} \bvec{k}) | \ d \bvec{k} \\
	&\le \int_{[-\Upsuppvar_\Upspsi , \Upsuppvar_\Upspsi]^{\numvars - \numposevals}} \frac{(2 \pi)^{\numposevals/2}}{\Gamma(\numposevals/2) \sqrt{\prod_{j=1}^\numposevals \lambda_j}} \left(\frac{\numrepresented}{\coordsize^2} - \frac{1}{2} \sum_{j=\numposevals+1}^\numvars \lambda_j {k_j}^2\right)^{\numposevals/2 - 1} \max_{\bvec{w} \in \RR^\numvars} | \Upspsi(\bvec{w}) | \ d \bvec{y} .  \numberthis \label{ineq:sigma upperbound nonsing5}
\end{align*}
since the Hessian matrix of the positive definite quadratic form $\frac{1}{2} \sum_{j=1}^\numposevals \lambda_j {k_j}^2$ is $\diag( \lambda_1, \ldots, \lambda_\numposevals)$ (which has a determinant of $\prod_{j=1}^\numposevals \lambda_j $).

If $\numposevals = 1$, then 
\begin{align*}
&\int_{[-\Upsuppvar_\Upspsi , \Upsuppvar_\Upspsi]^{\numvars - \numposevals}} \frac{(2 \pi)^{\numposevals/2}}{\Gamma(\numposevals/2) \sqrt{\prod_{j=1}^\numposevals \lambda_j}} \left(\frac{\numrepresented}{\coordsize^2} - \frac{1}{2} \sum_{j=\numposevals+1}^\numvars \lambda_j {k_j}^2\right)^{\numposevals/2 - 1} \max_{\bvec{w} \in \RR^\numvars} | \Upspsi(\bvec{w}) | \ d \bvec{y} \\
	&\le \int_{[-\Upsuppvar_\Upspsi , \Upsuppvar_\Upspsi]^{\numvars - 1}} \frac{(2 \pi)^{1/2}}{\Gamma(1/2) \sqrt{\lambda_1}} \left(\frac{\numrepresented}{\coordsize^2} \right)^{-1/2} \max_{\bvec{w} \in \RR^\numvars} | \Upspsi(\bvec{w}) | \ d \bvec{y} \\
	&\le (2 \Upsuppvar_\Upspsi)^{\numvars - 1} \frac{(2 \pi)^{1/2} \coordsize}{\Gamma(1/2) \left( \lambda_1 \numrepresented \right)^{1/2}} \max_{\bvec{w} \in \RR^\numvars} | \Upspsi(\bvec{w}) | .  \numberthis \label{ineq:sigma upperbound nonsing6}
\end{align*}

If $\numposevals > 1$, then 
\begin{align*}
&\int_{[-\Upsuppvar_\Upspsi , \Upsuppvar_\Upspsi]^{\numvars - \numposevals}} \frac{(2 \pi)^{\numposevals/2}}{\Gamma(\numposevals/2) \sqrt{\prod_{j=1}^\numposevals \lambda_j}} \left(\frac{\numrepresented}{\coordsize^2} - \frac{1}{2} \sum_{j=\numposevals+1}^\numvars \lambda_j {k_j}^2\right)^{\numposevals/2 - 1} \max_{\bvec{w} \in \RR^\numvars} | \Upspsi(\bvec{w}) | \ d \bvec{y} \\
	&\le \int_{[-\Upsuppvar_\Upspsi , \Upsuppvar_\Upspsi]^{\numvars - \numposevals}} \frac{(2 \pi)^{\numposevals/2}}{\Gamma(\numposevals/2) \left( \prod_{j=1}^\numposevals \lambda_j \right)^{1/2}} \left(\frac{\numrepresented}{\coordsize^2} - \frac{1}{2} \sum_{j=\numposevals+1}^\numvars \lambda_j {\Upsuppvar_\Upspsi}^2\right)^{\numposevals/2 - 1} \max_{\bvec{w} \in \RR^\numvars} | \Upspsi(\bvec{w}) | \ d \bvec{y} \\
	&\le (2 \Upsuppvar_\Upspsi)^{\numvars - \numposevals} \frac{(2 \pi)^{\numposevals/2}}{\Gamma(\numposevals/2) \left( \prod_{j=1}^\numposevals \lambda_j \right)^{1/2}} \left(\frac{\numrepresented}{\coordsize^2} - \frac{{\Upsuppvar_\Upspsi}^2}{2} \sum_{j=\numposevals+1}^\numvars \lambda_j \right)^{\numposevals/2 - 1} \max_{\bvec{w} \in \RR^\numvars} | \Upspsi(\bvec{w}) | .  \numberthis \label{ineq:sigma upperbound nonsing7}
\end{align*}

We obtain \eqref{ineq:sigma upperbound nonsing} by combining \eqref{ineq:sigma upperbound nonsing1}, \eqref{ineq:sigma upperbound nonsing3}, \eqref{ineq:sigma upperbound nonsing5}, \eqref{ineq:sigma upperbound nonsing6}, and \eqref{ineq:sigma upperbound nonsing7}.
\end{proof}

If $F$ is a positive definite form, then Theorem~\ref{thm:sigma upperbound nonsing} greatly simplifies since $\numposevals = \numvars$ and the determinant of a matrix is equal to the product of its eigenvalues. We state the result in the following corollary.
\begin{cor} \label{cor:sigma upperbound}
Suppose that $F$ is a positive definite quadratic form in $\numvars \ge 1$ variables. Let $A$ be the Hessian matrix of $F$. Suppose that $\numrepresented$ and $\coordsize$ are positive real numbers. Suppose that $\Upspsi \in C_c^\infty (\RR^\numvars)$. Then 
\begin{align}
| \sigma_{F, \Upspsi, \infty} (\numrepresented, \coordsize) | &\le \frac{(2 \pi)^{\numvars/2}}{\Gamma(\numvars/2) \sqrt{\det(A)}} \numrepresented^{\numvars/2 - 1} \coordsize^{2 - \numvars} \max_{\bvec{m} \in \RR^\numvars} |\Upspsi(\bvec{m})| . \label{ineq:sigma upperbound}
\end{align}
\end{cor}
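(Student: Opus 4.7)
The plan is to derive the corollary directly from Theorem~\ref{thm:sigma upperbound nonsing} by specializing to the positive definite case. Since $F$ is positive definite in $\numvars$ variables, every eigenvalue of the Hessian $A$ is strictly positive, so $\numposevals = \numvars$. This places us in the third branch of the piecewise bound \eqref{ineq:sigma upperbound nonsing} whenever $\numvars > 1$, and in the second branch when $\numvars = 1$.

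First I would handle the case $\numvars \ge 2$. With $\numposevals = \numvars$, the exponent $(2\Upsuppvar_\Upspsi)^{\numvars - \numposevals}$ collapses to $(2\Upsuppvar_\Upspsi)^0 = 1$, and the sum $\sum_{j=\numposevals+1}^\numvars \lambda_j$ is empty, hence zero. Consequently
\[
\left(\frac{\numrepresented}{\coordsize^2} - \frac{{\Upsuppvar_\Upspsi}^2}{2} \sum_{j=\numposevals+1}^\numvars \lambda_j \right)^{\numposevals/2 - 1} = \left( \frac{\numrepresented}{\coordsize^2} \right)^{\numvars/2 - 1} = \numrepresented^{\numvars/2 - 1} \coordsize^{2 - \numvars}.
\]
Since the eigenvalues of a symmetric matrix multiply to its determinant, $\prod_{j=1}^{\numvars} \lambda_j = \det(A)$. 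Substituting these simplifications into the third branch of \eqref{ineq:sigma upperbound nonsing} immediately yields \eqref{ineq:sigma upperbound}.

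Next I would verify the edge case $\numvars = 1$, which falls into the $\numposevals = 1$ branch of Theorem~\ref{thm:sigma upperbound nonsing}. Again $(2\Upsuppvar_\Upspsi)^{\numvars - 1} = 1$, the only eigenvalue $\lambda_1$ equals $\det(A)$, and the branch reads
\[
|\sigma_{F, \Upspsi, \infty} (\numrepresented, \coordsize)| \le \frac{(2\pi)^{1/2} \coordsize}{\Gamma(1/2) (\lambda_1 \numrepresented)^{1/2}} \max_{\bvec{m} \in \RR^\numvars} |\Upspsi(\bvec{m})|,
\]
which is exactly \eqref{ineq:sigma upperbound} rewritten with $\numrepresented^{1/2 - 1} = \numrepresented^{-1/2}$ and $\coordsize^{2-1} = \coordsize$. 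Since there is essentially no computation beyond bookkeeping, I do not anticipate any obstacle; the corollary is just a clean restatement of the theorem in the positive definite setting.
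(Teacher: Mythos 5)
Your proof is correct and matches the paper's intended derivation, which is simply to specialize Theorem~\ref{thm:sigma upperbound nonsing} using $\numposevals = \numvars$ and $\prod_{j=1}^\numvars \lambda_j = \det(A)$. You are a bit more careful than the paper's one-line justification in separately verifying the $\numvars = 1$ edge case (which lands in the $\numposevals = 1$ branch rather than the $\numposevals > 1$ branch), and that care is warranted and correct.
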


Now that we have upper bounds for the absolute value of $\sigma_{F, \Upspsi, \infty} (\numrepresented, \coordsize)$, we give upper bounds for the absolute value of the singular integral $J_{F, \Upspsi} (\numrepresented, \coordsize)$. The next upper bound follows immediately from \eqref{eq:singintnormalized with density}, Theorem~\ref{thm:archmediandensitytovol}, and Theorem~\ref{thm:sigma upperbound nonsing}.
\begin{cor} \label{cor:singint upperbound nonsing}
Suppose that $F$ is a nonsingular quadratic form in $\numvars \ge 1$ variables. Let $A$ be the Hessian matrix of $F$. Suppose that $\numrepresented$ and $\coordsize$ are positive real numbers. Let $\numposevals$ be the number of positive eigenvalues of $A$. Suppose that $\Upspsi \in C_c^\infty (\RR^\numvars)$. Then 
\begin{align*}
&| J_{F, \Upspsi, \infty} (\numrepresented, \coordsize) | \\
&\le 
\begin{cases}
0 &\text{if $\numposevals = 0$,} \\
\displaystyle{(2 \Upsuppvar_\Upspsi)^{\numvars - 1} \frac{(2 \pi)^{1/2} \coordsize^{\numvars-1}}{\Gamma(1/2) \left( \lambda_1 \numrepresented \right)^{1/2}} \max_{\bvec{m} \in \RR^\numvars} | \Upspsi(\bvec{m}) |} &\text{if $\numposevals = 1$,} \\
\displaystyle{(2 \Upsuppvar_\Upspsi)^{\numvars - \numposevals} \frac{(2 \pi)^{\numposevals/2} \coordsize^{\numvars-2}}{\Gamma(\numposevals/2) \left( \prod_{j=1}^\numposevals \lambda_j \right)^{1/2}} \left(\frac{\numrepresented}{\coordsize^2} - \frac{{\Upsuppvar_\Upspsi}^2}{2} \sum_{j=\numposevals+1}^\numvars \lambda_j \right)^{\numposevals/2 - 1} \max_{\bvec{m} \in \RR^\numvars} | \Upspsi(\bvec{m}) |} &\text{if $\numposevals > 1$,}
\end{cases}
\end{align*}
where $\lambda_1, \lambda_2, \ldots, \lambda_\numposevals$ are the positive eigenvalues of $A$ and $\lambda_{\numposevals+1}, \lambda_{\numposevals+2}, \ldots, \lambda_\numvars$ are the negative eigenvalues of $A$.
\end{cor}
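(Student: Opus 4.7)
The proof will be a short, direct assembly of three previously established results, so there is essentially no obstacle to overcome. The plan is as follows.

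First, I will start from the identity
\begin{align*}
J_{F, \Upspsi} (\numrepresented, \coordsize) &= \tilde{\sigma}_{F, \Upspsi, \infty} (\numrepresented, \coordsize) \, \coordsize^{\numvars - 2}
\end{align*}
given in \eqref{eq:singintnormalized with density}, which was obtained by the change of variables $x \mapsto x/\coordsize^2$ in the defining integral \eqref{eq:singint def}. Next, I will invoke Theorem~\ref{thm:archmediandensitytovol} to replace $\tilde{\sigma}_{F, \Upspsi, \infty} (\numrepresented, \coordsize)$ with $\sigma_{F, \Upspsi, \infty} (\numrepresented, \coordsize)$, giving
\begin{align*}
J_{F, \Upspsi} (\numrepresented, \coordsize) &= \sigma_{F, \Upspsi, \infty} (\numrepresented, \coordsize) \, \coordsize^{\numvars - 2}.
\end{align*}

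Taking absolute values and applying the case-by-case bound of Theorem~\ref{thm:sigma upperbound nonsing} to $|\sigma_{F, \Upspsi, \infty} (\numrepresented, \coordsize)|$, and then multiplying each case by the nonnegative factor $\coordsize^{\numvars-2}$, produces exactly the three-part estimate in the corollary: the $\numposevals = 0$ case becomes the bound~$0$; the $\numposevals = 1$ case gains the factor $\coordsize^{\numvars - 2}$ on top of the explicit $\coordsize$, yielding $\coordsize^{\numvars-1}$ in the numerator; and the $\numposevals > 1$ case picks up $\coordsize^{\numvars-2}$, while the bracket $\bigl(\numrepresented/\coordsize^2 - \tfrac{\Upsuppvar_\Upspsi^2}{2} \sum_{j=\numposevals+1}^\numvars \lambda_j\bigr)^{\numposevals/2 - 1}$ is carried over verbatim. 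Since no further estimation is needed, the proof is a one-line combination of the three cited results, and I expect no substantive obstacle.
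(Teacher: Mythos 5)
Your proposal is correct and matches the paper's argument exactly: the paper also derives the corollary immediately from \eqref{eq:singintnormalized with density}, Theorem~\ref{thm:archmediandensitytovol}, and Theorem~\ref{thm:sigma upperbound nonsing}, multiplying the three-case bound by $\coordsize^{\numvars-2}$. No differences to note.
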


Analogous to the upper bounds for the absolute value of $\sigma_{F, \Upspsi, \infty} (\numrepresented, \coordsize)$, Corollary~\ref{cor:singint upperbound nonsing} greatly simplifies if $F$ is a positive definite form. 
The next corollary follows immediately from \eqref{eq:singintnormalized with density}, Theorem~\ref{thm:archmediandensitytovol}, and Corollary~\ref{cor:sigma upperbound}.
\begin{cor} \label{cor:singint upperbound}
Suppose that $F$ is a positive definite quadratic form in $\numvars \ge 1$ variables. Let $A$ be the Hessian matrix of $F$. Suppose that $\numrepresented$ and $\coordsize$ are real positive numbers. Suppose that $\Upspsi \in C_c^\infty (\RR^\numvars)$. Then 
\begin{align*}
| J_{F, \Upspsi} (\numrepresented, \coordsize) | \le \frac{(2 \pi)^{\numvars/2}}{\Gamma(\numvars/2) \sqrt{\det(A)}} \numrepresented^{\numvars/2 - 1} \max_{\bvec{m} \in \RR^\numvars} | \Upspsi(\bvec{m}) | . 
\end{align*}
\end{cor}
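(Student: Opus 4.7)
The plan is to obtain the bound by chaining together three facts that are already available in the paper. First, I would invoke equation~\eqref{eq:singintnormalized with density}, which rewrites the singular integral as
\begin{align*}
J_{F, \Upspsi} (\numrepresented, \coordsize) &= \tilde{\sigma}_{F, \Upspsi, \infty} (\numrepresented, \coordsize) \, \coordsize^{\numvars - 2}.
\end{align*}
This reduces the task of bounding $|J_{F, \Upspsi}(\numrepresented,\coordsize)|$ to bounding $|\tilde{\sigma}_{F, \Upspsi, \infty}(\numrepresented,\coordsize)|$, at the cost of a single factor of $\coordsize^{\numvars-2}$.

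Next, I would apply Theorem~\ref{thm:archmediandensitytovol}, which identifies the unnormalized real factor with the normalized one, giving $\tilde{\sigma}_{F, \Upspsi, \infty} (\numrepresented, \coordsize) = \sigma_{F, \Upspsi, \infty} (\numrepresented, \coordsize)$. Consequently, taking absolute values yields
\begin{align*}
|J_{F, \Upspsi} (\numrepresented, \coordsize)| &= |\sigma_{F, \Upspsi, \infty} (\numrepresented, \coordsize)| \, \coordsize^{\numvars - 2}.
\end{align*}

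Finally, since $F$ is assumed positive definite, Corollary~\ref{cor:sigma upperbound} is directly applicable and supplies
\begin{align*}
|\sigma_{F, \Upspsi, \infty} (\numrepresented, \coordsize)| &\le \frac{(2 \pi)^{\numvars/2}}{\Gamma(\numvars/2) \sqrt{\det(A)}} \numrepresented^{\numvars/2 - 1} \coordsize^{2 - \numvars} \max_{\bvec{m} \in \RR^\numvars} |\Upspsi(\bvec{m})|.
\end{align*}
Multiplying through by $\coordsize^{\numvars-2}$, the powers of $\coordsize$ cancel exactly, leaving the claimed bound. There is no real obstacle here: every ingredient is already in place, and the result is a straightforward consequence of combining the three cited statements.
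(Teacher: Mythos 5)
Your proof is correct and is essentially identical to the paper's own justification: the paper states that this corollary ``follows immediately from \eqref{eq:singintnormalized with density}, Theorem~\ref{thm:archmediandensitytovol}, and Corollary~\ref{cor:sigma upperbound},'' which is precisely the chain of three facts you assemble.
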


We would like to obtain an equality for the singular integral $J_{F, \Upspsi} (\numrepresented, \coordsize)$ under certain conditions. One such condition is requiring that $\Upspsi$ is equal to $1$ on the set $V_\coordsize = \{ \bvec{m} \in \RR^\numvars : F(\bvec{m}) = \numrepresented / \coordsize^2 \}$. 
Before we can give an equality under such a condition, we prove a lemma that allows us to give an upper bound on how close $V_1$ is to a point $\bvec{m} \in \RR^\numvars$ satisfying $| F(\bvec{m}) - \numrepresented | < \varepsilon$ when $F$ is positive definite.
\begin{lem} \label{lem:preimagecloseness}
Suppose that $F$ is a positive definite quadratic form in $\numvars$ variables. Suppose that $\bvec{m} \in \RR^\numvars$. Suppose that $\numrepresented$ is a real positive number and $0 < \varepsilon < \numrepresented$. Let $V = \{ \bvec{v} \in \RR^\numvars : F(\bvec{v}) = \numrepresented \}$. 
Let $\lambda_\numvars$ be the smallest eigenvalue of the Hessian matrix of $F$. 
Then 
the inequality $| F(\bvec{m}) - \numrepresented | < \varepsilon$ implies that there exists $\bvec{v} \in V$ such that 
\begin{align}
\| \bvec{m} - \bvec{v} \| &< \frac{\varepsilon}{\numrepresented - \varepsilon} \sqrt{\frac{\numrepresented + \varepsilon}{2 \lambda_\numvars}} . \label{ineq:preimagecloseness}
\end{align}
\end{lem}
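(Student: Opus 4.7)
First I would define $\bvec{v} = c \bvec{m}$ with $c = \sqrt{\numrepresented / F(\bvec{m})}$; this is well-defined because $F$ is positive definite and $F(\bvec{m}) > \numrepresented - \varepsilon > 0$ forces $\bvec{m} \neq \bvec{0}$. Since $F$ is a quadratic form, $F(\bvec{v}) = c^2 F(\bvec{m}) = \numrepresented$, so $\bvec{v} \in V$, and one has $\|\bvec{m} - \bvec{v}\| = |1 - c| \cdot \|\bvec{m}\|$.

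The core of the proof will be an algebraic simplification of $(1-c)^2 \|\bvec{m}\|^2$. Writing $1 - c = (1 - c^2)/(1 + c) = (F(\bvec{m}) - \numrepresented)/(F(\bvec{m})(1 + c))$ and observing the identity $F(\bvec{m})(1 + c)^2 = (\sqrt{F(\bvec{m})} + \sqrt{\numrepresented})^2$ (immediate from $c = \sqrt{\numrepresented/F(\bvec{m})}$), and then combining with the upper bound $\|\bvec{m}\|^2 \le 2 F(\bvec{m})/\lambda_\numvars$ supplied by Lemma~\ref{lem:preimageFbounded}, I obtain
\begin{align*}
\|\bvec{m} - \bvec{v}\|^2 &\le \frac{2 (F(\bvec{m}) - \numrepresented)^2}{\lambda_\numvars (\sqrt{F(\bvec{m})} + \sqrt{\numrepresented})^2} = \frac{2 (\sqrt{F(\bvec{m})} - \sqrt{\numrepresented})^2}{\lambda_\numvars},
\end{align*}
where the last equality uses the factorization $(F(\bvec{m}) - \numrepresented)^2 = (\sqrt{F(\bvec{m})} - \sqrt{\numrepresented})^2 (\sqrt{F(\bvec{m})} + \sqrt{\numrepresented})^2$. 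This collapses the bound into a clean expression in $|\sqrt{F(\bvec{m})} - \sqrt{\numrepresented}|$.

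To finish, I would rationalize once more: $|\sqrt{F(\bvec{m})} - \sqrt{\numrepresented}| = |F(\bvec{m}) - \numrepresented| / (\sqrt{F(\bvec{m})} + \sqrt{\numrepresented}) < \varepsilon/(\sqrt{\numrepresented - \varepsilon} + \sqrt{\numrepresented})$, using $F(\bvec{m}) > \numrepresented - \varepsilon$, and then verify the elementary inequality $(\sqrt{\numrepresented - \varepsilon} + \sqrt{\numrepresented}) \sqrt{\numrepresented + \varepsilon} \ge 2 (\numrepresented - \varepsilon)$. Expanding the left side gives $\sqrt{\numrepresented^2 - \varepsilon^2} + \sqrt{\numrepresented^2 + \numrepresented \varepsilon}$, and each summand is at least $\numrepresented - \varepsilon$ (the first because $\numrepresented + \varepsilon > \numrepresented - \varepsilon$, the second because $\numrepresented > \numrepresented - \varepsilon$), so the inequality holds. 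Assembling the pieces yields the stated bound after taking square roots.

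The main obstacle, modest though it is, lies in spotting the clean identity $F(\bvec{m})(1 + c)^2 = (\sqrt{F(\bvec{m})} + \sqrt{\numrepresented})^2$ that produces the square-root cancellation. Without this cancellation, the naive estimate $|1 - c| \le \varepsilon/(\numrepresented - \varepsilon)$ paired with $\|\bvec{m}\| \le \sqrt{2(\numrepresented + \varepsilon)/\lambda_\numvars}$ yields a bound that is too large by a factor of $2$; the identity is what lets the factor $(\sqrt{F(\bvec{m})} + \sqrt{\numrepresented})^2$ in the denominator absorb this loss.
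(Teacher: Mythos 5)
Your proposal is correct and follows the same core approach as the paper: define $\bvec{v}$ as the radial rescaling $\sqrt{\numrepresented/F(\bvec{m})}\,\bvec{m}$, compute $\|\bvec{m}-\bvec{v}\| = |1 - \sqrt{\numrepresented/F(\bvec{m})}|\,\|\bvec{m}\|$, and rationalize to extract the factor $\sqrt{F(\bvec{m})}+\sqrt{\numrepresented}$. The only difference is bookkeeping: you keep $\|\bvec{m}\| \le \sqrt{2F(\bvec{m})/\lambda_\numvars}$ unevaluated so that $\sqrt{F(\bvec{m})}$ cancels cleanly (yielding the intermediate bound $\sqrt{2/\lambda_\numvars}\,|\sqrt{F(\bvec{m})}-\sqrt{\numrepresented}|$, which is in fact slightly sharper), at the price of verifying the elementary inequality $(\sqrt{\numrepresented-\varepsilon}+\sqrt{\numrepresented})\sqrt{\numrepresented+\varepsilon} \ge 2(\numrepresented-\varepsilon)$ at the end; the paper instead plugs in $\|\bvec{m}\| < \sqrt{2(\numrepresented+\varepsilon)/\lambda_\numvars}$ directly, bounds $\sqrt{F(\bvec{m})}(\sqrt{F(\bvec{m})}+\sqrt{\numrepresented}) > \sqrt{\numrepresented-\varepsilon}(\sqrt{\numrepresented-\varepsilon}+\sqrt{\numrepresented})$, and finishes with the simpler observation $\sqrt{\numrepresented} > \sqrt{\numrepresented-\varepsilon}$. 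One small remark: your closing comment sets up a straw man, since the paper also recovers the factor of $2$ via the same rationalization, just distributed differently among the factors.
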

\begin{proof}
Suppose that $| F(\bvec{m}) - \numrepresented | < \varepsilon$. This is equivalent to the statement 
\begin{align}
\numrepresented - \varepsilon < F(\bvec{m}) < \numrepresented + \varepsilon . \label{ineq:F(m)nepsilonbounds}
\end{align}
 Using Lemma~\ref{lem:preimageFbounded}, we find that 
\begin{align}
\| \bvec{m} \| < \sqrt{\frac{2 (\numrepresented + \varepsilon)}{\lambda_\numvars}} . \label{ineq:m upperbound}
\end{align}

Let 
\begin{align*}
\bvec{v} &= \sqrt{\frac{\numrepresented}{F(\bvec{m})}} \, \bvec{m} .
\end{align*}
Because $F$ is a quadratic form,
\begin{align*}
F(\bvec{v}) &= F \left( \sqrt{\frac{\numrepresented}{F(\bvec{m})}} \, \bvec{m} \right) \\
	&= \frac{\numrepresented}{F(\bvec{m})} F(\bvec{m}) 
	= \numrepresented .
\end{align*}
Therefore, $\bvec{v}$ is in $V$. 

We will prove the lemma by showing that \eqref{ineq:preimagecloseness} is satisfied by $\bvec{m}$ and this particular choice of $\bvec{v}$. 
Now
\begin{align*}
\| \bvec{m} - \bvec{v} \| &= \left\| \bvec{m} - \sqrt{\frac{\numrepresented}{F(\bvec{m})}} \, \bvec{m} \right\| 
	= \left| 1 - \sqrt{\frac{\numrepresented}{F(\bvec{m})}} \right| \| \bvec{m} \| \\
	&= \left| \frac{\sqrt{F(\bvec{m})} - \sqrt{\numrepresented}}{\sqrt{F(\bvec{m})}} \right| \| \bvec{m} \| \\
	&= \left| \frac{F(\bvec{m}) - \numrepresented}{\sqrt{F(\bvec{m})} (\sqrt{F(\bvec{m})} + \sqrt{\numrepresented})} \right| \| \bvec{m} \| . \numberthis \label{ineq:preimagecloseness intermed}
\end{align*}
By \eqref{ineq:F(m)nepsilonbounds}, \eqref{ineq:m upperbound}, and the fact that $| F(\bvec{m}) - \numrepresented | < \varepsilon$, we have
\begin{align*}
\left| \frac{F(\bvec{m}) - \numrepresented}{\sqrt{F(\bvec{m})} (\sqrt{F(\bvec{m})} + \sqrt{\numrepresented})} \right| \| \bvec{m} \| &< \frac{\varepsilon}{\sqrt{\numrepresented - \varepsilon} (\sqrt{\numrepresented - \varepsilon} + \sqrt{\numrepresented})} \sqrt{\frac{2 (\numrepresented + \varepsilon)}{\lambda_\numvars}} \\
	&< \frac{\varepsilon}{2 (\numrepresented - \varepsilon)} \sqrt{\frac{2 (\numrepresented + \varepsilon)}{\lambda_\numvars}}
\end{align*}
Combining this with \eqref{ineq:preimagecloseness intermed}, we obtain \eqref{ineq:preimagecloseness}.
\end{proof}

We use the previous lemma to prove the following theorem that gives an equality for $\sigma_{F, \Upspsi, \infty} (\numrepresented, \coordsize)$ when $F$ is positive definite and $\Upspsi$ is equal to $1$ on the set $V_\coordsize$.
\begin{thm} \label{thm:sigma asymp Upsilon=1 eval}
Suppose that $F$ is a positive definite quadratic form in $\numvars$ variables. Let $A$ be the Hessian matrix of $F$. Suppose that $\numrepresented$ and $\coordsize$ are real positive numbers. Suppose that $\Upspsi \in C_c^\infty (\RR^\numvars)$. 
Suppose that $\Upspsi(\bvec{m}) = 1$ whenever $F(\bvec{m}) = \numrepresented / \coordsize^2$. Then 
\begin{align}
\sigma_{F, \Upspsi, \infty} (\numrepresented, \coordsize) &= \frac{(2 \pi)^{\numvars/2}}{\Gamma(\numvars/2) \sqrt{\det(A)}} \numrepresented^{\numvars/2 - 1} \coordsize^{2 - \numvars} . \label{eq:sigma Upsilon=1 eval}
\end{align}
\begin{proof}
Since $V_\coordsize = \{ \bvec{m} \in \RR^\numvars : F(\bvec{m}) = \numrepresented / \coordsize^2 \}$, we have $\Upspsi(\bvec{m}) = 1$ if $\bvec{m} \in V_\coordsize$. From Theorem~\ref{thm:preimageFcompact}, we know that $V_\coordsize$ is compact.

Let $\eta > 0$. Because $\Upspsi$ is continuous and $V_\coordsize$ is compact, there exists $\delta > 0$ such that if $\dist( \bvec{m} , V_\coordsize ) < \delta$, then $| \Upspsi(\bvec{m}) - 1 | < \eta$. 

Observe that 
\begin{align*}
\lim_{\varepsilon \to 0^+} \frac{\varepsilon}{\frac{\numrepresented}{\coordsize^2} - \varepsilon} \sqrt{\frac{\frac{\numrepresented}{\coordsize^2} + \varepsilon}{2 \lambda_\numvars}} = 0 ,
\end{align*}
where $\lambda_\numvars$ is the smallest eigenvalue of $A$.
Thus, there exist infinitely many $\varepsilon > 0$ such that 
\begin{align}
\frac{\varepsilon}{\frac{\numrepresented}{\coordsize^2} - \varepsilon} \sqrt{\frac{\frac{\numrepresented}{\coordsize^2} + \varepsilon}{2 \lambda_1}} \le \delta . \label{ineq:epsdeltabound}
\end{align}

Choose $\varepsilon > 0$ so that \eqref{ineq:epsdeltabound} holds and $\varepsilon < \numrepresented / \coordsize^2$. Then Lemma~\ref{lem:preimagecloseness} implies that $\dist( \bvec{m} , V_\coordsize ) < \delta$ for all $\bvec{m} \in \RR^\numvars$ satisfying $\left| F(\bvec{m}) - \frac{\numrepresented}{\coordsize^2} \right| < \varepsilon$. Therefore, for all $\bvec{m} \in \RR^\numvars$ satisfying $\left| F(\bvec{m}) - \frac{\numrepresented}{\coordsize^2} \right| < \varepsilon$, we have $| \Upspsi(\bvec{m}) - 1 | < \eta$. Thus,
\begin{align}
\frac{1}{2 \varepsilon} \int_{\left| F(\bvec{m}) - \frac{\numrepresented}{\coordsize^2} \right| < \varepsilon} (1 - \eta) \ d \bvec{m} 
&\le \frac{1}{2 \varepsilon} \beta_{F , \Upspsi , \numrepresented , \coordsize} (\varepsilon) 
\le \frac{1}{2 \varepsilon} \int_{\left| F(\bvec{m}) - \frac{\numrepresented}{\coordsize^2} \right| < \varepsilon} (1 + \eta) \ d \bvec{m} , \label{ineqs:sigma Upsilon=1 etabounds}
\end{align}
where
\begin{align*}
\beta_{F , \Upspsi , \numrepresented , \coordsize} (\varepsilon) &= \int_{\left| F(\bvec{m}) - \frac{\numrepresented}{\coordsize^2} \right| < \varepsilon} \Upspsi(\bvec{m}) \ d \bvec{m} .
\end{align*}

Taking limits in \eqref{ineqs:sigma Upsilon=1 etabounds} as $\varepsilon \to 0^+$, we obtain
\begin{align}
\lim_{\varepsilon \to 0^+} \frac{1}{2 \varepsilon} \int_{\left| F(\bvec{m}) - \frac{\numrepresented}{\coordsize^2} \right| < \varepsilon} (1 - \eta) \ d \bvec{m} 
&\le \sigma_{F, \Upspsi, \infty} (\numrepresented, \coordsize) 
\le \lim_{\varepsilon \to 0^+} \frac{1}{2 \varepsilon} \int_{\left| F(\bvec{m}) - \frac{\numrepresented}{\coordsize^2} \right| < \varepsilon} (1 + \eta) \ d \bvec{m}  \label{ineqs:sigma Upsilon=1 etabounds lim}
\end{align}
since 
\begin{align*}
\sigma_{F, \Upspsi, \infty} (\numrepresented, \coordsize)  &= \lim_{\varepsilon \to 0^+} \frac{1}{2 \varepsilon} \beta_{F , \Upspsi , \numrepresented , \coordsize} (\varepsilon) .
\end{align*}
We apply Theorem~\ref{thm:limitshellvol} to \eqref{ineqs:sigma Upsilon=1 etabounds lim} to obtain
\begin{align}
\frac{(2 \pi)^{\numvars/2} (1 - \eta)}{\Gamma(\numvars/2) \sqrt{\det(A)}} \numrepresented^{\numvars/2 - 1} \coordsize^{2 - \numvars} 
&\le \sigma_{F, \Upspsi, \infty} (\numrepresented, \coordsize) 
\le \frac{(2 \pi)^{\numvars/2} (1 + \eta)}{\Gamma(\numvars/2) \sqrt{\det(A)}} \numrepresented^{\numvars/2 - 1} \coordsize^{2 - \numvars} . \label{ineqs:sigma Upsilon=1 etabounds2}
\end{align}
Because $\eta > 0$ was arbitrary, we obtain \eqref{eq:sigma Upsilon=1 eval} from \eqref{ineqs:sigma Upsilon=1 etabounds2}.
\end{proof}
\end{thm}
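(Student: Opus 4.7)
The plan is to reduce the evaluation of $\sigma_{F, \Upspsi, \infty} (\numrepresented, \coordsize)$ to the (already-computed) density of the level set and then use the hypothesis that $\Upspsi \equiv 1$ on the level set $V_\coordsize = \{ \bvec{m} \in \RR^\numvars : F(\bvec{m}) = \numrepresented / \coordsize^2 \}$ to replace $\Upspsi$ by $1$ inside a sufficiently thin shell.

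First, I would observe that $V_\coordsize$ is compact by Theorem~\ref{thm:preimageFcompact} (since $F$ is positive definite and $\numrepresented/\coordsize^2 > 0$). Combined with the continuity of $\Upspsi$ and the hypothesis $\Upspsi \equiv 1$ on $V_\coordsize$, for every $\eta > 0$ there exists $\delta > 0$ such that $|\Upspsi(\bvec{m}) - 1| < \eta$ whenever $\dist(\bvec{m}, V_\coordsize) < \delta$. This uses nothing more than a standard compactness/continuity argument applied to the continuous function $\bvec{m} \mapsto \Upspsi(\bvec{m}) - 1$.

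Second, I would use Lemma~\ref{lem:preimagecloseness} to transfer the condition $|F(\bvec{m}) - \numrepresented/\coordsize^2| < \varepsilon$ into a condition on $\dist(\bvec{m}, V_\coordsize)$. Specifically, since the bound provided by that lemma, namely $\frac{\varepsilon}{\numrepresented/\coordsize^2 - \varepsilon}\sqrt{(\numrepresented/\coordsize^2 + \varepsilon)/(2\lambda_\numvars)}$, tends to $0$ as $\varepsilon \to 0^+$, I can choose $\varepsilon > 0$ small enough so that any $\bvec{m}$ satisfying $|F(\bvec{m}) - \numrepresented/\coordsize^2| < \varepsilon$ also satisfies $\dist(\bvec{m}, V_\coordsize) < \delta$, hence $|\Upspsi(\bvec{m}) - 1| < \eta$. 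This pinches $\Upspsi$ between $1 - \eta$ and $1 + \eta$ on the shell.

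Third, I would sandwich the integral:
\begin{align*}
(1 - \eta)\int_{|F(\bvec{m}) - \numrepresented/\coordsize^2| < \varepsilon} d\bvec{m} \le \int_{|F(\bvec{m}) - \numrepresented/\coordsize^2| < \varepsilon} \Upspsi(\bvec{m})\, d\bvec{m} \le (1 + \eta)\int_{|F(\bvec{m}) - \numrepresented/\coordsize^2| < \varepsilon} d\bvec{m},
\end{align*}
divide by $2\varepsilon$, take the limit as $\varepsilon \to 0^+$, and apply Theorem~\ref{thm:limitshellvol} (with $c = 1$ and $\numrepresented$ replaced by $\numrepresented/\coordsize^2$) to get $(1-\eta)$ and $(1+\eta)$ times $\frac{(2\pi)^{\numvars/2}}{\Gamma(\numvars/2)\sqrt{\det(A)}} (\numrepresented/\coordsize^2)^{\numvars/2 - 1}$ as the outer bounds on $\sigma_{F, \Upspsi, \infty} (\numrepresented, \coordsize)$. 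Since $\eta > 0$ was arbitrary, letting $\eta \to 0^+$ yields the desired equality after rewriting $(\numrepresented/\coordsize^2)^{\numvars/2 - 1} = \numrepresented^{\numvars/2-1}\coordsize^{2-\numvars}$.

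The main obstacle, though it is essentially a packaging issue rather than a deep difficulty, is coordinating the order of limits: the $\delta$ depends on $\eta$ through the uniform continuity of $\Upspsi$ on a neighborhood of $V_\coordsize$, and the admissible $\varepsilon$ depends on $\delta$ through the quantitative estimate of Lemma~\ref{lem:preimagecloseness}; these must be threaded correctly so that the inner limit in $\varepsilon$ is taken before the outer limit in $\eta$. Once that ordering is set up, the rest is a straightforward application of the already-established shell-volume formula.
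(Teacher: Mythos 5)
Your proposal is correct and follows essentially the same route as the paper's proof: compactness of $V_\coordsize$ via Theorem~\ref{thm:preimageFcompact}, uniform continuity to get $\delta = \delta(\eta)$, Lemma~\ref{lem:preimagecloseness} to pass from the shell condition $|F(\bvec{m}) - \numrepresented/\coordsize^2| < \varepsilon$ to a distance condition, the $(1\pm\eta)$ sandwich, the limit $\varepsilon \to 0^+$ with Theorem~\ref{thm:limitshellvol}, and finally $\eta \to 0^+$. Your explicit remark about threading the order of the $\eta$, $\delta$, and $\varepsilon$ quantifiers is a useful clarification of a point the paper leaves implicit, but it is not a departure from the paper's argument.
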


If $F$ is positive definite, the following corollary gives an equality for the singular integral $J_{F, \Upspsi} (\numrepresented, \coordsize)$ if $\Upspsi(\bvec{m}) = 1$ whenever $F(\bvec{m}) = \numrepresented / \coordsize^2$. It follows immediately from \eqref{eq:singintnormalized with density}, Theorem~\ref{thm:archmediandensitytovol}, and Theorem~\ref{thm:sigma asymp Upsilon=1 eval}.
\begin{cor} \label{cor:singint asymp Upsilon=1 eval}
Suppose that $F$ is a positive definite quadratic form in $\numvars$ variables. Let $A$ be the Hessian matrix of $F$. Suppose that $\numrepresented$ and $\coordsize$ are real positive numbers. Suppose that $\Upspsi \in C_c^\infty (\RR^\numvars)$. 
Suppose that $\Upspsi(\bvec{m}) = 1$ whenever $F(\bvec{m}) = \numrepresented / \coordsize^2$. Then 
\begin{align*}
J_{F, \Upspsi} (\numrepresented, \coordsize) &= \frac{(2 \pi)^{\numvars/2}}{\Gamma(\numvars/2) \sqrt{\det(A)}} \numrepresented^{\numvars/2 - 1} . 
\end{align*}
\end{cor}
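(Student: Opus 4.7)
The plan is to obtain the corollary by straightforward composition of three previously established facts, as the author indicates. First, I would invoke equation~\eqref{eq:singintnormalized with density}, which expresses the singular integral in the factored form
\[
J_{F, \Upspsi}(\numrepresented, \coordsize) = \tilde{\sigma}_{F, \Upspsi, \infty}(\numrepresented, \coordsize)\,\coordsize^{\numvars - 2}.
\]
This identity is nothing more than the result of applying the change of variables $x \mapsto x/\coordsize^2$ inside the definition \eqref{eq:singint def} of $J_{F, \Upspsi}(\numrepresented, \coordsize)$ and recognizing the resulting integral as $\tilde\sigma_{F,\Upspsi,\infty}(\numrepresented,\coordsize)$ defined in \eqref{eq:archmediandensity def}.

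Next, I would apply Theorem~\ref{thm:archmediandensitytovol} to identify $\tilde{\sigma}_{F, \Upspsi, \infty}(\numrepresented, \coordsize)$ with the real factor $\sigma_{F, \Upspsi, \infty}(\numrepresented, \coordsize)$ defined in \eqref{eq:archmediandensity vol}. Finally, since the hypotheses of the present corollary are exactly those of Theorem~\ref{thm:sigma asymp Upsilon=1 eval} (positive definite $F$, positive $\numrepresented$ and $\coordsize$, and $\Upspsi \equiv 1$ on the level set $\{F(\bvec{m}) = \numrepresented/\coordsize^2\}$), I would invoke that theorem to obtain
\[
\sigma_{F, \Upspsi, \infty}(\numrepresented, \coordsize) = \frac{(2\pi)^{\numvars/2}}{\Gamma(\numvars/2)\sqrt{\det(A)}}\,\numrepresented^{\numvars/2 - 1}\,\coordsize^{2 - \numvars}.
\]
Substituting this into the factorization above produces $\coordsize^{2-\numvars}\cdot\coordsize^{\numvars-2} = 1$, so the $\coordsize$-dependence disappears entirely and the displayed closed form in $\numrepresented$, $\numvars$, and $\det(A)$ alone emerges immediately.

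Since each ingredient is already fully established in the excerpt, there is no genuine obstacle; the corollary is really a bookkeeping exercise verifying that the $\coordsize$-exponents cancel. The one check worth recording in the proof is precisely that cancellation, which confirms the claimed independence of the right-hand side from $\coordsize$ (consistent with the fact that $J_{F,\Upspsi}(\numrepresented,\coordsize)$ under the stated hypothesis encodes a weighted surface volume that is itself $\coordsize$-invariant after normalization).
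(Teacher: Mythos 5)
Your proof is correct and matches the paper's own argument exactly: the paper states the corollary ``follows immediately from \eqref{eq:singintnormalized with density}, Theorem~\ref{thm:archmediandensitytovol}, and Theorem~\ref{thm:sigma asymp Upsilon=1 eval},'' which is precisely the chain of substitutions you carry out, including the observation that the factor $\coordsize^{2-\numvars}$ from Theorem~\ref{thm:sigma asymp Upsilon=1 eval} cancels the factor $\coordsize^{\numvars-2}$ from \eqref{eq:singintnormalized with density}.
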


\subsubsection{Extending to the singular series} \label{subsec:extendsingseries}

From \eqref{eq:M\coordsize \numrepresented with singint2}, we know that 
\begin{align*}
M_{F, \Upspsi, \coordsize} (\numrepresented) 
	&= \mathfrak{S}_F (\numrepresented; Q) J_{F, \Upspsi} (\numrepresented, \coordsize) \\
		&\qquad + O_{\Upspsi , \numvars , \varepsilon} \left(\lvl^{\numvars/2} |\det(A)|^{-1/2} Q^{(\numvars - 1) / 2 + \varepsilon} \tau(\numrepresented) \prod_{p \mid 2 \det(A)} ( 1 - p^{-1/2} )^{-1} \right) , \numberthis \label{eq:M\coordsize \numrepresented with singint and truncsingseries}
\end{align*}
where $\mathfrak{S}_F (\numrepresented; Q)$ is the \emph{truncated singular series}
\begin{align}
\mathfrak{S}_F (\numrepresented; Q) &= \sum_{1 \le q \le Q} \frac{1}{q^\numvars} \sum_{d \in (\ZZ/q\ZZ)^\times} \sum_{\bvec{h} \in (\ZZ/q\ZZ)^\numvars} \e{\frac{d}{q} \left( F(\bvec{h}) - \numrepresented \right)} . \label{eq:truncsingseries}
\end{align}

In the following lemma, 
we extend (up to some acceptable error term) the truncated singular series to the singular series $\mathfrak{S}_F (\numrepresented)$ as defined in \eqref{eq:singseries}. 
\begin{lem} \label{lem:truncsingseries to singseries}
For $Q \ge 1$, the truncated singular series $\mathfrak{S}_F (\numrepresented; Q)$ is 
\begin{align}
\mathfrak{S}_F (\numrepresented; Q) &= \mathfrak{S}_F (\numrepresented) + O_{\numvars , \varepsilon} \left( \lvl^{\numvars/2} Q^{(3 - \numvars) / 2 + \varepsilon} \tau(\numrepresented) \prod_{p \mid 2 \det(A)} ( 1 - p^{-1/2} )^{-1} \right) \label{eq:truncsingseries to singseries}
\end{align}
for any $\varepsilon > 0$.
\end{lem}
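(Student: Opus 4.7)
The plan is to bound the tail $\mathfrak{S}_F(\numrepresented) - \mathfrak{S}_F(\numrepresented; Q)$ by feeding the complete-sum estimate in Lemma~\ref{lem:dhmodqsum upperbound} into a dyadic decomposition, and then applying Lemma~\ref{lem:q1gcdsum} on each dyadic block. Since the estimate to be proved simultaneously certifies absolute convergence of $\mathfrak{S}_F(\numrepresented)$, it suffices to bound
\begin{align*}
\left| \mathfrak{S}_F(\numrepresented) - \mathfrak{S}_F(\numrepresented; Q) \right| \le \sum_{q > Q} \frac{1}{q^\numvars} \left| \sum_{d \in (\ZZ/q\ZZ)^\times} \sum_{\bvec{h} \in (\ZZ/q\ZZ)^\numvars} \e{\tfrac{d}{q}(F(\bvec{h}) - \numrepresented)} \right|.
\end{align*}
Applying Lemma~\ref{lem:dhmodqsum upperbound}, absorbing $(\gcd(\lvl, q_0))^{\numvars/2} \le \lvl^{\numvars/2}$, and rewriting $q_0^{1/2} = q^{1/2} q_1^{-1/2}$ to consolidate the factors of $q$, reduces the problem to estimating
\begin{align*}
\lvl^{\numvars/2} \sum_{q > Q} q_1^{-1/2} (\gcd(\numrepresented, q_1))^{1/2} \, q^{(2-\numvars)/2} \tau(q) \log(2q).
\end{align*}

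Next, I would decompose the range $q > Q$ dyadically into $I_k = (2^k Q, 2^{k+1} Q]$ for $k \ge 0$. Since $\numvars \ge 4$, the function $q^{(2-\numvars)/2}$ is nonincreasing, so $q^{(2-\numvars)/2} \le (2^k Q)^{(2-\numvars)/2}$ on $I_k$; the standard bound $\tau(q) \log(2q) \ll_{\varepsilon'} q^{\varepsilon'}$ gives $\tau(q) \log(2q) \ll_{\varepsilon'} (2^{k+1} Q)^{\varepsilon'}$ for any fixed $\varepsilon' > 0$. The residual sum $\sum_{q \in I_k} q_1^{-1/2} (\gcd(\numrepresented, q_1))^{1/2}$ is then controlled by extending to $1 \le q \le 2^{k+1} Q$ and invoking Lemma~\ref{lem:q1gcdsum} with $Q$ replaced by $2^{k+1} Q$ and $C = 2 \det(A)$, producing a bound
\begin{align*}
\sum_{q \in I_k} q_1^{-1/2} (\gcd(\numrepresented, q_1))^{1/2} \ll (2^{k+1} Q)^{1/2} \tau(\numrepresented) \prod_{p \mid 2 \det(A)} (1 - p^{-1/2})^{-1}.
\end{align*}

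Combining these three estimates, the contribution of $I_k$ is at most a constant multiple of
\begin{align*}
2^{k((3-\numvars)/2 + \varepsilon')} Q^{(3-\numvars)/2 + \varepsilon'} \lvl^{\numvars/2} \tau(\numrepresented) \prod_{p \mid 2 \det(A)} (1 - p^{-1/2})^{-1}.
\end{align*}
For $\numvars \ge 4$ and any $\varepsilon' < (\numvars - 3)/2$, the factor $2^{k((3-\numvars)/2 + \varepsilon')}$ is the general term of a convergent geometric series in $k \ge 0$, so summing yields exactly the claimed bound with exponent $(3-\numvars)/2 + \varepsilon'$ on $Q$. Given a target $\varepsilon > 0$, choosing $\varepsilon' = \min\{\varepsilon, (\numvars - 3)/4\}$ delivers \eqref{eq:truncsingseries to singseries}. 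The only real subtlety is this bookkeeping between the $\varepsilon'$ used inside the dyadic estimate and the exposed $\varepsilon$ in the statement; beyond that, no new ingredient past the complete-sum bound of Lemma~\ref{lem:dhmodqsum upperbound} and the divisor-function sum of Lemma~\ref{lem:q1gcdsum} is required.
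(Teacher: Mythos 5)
Your proof is correct and follows essentially the same route as the paper: bound the tail via Lemma~\ref{lem:dhmodqsum upperbound}, decompose dyadically, bound the divisor/gcd sum on each block, and sum the resulting geometric series, observing that $Q\ge 1$ lets the exponent constraint $\varepsilon<(\numvars-3)/2$ be relaxed to $\varepsilon>0$. The only cosmetic difference is that you unwrap Lemma~\ref{lem:q1gcdsum+eps} into its two ingredients (Lemma~\ref{lem:q1gcdsum} plus $\tau(q)\log(2q)\ll_{\varepsilon'}q^{\varepsilon'}$), whereas the paper invokes the packaged lemma directly.
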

\begin{proof}
We apply Lemma~\ref{lem:dhmodqsum upperbound} to find that 
\begin{align*}
\mathfrak{S}_F (\numrepresented) - \mathfrak{S}_F (\numrepresented; Q) &= \sum_{q > Q} \frac{1}{q^\numvars} \sum_{d \in (\ZZ/q\ZZ)^\times} \sum_{\bvec{h} \in (\ZZ/q\ZZ)^\numvars} \e{\frac{d}{q} \left( F(\bvec{h}) - \numrepresented \right)} \\
	&\ll \sum_{q > Q} (\gcd(\lvl,q_0))^{\numvars/2} (\gcd(\numrepresented, q_1))^{1/2} q_0^{1/2} q^{(1 - \numvars) / 2} \tau(q) \log(2q) \\
	&\le \lvl^{\numvars/2} \sum_{q > Q} (\gcd(\numrepresented, q_1))^{1/2} q_1^{-1/2} q^{1 - \numvars/2} \tau(q) \log(2q) . \numberthis \label{ineq:singseriestailbound1}
\end{align*}
We now dyadically decompose the sum to obtain
\begin{align*}
&\sum_{q > Q} (\gcd(\numrepresented, q_1))^{1/2} q_1^{-1/2} q^{1 - \numvars/2} \tau(q) \log(2q) \\
	&= \sum_{k = 0}^\infty \sum_{2^k Q < q \le 2^{k+1} Q} (\gcd(\numrepresented, q_1))^{1/2} q_1^{-1/2} q^{1 - \numvars/2} \tau(q) \log(2q) .
\end{align*}
Because $\numvars \ge 4$, we find that 
\begin{align*}
&\sum_{q > Q} (\gcd(\numrepresented, q_1))^{1/2} q_1^{-1/2} q^{1 - \numvars/2} \tau(q) \log(2q) \\
	&\le \sum_{k = 0}^\infty (2^k Q)^{1 - \numvars/2} \sum_{2^k Q < q \le 2^{k+1} Q} (\gcd(\numrepresented, q_1))^{1/2} q_1^{-1/2} \tau(q) \log(2q) \\
	&\le \sum_{k = 0}^\infty (2^k Q)^{1 - \numvars/2} \sum_{1 \le q \le 2^{k+1} Q} (\gcd(\numrepresented, q_1))^{1/2} q_1^{-1/2} \tau(q) \log(2q) . \numberthis \label{ineq:singseriesdyadictailbound1}
\end{align*}
We apply Lemma~\ref{lem:q1gcdsum+eps} to \eqref{ineq:singseriesdyadictailbound1} to obtain 
\begin{align*}
&\sum_{q > Q} (\gcd(\numrepresented, q_1))^{1/2} q_1^{-1/2} q^{1 - \numvars/2} \tau(q) \log(2q) \\
&\ll_\varepsilon \sum_{k = 0}^\infty (2^k Q)^{1 - \numvars/2} (2^{k+1} Q)^{1/2 + \varepsilon} \tau(\numrepresented) \prod_{p \mid 2 \det(A)} ( 1 - p^{-1/2} )^{-1} \\
&= 2^{1/2 + \varepsilon} Q^{(3 - \numvars) / 2 + \varepsilon} \tau(\numrepresented) \left( \prod_{p \mid 2 \det(A)} ( 1 - p^{-1/2} )^{-1} \right) \sum_{k = 0}^\infty \left( 2^{(3 - \numvars) / 2 + \varepsilon} \right)^k \numberthis \label{ineq:singseriesdyadictailbound2} 
\end{align*}
for any $\varepsilon > 0$. The series $\sum_{k = 0}^\infty \left( 2^{(3 - \numvars) / 2 + \varepsilon} \right)^k$ is a geometric series and converges if $\varepsilon < (\numvars - 3)/2$.
If $\varepsilon < (\numvars - 3)/2$, then 
\begin{align}
\sum_{k = 0}^\infty \left( 2^{(3 - \numvars) / 2 + \varepsilon} \right)^k &= \left( 1 - 2^{(3 - \numvars) / 2 + \varepsilon} \right)^{-1} . \label{eq:geomseries2pow}
\end{align}

Substituting \eqref{eq:geomseries2pow} into \eqref{ineq:singseriesdyadictailbound2}, we obtain 
\begin{align*}
&\sum_{q > Q} (\gcd(\numrepresented, q_1))^{1/2} q_1^{-1/2} q^{1 - \numvars/2} \tau(q) \log(2q) \\
&\ll_\varepsilon 2^{1/2 + \varepsilon} Q^{(3 - \numvars) / 2 + \varepsilon} \tau(\numrepresented) \left( 1 - 2^{(3 - \numvars) / 2 + \varepsilon} \right)^{-1} \prod_{p \mid 2 \det(A)} ( 1 - p^{-1/2} )^{-1} \\
&\ll_{\numvars , \varepsilon} Q^{(3 - \numvars) / 2 + \varepsilon} \tau(\numrepresented) \prod_{p \mid 2 \det(A)} ( 1 - p^{-1/2} )^{-1} \numberthis \label{ineq:singseriesdyadictailbound3}
\end{align*}
for any $\varepsilon \in \RR$ satisfying $0 < \varepsilon < (\numvars - 3)/2$.

Applying \eqref{ineq:singseriesdyadictailbound3} to \eqref{ineq:singseriestailbound1}, we conclude that 
\eqref{eq:truncsingseries to singseries} holds 
for any $\varepsilon \in \RR$ satisfying $0 < \varepsilon < (\numvars - 3)/2$. 
Because $Q \ge 1$, we notice that \eqref{eq:truncsingseries to singseries} is true for all $\varepsilon > 0$, so we only require that $\varepsilon > 0$.
\end{proof}

Applying Lemma~\ref{lem:truncsingseries to singseries} to \eqref{eq:M\coordsize \numrepresented with singint and truncsingseries}, 
we find that 
\begin{align*}
M_{F, \Upspsi, \coordsize} (\numrepresented) 
	&= \mathfrak{S}_F (\numrepresented) J_{F, \Upspsi} (\numrepresented, \coordsize) \\
		&\qquad + O_\varepsilon \left( | J_{F, \Upspsi} (\numrepresented, \coordsize) | \lvl^{\numvars/2} Q^{(3 - \numvars) / 2 + \varepsilon} \tau(\numrepresented) \prod_{p \mid 2 \det(A)} ( 1 - p^{-1/2} )^{-1} \right) \\
		&\qquad + O_{\Upspsi , \numvars , \varepsilon} \left(\lvl^{\numvars/2} |\det(A)|^{-1/2} Q^{(\numvars - 1) / 2 + \varepsilon} \tau(\numrepresented) \prod_{p \mid 2 \det(A)} ( 1 - p^{-1/2} )^{-1} \right) 
\end{align*}
for any $\varepsilon > 0$.
We apply Corollary~\ref{cor:singint upperbound nonsing} to obtain for any $\varepsilon > 0$ that
\begin{align*}
M_{F, \Upspsi, \coordsize} (\numrepresented) 
	&= \mathfrak{S}_F (\numrepresented) J_{F, \Upspsi} (\numrepresented, \coordsize) \\
		&\quad + O_{\Upspsi , \numvars , \varepsilon} \left( \lvl^{\numvars/2} \frac{\coordsize^{\numvars-2}}{\Gamma(\numposevals/2) \left( \prod_{j=1}^\numposevals \lambda_j \right)^{1/2}} \left(\frac{\numrepresented}{\coordsize^2} - \frac{{\Upsuppvar_\Upspsi}^2}{2} \Ind{\numposevals > 1} \sum_{j=\numposevals+1}^\numvars \lambda_j \right)^{\numposevals/2 - 1} \right. \\
				&\qquad\qquad\qquad \left. \times Q^{(3 - \numvars) / 2 + \varepsilon} \tau(\numrepresented) \prod_{p \mid 2 \det(A)} ( 1 - p^{-1/2} )^{-1} \right) \\
		&\quad + O_{\Upspsi , \numvars , \varepsilon} \left(\lvl^{\numvars/2} |\det(A)|^{-1/2} Q^{(\numvars - 1) / 2 + \varepsilon} \tau(\numrepresented) \prod_{p \mid 2 \det(A)} ( 1 - p^{-1/2} )^{-1} \right) , \numberthis \label{eq:M\coordsize \numrepresented with singint and singseries}
\end{align*}
where $\lambda_1, \lambda_2, \ldots, \lambda_\numposevals$ are the positive eigenvalues of $A$ and $\lambda_{\numposevals+1}, \lambda_{\numposevals+2}, \ldots, \lambda_\numvars$ are the negative eigenvalues of $A$.

We will not further analyze the singular series in this paper. 
We do mention that the singular series is a product of $p$-adic densities, so the singular series is said to contain local information for the representation number. 
For more information about the singular series, please see Section~11.5 in \cite{IwaniecAutomorphicForms}.

\subsection{Analyzing the error term $E_{F, \Upspsi, \coordsize, 1} (\numrepresented)$}

We now analyze the error term $E_{F, \Upspsi, \coordsize, 1} (\numrepresented)$. We apply Lemma~\ref{lem:Trq\numrepresented x bound} and Theorem~\ref{thm:IFx\coordsize rq PSP bound} to \eqref{eq:E\coordsize1n} to obtain
\begin{align*}
E_{F, \Upspsi, \coordsize, 1} (\numrepresented) &\ll_{\Upspsi} \sum_{1 \le q \le Q} (\gcd(\lvl,q_0))^{\numvars/2} (\gcd(\numrepresented, q_1))^{1/2} q_0^{1/2} q^{(1 - \numvars) / 2} \tau(q) \log(2q) \\
		&\qquad \times \int_{0}^{\frac{1}{q (q + Q)}} \min\left\{ \coordsize^\numvars , |x|^{-\numvars/2} |\det(A)|^{-1/2} \right\} \sum_{\substack{\bvec{r} \in \ZZ^\numvars \\ 0 < \| \bvec{r} \| \le q \coordsize |x| \sigma_1 (\Upsuppvar_\Upspsi + 1) \sqrt{\numvars}}} 1 \ d x . \numberthis \label{ineq:E\coordsize1 bound1}
\end{align*}

Using Corollary~\ref{cor:latticeptsinpuncball upperbound}, we obtain 
\begin{align*}
\sum_{\substack{\bvec{r} \in \ZZ^\numvars \\ 0 < \| \bvec{r} \| \le q \coordsize |x| \sigma_1 (\Upsuppvar_\Upspsi + 1) \sqrt{\numvars}}} 1 &= \left| \{ \bvec{r} \in \ZZ^\numvars : 0 < \| \bvec{r} \| \le q \coordsize |x| \sigma_1 (\Upsuppvar_\Upspsi + 1) \sqrt{\numvars} \} \right| \\
	&\ll_\numvars (q \coordsize |x| \sigma_1 (\Upsuppvar_\Upspsi + 1) \sqrt{\numvars})^\numvars .
\end{align*}
Using this in \eqref{ineq:E\coordsize1 bound1}, we find that 
\begin{align*}
E_{F, \Upspsi, \coordsize, 1} (\numrepresented) &\ll_{\Upspsi , \numvars} (\coordsize \sigma_1 (\Upsuppvar_\Upspsi + 1) \sqrt{\numvars})^\numvars \sum_{1 \le q \le Q} (\gcd(\lvl,q_0))^{\numvars/2} (\gcd(\numrepresented, q_1))^{1/2} q_0^{1/2} q^{(\numvars+1)/2} \\
		&\qquad\qquad \times \tau(q) \log(2q) \int_{0}^{\frac{1}{q (q + Q)}} \min\left\{ \coordsize^\numvars |x|^\numvars , |x|^{\numvars/2} |\det(A)|^{-1/2} \right\} \ d x \\
	&\le (\coordsize \sigma_1 (\Upsuppvar_\Upspsi + 1) \sqrt{\numvars})^\numvars \lvl^{\numvars/2} \sum_{1 \le q \le Q} (\gcd(\numrepresented, q_1))^{1/2} q_0^{1/2} q^{(\numvars+1)/2} \tau(q) \log(2q) \\
		&\qquad\qquad \times |\det(A)|^{-1/2} \int_{0}^{\frac{1}{q (q + Q)}} |x|^{\numvars/2} \ d x \\
	&= (\coordsize \sigma_1 (\Upsuppvar_\Upspsi + 1) \sqrt{\numvars})^\numvars \lvl^{\numvars/2} \sum_{1 \le q \le Q} (\gcd(\numrepresented, q_1))^{1/2} q_0^{1/2} q^{-1/2} \tau(q) \log(2q) \\
		&\qquad\qquad \times |\det(A)|^{-1/2} \frac{1}{\frac{\numvars}{2} + 1} (q + Q)^{-\numvars/2 - 1} \\
	&= (\coordsize \sigma_1 (\Upsuppvar_\Upspsi + 1) \sqrt{\numvars})^\numvars \lvl^{\numvars/2} \sum_{1 \le q \le Q} (\gcd(\numrepresented, q_1))^{1/2} q_1^{-1/2} \tau(q) \log(2q) \\
		&\qquad\qquad \times |\det(A)|^{-1/2} \frac{1}{\frac{\numvars}{2} + 1} (q + Q)^{-\numvars/2 - 1} . \numberthis \label{ineq:E\coordsize1 bound2}
\end{align*}
Since $q > 0$, we know that $(q + Q)^{- \numvars/2 - 1} < Q^{- \numvars/2 - 1}$. Therefore, 
\begin{align*}
E_{F, \Upspsi, \coordsize, 1} (\numrepresented) &\ll_{\Upspsi , \numvars} (\coordsize \sigma_1 (\Upsuppvar_\Upspsi + 1))^\numvars \lvl^{\numvars/2} |\det(A)|^{-1/2} Q^{-\numvars/2 - 1} \\
		&\qquad\qquad \times \sum_{1 \le q \le Q} (\gcd(\numrepresented, q_1))^{1/2} q_1^{-1/2} \tau(q) \log(2q) . \numberthis \label{ineq:E\coordsize1 bound3}
\end{align*}
By applying Lemma~\ref{lem:q1gcdsum+eps} to \eqref{ineq:E\coordsize1 bound3}, we conclude that 
\begin{align*}
E_{F, \Upspsi, \coordsize, 1} (\numrepresented) &\ll_{\Upspsi , \numvars , \varepsilon} \coordsize^\numvars \sigma_1^\numvars \lvl^{\numvars/2} |\det(A)|^{-1/2} Q^{-\numvars/2 - 1/2 + \varepsilon} \tau(\numrepresented) \prod_{p \mid 2 \det(A)} ( 1 - p^{-1/2} )^{-1} \numberthis \label{ineq:E\coordsize1 bound4}
\end{align*}
for any $\varepsilon > 0$.

\subsection{Analyzing the error term $E_{F, \Upspsi, \coordsize, 2} (\numrepresented)$}

We now analyze the error term $E_{F, \Upspsi, \coordsize, 2} (\numrepresented)$. We apply Lemma~\ref{lem:Trq\numrepresented x bound} and Theorem~\ref{thm:IFx\coordsize rq PSP bound} to \eqref{eq:E\coordsize2n} to obtain
\begin{align*}
E_{F, \Upspsi, \coordsize, 2} (\numrepresented) &\ll_{\Upspsi} \sum_{1 \le q \le Q} (\gcd(\lvl,q_0))^{\numvars/2} (\gcd(\numrepresented, q_1))^{1/2} q_0^{1/2} q^{(1 - \numvars) / 2} \tau(q) \log(2q) \\
		&\qquad\qquad \times \int_{\frac{1}{q (q + Q)}}^{\frac{1}{q Q}} |x|^{-\numvars/2} |\det(A)|^{-1/2} \sum_{\substack{\bvec{r} \in \ZZ^\numvars \\ \| \bvec{r} \| \le q \coordsize |x| \sigma_1 (\Upsuppvar_\Upspsi + 1) \sqrt{\numvars}}} 1 \ d x . \numberthis \label{ineq:E\coordsize2 bound1}
\end{align*}

By Theorem~\ref{thm:latticeptsinball upperbound}, we know that 
\begin{align*}
\sum_{\substack{\bvec{r} \in \ZZ^\numvars \\ \| \bvec{r} \| \le q \coordsize |x| \sigma_1 (\Upsuppvar_\Upspsi + 1) \sqrt{\numvars}}} 1 \ d x &= \left| \{ \bvec{r} \in \ZZ^\numvars : \| \bvec{r} \| \le q \coordsize |x| \sigma_1 (\Upsuppvar_\Upspsi + 1) \sqrt{\numvars} \} \right| \\
	&\ll_\numvars (q \coordsize |x| \sigma_1 (\Upsuppvar_\Upspsi + 1) \sqrt{\numvars})^\numvars + 1 .
\end{align*}
Therefore,
\begin{align*}
&\int_{\frac{1}{q (q + Q)}}^{\frac{1}{q Q}} |x|^{-\numvars/2} \sum_{\substack{\bvec{r} \in \ZZ^\numvars \\ \| \bvec{r} \| \le q \coordsize |x| \sigma_1 (\Upsuppvar_\Upspsi + 1) \sqrt{\numvars}}} 1 \ d x \\
	&\ll_\numvars \int_{\frac{1}{q (q + Q)}}^{\frac{1}{q Q}} \left( (q \coordsize \sigma_1 (\Upsuppvar_\Upspsi + 1) \sqrt{\numvars})^\numvars |x|^{\numvars/2} + |x|^{-\numvars/2} \right) \ d x \\
	&= (q \coordsize \sigma_1 (\Upsuppvar_\Upspsi + 1) \sqrt{\numvars})^\numvars \frac{1}{\frac{\numvars}{2} + 1} \left( (q Q)^{-\numvars/2 - 1} - (q (q + Q))^{-\numvars/2 - 1} \right) \\
		&\qquad\qquad + \frac{1}{1 - \frac{\numvars}{2}} \left( (q Q)^{\numvars/2 - 1} - (q (q + Q))^{\numvars/2 - 1} \right) \\
	&\ll_\numvars q^{\numvars/2 - 1} \left( (\coordsize \sigma_1 (\Upsuppvar_\Upspsi + 1) )^\numvars Q^{-\numvars/2 - 1} + Q^{\numvars/2 - 1} \right) \numberthis \label{ineq:E\coordsize2 int bound}
\end{align*}
since $1 \le q \le Q$.

Substituting \eqref{ineq:E\coordsize2 int bound} into \eqref{ineq:E\coordsize2 bound1}, we find that
\begin{align*}
E_{F, \Upspsi, \coordsize, 2} (\numrepresented) &\ll_{\Upspsi , \numvars} \sum_{1 \le q \le Q} (\gcd(\lvl,q_0))^{\numvars/2} (\gcd(\numrepresented, q_1))^{1/2} q_0^{1/2} q^{-1/2} \tau(q) \log(2q) \\
		&\qquad\qquad \times |\det(A)|^{-1/2} \left( (\coordsize \sigma_1 (\Upsuppvar_\Upspsi + 1) )^\numvars Q^{-\numvars/2 - 1} + Q^{\numvars/2 - 1} \right) \\
	&\le \lvl^{\numvars/2} |\det(A)|^{-1/2} \left( \coordsize^\numvars \sigma_1^\numvars (\Upsuppvar_\Upspsi + 1)^\numvars Q^{-\numvars/2 - 1} + Q^{\numvars/2 - 1} \right) \\
		&\qquad \times \sum_{1 \le q \le Q} (\gcd(\numrepresented, q_1))^{1/2} q_1^{-1/2} \tau(q) \log(2q) . \numberthis \label{ineq:E\coordsize2 bound2}
\end{align*}

By applying Lemma~\ref{lem:q1gcdsum+eps} to \eqref{ineq:E\coordsize2 bound2}, we conclude that 
\begin{align*}
E_{F, \Upspsi, \coordsize, 2} (\numrepresented) &\ll_{\Upspsi , \numvars , \varepsilon} \lvl^{\numvars/2} |\det(A)|^{-1/2} \left( \coordsize^\numvars \sigma_1^\numvars Q^{-\numvars/2 - 1/2 + \varepsilon} + Q^{(\numvars - 1)/2 + \varepsilon} \right) \\
		&\qquad\qquad \times \tau(\numrepresented) \prod_{p \mid 2 \det(A)} ( 1 - p^{-1/2} )^{-1} . \numberthis \label{ineq:E\coordsize2 bound3}
\end{align*}
for any $\varepsilon > 0$.

\subsection{Analyzing the error term $E_{F, \Upspsi, \coordsize, 3} (\numrepresented)$} \label{sec:E\coordsize3}

We now analyze the error term $E_{F, \Upspsi, \coordsize, 3} (\numrepresented)$. Applying Lemma~\ref{lem:Trq\numrepresented x bound} and Corollary~\ref{cor:IFx\coordsize rq PNSP bound in terms of r} to \eqref{eq:E\coordsize3}, we obtain for any $M \ge 0$ that 
\begin{align*}
E_{F, \Upspsi, \coordsize, 3} (\numrepresented) &\ll_{M, \Upspsi} \sum_{1 \le q \le Q} (\gcd(\lvl,q_0))^{\numvars/2} (\gcd(\numrepresented, q_1))^{1/2} q_0^{1/2} q^{(1 - \numvars) / 2} \tau(q) \log(2q) \\
		&\qquad\qquad \times \int_{0}^{\frac{1}{q Q}} \sum_{\substack{\bvec{r} \in \ZZ^\numvars \\ \| \bvec{r} \| > q \coordsize |x| \sigma_1 (\Upsuppvar_\Upspsi + 1) \sqrt{\numvars}}} \coordsize^{\numvars-M} \left( \frac{1}{q (\Upsuppvar_\Upspsi + 1) \sqrt{\numvars}} \| \bvec{r} \| \right)^{-M} \ d x . \numberthis \label{ineq:E\coordsize3 bound1}
\end{align*}
In order to apply Corollary~\ref{cor:latticesumbound B>=0}, 
we suppose that $M \ge \numvars + 1$. 
Applying Corollary~\ref{cor:latticesumbound B>=0} to \eqref{ineq:E\coordsize3 bound1}, we obtain
\begin{align*}
E_{F, \Upspsi, \coordsize, 3} (\numrepresented) &\ll_{M, \Upspsi , \numvars} \sum_{1 \le q \le Q} (\gcd(\lvl,q_0))^{\numvars/2} (\gcd(\numrepresented, q_1))^{1/2} q_0^{1/2} q^{(1 - \numvars) / 2} \tau(q) \log(2q) \\
		&\qquad\qquad \times \coordsize^{\numvars-M} (q (\Upsuppvar_\Upspsi + 1) \sqrt{\numvars})^M \int_{0}^{\frac{1}{q Q}} 1 \ d x . \\
	&= \coordsize^{\numvars-M} (\Upsuppvar_\Upspsi + 1)^M \numvars^{M/2} Q^{-1} \\
		&\qquad \times \sum_{1 \le q \le Q} (\gcd(\lvl,q_0))^{\numvars/2} (\gcd(\numrepresented, q_1))^{1/2} q_0^{1/2} q^{M - 1/2 - \numvars/2} \tau(q) \log(2q) \\
	&\le \coordsize^{\numvars-M} (\Upsuppvar_\Upspsi + 1)^M \numvars^{M/2} Q^{M - \numvars/2 - 1} \lvl^{\numvars/2} \\
		&\qquad \times \sum_{1 \le q \le Q} (\gcd(\numrepresented, q_1))^{1/2} q_1^{-1/2} \tau(q) \log(2q) . \numberthis \label{ineq:E\coordsize3 bound2}
\end{align*}
We apply Lemma~\ref{lem:q1gcdsum+eps} to obtain
\begin{align}
E_{F, \Upspsi, \coordsize, 3} (\numrepresented) &\ll_{M, \Upspsi , \numvars , \varepsilon} \coordsize^{\numvars-M} Q^{M - \numvars/2 - 1/2 + \varepsilon} \lvl^{\numvars/2} \tau(\numrepresented) \prod_{p \mid 2 \det(A)} ( 1 - p^{-1/2} )^{-1} . \label{ineq:E\coordsize3 bound3}
\end{align}
for any $\varepsilon > 0$.

\subsection{Choosing parameters} \label{sec:parameters}

By putting together estimates from previous sections and by choosing some parameters in this subsection, we prove Theorem~\ref{thm:R\coordsize\numrepresented asymp1} and Corollary~\ref{cor:R\coordsize\numrepresented asymp2}. 
We substitute \eqref{eq:M\coordsize \numrepresented with singint and singseries}, \eqref{ineq:E\coordsize1 bound4}, \eqref{ineq:E\coordsize2 bound3}, and \eqref{ineq:E\coordsize3 bound3} into \eqref{eq:R\coordsize \numrepresented split} to obtain 
\begin{align*}
\repnum_{F, \Upspsi, \coordsize} (\numrepresented) &= \mathfrak{S}_F (\numrepresented) J_{F, \Upspsi} (\numrepresented, \coordsize) \\ 
		&\quad + O_{\Upspsi , \numvars , \varepsilon} \left( \lvl^{\numvars/2} \frac{\coordsize^{\numvars-2}}{\Gamma(\numposevals/2) \left( \prod_{j=1}^\numposevals \lambda_j \right)^{1/2}} \left(\frac{\numrepresented}{\coordsize^2} - \frac{{\Upsuppvar_\Upspsi}^2}{2} \Ind{\numposevals > 1} \sum_{j=\numposevals+1}^\numvars \lambda_j \right)^{\numposevals/2 - 1} \right. \\
				&\qquad\qquad\qquad \left. \times Q^{(3 - \numvars) / 2 + \varepsilon} \tau(\numrepresented) \prod_{p \mid 2 \det(A)} ( 1 - p^{-1/2} )^{-1} \right) \\
		&\quad + O_{\Upspsi , \numvars , \varepsilon} \left( \lvl^{\numvars/2} |\det(A)|^{-1/2} Q^{(\numvars - 1) / 2 + \varepsilon} \tau(\numrepresented) \prod_{p \mid 2 \det(A)} ( 1 - p^{-1/2} )^{-1} \right) \\
		&\quad + O_{\Upspsi , \numvars , \varepsilon} \left( \coordsize^\numvars \sigma_1^\numvars \lvl^{\numvars/2} |\det(A)|^{-1/2} Q^{-\numvars/2 - 1/2 + \varepsilon} \tau(\numrepresented) \prod_{p \mid 2 \det(A)} ( 1 - p^{-1/2} )^{-1} \right) \\
		&\quad + O_{\Upspsi , \numvars , \varepsilon} \left( \lvl^{\numvars/2} |\det(A)|^{-1/2} \left( \coordsize^\numvars \sigma_1^\numvars Q^{-\numvars/2 - 1/2 + \varepsilon} + Q^{(\numvars - 1)/2 + \varepsilon} \right) \right. \\
			&\qquad\qquad\qquad \left. \times \tau(\numrepresented) \prod_{p \mid 2 \det(A)} ( 1 - p^{-1/2} )^{-1} \right) \\
		&\quad + O_{M, \Upspsi , \numvars , \varepsilon} \left( \coordsize^{\numvars-M} Q^{M - \numvars/2 - 1/2 + \varepsilon} \lvl^{\numvars/2} \tau(\numrepresented) \prod_{p \mid 2 \det(A)} ( 1 - p^{-1/2} )^{-1} \right) \numberthis \label{eq:R\coordsize \numrepresented with error1}
\end{align*}
for any $\varepsilon > 0$.

We would like to choose $Q$ and $\coordsize$ that somewhat balance all of the error terms in \eqref{eq:R\coordsize \numrepresented with error1}. To do this, we find a value of $Q$ that practically minimizes the right-hand side of \eqref{ineq:E\coordsize2 bound3}, which is an upper bound for the absolute value of $E_{F, \Upspsi, \coordsize, 2} (\numrepresented)$. By solving 
\begin{align*}
\coordsize^\numvars \sigma_1^\numvars Q^{-\numvars/2 - 1/2 + \varepsilon} = Q^{(\numvars - 1)/2 + \varepsilon}
\end{align*}
for $Q$, we find that 
\begin{align*}
Q = \sigma_1 \coordsize
\end{align*}
will give a bound within a factor of $2$ of the optimal bound for the right-hand side of \eqref{ineq:E\coordsize2 bound3}.
(See the commentary after Theorem~2.3 in \cite{MontgomeryVaughanMultNT} for an explanation as to why this choice of $Q$ will give a bound within a factor of $2$ of the optimal bound for the right-hand side of \eqref{ineq:E\coordsize2 bound3}.
We also note that $\sigma_1 \coordsize$ is not necessarily an integer, which is why we purposefully allowed $Q$ to be not an integer in Subsection~\ref{sec:Fareydissect}.)

In order to set $Q = \sigma_1 \coordsize$, we need to place an additional restriction on $\coordsize$. Recall that $Q \ge 1$. 
Thus, to guarantee that $Q = \sigma_1 \coordsize \ge 1$, we require that $\coordsize \ge 1 / \sigma_1$.

By setting $Q = \sigma_1 \coordsize$ in \eqref{eq:R\coordsize \numrepresented with error1}, we obtain 
\begin{align*}
&\repnum_{F, \Upspsi, \coordsize} (\numrepresented) \\
	&= \mathfrak{S}_F (\numrepresented) J_{F, \Upspsi} (\numrepresented, \coordsize) \\
		&\qquad + O_{\Upspsi , \numvars , \varepsilon} \left( \frac{\lvl^{\numvars/2} \coordsize^{(\numvars - 1)/2 + \varepsilon} \sigma_1^{(3 - \numvars )/2 + \varepsilon}}{\Gamma(\numposevals/2) \left( \prod_{j=1}^\numposevals \lambda_j \right)^{1/2}} \left(\frac{\numrepresented}{\coordsize^2} - \frac{{\Upsuppvar_\Upspsi}^2}{2} \Ind{\numposevals > 1} \sum_{j=\numposevals+1}^\numvars \lambda_j \right)^{\numposevals/2 - 1} \right. \\
				&\qquad\qquad\qquad\qquad \left. \times \tau(\numrepresented) \prod_{p \mid 2 \det(A)} ( 1 - p^{-1/2} )^{-1} \right) \\
		&\qquad + O_{\Upspsi , \numvars , \varepsilon} \left( \coordsize^{(\numvars - 1)/2 + \varepsilon} \sigma_1^{(\numvars - 1)/2 + \varepsilon} \lvl^{\numvars/2} |\det(A)|^{-1/2} \tau(\numrepresented) \prod_{p \mid 2 \det(A)} ( 1 - p^{-1/2} )^{-1} \right) \\
		&\qquad + O_{M, \Upspsi , \numvars , \varepsilon} \left( \coordsize^{(\numvars - 1) /2 + \varepsilon} \sigma_1^{M - \numvars/2 - 1/2 + \varepsilon} \lvl^{\numvars/2} \tau(\numrepresented) \prod_{p \mid 2 \det(A)} ( 1 - p^{-1/2} )^{-1} \right) \numberthis \label{eq:R\coordsize \numrepresented with error2}
\end{align*}
for any $\varepsilon > 0$.

We notice that the only place that $M$ occurs in \eqref{eq:R\coordsize \numrepresented with error2} is as an exponent for $\sigma_1$. 
Now because $F$ is a nonsingular integral quadratic form, the absolute value of the determinant of $A$ is at least one, so at least one of the singular values of $A$ must be at least one. Since $\sigma_1$ is the largest singular value of $A$, we must have $\sigma_1 \ge 1$. 
Therefore, 
we want to set $M$ to be as small as possible. The only condition that we have on $M$ is that $M \ge \numvars + 1$, so we choose to set $M = \numvars + 1$. With this choice of $M$, we find that \eqref{eq:R\coordsize \numrepresented with error2} becomes
\begin{align*}
&\repnum_{F, \Upspsi, \coordsize} (\numrepresented) \\
	&= \mathfrak{S}_F (\numrepresented) J_{F, \Upspsi} (\numrepresented, \coordsize) \\
		&\qquad + O_{\Upspsi , \numvars , \varepsilon} \left( \frac{\lvl^{\numvars/2} \coordsize^{(\numvars - 1)/2 + \varepsilon} \sigma_1^{(3 - \numvars )/2 + \varepsilon}}{\Gamma(\numposevals/2) \left( \prod_{j=1}^\numposevals \lambda_j \right)^{1/2}} \left(\frac{\numrepresented}{\coordsize^2} - \frac{{\Upsuppvar_\Upspsi}^2}{2} \Ind{\numposevals > 1} \sum_{j=\numposevals+1}^\numvars \lambda_j \right)^{\numposevals/2 - 1} \right. \\
				&\qquad\qquad\qquad\qquad \left. \times \tau(\numrepresented) \prod_{p \mid 2 \det(A)} ( 1 - p^{-1/2} )^{-1} \right) \\
		&\qquad + O_{\Upspsi , \numvars , \varepsilon} \left( \coordsize^{(\numvars - 1)/2 + \varepsilon} \sigma_1^{(\numvars - 1)/2 + \varepsilon} \lvl^{\numvars/2} |\det(A)|^{-1/2} \tau(\numrepresented) \prod_{p \mid 2 \det(A)} ( 1 - p^{-1/2} )^{-1} \right) \\
		&\qquad + O_{\Upspsi , \numvars , \varepsilon} \left( \coordsize^{(\numvars - 1) /2 + \varepsilon} \sigma_1^{(\numvars + 1)/2 + \varepsilon} \lvl^{\numvars/2} \tau(\numrepresented) \prod_{p \mid 2 \det(A)} ( 1 - p^{-1/2} )^{-1} \right) \numberthis \label{eq:R\coordsize \numrepresented with error3}
\end{align*}
for any $\varepsilon > 0$. 

Recall that $|\det(A)| \ge 1$ and $\sigma_1 \ge 1$, so $|\det(A)|^{-1/2} \le \sigma_1$. Therefore, 
\begin{align*}
\sigma_1^{(\numvars - 1)/2 + \varepsilon} |\det(A)|^{-1/2} \le \sigma_1^{(\numvars + 1)/2 + \varepsilon} 
\end{align*}
for any $\varepsilon > 0$.
Thus, we conclude that 
\begin{align*}
&\repnum_{F, \Upspsi, \coordsize} (\numrepresented) \\
	&= \mathfrak{S}_F (\numrepresented) J_{F, \Upspsi} (\numrepresented, \coordsize) \\
		&\qquad + O_{\Upspsi , \numvars , \varepsilon} \left( \frac{\lvl^{\numvars/2} \coordsize^{(\numvars - 1)/2 + \varepsilon} \sigma_1^{(3 - \numvars )/2 + \varepsilon}}{\Gamma(\numposevals/2) \left( \prod_{j=1}^\numposevals \lambda_j \right)^{1/2}} \left(\frac{\numrepresented}{\coordsize^2} - \frac{{\Upsuppvar_\Upspsi}^2}{2} \Ind{\numposevals > 1} \sum_{j=\numposevals+1}^\numvars \lambda_j \right)^{\numposevals/2 - 1} \right. \\
				&\qquad\qquad\qquad\qquad \left. \times \tau(\numrepresented) \prod_{p \mid 2 \det(A)} ( 1 - p^{-1/2} )^{-1} \right) \\
		&\qquad + O_{\Upspsi , \numvars , \varepsilon} \left( \coordsize^{(\numvars - 1) /2 + \varepsilon} \sigma_1^{(\numvars + 1)/2 + \varepsilon} \lvl^{\numvars/2} \tau(\numrepresented) \prod_{p \mid 2 \det(A)} ( 1 - p^{-1/2} )^{-1} \right) \numberthis \label{eq:R\coordsize \numrepresented with error4}
\end{align*}
for any $\varepsilon > 0$. 
Because of \eqref{eq:singintnormalized with density} and Theorem~\ref{thm:archmediandensitytovol}, we deduce Theorem~\ref{thm:R\coordsize\numrepresented asymp1} from \eqref{eq:R\coordsize \numrepresented with error4}.

We now want to choose $\coordsize$ to minimize the error terms in \eqref{eq:R\coordsize \numrepresented with error4}. To make both error terms grow at the same rate in terms of $\numrepresented$, we set $\coordsize = \numrepresented^{1/2}$ in Theorem~\ref{thm:R\coordsize\numrepresented asymp1} to obtain 
\begin{align*}
&\repnum_{F, \Upspsi, \coordsize} (\numrepresented) \\
		&= \mathfrak{S}_F \!\left( \numrepresented \right) \sigma_{F, \Upspsi, \infty} \!\left( \numrepresented, \numrepresented^{1/2} \right) \numrepresented^{\numvars/2 - 1} \\
		&\qquad + O_{\Upspsi , \numvars , \varepsilon} \left( \left( \sigma_1^{(\numvars + 1)/2 + \varepsilon} + \frac{\sigma_1^{(3 - \numvars )/2 + \varepsilon}}{\Gamma(\numposevals/2) \left( \prod_{j=1}^\numposevals \lambda_j \right)^{1/2}} \left( 1 - \frac{{\Upsuppvar_\Upspsi}^2}{2} \Ind{\numposevals > 1} \sum_{j=\numposevals+1}^\numvars \lambda_j \right)^{\numposevals/2 - 1} \right) \right. \\
		&\qquad\qquad\qquad\qquad \left. \times \numrepresented^{(\numvars - 1) /4 + \varepsilon/2} \tau(\numrepresented) \lvl^{\numvars/2} \prod_{p \mid 2 \det(A)} ( 1 - p^{-1/2} )^{-1}  \right)  \numberthis \label{eq:R\coordsize \numrepresented with error5}
\end{align*}
for any $\varepsilon > 0$. 
Because $\tau(\numrepresented) \ll_{\varepsilon} \numrepresented^{\varepsilon/2}$ for any $\varepsilon > 0$, we conclude Corollary~\ref{cor:R\coordsize\numrepresented asymp2} from \eqref{eq:R\coordsize \numrepresented with error5}.

\begin{rmk}
The implied constant in Corollary~\ref{cor:R\coordsize\numrepresented asymp2} can be further optimized if $F$ is a positive definite quadratic form. (See Corollary~1.4 in \cite{EdnaJonesThesis}.)
\end{rmk}

\subsection{Proof of Corollary~\ref{cor:repnum F asymp}} \label{sec:proofcorrepnum}

In this subsection, we complete a proof of Corollary~\ref{cor:repnum F asymp}, in which we assume that $F$ is positive definite. 
To prove Corollary~\ref{cor:repnum F asymp}, we choose a bump function $\Upspsi$ that satisfies certain conditions. Notice that 
\begin{align*}
| \{ \bvec{m} \in \ZZ^\numvars : F(\bvec{m}) = \numrepresented \} | &= \sum_{\bvec{m} \in \ZZ^\numvars} \Ind{F(\bvec{m}) = \numrepresented} .
\end{align*}
Because 
\begin{align*}
\repnum_{F, \Upspsi, \coordsize} (\numrepresented) = \sum_{\bvec{m} \in \ZZ^\numvars} \Ind{F(\bvec{m}) = \numrepresented} \Upspsi_\coordsize (\bvec{m}) ,
\end{align*}
we find that 
\begin{align}
| \{ \bvec{m} \in \ZZ^\numvars : F(\bvec{m}) = \numrepresented \} | &=  \repnum_{F, \Upspsi, \coordsize} (\numrepresented) \label{eq:repnum to weightedrepnum}
\end{align}
if $\Upspsi_\coordsize (\bvec{m}) = 1$ for each $\bvec{m} \in \ZZ^\numvars$ satisfying $F(\bvec{m}) = \numrepresented$. Because scaling by $\coordsize > 0$ does not necessarily preserve integrality of solutions to $F(\bvec{m}) = \numrepresented$, we require that $\Upspsi_\coordsize (\bvec{x}) = 1$ whenever $\bvec{x} \in \RR^\numvars$ satisfies $F(\bvec{x}) = \numrepresented$. Using 
the invertible mapping $\bvec{x} \mapsto \coordsize \bvec{m}$ and 
the fact that 
\begin{align*}
\Upspsi_\coordsize (\bvec{m}) = \Upspsi\left( \frac{1}{\coordsize} \bvec{m} \right) ,
\end{align*}
we deduce that $\Upspsi(\bvec{m}) = 1$ whenever $\bvec{m} \in \RR^\numvars$ satisfies $F(\bvec{m}) = \numrepresented / \coordsize^2$ if and only if $\Upspsi_\coordsize (\bvec{x}) = 1$ whenever $\bvec{x} \in \RR^\numvars$ satisfies $F(\bvec{x}) = \numrepresented$.

To remove any dependence of $\Upspsi$ on $\numrepresented$ or $\coordsize$, we set 
\begin{align}
\coordsize = \numrepresented^{1/2} \label{eq:\coordsize=sqrt\numrepresented}
\end{align}
so that the only condition on $\Upspsi \in C_c^\infty (\RR^\numvars)$ is that $\Upspsi(\bvec{m}) = 1$ whenever $\bvec{m} \in \RR^\numvars$ satisfies $F(\bvec{m}) = 1$. We note that it is possible for $\Upspsi$ to satisfy this condition because of Theorem~\ref{thm:preimageFcompact}.

Observe that $\Upspsi$ now depends on $F$, so we now consider any implied constants dependent on $\Upspsi$ to now be dependent on $\Upspsi$ and $F$.
Therefore, by applying \eqref{eq:\coordsize=sqrt\numrepresented} to \eqref{eq:R\coordsize \numrepresented with error4}, we obtain 
\begin{align}
\repnum_{F, \Upspsi, \coordsize} (\numrepresented) &= \mathfrak{S}_F (\numrepresented) J_{F, \Upspsi} (\numrepresented, \coordsize) + O_{F , \Upspsi , \numvars , \varepsilon} \left( \numrepresented^{(\numvars - 1) /4 + \varepsilon/2} \right) \label{eq:R\coordsize \numrepresented with error6} 
\end{align}
for any $\varepsilon > 0$. 
Because $\Upspsi(\bvec{m}) = 1$ whenever $\bvec{m} \in \RR^\numvars$ satisfies $F(\bvec{m}) = \numrepresented / \coordsize^2$, we deduce that Corollary~\ref{cor:singint asymp Upsilon=1 eval}, \eqref{eq:repnum to weightedrepnum}, and \eqref{eq:R\coordsize \numrepresented with error6} imply that 
\begin{align}
| \{ \bvec{m} \in \ZZ^\numvars : F(\bvec{m}) = \numrepresented \} | &= \mathfrak{S}_F (\numrepresented) \frac{(2 \pi)^{\numvars/2}}{\Gamma(\numvars/2) \sqrt{\det(A)}} \numrepresented^{\numvars/2 - 1} + O_{F , \Upspsi , \numvars , \varepsilon} \left( \numrepresented^{(\numvars - 1) /4 + \varepsilon/2} \right) \label{eq:R\coordsize \numrepresented with error7}
\end{align}
for any $\varepsilon > 0$. 
Because $| \{ \bvec{m} \in \ZZ^\numvars : F(\bvec{m}) = \numrepresented \} |$ is a number that does not depend on $\Upspsi$, we can remove the dependency on $\Upspsi$ for the implied constant in \eqref{eq:R\coordsize \numrepresented with error7} and obtain 
\begin{align}
| \{ \bvec{m} \in \ZZ^\numvars : F(\bvec{m}) = \numrepresented \} | &= \mathfrak{S}_F (\numrepresented) \frac{(2 \pi)^{\numvars/2}}{\Gamma(\numvars/2) \sqrt{\det(A)}} \numrepresented^{\numvars/2 - 1} + O_{F , \numvars , \varepsilon} \left( \numrepresented^{(\numvars - 1) /4 + \varepsilon/2} \right) \label{eq:R\coordsize \numrepresented with error8}
\end{align}
for any $\varepsilon > 0$. 
By replacing $\varepsilon/2$ with $\varepsilon$ in \eqref{eq:R\coordsize \numrepresented with error8}, we conclude Corollary~\ref{cor:repnum F asymp}.

\section{A strong asymptotic local-global principle for certain Kleinian sphere packings} \label{chapter:Local-GlobalSpherePackings}

In this paper, we have developed a version of the Kloosterman circle method with a bump function. 
A potential application of this version of the Kloosterman circle method is a proof of a strong asymptotic local-global principle for certain integral Kleinian sphere packings. 

To state a conjectured strong asymptotic local-global principle for certain integral Kleinian sphere packings, we first need to state some definitions. 
The \emph{bend} of a $(\dimnum-1)$-sphere is the reciprocal of the radius of the $(\dimnum-1)$-sphere. 
An $(\dimnum-1)$-sphere packing is called \emph{integral} if the bend of each $(\dimnum-1)$-sphere in the packing is an integer. 
An integral $(\dimnum-1)$-sphere packing is called \emph{primitive} if the greatest common divisor of all of the bends in the packing is~$1$. 
An $(\dimnum-1)$-sphere packing is \emph{Kleinian} if its limit set is that of a geometrically finite group $\Gamma$ of isometries of $(\dimnum+1)$-dimensional hyperbolic space. 
Kontorovich and Nakamura~\cite{KNCrystallographicSpherePackings} and Kontorovich and Kapovich~\cite{KKKleinianSoherePackingsBugsArithGroups} proved that there are infinitely many conformally inequivalent integral Kleinian sphere packings.

There may be local or congruence restrictions on the bends in a fixed Kleinian sphere packing. 
This motivates the following definition of admissibility. 
\begin{defn}[Admissible integers for sphere packings]
Let $\PP$ be an integral Kleinian sphere packing. 
An integer $m$ is \emph{admissible} (or \emph{locally represented}) if, for every $q \ge 1$, we have 
\begin{align*}
m \equiv \text{bend of some sphere in }\PP \pmod{q} .
\end{align*}
\end{defn}

Before we state a conjectured strong asymptotic local-global principle for certain Kleinian sphere packings, we briefly discuss orientation-preserving isometries of $(\dimnum+1)$-dimensional hyperbolic space. 
The group of orientation-preserving isometries of $(\dimnum+1)$-dimensional hyperbolic space can be identified with the group of orientation-preserving M\"obius transformations acting on $\RR^\dimnum \cup \{ \infty \}$. These groups can be identified with a certain group of $2 \times 2$ matrices with entries in a Clifford algebra. (See \cite{VahlenMobiusClifford}, \cite{AhlforsMobiusClifford}, or \cite{WatermanMobiusSeveralDims} for how this can be done.) This matrix group contains $\PSL_2 (\CC)$ if $\dimnum \ge 2$.

The following is a conjectured strong asymptotic local-global principle for certain Kleinian sphere packings.
\begin{conj} \label{conj:LocalGlobalKleinianSpherePackings}
Let $\PP$ be a primitive integral Kleinian $(\dimnum-1)$-sphere packing in $\RR^\dimnum \cup \{ \infty \}$ with an orientation-preserving automorphism group $\Gamma$ of M\"obius transformations.
\begin{enumerate}
\item Suppose that there exists a $(\dimnum-1)$-sphere $S_0 \in\PP$ such that the stabilizer of $S_0$ in $\Gamma$ contains (up to conjugacy) a congruence subgroup of $\PSL_2 (\OO_K)$, where $K$ is an imaginary quadratic field and $\OO_K$ is the ring of integers of $K$. This condition implies that $\dimnum \ge 3$.
\item Suppose that there is a $(\dimnum-1)$-sphere $S_1 \in \PP$ that is tangent to $S_0$. \label{cond:tangencycond}
\end{enumerate}
Then every sufficiently large admissible integer is a bend of a $(\dimnum-1)$-sphere in $\PP$. That is, there exists an $N_0 = N_0 (\PP)$ such that if $m$ is admissible and $m > N_0$, then $m$ is the bend of a $(\dimnum-1)$-sphere in $\PP$.
\end{conj}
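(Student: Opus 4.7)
The plan is to reduce the statement to a representation problem for a finite family of positive definite integral quaternary quadratic forms, and then to feed this into Theorem~\ref{thm:R\coordsize\numrepresented asymp1} (or the normalized Corollary~\ref{cor:R\coordsize\numrepresented asymp2}) with a carefully chosen bump function. First I would use the two hypotheses to manufacture a controlled family of spheres in $\PP$ whose bends are values of a quadratic form. Let $\Gamma_0 \le \Gamma$ be the stabilizer of $S_0$; by hypothesis~(1), up to conjugacy $\Gamma_0$ contains a congruence subgroup of $\PSL_2 (\OO_K)$ for some imaginary quadratic field $K$, and by hypothesis~(2) there is some $S_1 \in \PP$ tangent to $S_0$. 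The orbit $\Gamma_0 \cdot S_1$ consists of spheres in $\PP$ all tangent to $S_0$, and, using the Clifford matrix model of orientation-preserving M\"obius transformations of $\RR^\dimnum \cup \{\infty\}$ together with Descartes-type identities in inversive coordinates, the bend of $\gamma \cdot S_1$ is a quadratic function $F_{S_0, S_1}(\bvec{m})$ of the four entries of a lift $\gamma \in \PSL_2(\OO_K) \cong \PSL_2(\ZZ[\omega])$. The form $F_{S_0, S_1}$ is integral and positive definite (because only finitely many spheres of fixed bend are tangent to $S_0$), so it suffices to show that every sufficiently large admissible integer $m$ is represented on the relevant congruence sublattice by such a form arising from one of finitely many choices of $S_1$.

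Next I would apply the asymptotic of this paper. For each quaternary form $F = F_{S_0, S_1^{(j)}}$ in the finite collection above, I would pick a bump function $\Upspsi \in C_c^\infty (\RR^4)$ that equals $1$ on $\{F(\bvec{x}) = 1\}$, set $\coordsize = m^{1/2}$, and invoke Corollary~\ref{cor:R\coordsize\numrepresented asymp2} to obtain
\begin{align*}
\repnum_{F, \Upspsi, \coordsize} (m) = \mathfrak{S}_F (m) \, \sigma_{F, \Upspsi, \infty}\!\left(m, m^{1/2}\right) m + O_{\Upspsi, F, \varepsilon}\!\left( m^{3/4 + \varepsilon} \right) ,
\end{align*}
where I have taken $\numvars = 4$. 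The real factor $\sigma_{F, \Upspsi, \infty}(m, m^{1/2})$ is a positive constant (independent of $m$) by Theorem~\ref{thm:sigma asymp Upsilon=1 eval}. Admissibility of $m$ for $\PP$ translates, via strong approximation for the spin double cover of the relevant indefinite orthogonal group, into the statement that $m$ is locally represented by at least one $F_{S_0, S_1^{(j)}}$ on its congruence sublattice at every prime $p$, which in turn gives a uniform lower bound $\mathfrak{S}_F (m) \gg 1$ for the corresponding $F$. For such an $m$ the main term is of size $\asymp m$ and thus dominates the $O(m^{3/4 + \varepsilon})$ error for $m$ large enough, producing an integral $\bvec{m}$ with $F(\bvec{m}) = m$ and hence a sphere in $\Gamma_0 \cdot S_1^{(j)} \subseteq \PP$ of bend $m$.

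The hard part will be twofold. First, a single orbit $\Gamma_0 \cdot S_1$ need not detect all admissible residue classes, so one must assemble a finite list $S_1^{(1)}, \dots, S_1^{(J)}$ of tangent spheres whose orbits together cover every admissible residue class modulo every prime power; proving that finitely many suffice requires a careful strong-approximation argument for the Bianchi-like group inside $\Gamma_0$ and is the step most specific to the geometry of the given packing $\PP$. Second, one must prove the uniform lower bound $\mathfrak{S}_{F_{S_0, S_1^{(j)}}}(m) \gg 1$ on admissible $m$; this is an archetypal local analysis of $p$-adic densities, delicate at the ramified primes $p \mid 2 \det(A)$ because of the factor $\prod_{p \mid 2 \det(A)}(1 - p^{-1/2})^{-1}$ in the error of Theorem~\ref{thm:R\coordsize\numrepresented asymp1}. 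It is precisely here that the uniformity of our implied constants in $F$, depending only on $\numvars$, $\varepsilon$, and $\Upspsi$, is crucial: as one ranges $j$ over the finite list, the constants controlling each $F_{S_0, S_1^{(j)}}$ must be absorbed without dependence on the quadratic form itself, which is exactly what Theorem~\ref{thm:R\coordsize\numrepresented asymp1} provides. Once these two inputs are established, combining them with the asymptotic above would yield the effective threshold $N_0(\PP)$ and complete the proof of Conjecture~\ref{conj:LocalGlobalKleinianSpherePackings}.
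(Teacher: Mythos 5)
First, a point of order: the statement you have attempted to prove is labeled \emph{Conjecture} in the paper, and the paper does not contain a proof of it. Section~\ref{chapter:Local-GlobalSpherePackings} only offers a one-sentence sketch of a hoped-for strategy and explicitly says the author is still making progress. There is therefore no proof in the paper to compare against; you have written a proposed attack on an open problem.

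Measured against the paper's own sketch, your outline is broadly aligned (M\"obius transformations, inversive coordinates, a finite family of integral quaternary quadratic polynomials, then the Kloosterman circle method), but there is a genuine gap that would cause the argument as written to fail. The paper is careful to say that what one obtains is ``a family of integral quadratic polynomials in four variables \emph{with a coprimality condition on the variables}.'' That condition is not decorative: the bend of $\gamma S_1$ is a quadratic polynomial in the entries of one row (or column) of $\gamma \in \PSL_2(\OO_K)$, and those two entries of $\OO_K$ (four $\ZZ$-coordinates) must be coprime in $\OO_K$ in order to extend to an actual matrix in $\PSL_2(\OO_K)$ and hence to actually produce a sphere in $\PP$. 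Theorem~\ref{thm:R\coordsize\numrepresented asymp1} and Corollary~\ref{cor:R\coordsize\numrepresented asymp2} count $\sum_{\bvec{m} \in \ZZ^\numvars} \Ind{F(\bvec{m}) = \numrepresented}\,\Upspsi_\coordsize(\bvec{m})$ over the full lattice $\ZZ^\numvars$; they do not restrict to the coprime sublocus, nor do they handle shifted or inhomogeneous quadratics. So even if $\repnum_{F, \Upspsi, \coordsize}(m) > 0$, the representing vector produced by the circle method may have a nontrivial common factor, fail to lift to $\PSL_2(\OO_K)$, and give no sphere in $\PP$ of bend $m$. Removing the coprimality restriction by M\"obius inversion reintroduces unbounded sums over divisors and destroys the clean positivity argument you are relying on; this is precisely the kind of extra technical machinery (beyond what is proved in this paper) that Kontorovich's Soddy packing work and Bourgain--Kontorovich's Apollonian work had to develop. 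Until you explain how to run the circle method with the coprimality (and inhomogeneity) constraints while preserving positivity of the main term, the reduction step ``represented over $\ZZ^4$ $\Rightarrow$ bend occurs in $\PP$'' does not hold. Your identification of the other two difficulties (finitely many $S_1^{(j)}$ via strong approximation, and a uniform lower bound for $\mathfrak{S}_F$) is apt, but the coprimality issue is the one you omit entirely and the one the paper itself flags.
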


There has been work towards Conjecture~\ref{conj:LocalGlobalKleinianSpherePackings}. 
Examples of integral Kleinian sphere packings are integral Soddy sphere packings and integral orthoplicial Apollonian sphere packings. 
Kontorovich~\cite{KontorovichLocal-GlobalSoddy} proved the strong asymptotic local-global principle for integral Soddy sphere packings. 
Dias~\cite{DiasLocal-GlobalOrthoplicial} and Nakamura~\cite{NakamuraLocal-GlobalOrthoplicial} independently did work towards proving the strong asymptotic local-global principle for integral orthoplicial Apollonian sphere packings.

When this was written, the author did not know of a proof of a strong asymptotic local-global principle that applied to multiple conformally inequivalent integral Kleinian sphere packings. The author is making progress towards proving Conjecture~\ref{conj:LocalGlobalKleinianSpherePackings}, which would apply to multiple conformally inequivalent integral Kleinian sphere packings. 
By using M\"obius transformations on $\RR^\dimnum \cup \{ \infty \}$ and inversive coordinates of $(\dimnum-1)$-spheres, one can obtain a family of integral quadratic polynomials in four variables with a coprimality condition on the variables.
Potentially, the version of the Kloosterman circle method discussed in this paper could be then used to prove a result towards Conjecture~\ref{conj:LocalGlobalKleinianSpherePackings} that applies to multiple conformally inequivalent integral Kleinian sphere packings.

\section*{Acknowledgments}

I thank Alex Kontorovich, Gene Kopp, Louis Gaudet, Henryk Iwaniec, and Sam Pushon-Smith for helpful mathematical discussions. 
I thank the anonymous referee for the excellent suggestions that improved this article. 
I thank Alex Kontorovich for giving me the problem of trying to prove a strong asymptotic local-global principle for certain Kleinian sphere packings. 
I thank Trevor Wooley for several helpful conversations with him about the circle method and mathematical writing. I also thank Trevor Wooley for sharing his arithmetic harmonic analysis notes. 

Some of this material is based upon work supported by the National Science Foundation under Grant No.~2402599.

\bibliographystyle{amsalpha}
\bibliography{KloostermanCircleMethodBib}
\end{document}